\newtheorem{thm}{Theorem}[section]
\newtheorem*{thm*}{Theorem}
\newtheorem*{lemma*}{Lemma}
\newtheorem*{coro*}{Corollary}
\newtheorem{lemma}[thm]{Lemma}
\newtheorem{coro}[thm]{Corollary}
\newtheorem{prop}[thm]{Proposition}
\theoremstyle{definition}
\newtheorem{defn}[thm]{Definition}
\theoremstyle{remark}
\newtheorem{rem}[thm]{Remark}
\newtheorem{ex}[thm]{Example}
\newtheorem{quest}[thm]{Question}
\numberwithin{equation}{section}
\def\k{\mathsf{k}}
\def\A{\mathcal{A}}
\def\U{\mathcal{U}}
\def\X{\mathcal{X}}
\def\S{\Sigma}
\def\R{\mathbb{R}}
\def\a{\mathbf{a}}
\def\Z{\mathbb{Z}}
\def\Q{\mathsf{Q}}
\def\M{\mathbb{M}}
\def\J{\mathcal{J}}
\title{Locally acyclic cluster algebras.}
\author{Greg Muller}
\address{Department of Mathematics,
Louisiana State University, Baton Rouge, LA 70808, USA}
\email{gmuller@lsu.edu}
\thanks{This work was supported by the VIGRE program at LSU, National Science Foundation grant DMS-0739382.}
\tikzstyle{mutable}=[inner sep=0.5mm,circle,draw,minimum size=2mm]
\tikzstyle{frozen}=[inner sep=0.5mm,rectangle,draw]
\tikzstyle{marked}=[inner sep=0.5mm,circle,draw,fill=black!50]
\begin{document}

\begin{abstract}
This paper studies cluster algebras locally, by identifying a special class of localizations which are themselves cluster algebras.  A `locally acyclic cluster algebra' is a cluster algebra which admits a finite cover (in a geometric sense) by acyclic cluster algebras.  Many important results about acyclic cluster algebras extend to local acyclic cluster algebras (such as finite generation, integrally closure, and equaling their upper cluster algebra), as well as results which are new even for acyclic cluster algebras (such as regularity when the exchange matrix has full rank).

We develop several techniques for determining whether a cluster algebra is locally acyclic.  We show that cluster algebras of marked surfaces with at least two boundary marked points are locally acyclic, providing a large class of examples of cluster algebras which are locally acyclic but not acyclic.  We also work out several specific examples in detail.
\end{abstract}

\maketitle

\section{Introduction}

\subsection{Cluster algebras}

Cluster algebras were introduced by Fomin and Zelevinsky \cite{FZ02} in the study of canonical bases of Lie algebras.  Since then, cluster algebras have been discovered in many contexts throughout mathematics.  They often arise as the coordinate rings of spaces (such as double Bruhat cells \cite{BFZ05}, Grassmannians \cite{GLS08}, or decorated Teichm\"uller space \cite{GSV05}) where certain special functions on that space (such as generalized minors, Pl\"ucker coordinates or Penner coordinates) correspond to special elements called \emph{cluster variables}.

The distinguishing feature of a cluster algebra is that the full set of cluster variables can be recovered from special $n$-element subsets of the cluster variables called \emph{clusters}.  A cluster variable in a cluster can be \emph{mutated} to produce a new cluster variable.  Iterating in all possible ways reconstructs the complete set of cluster variables.  This generalizes known relations like the Desnanot-Jacobi identity, the three-term Pl\"ucker relation, and the Penner relation.

This paper will focus on \emph{skew-symmetric} cluster algebras of geometric type.  This are less than completely general, but they can be encoded into an \emph{ice quiver}; a quiver with some vertices declared as \emph{frozen}.  This turns a potentially complicated algebraic object into a simpler and more visual combinatorial object.  This is an expository choice; \textbf{all the results in this paper remain true for skew-symmetrizable cluster algebras}.\footnote{However, if a reader feels that the skew-symmetrizable versions of the proofs merit publication, they will receive no contention or competition from this author.}


\subsection{Acyclic cluster algebras} There is a curious dichotomy in the general theory of cluster algebras.  Cluster algebras seem to either be very well-behaved or very poorly-behaved, with few examples in between.  By way of illustration, a cluster algebra $\A$ of rank 3 is either generated by 6 elements or is non-Noetherian.\footnote{Either $\A$ is acyclic, and generated by six elements by Lemma \ref{lemma: acyclicpres}, or it behaves similarly to the Markov cluster algebra (see Section \ref{section: Markov}).}  One possible explanation for this is that a related algebra, called the \emph{upper cluster algebra} $\U$, is the truly well-behaved object.  However, the upper cluster algebra is defined as an infinite intersection of algebras, and so it can be hard to work with directly.

In \cite{BFZ05}, Berenstein, Fomin and Zelevinsky defined \emph{acyclic} cluster algebras.  These are cluster algebras which can come from a quiver with no directed cycle of unfrozen vertices.  They produce an explicit finite presentation of any acyclic cluster algebra, and show that it coincides with its upper cluster algebra.  Since then, many results on cluster algebras have focused on the acyclic case.

\subsection{Locally acyclic cluster algebras}

Despite the success of working with acyclic cluster algebras, there are still many examples of cluster algebras which are not acyclic but which enjoy many of the same properties. We will try generalize acyclic cluster algebras to capture more of these well-behaved algebras.

To do this, we consider localizations of cluster algebras where a finite set of cluster variables has been inverted.  Under additional conditions (Lemma \ref{lemma: acycliclocal}), this localization will again be a cluster algebra in a natural way; call this a \emph{cluster localization}.  On the level of ice quivers, a cluster localization will amount to freezing some vertices, and so the cluster localization will be a simpler cluster algebra.

Geometrically, a localization of a commutative algebra $\A$ defines an open subscheme in the affine scheme $Spec(\A)$.  Therefore, a cluster localization $\A'$ of $\A$ defines an open patch in $\X_\A:=Spec(\A)$, called a \emph{cluster chart}.  If $\X_\A$ can be covered by smaller cluster charts, then any local property of $\X_\A$ or $\A$ can checked on the cover.\footnote{By a local property, we mean any property of a scheme or an algebra which is true if and only if it is true on any finite open cover.}

Since many properties are already known about acyclic cluster algebras, this motivates the following definition.
\begin{defn}
A \textbf{locally acyclic cluster algebra} $\A$ is a cluster algebra such that $\X_\A$ has a finite cover by acyclic cluster charts.
\end{defn}

\subsection{Properties of locally acyclic cluster algebras} It is immediate that any local property of acyclic cluster algebras is also true of locally acyclic cluster algebras.  We list these now.

Let $\U$ denote the upper cluster algebra of $\A$.
\begin{thm*} [\textbf{\ref{thm: A=U}}]
If $\A$ is locally acyclic, then $\A=\U$.
\end{thm*}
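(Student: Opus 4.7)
The inclusion $\A \subseteq \U$ is automatic from the definitions, so the theorem is really the reverse inclusion $\U \subseteq \A$. The plan is to exploit the cover by acyclic cluster charts and reduce to the Berenstein--Fomin--Zelevinsky result that $\A' = \U'$ for an acyclic cluster algebra $\A'$. Phrased geometrically, the equality $\A = \U$ ought to be a local property of $\X_\A$, so a cover by charts on which the equality holds should imply it globally.

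More concretely, fix a finite cover of $\X_\A$ by acyclic cluster charts $\X_{\A_1}, \ldots, \X_{\A_n}$, where each $\A_i$ is the cluster localization $\A[S_i^{-1}]$ obtained by inverting (equivalently, freezing) a finite set $S_i$ of cluster variables. Write $\U_i$ for the upper cluster algebra of $\A_i$. The argument then proceeds in three steps. First, by acyclicity, $\A_i = \U_i$ for every $i$. Second, one shows that forming the upper cluster algebra commutes with cluster localization, in the sense that $\U_i = \U[S_i^{-1}]$; this identifies the localizations of $\U$ on the cover with the acyclic chart algebras. Third, given $u \in \U$, its image in each $\A[S_i^{-1}]$ lies in $\U_i = \A_i$, and because the $\X_{\A_i}$ cover $\X_\A$ the union $\bigcup_i S_i$ generates the unit ideal in $\A$; the standard sheaf axiom for affine schemes $\A = \bigcap_i \A[S_i^{-1}]$ (inside the fraction field) then forces $u \in \A$.

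The step I expect to be the crux is the compatibility $\U_i = \U[S_i^{-1}]$. Since $\U$ is defined as an intersection of Laurent rings over all clusters of $\A$ while $\U_i$ is the analogous intersection over clusters of $\A_i$ (i.e., those containing $S_i$), localization does not obviously commute with this infinite intersection. The lemma that underwrites cluster localization (Lemma~\ref{lemma: acycliclocal}) presumably handles exactly this: clusters of the localization are identified with clusters of $\A$ containing $S_i$, so each Laurent subring appearing in the definition of $\U_i$ is (the localization of) one appearing in the definition of $\U$, and conversely inverting $S_i$ on the intersection defining $\U$ yields the intersection defining $\U_i$. Once this is in hand, the remaining pieces --- the acyclic theorem on each chart and the affine sheaf axiom on the cover --- combine without further obstacle to give $\U \subseteq \A$.
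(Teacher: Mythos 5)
Your proposal is correct and takes essentially the same route as the paper: the acyclic equality $\A_i=\U_i$ on each chart, Lemma \ref{lemma: acycliclocal} supplying exactly the crux identification $\A[S_i^{-1}]=\U[S_i^{-1}]=\U_i=\A_i$, and then globalization over the finite cover. The only cosmetic difference is your final gluing step, done algebraically via $\A=\bigcap_i \A[S_i^{-1}]$, whereas the paper phrases the same gluing geometrically: the map $\X_\U\rightarrow\X_\A$ pulls back to an isomorphism over each chart of the cover and is therefore an isomorphism.
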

Recall that a commutative domain is \emph{integrally closed} if it contains the root of any monic polynomial which exists in its fraction field; and a commutative ring is \emph{locally a complete intersection} if it is locally a quotient of a regular ring by a regular sequence.\footnote{A variety is locally a complete intersections if it can be locally defined in affine space using the minimal number of equations, given its dimension.  It also implies other geometric properties, such as being Gorenstein and Cohen-Macaulay.}
\begin{thm*} [\textbf{\ref{thm: LAprops}}]
If $\A$ is locally acyclic, then $\A$ is finitely-generated, integrally closed, and locally a complete intersection.
\end{thm*}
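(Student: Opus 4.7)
The plan is to reduce each of the three assertions to the corresponding property for acyclic cluster algebras, which is either known from \cite{BFZ05} or follows easily from the explicit presentation given there. Unpacking the hypothesis, there is a finite collection of acyclic cluster localizations $\A_i = \A[f_i^{-1}]$ whose associated open subschemes $Spec(\A_i) = D(f_i)$ cover $\X_\A$; equivalently, $(f_1, \ldots, f_r) = (1)$ in $\A$.

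Finite generation is the only one of the three that is not obviously Zariski-local, so it deserves the most care. The tool I would use is the standard descent lemma: if $(f_1, \ldots, f_r) = (1)$ in a $\k$-algebra $R$ and each $R[f_i^{-1}]$ is a finitely generated $\k$-algebra, then so is $R$, by gluing finite generating sets of the localizations through a partition of unity $\sum g_i f_i = 1$. Since acyclic cluster algebras are finitely generated by the BFZ presentation, this applies to the given cover and yields finite generation of $\A$.

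Integral closure (of a domain) and being locally a complete intersection are both genuinely local properties of an affine scheme, so they descend from any finite cover by opens having the property. Hence it suffices to know each acyclic $\A_i$ is integrally closed and locally a complete intersection. Integral closure for acyclic cluster algebras is proved in \cite{BFZ05}. For the LCI property, I would appeal to the BFZ presentation of an acyclic cluster algebra as the quotient of a polynomial ring in $2n+m$ variables by the $n$ exchange relations, each of the form $x_j x_j' - M_j$ with $M_j$ independent of $x_j'$. A Krull dimension count gives $\dim \A = n+m$, which matches the codimension of $n$ relations, and the triangular form of the exchange relations guarantees they form a regular sequence; so an acyclic cluster algebra is in fact a global complete intersection. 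The step most likely to require explicit verification is this regular-sequence argument, but it is elementary given the form of the presentation; the main conceptual point is simply that the cover of $\X_\A$ by cluster charts is a finite Zariski cover by principal opens, to which the standard descent and local-property machinery applies.
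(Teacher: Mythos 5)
Your proposal is correct, and two of its three parts follow the paper's own route: for finite generation the paper covers $\X_\A$ by the finitely many finite-type acyclic charts and cites \cite[Exercise II.3.3(b)]{Har77}, which is exactly the partition-of-unity gluing lemma you spell out (note only that the base here is $\Z$, not a field $\k$ --- the descent lemma works verbatim over any base, so this is cosmetic); and for the locally-complete-intersection property both you and the paper simply observe that the property is local by definition and that an acyclic cluster algebra is a complete intersection via the BFZ presentation (this is the paper's Corollary \ref{coro: acyclic}, whose regular-sequence argument --- each relation contains its own variable $a_i'$ --- is the same observation you make, so you need not re-derive it). Where you genuinely diverge is integral closure: the paper deduces it from Theorem \ref{thm: A=U} (locally acyclic implies $\A=\U$) together with the fact that $\U$, being an intersection of Laurent rings, is integrally closed; you instead treat normality as a property that descends from the finite cover by principal opens, using that each acyclic chart $\A_i$ is integrally closed (which indeed follows from \cite{BFZ05} via $\A_i=\U_i$) and, implicitly, the identity $\A=\bigcap_i \A_i=\bigcap_i \A[f_i^{-1}]$ inside the fraction field, so that an element integral over $\A$ lies in every chart and hence in $\A$. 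Your route is slightly more elementary in that it does not invoke Theorem \ref{thm: A=U} at all, while the paper's route gets integral closure for free once $\A=\U$ is in hand; both are valid, and if you use yours it is worth writing out the one-line verification that the $D(f_i)$ covering $Spec(\A)$ forces $\A=\bigcap_i \A[f_i^{-1}]$, since that is the only point where the "locality" of normality actually gets used.
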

In \cite{GSV05}, the authors define a 2-form $\omega_{WP}$ on an open subset of $\X_\A$ called the \emph{Weil-Petersson 2-form}.
\begin{thm*} [\textbf{\ref{thm: WP}}]
If $\A$ is locally acyclic, the Weil-Petersson form $\omega_{WP}$ extends to a K\"ahler differential 2-form on $\X_\A$.
\end{thm*}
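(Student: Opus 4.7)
The plan is to apply the same local-to-global reduction that handled the previous two theorems. Extendibility of a rational 2-form to a K\"ahler (regular) 2-form on a scheme is visibly a local property: a rational form on $\X_\A$ is regular if and only if it is regular on each piece of any finite affine open cover. By the definition of local acyclicity, $\X_\A$ admits such a cover by acyclic cluster charts $\{\X_{\A_i}\}$, and it is straightforward that the Weil-Petersson form of $\A$ restricts on each chart to the Weil-Petersson form of $\A_i$ (both are built out of the same exchange matrix data, written in a localization of the same cluster). Hence the theorem reduces to the corresponding statement for \emph{acyclic} cluster algebras: if $\A$ is acyclic, $\omega_{WP}$ extends to a K\"ahler 2-form on $\X_\A$.

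Uniqueness of extensions handles the gluing step: since the locus on which $\omega_{WP}$ is originally defined is dense in $\X_\A$, any two regular 2-forms on an open piece of $\X_\A$ restricting to $\omega_{WP}$ on this dense open set must coincide. Therefore the local extensions coming from the cover automatically agree on overlaps and glue to a global K\"ahler 2-form on $\X_\A$.

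The remaining and substantive step is the acyclic case. Here I would use the finite presentation of an acyclic cluster algebra from \cite{BFZ05}, which realizes $\A$ as an explicit complete intersection so that K\"ahler differentials are well-understood. Using the cluster attached to the acyclic seed, $\omega_{WP}$ is written as a $\Z$-linear combination $\sum_{i,j} b_{ij}\, d\log(x_i) \wedge d\log(x_j)$, which a priori carries logarithmic poles along the divisors $\{x_i = 0\}$. The goal is to show these apparent poles cancel: for each $i$ one should rewrite the terms involving $d\log(x_i)$ using the exchange relation $x_i x_i' = M_i^+ + M_i^-$, so that the contribution along $\{x_i = 0\}$ becomes an expression in $x_i'$ and the remaining cluster variables. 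Skew-symmetry of the exchange matrix and the monomial form of the exchange relation should produce exactly the cancellation needed to eliminate the $x_i^{-1}$ singularity.

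The main obstacle is precisely this last computation: verifying that in the acyclic seed the pole of $\omega_{WP}$ along every boundary divisor $\{x_i = 0\}$ really does vanish after using exchange relations. In principle this is a symbolic manipulation of exchange relations and nothing deeper, but bookkeeping the contributions from every codimension-one component simultaneously is the technical heart of the argument. Once that calculation is in hand, the local-to-global mechanism above delivers the theorem for all locally acyclic cluster algebras.
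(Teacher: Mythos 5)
Your reduction to the acyclic case is exactly the paper's: cover $\X_\A$ by finitely many acyclic cluster charts $\X_{\A_i}$, note that $\omega_{WP}$ restricted to a chart is the Weil--Petersson form of $\A_i$, and patch the local extensions using the fact that K\"ahler 2-forms form a sheaf on $\X_\A$. The difference, and the gap, is in how the acyclic case is handled. The paper does not reprove it; it invokes \cite[Theorem 5.2.2]{GregWP} (quoted as the lemma immediately preceding the theorem), which states precisely that for acyclic $\A$ the form $\omega_{WP}$ extends to a K\"ahler 2-form on $\X_\A$. You instead propose to prove the acyclic case from scratch, via the presentation of Lemma \ref{lemma: acyclicpres} and a pole-cancellation computation, and you explicitly leave that computation undone (``the main obstacle is precisely this last computation''). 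Since the covering step is soft, that computation is essentially the entire content of the theorem, so as written the proposal is not a proof: either carry out the acyclic case in full or cite it, in which case you recover the paper's argument verbatim.

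Beyond being unfinished, the planned computation is aimed at the wrong notion. Showing that the logarithmic poles of $\sum_{i,j} b_{ij}\, d\log(x_i)\wedge d\log(x_j)$ cancel along each divisor $\{x_i=0\}$ would establish regularity of $\omega_{WP}$ in codimension one as a rational form; it does not by itself produce an element of $\wedge^2\Omega_{\A}$, that is, an expression $\sum_k a_k\, db_k\wedge dc_k$ with $a_k,b_k,c_k\in\A$ restricting to $\omega_{WP}$, which is what ``extends to a K\"ahler 2-form'' means. This distinction is the very reason the paper passes to K\"ahler differentials: $\X_\A$ may be singular even for acyclic $\A$, and at singular points absence of poles along divisors is strictly weaker than membership in the K\"ahler module. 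For the same reason your gluing-by-uniqueness step is not automatic: two sections of $\Omega^2_{\X_\A}$ agreeing with $\omega_{WP}$ on the dense union of cluster tori can differ by a torsion section supported off the tori, so compatibility on overlaps needs either a torsion-freeness argument or a canonical choice of local extension, which is what the construction in \cite{GregWP} supplies in the paper's version of the argument.
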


Given an ice quiver $\Q$, the \emph{exchange matrix} of $\Q$ is a matrix with a column for each vertex and a row for each mutable vertex; in each entry, it counts the signed number of arrows between vertices.  It is known that the rank of this matrix is an invariant of the cluster algebra \cite{GSV03}.  A cluster algebra is \emph{full-rank} if any exchange matrix has full rank.
\begin{thm*} [\textbf{\ref{thm: LAsmooth}}]
If $\A$ is a locally acyclic cluster algebra of full rank, then $\mathbb{Q}\otimes\A$ is regular.
\end{thm*}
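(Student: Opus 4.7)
The plan is to use that regularity of a Noetherian ring is a local property of schemes: the locally acyclic hypothesis produces a finite cover of $\X_\A$ by acyclic cluster charts $\X_{\A_i}$, so it suffices to show each $\mathbb{Q}\otimes\A_i$ is regular. I would first observe that the full-rank property is inherited by the $\A_i$: a cluster localization freezes some mutable vertices, which deletes the corresponding rows of the exchange matrix but adjoins no new columns, and linear independence of rows is preserved under deleting rows. This reduces the theorem to the following assertion: if $\A$ is an acyclic, full-rank cluster algebra, then $\mathbb{Q}\otimes\A$ is regular.

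For the reduced assertion, I would invoke the explicit acyclic presentation (Lemma \ref{lemma: acyclicpres}), which exhibits $\A$ as the quotient of a localized polynomial ring in the $2n$ generators $x_1,\dots,x_n,x_1',\dots,x_n'$ together with the invertible frozen variables, modulo the $n$ exchange relations $f_i = x_i x_i' - M_i^+ - M_i^-$. Theorem \ref{thm: LAprops} already shows $\A$ is a complete intersection of the expected dimension $n+m$, so the $f_i$ form a regular sequence; hence the Jacobian criterion, valid over the perfect field $\mathbb{Q}$, reduces regularity of $\mathbb{Q}\otimes\A$ to verifying that the $n\times 2n$ matrix $J = (\partial f_i/\partial x_j \mid \partial f_i/\partial x_j')$ attains rank $n$ at every closed point of $\mathrm{Spec}(\mathbb{Q}\otimes\A)$.

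The main obstacle is establishing this rank condition for $J$. The right-hand block of $J$ is the diagonal matrix $\mathrm{diag}(x_1,\dots,x_n)$, which already has rank $n$ at any point where no initial cluster variable vanishes; combined with the diagonal contributions $x_i'$ in the left-hand block (using the acyclic form, which ensures $M_i^\pm$ does not involve $x_i$), the Jacobian automatically has rank $n$ unless there exists an index $i$ with $x_i = x_i' = 0$ at the point. At such a degenerate index the $i$-th row of $J$ consists purely of partial derivatives $-\partial M_i^\pm/\partial x_j$ for $j \neq i$. This is where full-rank enters essentially: the exponents of the monomials $M_i^\pm$ are read directly off the $i$-th row of the exchange matrix, and a logarithmic-derivative computation should translate linear independence of the rows of the exchange matrix into linear independence of the rows of $J$ indexed by degenerate $i$ against the non-degenerate rows. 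The characteristic-zero hypothesis is precisely what guarantees that the nonzero integer exponents of the $M_i^\pm$ survive as nonzero scalars in the residue field, so that no accidental cancellation occurs. This combinatorial Jacobian calculation is the substantive content of the theorem.
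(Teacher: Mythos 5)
Your reduction to acyclic charts and the inheritance of full rank (Proposition \ref{prop: fullrank}) are fine, but the heart of your argument --- the Jacobian rank computation for an acyclic, full-rank cluster algebra --- is only asserted, and as sketched it does not go through. The difficulty is exactly at the ``degenerate'' points you identify. First, your claim that the Jacobian automatically has rank equal to the number of relations unless some index has $x_i=x_i'=0$ is not justified: for a row with $x_i=0$ but $x_i'\neq 0$ the only guaranteed nonzero entry sits in the column of $x_i$, and that column also carries the entries $-\partial(M_k^++M_k^-)/\partial x_i$ from other rows, so no diagonal or triangular structure is available (acyclicity of the quiver does not make these off-diagonal terms one-sided, since both $M_k^+$ and $M_k^-$ can involve $x_i$ for different $k$). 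Second, and more seriously, at a point where several \emph{mutable} variables vanish, the entries $\partial M_i^{\pm}/\partial x_j$ are \emph{not} rescalings of the exchange-matrix entries: if $M_i^{\pm}$ is divisible by a second vanishing mutable variable, or by $x_j^2$, the entry evaluates to $0$ even though $\Q_{ij}\neq 0$. So ``linear independence of the rows of the exchange matrix'' does not transfer to the rows of $\phi(\J)$ by any logarithmic-derivative identity in the general acyclic case; the identity $\phi(\J_{ij})=\Q_{ij}\phi(\pi_i^+)/\phi(a_j)$ needs $\pi_i^{\pm}$ and $a_j$ to be nonvanishing at the point, which you cannot arrange in your setting.

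This is precisely why the paper does not stop at acyclic charts. It first refines the acyclic cover to a cover by \emph{isolated} cluster charts (Corollary \ref{coro: isolatedcover}, Theorem \ref{thm: localequiv}), so that every exchange monomial $\pi_i^{\pm}$ is a monomial in frozen, hence invertible, variables; and then, given a prime $P$, it localizes at the mutable variables of the seed \emph{not} in $P$ --- this is again a cluster localization by Lemma \ref{lemma: acycliclocal}, still isolated and still full rank by Proposition \ref{prop: fullrank} --- reducing to the case where $P$ contains \emph{all} mutable variables. Only after these two reductions is $\phi(\J)$ related to $Ex(\Q)$ by invertible row and column operations (Lemma \ref{lemma: isolatedsmooth}), and the full-rank hypothesis finishes the argument. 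If you want to salvage your route, you should either carry out an honest rank analysis of the acyclic Jacobian at arbitrary points (which is substantially harder and not what the full-rank hypothesis directly feeds into), or insert the same two reductions --- pass to isolated charts and localize away the nonvanishing mutable variables --- at which point you are reproducing the paper's proof.
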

\begin{coro*}[\textbf{\ref{coro: LAsmooth}}]
If $\A$ is a locally acyclic cluster algebra of full rank, then $\X_\A(\mathbb{C}):=Hom(\A,\mathbb{C})$ is a smooth complex manifold, and $\X_\A(\R):=Hom(\A,\mathbb{R})$ is a smooth real manifold.
\end{coro*}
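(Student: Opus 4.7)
The plan is to deduce this corollary from Theorem \ref{thm: LAsmooth} by a standard base-change argument followed by the familiar passage from smooth affine varieties to analytic manifolds. By Theorem \ref{thm: LAprops}, $\A$ is finitely generated as a $\Z$-algebra, so $\CC\otimes\A$ and $\R\otimes\A$ are finitely generated algebras over $\CC$ and $\R$ respectively, and $\X_\A(\CC)$ and $\X_\A(\R)$ are the sets of closed points of the corresponding affine varieties of finite type.

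The first step is to promote the regularity of $\mathbb{Q}\otimes\A$ guaranteed by Theorem \ref{thm: LAsmooth} to regularity of $\R\otimes\A$ and $\CC\otimes\A$. Since $\mathbb{Q}$ is a perfect field, a finitely generated $\mathbb{Q}$-algebra is regular if and only if it is smooth over $\mathbb{Q}$, and smoothness is preserved under arbitrary base change. Applying this to the base changes $\mathbb{Q}\hookrightarrow\R$ and $\mathbb{Q}\hookrightarrow\CC$ yields that $\R\otimes\A$ and $\CC\otimes\A$ are smooth, hence regular, over their respective base fields.

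The second step is to invoke the standard fact that the closed-point set of a smooth affine variety over $\CC$ (resp.\ over $\R$) carries a canonical structure of a smooth complex (resp.\ real) analytic manifold. Concretely, after presenting the algebra as a quotient of a polynomial ring by finitely many relations, the Jacobian criterion for smoothness is precisely the hypothesis required to apply the holomorphic (resp.\ smooth) implicit function theorem at every closed point.

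I anticipate no serious obstacle. The only delicate ingredient is that $\mathbb{Q}$ is perfect, which is what allows regularity of $\mathbb{Q}\otimes\A$ to pass to regularity of $\R\otimes\A$ and $\CC\otimes\A$; over an imperfect base field this step can fail. For the real case one may additionally want to note that $\X_\A(\R)$ is nonempty, but this is immediate from the Laurent phenomenon, as the cluster variables of any fixed cluster are algebraically independent over $\Z$ and can therefore be specialized to arbitrary nonzero real values to produce $\R$-points.
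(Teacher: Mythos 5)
Your proposal is correct, but it takes a different route from the paper. The paper does not base-change the regularity statement; instead it re-runs the local argument at the level of $\k$-points for $\k=\R$ or $\CC$: it reduces to the isolated case via a finite isolated cover (Theorem \ref{thm: localequiv}), uses the explicit presentation of Lemma \ref{lemma: acyclicpres} to embed $\X_\A(\k)$ in affine space, observes that the full-rank Jacobian computation of Lemma \ref{lemma: isolatedsmooth} is exactly the hypothesis of the inverse/implicit function theorem over $\k$, and then patches the resulting charts of $\X_\A(\k)$ together, since an isolated cover of $\X_\A$ induces a cover of $\X_\A(\k)$ by smooth $\k$-manifolds. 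You instead treat Theorem \ref{thm: LAsmooth} as a black box: regularity of $\mathbb{Q}\otimes\A$ plus finite generation (Theorem \ref{thm: LAprops}) and perfectness of $\mathbb{Q}$ give smoothness over $\mathbb{Q}$, which is stable under base change to $\R$ and $\CC$, and then the standard passage from a smooth affine variety to an analytic manifold of its $\k$-points finishes the argument. Your version buys economy (no re-derivation of the Jacobian computation over $\k$) at the cost of invoking the regular-versus-smooth and base-change machinery, whereas the paper's version is more self-contained and explicit about the charts. One small wording slip: over $\R$ it is the set of $\R$-rational points $\Hom(\R\otimes\A,\R)$, not the full closed-point set of $Spec(\R\otimes\A)$ (which contains points with residue field $\CC$), that carries the manifold structure; since $\X_\A(\R)=\Hom(\A,\R)$ is precisely the set of $\R$-points, this does not affect your argument. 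The nonemptiness remark is harmless but unnecessary, as the empty set would still be a smooth manifold.
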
\noindent This appears to be a new result even when $\A$ is acyclic.

One might consider taking a different class of cluster algebras and considering those cluster algebras which are \emph{locally} in that class.
\begin{itemize}
\item A cluster algebra $\A$ is \emph{tree-type} if it can come from an ice quiver $\Q$ whose mutable subquiver $\Q_{mut}$ is a union of trees.
\item A cluster algebra $\A$ is \emph{finite-type} if it can come from an ice quiver $\Q$ whose mutable subquiver $\Q_{mut}$ is a union of Dynkin quivers.\footnote{Finite-type cluster algebras are exactly those with finitely many clusters \cite{FZ03}.}
\item A cluster algebra $\A$ is \emph{$A$-type} if it can come from an ice quiver $\Q$ whose mutable subquiver $\Q_{mut}$ is a union of quivers of the form below.
\begin{center}
\begin{tikzpicture}[inner sep=0.5mm,scale=1,auto,minimum size=2mm]
	\node (1) at (-2,0)  [mutable] {};
	\node (2) at (-1,0)  [mutable] {};
	\node (3) at (0,0)  [mutable] {};
	\node (4) at (1,0)  [mutable] {};
	\node (5) at (2,0)  [mutable] {};
	\draw (1) to [-angle 90] (2);
	\draw (2) to [-angle 90] (3);
	\draw (3) to [dashed,-angle 90] (4);
	\draw (4) to [-angle 90] (5);
\end{tikzpicture}
\end{center}
\item A cluster algebra $\A$ is \emph{isolated} if it comes from an ice quiver $\Q$ in which there are no arrows between unfrozen vertices.
\end{itemize}
In fact, these are all locally equivalent.
\begin{thm*}[\textbf{\ref{thm: localequiv}}] Let $\A$ be a cluster algebra.  Then the following are equivalent.
\begin{itemize}
\item (Locally acyclic) $\X_A$ has a finite cover by acyclic cluster charts.
\item (Locally tree-type) $\X_A$ has a finite cover by tree-type cluster charts.
\item (Locally finite-type) $\X_A$ has a finite cover by finite-type cluster charts.
\item (Locally $A$-type) $\X_A$ has a finite cover by $A$-type cluster charts.
\item (Locally isolated) $\X_A$ has a finite cover by isolated cluster charts.
\end{itemize}
\end{thm*}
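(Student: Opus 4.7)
The plan is to verify the five properties equivalent by closing a cycle of implications, where four arrows are immediate from containments of ice-quiver classes and a single arrow (locally acyclic $\Rightarrow$ locally isolated) carries all of the content.

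For the easy arrows, observe: an isolated ice quiver is a trivial instance of an $A$-type quiver (each path degenerates to a single vertex); an $A$-type quiver is simultaneously a tree and a type-$A$ Dynkin quiver, hence both tree-type and finite-type; and both trees and Dynkin quivers are acyclic. Since an $X$-type ice quiver is automatically $Y$-type whenever $X$ is contained in $Y$, any finite cover of $\X_\A$ by $X$-cluster charts is a fortiori a finite cover by $Y$-cluster charts. This yields (locally isolated) $\Rightarrow$ (locally $A$-type), which implies both (locally tree-type) and (locally finite-type), each of which implies (locally acyclic).

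The remaining direction, (locally acyclic) $\Rightarrow$ (locally isolated), is the substance. Composing covers reduces it to showing that every acyclic cluster algebra is locally isolated, which I would prove by induction on the number of arrows in the mutable subquiver $\Q_{mut}$ of an acyclic presenting quiver $\Q$. If $\Q_{mut}$ has no arrows, $\Q$ is already isolated. Otherwise, choose a source $v$ of $\Q_{mut}$ incident to at least one arrow (which exists since $\Q_{mut}$ is acyclic and nonempty in arrows). Let $\A^{(1)}$ be the cluster localization obtained by freezing $v$ in $\Q$, and $\A^{(2)}$ the cluster localization obtained by first mutating at $v$ and then freezing $v$. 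Because $v$ is a source, $\mu_v(\Q)$ is obtained from $\Q$ by reversing the arrows at $v$ without creating new arrows among the other vertices, so $\mu_v(\Q)$ is again acyclic; in both $\Q^{(1)}_{mut}$ and $\Q^{(2)}_{mut}$ the vertex $v$ and all its incident arrows are removed from the mutable part, so each has strictly fewer arrows than $\Q_{mut}$. By the inductive hypothesis, both $\A^{(1)}$ and $\A^{(2)}$ are locally isolated.

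The main obstacle is verifying that $\X_{\A^{(1)}}$ and $\X_{\A^{(2)}}$ together cover $\X_\A$, equivalently that the cluster variable $x_v$ and its exchange partner $x_v'$ generate the unit ideal in $\A$. I would deduce this from the covering lemmas on cluster localizations developed earlier in the paper, leveraging the exchange relation $x_v x_v' = M_+ + M_-$ (where $M_\pm$ are monomials in the other cluster variables) to rule out a common vanishing locus of $x_v$ and $x_v'$ on $\X_\A$. Granting this covering property, the two inductive covers of $\X_{\A^{(1)}}$ and $\X_{\A^{(2)}}$ glue to a finite cover of $\X_\A$ by isolated cluster charts, completing the induction and closing the cycle.
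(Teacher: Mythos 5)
Your easy implications and the reduction of the hard arrow to ``every acyclic cluster algebra is locally isolated'' are exactly the paper's route (Corollary \ref{coro: isolatedcover} plus the observation that the classes are nested). The genuine gap is in your inductive covering step: you cover $\X_\A$ by the chart where $x_v\neq 0$ and the chart where the exchange partner $x_v'\neq 0$, claiming that $x_v$ and $x_v'$ generate the unit ideal. This is false in general, already for the coefficient-free $A_2$ algebra with quiver $1\to 2$ and $v=1$ the source. There $\A=\Z[y_1,y_2,y_3,y_5]/\langle y_1y_3-1-y_2,\; y_2y_5-1-y_1\rangle$ with $y_1=x_1$, $y_3=x_1'$, and the ring homomorphism $\A\to\Z$ sending $y_1\mapsto 0$, $y_3\mapsto 0$, $y_2\mapsto -1$, $y_5\mapsto -1$ satisfies both relations, so its kernel is a prime ideal containing both $x_1$ and $x_1'$. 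The exchange relation $x_vx_v'=M_++M_-$ cannot rule this out, because neither $x_v$ nor $x_v'$ occurs in $M_\pm$: a common zero only forces $M_++M_-=0$, which the remaining variables can arrange (here $x_2=-1$). This is precisely why Lemma \ref{lemma: covering} is stated for a \emph{covering pair}, i.e.\ two variables lying in a common seed and joined by an arrow, so that one of them divides one of the two monomials in the other's exchange relation; a variable and its own mutation partner never form such a pair, and indeed they need not be relatively prime.

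The paper's Lemma \ref{lemma: isolatedcover} avoids this by staying inside a single acyclic seed: if $\Q_{mut}$ has any arrow $i_1\to i_2$, that arrow lies in no bi-infinite path (an acyclic quiver has none at all), so $(a_{i_1},a_{i_2})$ is a covering pair and Lemma \ref{lemma: covering} gives that the two \emph{freezings of the same seed}, at $i_1$ and at $i_2$ respectively, yield acyclic cluster charts covering $\X_\A$; iterating on the number of mutable arrows produces the isolated cover. If you replace your pair $(x_v,\,x_v')$ by the pair $(x_v,\,x_w)$ for any arrow $v\to w$ (or $w\to v$) of $\Q_{mut}$, freezing $v$ in one branch and $w$ in the other, your induction goes through and recovers the paper's argument; as written, the two charts can miss points of $\X_\A$ and the induction collapses.
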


\subsection{Examples} The results of the previous section are only as interesting as the class of locally acyclic cluster algebras.  Thankfully, there are many more locally acyclic cluster algebras than acyclic cluster algebras.  Explicit examples can be found in Figure \ref{fig: LA}.

The main families of examples known are \emph{cluster algebras of marked surfaces}, which were introduced in \cite{GSV05} and developed in \cite{FST08}.  A \emph{marked surface} $(\S,\M)$ will be an oriented surface with boundary $\S$ together with a finite collection of marked points $\M$.  Barring some small examples, every marked surface has an associated cluster algebra $\A(\S,\M)$.  These provide large classes of locally acyclic cluster algebras.
\begin{thm*}[\textbf{\ref{thm: atleast2}}]
If $(\S,\M)$ has at least two boundary marked points in each component of $\S$, then $\A(\S,\M)$ is locally acyclic.
\end{thm*}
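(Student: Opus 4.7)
The natural plan is to induct on the topological complexity of $(\S,\M)$---say on $-\chi(\S)$ plus the number of punctures---with base case an unpunctured disk with $n\ge 3$ marked boundary points, whose cluster algebra is of type $A_{n-3}$ and hence acyclic.

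The inductive step rests on identifying cluster localizations with surface cuttings: if $\gamma$ is an interior arc between two distinct boundary marked points, then freezing the cluster variable for $\gamma$ in any seed containing it should produce the cluster algebra of the marked surface $(\S',\M')$ obtained by cutting $(\S,\M)$ along $\gamma$. A short topological check, using the hypothesis, shows that each component of $(\S',\M')$ retains at least two boundary marked points (in the worst case, the cut creates a new boundary component carrying the two copies of $\gamma$), so by the inductive hypothesis each such cluster localization is locally acyclic. To finish, one must choose a finite set of such arcs $\gamma_1,\dots,\gamma_k$ whose cluster localizations cover $\X_\A$---equivalently, such that the $\gamma_i$ generate the unit ideal in $\A$.

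I expect this covering step to be the main obstacle. A natural tactic is to exploit the exchange relation $xx'=M+N$ for mutation-adjacent cluster variables: since $M$ and $N$ are monomials in disjoint sets of cluster variables (and the boundary arcs appearing in them are frozen, hence non-zero-divisors), their simultaneous vanishing at a point forces many further cluster variables to zero, and one can bootstrap to an empty common zero locus. Punctured surfaces add a further subtlety, since they frequently lack acyclic triangulations; the induction must cut in a way that reduces topological complexity without inflating the puncture count, and the boundary-marked-point hypothesis is exactly what guarantees a supply of usable boundary-to-boundary arcs at every stage of the reduction.
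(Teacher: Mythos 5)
Your skeleton (induct on topological complexity, cut along arcs with boundary endpoints, cover $\X_\A$ by the corresponding localizations) is the same as the paper's, but the two steps you leave open are exactly where the content lies, and as written they are genuine gaps. First, the covering step. What makes the exchange-relation bootstrap terminate is a precise combinatorial criterion, not just "many variables are forced to vanish": the paper works with a \emph{pair} of arcs $(\alpha,\beta)$ such that the arrow from $\alpha$ to $\beta$ lies on no bi-infinite directed path of mutable vertices (in practice, $\beta$ is a sink); then the propagation of vanishing along arrows cannot continue and one gets a contradiction (Lemma \ref{lemma: covering}). Without such a criterion the bootstrap simply never stops: for a surface with empty boundary or with $|\M|=1$ every vertex of every seed has arrows in and out, the argument produces an infinite chain, and indeed the ideal generated by the cluster variables is proper and the algebra is not even finitely generated (Theorem \ref{thm: oneboundary}, Corollary \ref{coro: torusnotfg}). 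The paper realizes the criterion topologically by two explicit configurations: an arc $\beta$ between distinct boundary marked points completed by $\alpha$ to an unpunctured triangle with all endpoints on $\partial\S$ (Lemma \ref{lemma: type1}), and a once-punctured digon together with its radius (Lemma \ref{lemma: type2}). The second configuration is also how punctures are disposed of; your boundary-to-boundary cuts never remove a puncture, so your induction would at least need additional base cases (once-punctured discs, type $D$), which you do not supply.

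Second, the assertion that freezing the variable of $\gamma$ "should produce" the cluster algebra of the cut surface, as a cluster localization of $\A$, is precisely the delicate point and is false in general: cutting corresponds to \emph{deleting} the vertex (Lemma \ref{lemma: cutting}), and a freezing need not be a localization at all. The paper's own surface example is the torus with two boundary marked points, where freezing the arc whose cutting yields the torus with one boundary marked point gives $\A(\Q')\subsetneq\A[a_\gamma^{-1}]$ (Proposition \ref{prop: notclusterlocal}). Your identification can be rescued only a posteriori: once the cut surface's algebra is known to be locally acyclic (the inductive hypothesis), Theorem \ref{thm: A=U} and Lemma \ref{lemma: acycliclocal} show the freezing equals $\A[a_\gamma^{-1}]$, hence is a cluster chart. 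The paper avoids having to say this at intermediate stages by packaging the recursion as the (reduced) Banff algorithm (Theorem \ref{thm: Banff}, Proposition \ref{prop: rBanff}, Lemma \ref{lemma: surfacecutting}): one only ever needs the terminal acyclic, or inductively locally acyclic, seeds to define open subschemes. With the covering-pair lemma and this freezing-versus-localization point supplied, your induction matches the paper's proof of Theorem \ref{thm: atleast2}; without them, the argument does not yet close.
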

\begin{thm*}[\textbf{\ref{thm: inadisc}}]
If $\S$ includes into a disc, then $\A(\S,\M)$ is locally acyclic.
\end{thm*}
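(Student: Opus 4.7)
The plan is to reduce to Theorem \ref{thm: atleast2} by cutting $\S$ along tagged arcs, opening each puncture into a new boundary marked point. Working one connected component at a time, I may assume $\S$ is connected; then $\S$ is planar with boundary---that is, a disc with some (possibly zero) open holes removed and some (possibly zero) interior punctures. If every component already carries at least two boundary marked points, Theorem \ref{thm: atleast2} applies directly, so the only case that requires new argument is when $\S$ has a single boundary circle bearing a unique marked point $m$ together with $p \geq 1$ punctures.

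For each puncture $q$, choose a tagged arc $\alpha_q$ from $m$ to $q$. Inverting the cluster variables associated to the $\alpha_q$ yields a cluster localization of $\A(\S, \M)$ via Lemma \ref{lemma: acycliclocal}. Geometrically, inverting the cluster variable of a tagged arc ending at a puncture $q$ corresponds to opening $q$ into a small boundary component carrying a marked point, so that $\alpha_q$ becomes an ordinary arc between boundary marked points; the localization is then the cluster algebra of the resulting cut marked surface. Iterating over all $p$ punctures produces a planar marked surface $(\S', \M')$ with no punctures and at least $1 + p \geq 2$ boundary marked points in each component, to which Theorem \ref{thm: atleast2} applies, so the resulting cluster chart is locally acyclic.

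To promote a single such localization to a finite cover of $\X_{\A(\S, \M)}$, I would for each puncture $q$ select a finite family $S_q$ of tagged arcs ending at $q$ whose associated cluster variables have no common vanishing on $\X_{\A(\S, \M)}$---for instance, both tagged versions of an arc at $q$, possibly together with a few more arcs in distinct homotopy classes. The finite cover is then indexed by choosing one arc from each $S_q$, with each resulting cluster chart locally acyclic by the preceding paragraph. The main obstacle is verifying this covering property: one must show that at every closed point of $\X_{\A(\S, \M)}$ and every puncture $q$, at least one cluster variable in $S_q$ is nonzero. This should reduce to a puncture-local analysis using the exchange relation between the two tagged versions of an arc at $q$, in a manner analogous to the covering argument in the proof of Theorem \ref{thm: atleast2}. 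Once this covering is established, every chart is locally acyclic, and hence $\A(\S, \M)$ is locally acyclic by definition.
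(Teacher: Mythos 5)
Your overall strategy (invert cluster variables of arcs ending at punctures, so that each chart looks like the cluster algebra of a surface with the punctures opened into boundary, then quote Theorem \ref{thm: atleast2}) is close in spirit to the paper, but the step you defer is exactly the mathematical content, and as stated it has a genuine gap. The covering property --- that for each puncture $q$ there is a finite family $S_q$ of arcs at $q$ whose cluster variables have no common vanishing point on $\X_{\A(\S,\M)}$ --- is asserted, not proved, and your proposed candidate (the two tagged versions of an arc at $q$) is not supported by any tool in the paper. The only mechanism available for proving that two cluster variables cannot vanish simultaneously is Lemma \ref{lemma: covering}, which requires a \emph{covering pair}: an arrow between the two vertices in some seed that lies in no bi-infinite path. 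In a tagged triangulation containing both taggings of a radius at $q$, the quiver has \emph{no} arrow between the two versions (a puncture lying on two tagged arcs of the triangulation is ignored when drawing arrows), so Lemma \ref{lemma: covering} says nothing about that pair, and their relative primeness is not established anywhere. The paper avoids this by using a different pair: Lemma \ref{lemma: type2} shows that an arc $\alpha$ cutting out a once-punctured digon together with a radius $\beta$ is a covering pair, and then Lemma \ref{lemma: surfacecutting} (the topological Banff step) is applied one arc at a time in an induction on rank, cutting along $\alpha$ or along $\beta$ and recursing on strictly smaller surfaces, with the once-punctured digon and the unpunctured disc as base cases. Without a substitute for this relative-primeness input, your ``product cover'' indexed by choices from the $S_q$ is not known to cover $\X_{\A(\S,\M)}$.

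Two secondary points would also need repair even if the covering step were fixed. First, to invoke Lemma \ref{lemma: acycliclocal} you must know the freezing at $\{\alpha_q\}$ equals its upper cluster algebra; that is not automatic (Proposition \ref{prop: notclusterlocal} gives a freezing which is not a cluster localization). Here it can be justified because the mutable part of the frozen seed is the quiver of the cut surface, which is locally acyclic by Theorem \ref{thm: atleast2}, hence equal to its upper cluster algebra by Theorem \ref{thm: A=U}; also note the localization is the cluster algebra of the cut surface only up to the frozen variables $\alpha_q$, which survive as coefficients rather than being deleted --- harmless for local acyclicity, but worth saying. Second, a chart in your cover is a freezing of a single seed only if the chosen arcs (one per puncture) are pairwise compatible, i.e.\ lie in a common tagged triangulation; if the families $S_q$ contain arcs in several homotopy classes, arcs chosen at different punctures may cross, so the families must be constructed with this constraint in mind. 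The paper's one-arc-at-a-time induction sidesteps both issues, which is why it is organized as a recursion on rank rather than a single simultaneous localization.
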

Since a relatively small class of marked surfaces give acyclic cluster algebras, this produces many examples of locally acyclic cluster algebras which are not acyclic (see Remark \ref{rem: nonA}).

Explicit examples of quivers of non-locally-acyclic cluster algebras can be found in Figure \ref{fig: nonLA}.  Marked surfaces also give families of cluster algebras which are not locally acyclic.
\begin{thm*}[\textbf{\ref{thm: noboundary}}]
If $\partial\S$ is empty, then $\A(\S,\M)$ is not locally acyclic.
\end{thm*}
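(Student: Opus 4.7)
The plan is to exploit the contrapositive of Theorem~\ref{thm: A=U}: since any locally acyclic cluster algebra satisfies $\A = \U$, it is enough to prove that $\A(\Sigma,\M) \subsetneq \U(\Sigma,\M)$ whenever $\partial \Sigma = \emptyset$. This converts the statement into a question about an obstruction to the equality $\A = \U$ for closed marked surfaces.

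I would begin with the base case of the once-punctured torus, whose cluster algebra is the Markov cluster algebra developed in Section~\ref{section: Markov}. There one can exhibit an explicit element of the upper cluster algebra (for instance, a ratio whose Laurent expansions in every cluster lie in $\Z[x_i^{\pm}]$ but which cannot be written as a polynomial combination of cluster variables) witnessing $\A \neq \U$. By Theorem~\ref{thm: A=U}, this already settles the Markov case.

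For a general closed marked surface, the plan is to reduce to the Markov case by freezing. Since $\M \neq \emptyset$ (some puncture exists), choose a puncture $p \in \M$ and a triangulation of $(\Sigma,\M)$ containing three arcs incident at $p$ whose associated subquiver is mutation-equivalent to the Markov quiver (a $3$-cycle with doubled arrows). Freezing the cluster variables corresponding to all other arcs produces a cluster localization of $\A(\Sigma,\M)$ whose unfrozen part is the Markov cluster algebra. Because local acyclicity of $\A$ would pass to any cluster localization---the restrictions of the acyclic cover of $\X_\A$ to the open subscheme defined by the localization form an acyclic cover of the localized chart---this would force the Markov cluster algebra to be locally acyclic, contradicting the base case.

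The main obstacle is the reduction step. One must verify that every closed marked surface admits a triangulation exhibiting such a Markov subconfiguration near some puncture, and that freezing the complementary arcs genuinely recovers the Markov cluster algebra as the unfrozen part (rather than some mutation-inequivalent variant). This amounts to interpreting freezing of arcs as the geometric operation of cutting $\Sigma$ open along those arcs, and analyzing the resulting marked subsurface; the essential content is that a small annular neighborhood of the chosen puncture in any closed surface supports exactly the Markov configuration.
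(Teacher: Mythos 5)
There is a genuine gap, and it sits exactly at the hinge of your reduction step. You assert that local acyclicity of $\A(\S,\M)$ would pass to the freezing whose mutable part is the Markov quiver, justified by ``restricting the acyclic cover of $\X_\A$ to the open subscheme defined by the localization.'' This is precisely the reasoning Remark \ref{rem: LAnonlocal} warns against: cluster charts are not closed under intersection, so intersecting an acyclic cover of $\X_\A$ with $\X_{\A'}$ does not produce a cover of $\X_{\A'}$ by acyclic \emph{cluster} charts. Worse, your $\A'$ is a freezing, not known to be a cluster localization at all (freezing and inverting differ in general), and Proposition \ref{prop: notclusterlocal} exhibits a locally acyclic $\A$ (the twice-marked torus) with a freezing that is not locally acyclic and not a cluster localization --- so the implication ``$\A$ locally acyclic $\Rightarrow$ freezing locally acyclic'' is outright false, and even for honest cluster localizations the paper states the question is open. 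Without this step your contradiction with the Markov base case never gets off the ground. A secondary problem is the claimed ``Markov subconfiguration near any puncture'': the Markov quiver comes from the once-punctured torus, which is not a subsurface of a genus-zero closed surface, so for instance the four-punctured sphere admits no such triple of arcs; and in any case freezing arcs corresponds to cutting only at the level of deleting quiver vertices (Lemma \ref{lemma: cutting}), with the associated algebra maps being inclusions, not isomorphisms, in general. Note also that your overall strategy requires $\A\neq\U$ for \emph{every} closed surface, a statement the paper never establishes (and which cannot be obtained by a crude finite-generation argument, since the four-punctured sphere is finitely generated by Proposition \ref{prop: spheregenerate}).

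For comparison, the paper's proof is purely combinatorial and avoids $\U$ entirely: in any tagged triangulation of a closed surface, every arc has an arrow in and an arrow out (look at an endpoint lying on at least three arcs), so no seed has a source or sink, hence $\A(\S,\M)$ has no covering pairs; Lemma \ref{lemma: nocoveringpairs} --- which rests on the non-negative-but-not-positive homomorphism of Theorem \ref{thm: coveringhomo} together with Theorem \ref{thm: nonnegnonLA} --- then shows a non-isolated cluster algebra without covering pairs cannot be locally acyclic. If you want to salvage your approach, you would need either a direct proof that $\A\neq\U$ for each closed surface or a genuine descent mechanism for local acyclicity under freezing, neither of which is available in this paper.
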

\begin{thm*}[\textbf{\ref{thm: oneboundary}}]
If $|\M|=1$, then $\A(\S,\M)$ is not locally acyclic.
\end{thm*}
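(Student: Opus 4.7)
The plan is to argue by contradiction using Theorem \ref{thm: A=U}: a locally acyclic cluster algebra must satisfy $\A=\U$, so it is enough to exhibit an element of $\U(\S,\M) \setminus \A(\S,\M)$ whenever $|\M|=1$.

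I would first dispatch the two possible geometric configurations. If the unique marked point is a puncture, then no boundary component can carry a marked point, which forces $\partial\S = \emptyset$; that case is already covered by Theorem \ref{thm: noboundary}. Otherwise the marked point lies on $\partial\S$, and since every boundary component must carry a marked point, $\S$ has exactly one boundary component and no punctures. A disc with one boundary marked point yields a trivial cluster algebra, so I may assume $\S$ is not a disc; in particular $\S$ has positive genus and an essential simple closed curve.

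The second step is to construct the witness. Pick an essential simple closed curve $\gamma \subset \S$ (for instance, one representing a nonzero class in $H_1(\S;\Z)$). For each triangulation $T$ of $(\S,\M)$, the band-graph expansion formula of Musiker--Schiffler--Williams assigns to $\gamma$ a Laurent polynomial $L_T(\gamma)$ in the cluster variables of $T$. These Laurent polynomials transform consistently under flips of $T$, so they glue to a single element $x_\gamma$ lying in every cluster's Laurent ring, i.e., $x_\gamma \in \U(\S,\M)$.

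The main obstacle is showing $x_\gamma \notin \A(\S,\M)$. Every element of $\A$ is a polynomial in cluster variables, each of which corresponds (under the FST bijection) to an arc based at the unique marked point; any monomial in cluster variables therefore represents a multi-arc at that marked point. Because $\gamma$ is a free loop not homotopic to any such multi-arc, its extremal Laurent term cannot be matched by any polynomial combination of the $L_T$ of arcs. Making this precise via a $\mathbf{g}$-vector or Newton-polytope calculation in a fixed cluster shows that $x_\gamma$ is not in the subalgebra generated by cluster variables. Combining $x_\gamma \in \U$ with $x_\gamma \notin \A$ contradicts $\A=\U$, so $\A(\S,\M)$ cannot be locally acyclic.
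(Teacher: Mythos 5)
Your reduction of the closed case to Theorem \ref{thm: noboundary} is fine, and the overall logic (locally acyclic $\Rightarrow \A=\U$ by Theorem \ref{thm: A=U}, so it suffices to find an element of $\U\setminus\A$) is valid as far as it goes. But the argument has a genuine gap at its central step, the claim $x_\gamma\notin\A$. The topological heuristic -- ``$\gamma$ is a free loop not homotopic to any multi-arc, hence cannot be a polynomial in arc variables'' -- is simply not a valid inference: membership in $\A$ is an algebraic condition about arbitrary integer polynomial combinations of cluster variables drawn from \emph{all} clusters, and there is no principle matching geometric incidence to algebraic non-membership. Indeed the annulus already refutes the heuristic: the element attached to the core loop lies in $\A=\U$ (the surface is acyclic) and is expressible in cluster variables via the skein relation, even though the loop is homotopic to no multi-arc. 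So the promised $\mathbf{g}$-vector/Newton-polytope computation is not a routine ``making precise''; it is the entire content of the argument, and it is not supplied. Worse, for the case you actually need (unpunctured $\S$ of positive genus with a single boundary marked point) the strategy cannot be repaired: it was later proved by Canakci, Lee and Schiffler that cluster algebras of unpunctured surfaces with exactly one marked point \emph{do} satisfy $\A=\U$, so no witness in $\U\setminus\A$ exists. This is consistent with the present paper, since Theorem \ref{thm: A=U} is only one implication; local acyclicity is strictly stronger than $\A=\U$, and the paper itself never claims (and could not use) the converse. Note also that the paper proves $\A\neq\U$ for the once-punctured torus by a grading argument special to that surface, and leaves the analogous question for the one-holed torus open -- a further sign that this route is not available.

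The paper's own proof avoids $\U$ entirely and is purely combinatorial: with $|\M|=1$, every tagged arc in every tagged triangulation has both endpoints at the unique marked point, so at most one of its two ends can be the most counter-clockwise arc at that point; hence every vertex of $\Q_\Delta$ has both an incoming and an outgoing arrow, no seed has a source or sink, and therefore $\A(\S,\M)$ has no covering pairs. Lemma \ref{lemma: nocoveringpairs} (which rests on the non-negative-map obstruction of Theorems \ref{thm: coveringhomo} and \ref{thm: nonnegnonLA}) then gives non-local-acyclicity directly. If you want to salvage your approach, you would need to replace the $\A\neq\U$ criterion by an obstruction that actually characterizes (or at least is implied by) local acyclicity, such as the existence of a non-negative, non-positive point of $\X_\A(\R)$ used in the paper.
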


Examples are mostly from marked surfaces because there is a convenient topological method for constructing acyclic covers (Lemmas \ref{lemma: type1} and \ref{lemma: type2}).  There is no reason to suspect that local acyclicity is special to cluster algebras of marked surfaces.

\subsection{Constructing acyclic covers}

We also present some general techniques for constructing acyclic covers of $\X_\A$, on the level of quiver manipulations.  The most fundamental technique is the idea of \emph{covering pairs}.
\begin{defn}
A \textbf{covering pair} $(a_1,a_2)$ in an ice quiver $\Q$ is a pair of mutable vertices $a_1,a_2$ such that there is an arrow from $a_1$ to $a_2$ but this arrow is not in any bi-infinite path of mutable vertices.

A \textbf{covering pair} $(a_1,a_2)$ in a cluster algebra $\A$ is a pair of cluster variables such that there is some seed $(\Q,\a)$ containing $a_1,a_2$ and $(a_1,a_2)$ a covering pair in $\Q$.
\end{defn}
\begin{lemma*}[\textbf{\ref{lemma: covering}}]
Covering pairs in $\A$ are relatively prime.
\end{lemma*}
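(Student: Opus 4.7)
Proposed proof plan. The claim to prove is that $a_1,a_2$ generate the unit ideal of $\A$ — the interpretation of ``relatively prime'' that yields $D(a_1) \cup D(a_2) = \X_\A$ and so is compatible with the paper's use of covering pairs to build covers of $\X_\A$. By hypothesis there is a seed $(\Q,\a)$ containing $a_1,a_2$, with an arrow $a_1\to a_2$ lying in no bi-infinite path of mutable vertices of $\Q$. Because $\Q$ is finite, a bi-infinite path through this arrow would require both an infinite forward mutable path from $a_2$ and an infinite backward mutable path to $a_1$, each of which would have to enter a mutable cycle. The hypothesis thus splits into two symmetric cases: either no mutable cycle is reachable forward from $a_2$, or none reaches $a_1$ backward. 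The two are dual under reversing arrows (and using the mutation at $a_1$ in place of $a_2$), so I treat the first and let $S$ denote the (finite, acyclic) set of mutable vertices reachable forward from $a_2$.

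Suppose for contradiction that $(a_1,a_2)$ is contained in a maximal ideal $\mathfrak{m}$, and let $\phi\colon \A \to K := \A/\mathfrak{m}$ be the quotient, so $\phi(a_1)=\phi(a_2)=0$. The core step is to propagate the zero forward along $S$ using exchange relations. Mutating at $a_2$ gives $a_2 a_2^* = M_+ + M_-$, where $M_\pm$ are the usual binomial terms. The arrow $a_1 \to a_2$ makes $a_1 \mid M_-$, so $\phi(M_-)=0$ and hence $\phi(M_+)=0$. Since $K$ is a field, some vertex $v_1$ appearing in $M_+$, i.e., with $a_2 \to v_1$, satisfies $\phi(v_1)=0$. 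Iterating at $v_1$: the arrow $a_2 \to v_1$ makes $a_2 \mid M_-^{(v_1)}$, so $\phi(M_-^{(v_1)})=0$ and $\phi(M_+^{(v_1)})=0$, producing $v_2$ with $v_1 \to v_2$ and $\phi(v_2)=0$. Repeating indefinitely builds an infinite directed walk $a_2 \to v_1 \to v_2 \to \cdots$ of $\phi$-vanishing vertices, which --- being in a finite quiver --- must eventually revisit a vertex, so some tail is a cycle reachable from $a_2$.

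If each $v_i$ can be taken to be mutable, the walk lies entirely in $S$ and the resulting cycle contradicts the acyclicity of $S$, proving the lemma. The one delicate step --- and what I expect to be the main obstacle --- is ensuring mutability of the $v_i$: a priori $\phi(M_+^{(\cdot)})=0$ only guarantees some factor vanishes, and that factor could in principle be a frozen vertex rather than a mutable one. This should be resolved by the paper's convention about frozen cluster variables (that they be treated as invertible in $\A$, or equivalently by harmlessly localizing at them before running the argument) so that $\phi$ cannot kill any frozen variable; then each vanishing $v_i$ is forced to be mutable, lies in $S$, and the iteration closes.
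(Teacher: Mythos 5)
Your proposal is correct and is essentially the paper's own argument: both propagate the vanishing of $a_1,a_2$ through the exchange relations (the arrow $a_1\to a_2$ forces one binomial term into the ideal, and primality/the field quotient then forces a further mutable variable in), and both rely, as you flag, on the paper's convention that frozen variables are invertible so the walk stays among mutable vertices. The only cosmetic difference is that the paper iterates unconditionally in both directions to build a bi-infinite path contradicting the covering-pair condition directly, whereas you case-split on which side has no reachable cycle and iterate in one direction only; this is the same idea.
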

\noindent If $(a_1,a_2)$ is a covering pair in $\A$, then $\X_{\A[a_1^{-1}]}$ and $\X_{\A[a_2^{-1}]}$ cover $\X_\A$.  If $\A[a_1^{-1}]$ and $\A[a_2^{-1}]$ are also cluster localizations, then this is a cover by cluster charts.

This approach can be formally iterating by a branching, non-deterministic algorithm we call the \textbf{Banff algorithm}.  It takes as input a seed of $\A$, and it either fails, or produces the seeds of cluster localizations which define an acyclic cover of $\X_\A$.

\begin{framed}
\noindent \underline{Input}: a seed $(\Q,\mathbf{a})$.
\begin{enumerate}
\item If $(\Q,\mathbf{a})$ is equivalent to an acyclic seed, stop.
\item Otherwise, mutate $(\Q,\mathbf{a})$ to a seed $(\Q',\a')$ with a covering pair $(i_1,i_2)$.  If this is impossible, the algorithm \textbf{fails}.
\item Consider two seeds, which are the seed $(\Q',\a')$ with either $i_1$ or $i_2$ frozen.  Plug each of these seeds into Step 1.
\end{enumerate}
\noindent \underline{Output}: a finite set of acyclic seeds; or the algorithm \textbf{failed}.
\end{framed}

\begin{thm*}[\textbf{\ref{thm: Banff}}]
If the Banff algorithm outputs a finite set of acyclic seeds, these seeds define a finite acyclic cover of $\X_\A$, and so $\A$ is locally acyclic.
\end{thm*}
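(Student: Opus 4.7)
The plan is to induct on the depth of the binary tree of recursive calls the Banff algorithm generates on the input seed; the hypothesis that the algorithm successfully terminates is precisely the statement that this tree is finite. In the base case the algorithm halts at Step~1, so the input seed $(\Q,\a)$ is mutation-equivalent to an acyclic seed, $\A$ itself is acyclic, and the singleton $\{\X_\A\}$ is a trivial acyclic cover.

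For the inductive step, suppose the algorithm mutates $(\Q,\a)$ to a seed $(\Q',\a')$ containing a covering pair $(i_1,i_2)$, and then recurses on the two seeds obtained by freezing $i_1$ and $i_2$ in $\Q'$. Since mutation changes neither $\A$ nor $\X_\A$, I may replace $(\Q,\a)$ by $(\Q',\a')$ without loss of generality. Writing $a_k$ for the cluster variable at $i_k$, freezing $i_k$ corresponds algebraically to inverting $a_k$; by Lemma~\ref{lemma: acycliclocal} the algebra $\A[a_k^{-1}]$ is a cluster localization of $\A$ whose ice quiver is exactly $\Q'$ with $i_k$ frozen, so it coincides with the child seed fed into the recursion. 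The covering-pair hypothesis, via Lemma~\ref{lemma: covering}, then provides that the two open subschemes $\X_{\A[a_1^{-1}]}$ and $\X_{\A[a_2^{-1}]}$ cover $\X_\A$.

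By the inductive hypothesis, the Banff algorithm produces a finite acyclic cover of $\X_{\A[a_k^{-1}]}$ by acyclic cluster charts for each $k \in \{1,2\}$. Cluster localizations compose — a cluster localization of $\A[a_k^{-1}]$ is also a cluster localization of $\A$ — so these acyclic charts are naturally acyclic cluster charts of $\A$. Taking their union across $k=1,2$ and invoking that $\X_{\A[a_1^{-1}]}$ and $\X_{\A[a_2^{-1}]}$ already cover $\X_\A$ yields the desired finite acyclic cover, and hence $\A$ is locally acyclic by definition. The main obstacle is the identification carried out in the inductive step — namely, that freezing a single vertex of a covering pair yields a bona fide cluster localization of $\A$ corresponding to the principal open defined by inverting the associated cluster variable, and that this open subscheme really does participate in a cover of $\X_\A$ with its counterpart. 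Once these two points are in hand (they are exactly the content of the cluster localization lemma and the covering lemma), the remainder of the argument is a clean induction on recursion depth.
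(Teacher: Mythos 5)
Your overall inductive strategy can be made to work, but as written there is a genuine gap at the central identification step. You invoke Lemma~\ref{lemma: acycliclocal} to assert that the child seed --- $(\Q',\a')$ with $i_k$ frozen --- has cluster algebra equal to $\A[a_k^{-1}]$, i.e.\ that this freezing is a cluster localization. But Lemma~\ref{lemma: acycliclocal} has a hypothesis: the freezing $\A'$ must equal its upper cluster algebra $\U'$. For the final, acyclic seeds this is automatic (Theorem~\ref{lemma: acyclicA=U}), but the child seeds occurring partway through the Banff recursion need not be acyclic, and at the moment you apply the lemma you know nothing about them; in general freezing is \emph{not} the same as inverting (Proposition~\ref{prop: notclusterlocal} exhibits a freezing that is not a cluster localization, and the remark following Theorem~\ref{thm: Banff} flags exactly this stumbling block). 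So the sentence ``freezing $i_k$ corresponds algebraically to inverting $a_k$'' is unjustified where it stands --- it is not ``exactly the content of the cluster localization lemma'' --- and your inductive hypothesis, phrased as a statement about $\X_{\A[a_k^{-1}]}$, already presupposes the identification it is supposed to deliver.

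The gap is repairable inside your induction, but only by reordering: first apply the inductive hypothesis to the cluster algebra of the child seed itself, namely the freezing $\A'_k$, concluding that $\A'_k$ is locally acyclic; then Theorem~\ref{thm: A=U} gives $\A'_k=\U'_k$, and only now does Lemma~\ref{lemma: acycliclocal} yield $\A'_k=\A[a_k^{-1}]$ and hence a genuine cluster chart in $\X_\A$; the covering lemma and your composition-of-localizations observation then finish the argument as you describe. Note that the paper's own proof avoids the issue by a different route: it never reasons about intermediate freezings at all, but observes that each \emph{output} seed is obtained from the original seed by mutation followed by a single batch of freezings, applies Lemma~\ref{lemma: acycliclocal} only to these acyclic leaves, and establishes that the resulting charts cover $\X_\A$ by tracking an arbitrary prime ideal down the branching tree, using the relative primality of covering pairs (Lemma~\ref{lemma: covering}) at each branch.
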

Figure \ref{fig: Banff2} contains an example where the Banff algorithm is used to show a quiver corresponds to a locally acyclic cluster algebra.

\begin{figure}
\begin{tikzpicture}[inner sep=0.5mm,scale=.35,auto,minimum size=2mm]
	\begin{scope}
	\node (1) at (0,0)  [red,mutable] {};
	\node (2) at (-2,0)  [mutable] {};
	\node (3) at (2,0)  [mutable] {};
	\node (4) at (-1,1.5)  [mutable] {};
	\node (5) at (1,1.5)  [mutable] {};
	\node (6) at (0,-1.5)  [red,mutable] {};
	\draw (1) to [-angle 90] (2);
	\draw (2) to [out=80, in=220,-angle 90] (4);
	\draw (2) to [out=40, in=260,-angle 90] (4);
	\draw (4) to [-angle 90] (1);
	\draw (1) to [-angle 90] (3);
	\draw (3) to [out=140, in=280,-angle 90] (5);
	\draw (3) to [out=100, in=320,-angle 90] (5);
	\draw (5) to [-angle 90] (1);
	\draw (1) to [red,-angle 90] (6);
	\end{scope}
    \begin{scope}[yshift=-1.5in]
        \draw[thick] (0,.5in) to (0,0) to (1.5in,0) to (1.5in,-.5in);
        \draw[thick] (0,.5in) to (0,0) to (-1.5in,0) to (-1.5in,-.5in);
    \end{scope}
	\begin{scope}[xshift=-1.5in,yshift=-3in]
	\node (1) at (0,0)  [frozen] {};
	\node (2) at (-2,0)  [mutable] {};
	\node (3) at (2,0)  [mutable] {};
	\node (4) at (-1,1.5)  [mutable] {};
	\node (5) at (1,1.5)  [mutable] {};
	\node (6) at (0,-1.5)  [mutable] {};
	\draw (1) to [-angle 90] (2);
	\draw (2) to [out=80, in=220,-angle 90] (4);
	\draw (2) to [out=40, in=260,-angle 90] (4);
	\draw (4) to [-angle 90] (1);
	\draw (1) to [-angle 90] (3);
	\draw (3) to [out=140, in=280,-angle 90] (5);
	\draw (3) to [out=100, in=320,-angle 90] (5);
	\draw (5) to [-angle 90] (1);
	\draw (1) to [-angle 90] (6);
	\end{scope}
        \node (a1) at (-1.5in,-4.25in) {Acyclic};
	\begin{scope}[xshift=1.5in,yshift=-3in]
	\node (1) at (0,0)  [mutable] {};
	\node (2) at (-2,0)  [mutable] {};
	\node (3) at (2,0)  [mutable] {};
	\node (4) at (-1,1.5)  [mutable] {};
	\node (5) at (1,1.5)  [mutable] {};
	\node (6) at (0,-1.5)  [frozen] {};
	\draw (1) to [-angle 90] (2);
	\draw (2) to [out=80, in=220,-angle 90] (4);
	\draw (2) to [out=40, in=260,-angle 90] (4);
	\draw (4) to [-angle 90] (1);
	\draw (1) to [-angle 90] (3);
	\draw (3) to [out=140, in=280,-angle 90] (5);
	\draw (3) to [out=100, in=320,-angle 90] (5);
	\draw (5) to [-angle 90] (1);
	\draw (1) to [-angle 90] (6);
	\end{scope}

    \node (a) at (3in,-3in) {$\Leftrightarrow$};

	\begin{scope}[xshift=4.5in,yshift=-3in]
	\node (1) at (0,0)  [red,mutable] {};
	\node (2) at (-2,0)  [mutable] {};
	\node (3) at (2,0)  [mutable] {};
	\node (4) at (-1,1.5)  [mutable] {};
	\node (5) at (1,1.5)  [red,mutable] {};
	\node (6) at (0,-1.5)  [frozen] {};
	\draw (1) to [angle 90-] (2);
	\draw (2) to [out=80, in=220,angle 90-] (4);
	\draw (2) to [out=40, in=260,angle 90-] (4);
	\draw (4) to [angle 90-] (1);
	\draw (1) to [-angle 90] (3);
	\draw (3) to [out=0, in=180,-angle 90,relative] (6);
	\draw (3) to [out=60, in=120,-angle 90,relative] (6);
	\draw (5) to [red,-angle 90] (1);
	\draw (1) to [angle 90-] (6);
	\end{scope}
    \begin{scope}[xshift=4.5in,yshift=-4.5in]
        \draw[thick] (0,.5in) to (0,0) to (1.5in,0) to (1.5in,-.5in);
        \draw[thick] (0,.5in) to (0,0) to (-1.5in,0) to (-1.5in,-.5in);
    \end{scope}
	\begin{scope}[xshift=3in,yshift=-6in]
	\node (1) at (0,0)  [red,mutable] {};
	\node (2) at (-2,0)  [mutable] {};
	\node (3) at (2,0)  [red,mutable] {};
	\node (4) at (-1,1.5)  [mutable] {};
	\node (5) at (1,1.5)  [frozen] {};
	\node (6) at (0,-1.5)  [frozen] {};
	\draw (1) to [angle 90-] (2);
	\draw (2) to [out=80, in=220,angle 90-] (4);
	\draw (2) to [out=40, in=260,angle 90-] (4);
	\draw (4) to [angle 90-] (1);
	\draw (1) to [red, -angle 90] (3);
	\draw (3) to [out=0, in=180,-angle 90,relative] (6);
	\draw (3) to [out=60, in=120,-angle 90,relative] (6);
	\draw (5) to [-angle 90] (1);
	\draw (1) to [angle 90-] (6);
	\end{scope}
	\begin{scope}[xshift=6in,yshift=-6in]
	\node (1) at (0,0)  [frozen] {};
	\node (2) at (-2,0)  [mutable] {};
	\node (3) at (2,0)  [mutable] {};
	\node (4) at (-1,1.5)  [mutable] {};
	\node (5) at (1,1.5)  [mutable] {};
	\node (6) at (0,-1.5)  [frozen] {};
	\draw (1) to [angle 90-] (2);
	\draw (2) to [out=80, in=220,angle 90-] (4);
	\draw (2) to [out=40, in=260,angle 90-] (4);
	\draw (4) to [angle 90-] (1);
	\draw (1) to [-angle 90] (3);
	\draw (3) to [out=0, in=180,-angle 90,relative] (6);
	\draw (3) to [out=60, in=120,-angle 90,relative] (6);
	\draw (5) to [-angle 90] (1);
	\draw (1) to [angle 90-] (6);
	\end{scope}
        \node (a2) at (6in,-7.25in) {Acyclic};

    \begin{scope}[xshift=3in,yshift=-7.5in]
        \draw[thick] (0,.5in) to (0,0) to (1.5in,0) to (1.5in,-.5in);
        \draw[thick] (0,.5in) to (0,0) to (-1.5in,0) to (-1.5in,-.5in);
    \end{scope}

	\begin{scope}[xshift=-1.5in,yshift=-9in]
	\node (1) at (0,0)  [mutable] {};
	\node (2) at (-2,0)  [mutable] {};
	\node (3) at (2,0)  [frozen] {};
	\node (4) at (-1,1.5)  [mutable] {};
	\node (5) at (1,1.5)  [frozen] {};
	\node (6) at (0,-1.5)  [frozen] {};
	\draw (1) to [-angle 90] (2);
	\draw (2) to [angle 90-] (4);
	\draw (4) to [-angle 90] (1);
	\draw (1) to [angle 90-] (3);
	\draw (3) to [out=60, in=120,-angle 90,relative] (6);
	\draw (5) to [angle 90-] (1);
	\draw (1) to [-angle 90] (6);
    \draw (4) to [-angle 90] (5);
    \draw (3) to [-angle 90] (5);
    \draw (6) to [out=30,in=150,relative,-angle 90] (4);
    \draw (2) to [out=-45,in=225,relative,-angle 90] (3);
	\end{scope}
    \node (b) at (0in,-9in) {$\Leftrightarrow$};
	\begin{scope}[xshift=1.5in,yshift=-9in]
	\node (1) at (0,0)  [mutable] {};
	\node (2) at (-2,0)  [mutable] {};
	\node (3) at (2,0)  [frozen] {};
	\node (4) at (-1,1.5)  [mutable] {};
	\node (5) at (1,1.5)  [frozen] {};
	\node (6) at (0,-1.5)  [frozen] {};
	\draw (1) to [angle 90-] (2);
	\draw (2) to [out=80, in=220,angle 90-] (4);
	\draw (2) to [out=40, in=260,angle 90-] (4);
	\draw (4) to [angle 90-] (1);
	\draw (1) to [-angle 90] (3);
	\draw (3) to [out=0, in=180,-angle 90,relative] (6);
	\draw (3) to [out=60, in=120,-angle 90,relative] (6);
	\draw (5) to [-angle 90] (1);
	\draw (1) to [angle 90-] (6);
	\end{scope}
        \node (a3) at (-1.5in,-10.25in) {Acyclic};
	\begin{scope}[xshift=4.5in,yshift=-9in]
	\node (1) at (0,0)  [frozen] {};
	\node (2) at (-2,0)  [mutable] {};
	\node (3) at (2,0)  [mutable] {};
	\node (4) at (-1,1.5)  [mutable] {};
	\node (5) at (1,1.5)  [frozen] {};
	\node (6) at (0,-1.5)  [frozen] {};
	\draw (1) to [angle 90-] (2);
	\draw (2) to [out=80, in=220,angle 90-] (4);
	\draw (2) to [out=40, in=260,angle 90-] (4);
	\draw (4) to [angle 90-] (1);
	\draw (1) to [-angle 90] (3);
	\draw (3) to [out=0, in=180,-angle 90,relative] (6);
	\draw (3) to [out=60, in=120,-angle 90,relative] (6);
	\draw (5) to [-angle 90] (1);
	\draw (1) to [angle 90-] (6);
	\end{scope}
        \node (a4) at (4.5in,-10.25in) {Acyclic};
\end{tikzpicture}
\caption{An example of the Banff algorithm.  The cluster variables have been surpressed, for space and because they do not affect acyclicity. Circles denote mutable vertices, squares denote frozen vertices, red indicates a covering pair defining the branching, and $\Leftrightarrow$ denotes mutation equivalence.
  }\label{fig: Banff2}
\end{figure}

\subsection{The structure of the paper}The structure of the paper follows.
\begin{enumerate}
\setcounter{enumi}{1}
\item \textbf{Preliminaries.} The necessary definitions of ice quivers, seeds and cluster algebras are given, as well as some standard results.  It is mostly review and notation-fixing.
\item \textbf{Locally acyclic cluster algebras.} The ideas of cluster localization and and locally acyclic cluster algebras are introduced.
\item \textbf{Properties of locally acyclic cluster algebras.} The first properties of locally acyclic cluster algebras are covered here.
\item \textbf{Covering pairs and the Banff algorithm.}  Covering pairs are introduced as a method for producing covers.  The Banff algorithm for constructing acyclic covers is introduced, as well as a reduced version for checking whether acyclic covers exist.
\item \textbf{Finite isolated covers.} As a consequence of the Banff algorithm, any locally acyclic cluster algebra can be covered by isolated cluster algebras.
\item \textbf{Regularity and rank.} Using the existence of finite isolated covers, the surjectivity of the exchange matrix implies the a locally acyclic cluster algebra is regular.
\item \textbf{Failure of local acyclicity.} This section explores how a cluster algebra can fail to be locally acyclic, in terms of combinatorial data, algebraic maps, and geometric behavior.
\item \textbf{Cluster algebras of marked surfaces.} This section reviews how to associate a cluster algebra to a suitable marked surface.  It mostly reviews \cite{FST08}.
\item \textbf{Marked surfaces and local acyclicity.} Covering pairs are reinterpreted topologically.  Several large classes of marked surface are shown to determine locally acyclic cluster algebras, and several other classes are shown to detemine non-locally-acyclic cluster algebras.
\item \textbf{Examples and non-examples.} Several concrete examples of locally acyclic and non-locally-acyclic cluster algebras are worked out in detail.  
\end{enumerate}

\subsection{Acknowledgements}

The author would like to thank a great many people for useful discussions and support; including A. Berenstein, M. Gekhtman, A. Knutson, G. Musiker, D. Thurston, M. Yakimov, and A. Zelevinsky. Special thanks also goes to the participants in LSU's Cluster Algebras seminar 2010-2011, particularly Neal Livesay, Jacob Matherne and Jesse Levitt; and to Frank Moore, for his patience with the author's constant questions about the computational algebra program Macaulay 2.

\section{Preliminaries}

Cluster algebras considered will be skew-symmetric of geometric type.  

\subsection{Quivers}

An \textbf{ice quiver} will be a quiver (without loops or directed 2-cycles), together with a distinguished subset of the vertices called \textbf{frozen vertices} (drawn as squares \begin{tikzpicture}\node (1) at (0,0) [frozen,inner sep=1mm] {};\end{tikzpicture}).  The unfrozen vertices are called \textbf{mutable} (drawn as circles \begin{tikzpicture}\node (1) at (0,0) [mutable] {};\end{tikzpicture}).  The number of vertices of an implicit ice quiver will often be denoted $n$.

For $\Q$ an ice quiver, $\Q_{mut}$ will denote the induced subquiver on the mutable vertices.  An ice quiver $\Q$ is called...
\begin{itemize}
\item ...\textbf{acyclic} if $\Q_{mut}$ has no directed cycles, and
\item ...\textbf{tree-type} if $\Q_{mut}$ is an orientation of a union of trees,
\item ...\textbf{finite-type} if $\Q_{mut}$ is an orientation of a union of Dynkin diagrams,
\item ...\textbf{$A$-type} if $\Q_{mut}$ is an orientation of a union of $A$-type Dynkin diagrams,
\item ...\textbf{isolated} if $\Q_{mut}$ has no arrows at all.
\end{itemize}


For an ice quiver $\Q$ with vertices $i$ and $j$, let
\[\Q_{ij} := \#(\text{arrows from $i$ to $j$}) - \#(\text{arrows from $i$ to $j$})\]
This defines an $n\times n$, skew-symmetric, integral matrix from which $\Q$ can be reconstructed.  


A \textbf{seed} $(\Q,\mathbf{a})$ of rank $n$ will be an ice quiver $\Q$ on $n$ vertices, together with a \textbf{cluster} $\mathbf{a}$: a map from the set of vertices of $\Q$ to $\mathbb{Q}(x_1,...,x_n)$.  Typically, the vertices will be numbered from $1$ to $n$, and so $\mathbf{a}$ can be defined as an ordered $n$-tuple $\{a_1,...,a_n\}$ in $\mathbb{Q}(x_1,...,x_n)$.  A seed is \textbf{initial} if $\mathbf{a}=\{x_1,...,x_n\}$.  A seed is \textbf{acyclic}, \textbf{isolated}, etc. if its ice quiver is.

\subsection{Cluster algebras}  

Given a seed $(\Q,\mathbf{a})$ with mutable vertices $\{1,...,m\}$, the \textbf{mutation at $k\in \{1,...,m\}$} of $(\Q,\a)$ is the new seed $(\mu_k(\Q),\mu_k(\a))$ defined by
\begin{itemize}
\item The new ice quiver $\mu_k(\Q)$ is defined by, for all $1\leq i, j\leq n$,
\[ \mu_k(\Q)_{ij} := \left\{\begin{array}{cc}
-\Q_{ij} & \text{if } k=i\text{ or }j\\
\Q_{ij} + \frac{|\Q_{ik}|\Q_{kj}+\Q_{ik}|\Q_{kj}|}{2} & \text{if } k\neq i,j
\end{array}\right\}\]
A vertex in $\mu_k(\Q)$ is frozen if the corresponding vertex in $\Q$ is frozen.
\item For $i\neq k$, set $\mu_k(a_i):=a_i$, and set
\[ \mu_k(a_k):=
\left(\prod_{j,\;\Q_{kj}>0} a_j^{\Q_{kj}}+\prod_{j,\;\Q_{kj}<0} a_j^{-\Q_{kj}}\right)a_k^{-1}\]
\end{itemize}
Mutation may be iterated at any sequence of vertices, and mutating at the same vertex twice returns to the original seed.


Given a seed $(\Q,\a)$, the \textbf{cluster algebra} $\A(\Q,\a)$ is the subalgebra of $\mathbb{Q}(x_1,...,x_n)$ generated by the \textbf{cluster variables}: the set of functions in $\mathbb{Q}(x_1,...,x_n)$ which occur in any seed mutation equivalent to $(\Q,\a)$.  We will write $\A(\Q)$ for $\A(\Q,\mathbf{x})$, where $\mathbf{x}=\{x_1,...,x_n\}$ is an initial cluster.  A cluster algebra is \textbf{acyclic}, \textbf{isolated}, etc. if has a seed of that type.

\subsection{The upper cluster algebra}

For a cluster algebra $\A$ with seed $(\Q,\mathbf{a})$ and $\mathbf{a}=\{a_1,...,a_n\}$, there is a Laurent embedding \cite[Theorem 3.1]{FZ02}
\[ \A\subset \Z[a_1^{\pm1},...,a_n^{\pm1}]\subset \mathbb{Q}(x_1,...,x_n)\]
The ring $\Z[a_1^{\pm1},...,a_n^{\pm1}]$ will be called the \textbf{Laurent ring} of the cluster $\mathbf{a}$.  The \textbf{upper cluster algebra} $\U$ is the intersection of the Laurent rings of each cluster in $\A$.  It follows that $\A\subseteq \U$.

Recall that a commutative domain is \textbf{integrally closed} if it is contains any root of a monic polynomial which exists in its fraction field.
\begin{prop}
The upper cluster algebra $\U$ is integrally closed.
\end{prop}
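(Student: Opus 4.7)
The plan is to reduce this to the standard commutative-algebra fact that an arbitrary intersection of integrally closed subrings of a common field is integrally closed. Concretely, for any cluster $\mathbf{a} = \{a_1,\ldots,a_n\}$, the Laurent ring $\Z[a_1^{\pm 1},\ldots,a_n^{\pm 1}]$ is a localization of the polynomial ring $\Z[a_1,\ldots,a_n]$, which is a UFD; hence each Laurent ring is itself a UFD, and in particular normal. By definition, $\U$ is the intersection of these Laurent rings, all sitting inside the common ambient field $\mathbb{Q}(x_1,\ldots,x_n)$.

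The argument then runs as follows. First I would note that since $\U$ is a subring of the domain $\mathbb{Q}(x_1,\ldots,x_n)$, it is itself a domain, and its fraction field embeds in $\mathbb{Q}(x_1,\ldots,x_n)$. Next, suppose $f$ lies in $\mathrm{Frac}(\U)$ and satisfies a monic polynomial relation
\[
f^N + u_{N-1} f^{N-1} + \cdots + u_0 = 0, \qquad u_i \in \U.
\]
For every cluster $\mathbf{a}$, each coefficient $u_i$ lies in the Laurent ring of $\mathbf{a}$, so $f$ is integral over that Laurent ring; because the Laurent ring is integrally closed in $\mathbb{Q}(x_1,\ldots,x_n)$ and $f \in \mathbb{Q}(x_1,\ldots,x_n)$, we get $f \in \Z[a_1^{\pm 1},\ldots,a_n^{\pm 1}]$. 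Since this holds for every cluster, $f$ lies in the intersection, i.e.\ in $\U$.

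There is essentially no main obstacle: the only thing to verify carefully is that $\mathrm{Frac}(\U) \subseteq \mathbb{Q}(x_1,\ldots,x_n)$, so that $f$ is automatically an element of the common ambient field in which each Laurent ring is integrally closed. This is immediate from the Laurent phenomenon embedding $\A \subseteq \U \subseteq \Z[a_1^{\pm 1},\ldots,a_n^{\pm 1}] \subseteq \mathbb{Q}(x_1,\ldots,x_n)$ recalled just before the proposition.
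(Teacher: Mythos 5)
Your proposal is correct and follows essentially the same route as the paper: each Laurent ring is integrally closed (being a localization of a polynomial ring), and an element of the common ambient field $\mathbb{Q}(x_1,\ldots,x_n)$ integral over $\U$ is integral over each Laurent ring, hence lies in their intersection $\U$. Your extra care in checking $\mathrm{Frac}(\U)\subseteq\mathbb{Q}(x_1,\ldots,x_n)$ is a welcome but minor refinement of the paper's argument.
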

\begin{proof}
Let $f(x)$ be a monic polynomial in $\U$, and $a$ a root of $f(x)$ in the fraction field $\k(x_1,...,x_n)$.  For any cluster $\mathbf{a}$, its Laurent ring is integrally closed because it is a localization of a polynomial ring, so $a\in k[a_1^{\pm1},...,a_n^{\pm1}]$.  Therefore, $a\in \U$.
\end{proof}

\subsection{Krull dimension}

Cluster algebras have been defined over $\Z$, rather than over a field.  Some results are more natural over a field, and it will be convenient to occasionally tensor with $\mathbb{Q}$.  One such result is the (Krull) dimension.
\begin{prop}\label{prop: Krulldim}
For $\A$ rank $n$, $\mathbb{Q}\otimes \A$ and $\mathbb{Q}\otimes \U$ have dimension $n$.
\end{prop}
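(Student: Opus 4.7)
The plan is to establish $\dim B = n$ uniformly for both $B = \mathbb{Q}\otimes\A$ and $B = \mathbb{Q}\otimes\U$ by exploiting the sandwich
\[
\mathbb{Q}[a_1,\ldots,a_n] \;\subseteq\; B \;\subseteq\; \mathbb{Q}[a_1^{\pm1},\ldots,a_n^{\pm1}]
\]
attached to any fixed seed $(\Q,\a)$ with $\a=\{a_1,\ldots,a_n\}$. The left inclusion holds because each $a_i$ is a cluster variable and so lies in $\A\subseteq\U$; the right inclusion is the Laurent embedding recalled in the preliminaries. Both extremes visibly have Krull dimension $n$, and $B$ is a $\mathbb{Q}$-subalgebra of the field $\mathbb{Q}(a_1,\ldots,a_n)$, which has transcendence degree $n$ over $\mathbb{Q}$.

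For the lower bound, I would observe that $B[a_1^{-1},\ldots,a_n^{-1}] = \mathbb{Q}[a_1^{\pm1},\ldots,a_n^{\pm1}]$, since $B$ already contains $\mathbb{Q}[a_1,\ldots,a_n]$ and is contained in the Laurent ring. Hence $\mathrm{Spec}\,\mathbb{Q}[a_1^{\pm1},\ldots,a_n^{\pm1}]$ is a principal open subscheme of $\mathrm{Spec}\,B$, and any chain of primes in the former is also a chain of primes in the latter. This gives $\dim B \geq n$.

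The upper bound is the step I expect to need the most care, since at this point in the paper neither $\A$ nor $\U$ is known to be Noetherian or finitely generated, so the identity $\dim R = \mathrm{tr.deg}_{\mathbb{Q}}\mathrm{Frac}(R)$ cannot be applied to $B$ directly. Instead, I would reduce to finitely generated subrings: given any strict chain of primes $P_0 \subsetneq P_1 \subsetneq \cdots \subsetneq P_k$ in $B$, pick witnesses $b_j \in P_j \setminus P_{j-1}$ and set $R := \mathbb{Q}[b_1,\ldots,b_k]\subseteq B$. The contractions $P_j \cap R$ are prime in $R$, and by the choice of witnesses they form a strictly ascending chain, so $k \leq \dim R$. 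Now $R$ is a finitely generated $\mathbb{Q}$-algebra and a domain, with $\mathrm{Frac}(R) \subseteq \mathbb{Q}(a_1,\ldots,a_n)$, so Noether normalization gives $\dim R = \mathrm{tr.deg}_{\mathbb{Q}}\mathrm{Frac}(R) \leq n$. Therefore $k \leq n$, and taking the supremum over all chains yields $\dim B \leq n$. Combined with the lower bound, $\dim B = n$.
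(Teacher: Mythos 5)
Your proof is correct and follows essentially the same route as the paper: both sandwich $\mathbb{Q}\otimes\A$ and $\mathbb{Q}\otimes\U$ between $\mathbb{Q}[a_1,\ldots,a_n]$ and $\mathbb{Q}[a_1^{\pm1},\ldots,a_n^{\pm1}]$, obtain the lower bound by contracting prime chains from the Laurent ring (a localization of $B$), and obtain the upper bound by passing to the finitely generated subalgebra generated by chain witnesses and invoking dimension equals transcendence degree for finitely generated domains over $\mathbb{Q}$. The only differences are cosmetic: you bound chain lengths directly rather than arguing by contradiction, and you phrase the lower bound via $B[a_1^{-1},\ldots,a_n^{-1}]=\mathbb{Q}[a_1^{\pm1},\ldots,a_n^{\pm1}]$ instead of the paper's remark that primes of the Laurent ring are generated by their contractions.
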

\begin{proof}
This is true for any algebra $R$ with
\[ \mathbb{Q}[x_1,...,x_n]\subseteq R \subset \mathbb{Q}[x_1^{\pm1},...,x_n^{\pm1}]\]
Any prime ideal $P$ in the Laurent ring is generated by its intersection with the polynomial ring, and therefore by its intersection with $R$.  It follows that no two primes in the Laurent ring have the same intersection with $R$, and so any chain of $n$ prime ideals in the Laurent ring gives a chain of $n$ prime ideals in $R$.  Then $dim(R)\geq n$.

Assume $R$ has a chain of $n+1$ prime ideals
\[ P_0\subsetneq P_1\subsetneq ...\subsetneq P_{n+1}\]
Choose a sequence of elements $r_i\in P_i-P_{i-1}$, and let $S\subset R$ be the subalgebra generated by $\{r_i\}$.  Then $Q_i:=S\cap P_i$ gives a chain of $n+1$ prime ideals in $S$; they are distinct because $Q_i$ contains $r_i$ but not $r_{i+1}$.  Then $S$ is a finitely-generated domain over $\mathbb{Q}$ with dimension $n+1$.

But the fraction field 
\[K(S)\subset K(R) = \mathbb{Q}(x_1,...,x_n)\]
and so it has transcendence degree $n$ over $\mathbb{Q}$.  By Theorem A \cite{Eis95}, this means that $S$ has Krull dimension $n$; this is a contradiction.  So $dim(R)=n$.
\end{proof}

\subsection{Acyclic cluster algebras}


Acyclic cluster algebras have many strong properties which are false in general.  For a seed $(\Q,\a)$, define
\[ \pi_i^+:=\prod_{j,\Q_{ij}>0}a_j^{\Q_{ij}},\;\;\;\pi_i^-:=\prod_{j,\Q_{ij}<0}a_j^{-\Q_{ij}}\]


\begin{lemma}\cite[Corollary 1.21]{BFZ05}\label{lemma: acyclicpres}
Let $(\Q,\mathbf{a})$ be an acyclic seed of $\A$, with frozen vertices numbered $m+1$ to $n$.  For $1\leq i\leq m$, let $a_i'$ denote the mutation of $a_i$ in $\A$.  Then
\[\A=\Z[a_1,a_2,...a_m,a_{m+1}^{\pm1},...,a_n^{\pm1}, a_1',a_2',...a_m']/ \langle a_1a_1'-\pi_1^+-\pi_1^-,...a_ma_m'-\pi_m^+-\pi_m^-\rangle\]
\end{lemma}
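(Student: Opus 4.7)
The plan is to construct the obvious candidate for the isomorphism and then verify it is bijective. Let $B$ denote the ring on the right-hand side. The assignments $a_i \mapsto a_i$ and $a_i' \mapsto \mu_i(a_i)$ for $1 \leq i \leq m$, together with $a_j \mapsto a_j$ for $j > m$, define a ring homomorphism $\phi : B \to \A$, well-defined because the relations $a_i a_i' = \pi_i^+ + \pi_i^-$ are precisely the exchange relations. It remains to show $\phi$ is both surjective and injective.

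For surjectivity, I would argue through the upper cluster algebra $\U$. Because $\A \subseteq \U$ by the Laurent phenomenon, it suffices to show $\U \subseteq \phi(B)$. The key input is that for an acyclic seed, $\U$ is already cut out by a \emph{finite} intersection: the Laurent ring of the initial cluster $\mathbf{a}$ intersected with the $m$ neighboring Laurent rings obtained by swapping $a_i$ for $a_i'$. Given an element $u \in \U$, its compatible Laurent expansions in these $m+1$ clusters can be reorganized: each $a_i'$ can be substituted for $(\pi_i^+ + \pi_i^-)/a_i$ or its inverse, and the simultaneous regularity in all $m+1$ Laurent rings forces $u$ to be a polynomial in $a_1, \ldots, a_m, a_1', \ldots, a_m'$ over $\Z[a_{m+1}^{\pm 1}, \ldots, a_n^{\pm 1}]$. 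Combined with the theorem that $\A = \U$ for acyclic seeds, this gives $\phi(B) = \A$.

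For injectivity, my plan is a standard-monomial argument. Using the exchange relations, every element of $B$ can be rewritten as a $\Z[a_{m+1}^{\pm 1}, \ldots, a_n^{\pm 1}]$-linear combination of ``standard monomials'' $a_1^{\alpha_1} \cdots a_m^{\alpha_m} (a_1')^{\beta_1} \cdots (a_m')^{\beta_m}$ in which $\min(\alpha_i, \beta_i) = 0$ for every $i$. To show these standard monomials remain linearly independent after applying $\phi$, I would expand each $\phi(a_i')$ as a Laurent polynomial in the initial cluster and extract its leading term with respect to a monomial order compatible with a topological ordering of the mutable vertices (available by acyclicity: order $i \prec j$ whenever $\Q_{ij} > 0$). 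Under such an order the leading terms of distinct standard monomials remain distinct, so no nontrivial relation can hold in $\A$.

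The main obstacle is the finite-intersection description of $\U$ in the acyclic case — the BFZ ``upper bound'' theorem. This is precisely where acyclicity is indispensable: the topological ordering of mutable vertices is what allows an inductive control of Laurent denominators and collapses the a priori infinite intersection defining $\U$ to a finite one. Once this is granted, both the surjectivity reduction and the leading-term argument for injectivity are essentially bookkeeping with monomials.
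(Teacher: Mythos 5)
The paper does not actually prove this lemma: it is imported directly from \cite[Corollary~1.21]{BFZ05}, so what you are proposing is in effect a reconstruction of the Berenstein--Fomin--Zelevinsky argument. Your architecture does match theirs (compare an ``upper bound'' given by finitely many Laurent rings with the ``lower bound'' generated by $a_1,\dots,a_m,a_1',\dots,a_m'$, and prove linear independence of standard monomials via an ordering coming from acyclicity), and your injectivity half is essentially their Theorem~1.20; as a sketch it is fine. The problem is in the surjectivity half, where you have located the difficulty in the wrong place. For the containment you actually use, namely $\U\subseteq(\text{Laurent ring of }\mathbf{a})\cap(\text{the $m$ adjacent Laurent rings})$, no theorem is needed at all: $\U$ is by definition an intersection over \emph{all} clusters, hence trivially contained in any finite sub-intersection. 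The collapse of the infinite intersection to that finite one (mutation-invariance of upper bounds in \cite{BFZ05}) is proved under coprimality hypotheses and does not use acyclicity, so it is neither the step your argument rests on nor ``precisely where acyclicity is indispensable.''

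The step your argument does rest on is the one you dismiss as ``essentially bookkeeping'': that an element which is Laurent simultaneously in the initial cluster and in the $m$ adjacent clusters must be a polynomial in $a_1,\dots,a_m,a_1',\dots,a_m'$ over $\Z[a_{m+1}^{\pm1},\dots,a_n^{\pm1}]$. That is exactly the hard theorem of \cite{BFZ05} (upper bound $=$ lower bound for coprime acyclic seeds), proved by a nontrivial induction on the mutable vertices, and it is where acyclicity must enter: for the Markov quiver the element $(x^2+y^2+z^2)/xyz$ lies in the Laurent ring of every cluster, in particular in your finite intersection, yet it is not in the lower bound, so no amount of substitution bookkeeping can establish this containment without invoking acyclicity at precisely this point --- which your sketch never does, having assigned acyclicity's role to the (trivial, for your purposes) finite-intersection step instead. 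Two smaller remarks: invoking $\A=\U$ for acyclic cluster algebras is admissible within this paper, since Theorem~\ref{lemma: acyclicA=U} is quoted separately, but in \cite{BFZ05} it is itself deduced from the same upper-bound-equals-lower-bound theorem, so it does not help you bridge the missing containment; and that theorem carries a coprimality hypothesis which \cite{BFZ05} must remove by a separate argument to reach Corollary~1.21 in the stated generality, a wrinkle your outline also glosses over.
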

A commutative algebra 
is a \textbf{complete intersection} if it can be expressed as a quotient of a regular ring by a regular sequence.
\begin{coro}\label{coro: acyclic}
Let $\A$ be an acyclic cluster algebra.  Then $\A$ is finitely-generated and a complete intersection.
\end{coro}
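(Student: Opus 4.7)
Both conclusions will follow directly from the explicit presentation provided by Lemma \ref{lemma: acyclicpres}. Finite generation is immediate: Lemma \ref{lemma: acyclicpres} exhibits $\A$ as a quotient of the ring
\[ S := \Z[a_1,\ldots,a_m, a_{m+1}^{\pm1},\ldots,a_n^{\pm1}, a_1',\ldots,a_m'], \]
which is plainly a finitely generated $\Z$-algebra.

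For the complete intersection claim, the plan is to show that (i) the ambient ring $S$ is regular and (ii) the $m$ exchange relations $f_i := a_i a_i' - \pi_i^+ - \pi_i^-$ form a regular sequence in $S$. Part (i) is immediate, since $S$ is a localization of a polynomial ring over $\Z$. For part (ii), I would observe that a regular ring is Cohen--Macaulay, so that by the standard unmixedness theorem an ideal generated by $m$ elements in $S$ is generated by a regular sequence as soon as its codimension equals $m$. Thus it suffices to verify the single dimension identity $\dim S - \dim \A = m$.

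Both dimensions are routine. On the one hand, $S$ is a localization of a polynomial ring in $n+m$ indeterminates over $\Z$, and a maximal chain of primes avoiding the inverted variables (for instance one containing $a_{m+1}-1,\ldots,a_n-1$) shows $\dim S = n+m+1$. On the other hand, Proposition \ref{prop: Krulldim} gives $\dim(\mathbb{Q}\otimes \A) = n$, and since $\A$ is a finitely generated $\Z$-algebra which is a domain of characteristic $0$, the usual dimension formula upgrades this to $\dim \A = n+1$. Subtracting, $\dim S - \dim \A = m$, so the $f_i$ form a regular sequence and $\A = S/(f_1,\ldots,f_m)$ is a complete intersection. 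The only genuinely fiddly point is the passage from $\dim(\mathbb{Q}\otimes\A)$ to $\dim \A$; everything else is a bookkeeping exercise on top of Lemma \ref{lemma: acyclicpres} and Proposition \ref{prop: Krulldim}.
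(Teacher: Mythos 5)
Your proposal is correct, but it reaches the complete-intersection statement by a genuinely different route than the paper. The paper's argument is element-theoretic: after noting that the ambient ring $S$ is regular (a localization of a polynomial ring), it asserts directly that the exchange relations form a regular sequence in any order, because each relation $a_ia_i'-\pi_i^+-\pi_i^-$ is the only one involving the variable $a_i'$. You instead deduce the regular-sequence property indirectly: $S$ is regular, hence Cohen--Macaulay, and in a Cohen--Macaulay ring $m$ elements generating a proper ideal of height $m$ automatically form a regular sequence, so everything reduces to the dimension count $\dim S-\dim\A=m$. What your route buys is robustness: it uses nothing about the shape of the relations beyond their number, so it shows more generally that a domain of the right dimension cut out by $m$ equations in a regular ring is a complete intersection; what it costs is dimension theory over $\Z$, which is where the only delicate points sit. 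Besides the step you flag (upgrading $\dim(\mathbb{Q}\otimes\A)=n$ from Proposition \ref{prop: Krulldim} to $\dim\A=n+1$, which is indeed valid for finitely generated $\Z$-algebra domains of characteristic zero), note that you also silently identify the codimension (height) of the defining prime with $\dim S-\dim(S/I)$: the equality $\operatorname{ht}(I)+\dim(S/I)=\dim S$ is not automatic in a general Noetherian ring, and it is the direction $\operatorname{ht}(I)\geq m$ that your unmixedness step actually needs. It does hold here because $S$ is a finitely generated domain over $\Z$, which is universally catenary, and $I$ is prime since $\A$ is a domain; so this is a matter of citation rather than substance. Finally, your chain of primes avoiding the inverted variables only gives $\dim S\geq n+m+1$; the matching upper bound comes from $S$ being a localization of an $(n+m)$-variable polynomial ring over $\Z$.
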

\begin{proof}
Finite-generation is clear from the lemma.  
The ring \[\Z[a_1,a_2,...a_m,a_{m+1}^{\pm1},...,a_n^{\pm1}, a_1',a_2',...a_m']\] is a localization of a polynomial ring and so it is regular.  The relations given form a regular sequence in any order, because each one contains a unique variable of the form $a_i'$.
\end{proof}


Acyclic cluster algebras are known to coincide with their upper cluster algebra.

\begin{thm}\cite[Theorem 1.18]{BFZ05}\label{lemma: acyclicA=U}
Let $\A$ be an acyclic cluster algebra, and $\U$ is upper cluster algebra.  Then $\A=\U$.
\end{thm}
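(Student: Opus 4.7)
The statement $\A = \U$ reduces to the inclusion $\U \subseteq \A$, since $\A \subseteq \U$ is the Laurent phenomenon built into the definition of $\U$. My plan is to compress the infinite intersection defining $\U$ down to a finite, seed-level intersection, and then to match that intersection with $\A$ using the explicit presentation from Lemma \ref{lemma: acyclicpres}.

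For the compression, I would introduce the \emph{upper bound} attached to the seed $(\Q,\a)$,
\[ \U(\Q,\a) := \Z[a_1^{\pm1},\dots,a_n^{\pm1}] \;\cap\; \bigcap_{i=1}^{m}\, \Z\bigl[\mu_i(\a)^{\pm 1}\bigr], \]
the intersection over only the initial Laurent ring and the $m$ adjacent Laurent rings coming from a single mutable mutation. Tautologically $\U \subseteq \U(\Q,\a)$. The next step is to show that this upper bound is mutation-invariant, i.e.\ $\U(\Q,\a) = \U(\mu_k(\Q),\mu_k(\a))$ for every mutable $k$; iterating then identifies $\U(\Q,\a)$ with the upper bound at every seed in the mutation class and so with $\U$ itself.

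Having identified $\U$ with $\U(\Q,\a)$, the acyclic hypothesis is used to prove $\U(\Q,\a) \subseteq \A$. Take $u \in \U(\Q,\a)$ and expand it in the initial Laurent ring as $u = P/(a_1^{N_1}\cdots a_m^{N_m})$ with $P$ coprime to each mutable $a_i$. The condition that $u$ also lie in the Laurent ring of $\mu_i(\a)$, combined with the exchange relation $a_i a_i' = \pi_i^+ + \pi_i^-$, allows each factor $a_i^{-N_i}$ to be rewritten in terms of $a_i'$ and the other cluster variables. Because $(\Q,\a)$ is acyclic, the mutable vertices admit a linear order in which no $\pi_i^\pm$ involves $a_j$ for $j$ later in the order, so the elimination proceeds inductively without reintroducing denominators. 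The resulting expression uses only the generators $a_1,\dots,a_n,a_1',\dots,a_m'$ listed in Lemma \ref{lemma: acyclicpres}, placing $u$ in $\A$.

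The main obstacle is the mutation-invariance of the upper bound: comparing how a single Laurent polynomial decomposes in two adjacent clusters is delicate, and for a general seed it requires an extra coprimality hypothesis on the polynomials $\pi_i^+ + \pi_i^-$. Part of the work in the acyclic case is to verify this coprimality automatically so that the mutation-invariance machinery applies, and this verification is where acyclicity plays its essential role.
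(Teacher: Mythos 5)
The paper gives no proof of this theorem: it is imported verbatim from \cite{BFZ05}, so the only thing to compare your outline with is the argument of that reference, which you are essentially reconstructing (the upper bound over one seed and its adjacent clusters, mutation invariance of upper bounds, and an acyclicity argument collapsing the upper bound onto the subalgebra generated by $a_1,\dots,a_n,a_1',\dots,a_m'$). The skeleton is the right one, but your account of where acyclicity enters has a genuine gap.

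Acyclicity does not imply the coprimality hypothesis needed for mutation invariance. Already for the coefficient-free $A_3$ quiver $1\to 2\to 3$, which is acyclic (indeed a tree), the exchange polynomials at the two end vertices coincide: $\pi_1^++\pi_1^-=a_2+1=\pi_3^++\pi_3^-$, so the seed is not coprime; coprimality is governed by other features (for instance it holds when the extended exchange matrix has full rank), not by acyclicity. Worse, identifying your upper bound $\U(\Q,\a)$ with $\U$ requires mutation invariance at every seed along every mutation path, i.e.\ \emph{total} coprimality, not coprimality of the initial seed alone, so the compression step of your plan is unjustified precisely in the cases (e.g.\ trivial coefficients) the theorem is meant to cover --- this is a known delicacy in extracting $\A=\U$ from the upper-bound machinery. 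Your elimination step is also misstated: in a topological order of an acyclic quiver, any vertex $i$ with an out-arrow to a mutable vertex has $\pi_i^+$ divisible by a mutable variable later in the order, so no linear order makes all $\pi_i^\pm$ avoid later variables except in the isolated case. In \cite{BFZ05}, acyclicity is used instead to control the \emph{lower} bound: the standard monomials in the $a_i,a_i'$ are linearly independent exactly in the acyclic case, and an induction on the number of exchangeable variables then shows the upper bound equals the lower bound; that, rather than a coprimality verification, is the essential role of acyclicity.
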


\subsection{The geometry of cluster algebras}



The goal of this paper is to use geometry to study algebraic properties of $\A$ and $\U$.  This will done by studying the affine schemes
\[\X_\A:=Spec(\A)\;\;\; \text{and}\;\;\;\X_\U:=Spec(\U).\]


Dual to the inclusion $\A\rightarrow \U$, there is a map of schemes
\[ \X_\U\rightarrow \X_\A\]
In many ways, this map behaves like a normalization map.  It is a dominant map which is an isomorphism on an open subscheme, and $\X_\U$ is a normal scheme.  


For each cluster $\mathbf{a}$ in $\A$, we have embeddings
\[ \A\hookrightarrow \U\hookrightarrow \Z[a_1^{\pm1},...,a_n^{\pm1}]\]
Geometrically, this is dual to an open inclusion of an algebraic torus.
\[ (\mathbb{A}^*_\Z)^n\hookrightarrow \X_\U\rightarrow \X_\A\]
The image of this inclusion in $\X_\A$ or $\X_\U$ is called a \textbf{cluster torus}, and it is a smooth, affine open subscheme.  The union of all cluster tori will be denoted $\X^\circ$; this is again a smooth open subscheme.\footnote{ The subscheme $\X^\circ$ is called the \textbf{cluster manifold}; see \cite{GSV03} and \cite{GSV10}.  However, this terminology can be misleading, since the smooth locus of $\X_A$ can be bigger.  Note there is no subscript on $\X^o$, since $\X^\circ_\A=\X^\circ_\U$ and the notation risks being cluttered.}

This gives a geometric definition of the upper cluster algebra.
\begin{prop}\label{prop: upperaffine}
The ring of global sections on the union of the cluster tori $\X^\circ$ is $\U$.
\end{prop}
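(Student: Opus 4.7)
The plan is to recognize this as essentially a direct application of the sheaf axiom, once each cluster torus is identified with the right principal affine open of $\X_\U$.

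First, I would verify that each cluster torus $T_\mathbf{a}\subseteq\X_\U$ is the affine scheme $\mathrm{Spec}(\Z[a_1^{\pm1},\dots,a_n^{\pm1}])$, realized as the principal open defined by $a_1a_2\cdots a_n \in \U$. This uses the inclusion $\U\subseteq \Z[a_1^{\pm1},\dots,a_n^{\pm1}]$ from the definition of $\U$, together with the fact that each $a_i$ lies in $\A\subseteq\U$. Inverting $a_1,\dots,a_n$ in $\U$ produces a ring sandwiched between $\U[(a_1\cdots a_n)^{-1}]$ and $\Z[a_1^{\pm1},\dots,a_n^{\pm1}]$, but since every generator of the Laurent ring is now present, the two coincide. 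So $\Gamma(T_\mathbf{a},\O) = \Z[a_1^{\pm1},\dots,a_n^{\pm1}]$.

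Next, I would observe that $\X^\circ$ is integral: $\U$ is a domain (it sits inside $\mathbb{Q}(x_1,\dots,x_n)$), hence $\X_\U$ is integral, and $\X^\circ$ is an open subscheme of $\X_\U$, so is integral as well. Consequently all rings of sections on non-empty opens embed into the common function field $K = \mathrm{Frac}(\U)\subseteq \mathbb{Q}(x_1,\dots,x_n)$, and these embeddings are compatible with restriction maps.

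The final step is the sheaf axiom. By definition $\X^\circ = \bigcup_\mathbf{a} T_\mathbf{a}$, so a global section of $\O_{\X^\circ}$ is a compatible family $\{s_\mathbf{a}\in \Gamma(T_\mathbf{a},\O)\}_\mathbf{a}$. Compatibility on overlaps $T_\mathbf{a}\cap T_\mathbf{b}$ is automatic once we view each $s_\mathbf{a}$ inside $K$: agreement on any non-empty open in an integral scheme forces equality in $K$. Hence
\[ \Gamma(\X^\circ,\O) \;=\; \bigcap_\mathbf{a} \Gamma(T_\mathbf{a},\O) \;=\; \bigcap_\mathbf{a} \Z[a_1^{\pm1},\dots,a_n^{\pm1}], \]
taken inside $K$, which is precisely the definition of $\U$.

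There is essentially no serious obstacle here; the only thing to be careful about is ensuring that the various Laurent rings are being intersected inside a single ambient ring in a way that matches the schematic picture. That is taken care of by the integrality of $\X_\U$ and the fixed embedding $\U\hookrightarrow \mathbb{Q}(x_1,\dots,x_n)$ coming from any initial cluster.
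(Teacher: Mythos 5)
Your proposal is correct and follows essentially the same route as the paper: sections on each cluster torus are identified with the corresponding Laurent ring, and integrality lets you intersect them inside the common function field $\mathbb{Q}(x_1,\dots,x_n)$, recovering the definition of $\U$. The only differences are cosmetic — you work in $\X_\U$ rather than $\X_\A$ and spell out the identification of each torus as the principal open $D(a_1\cdots a_n)$ and the sheaf-axiom step, which the paper leaves implicit.
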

\begin{proof}
As an open subscheme of $\X_\A$, every function on $\X^\circ$ lives in the field of fractions on $\X_\A$, which is $\mathbb{Q}(x_1,...,x_n)$.  For a given cluster $\mathbf{a}$, an element $f\in \Z(x_1,...,x_n)$ is a regular function on the corresponding cluster torus exactly when $f\in \Z[a_1^{\pm1},...,a_n^{\pm1}]$.  Therefore, $f$ is a regular function on all cluster tori exactly when it is in the intersection of the Laurent rings of all clusters.
\end{proof}
From this perspective, $\X_\U$ is the \emph{affinization} of $\X^\circ$; the universal affine scheme with a map from $\X^\circ$.

The union of the cluster tori represents the well-behaved part of the cluster scheme; it is a smooth quasi-affine variety.  If $\A$ is bad in some way, then this badness should be caused by the complement $\X^\circ-\X_\A$.  Therefore, careful analysis of this complement can be used to prove that the scheme $\X_\A$ and the algebra $\A$ have good properties.

The easiest way this can happen is when the complement is empty.
\begin{quest}
When does $\X_\A=\X^\circ$?
\end{quest}
Surprisingly, this appears to be a difficult question. As an example, for coefficient-free $A_n$, the union of the cluster tori and the cluster scheme coincide except when $n\equiv 3 \,(\text{mod }4)$, when the complement is a singular point.

\section{Locally acyclic cluster algebras}

A cluster algebra will have special localizations (called \emph{cluster localizations}) which are again cluster algebras.  On the level of quivers, these will correspond to some freezings of the original quiver (but not all freezings).  This enables the study of cluster algebras locally, by passing to a finite cover by simpler cluster algebras.

\subsection{Freezing and cluster localization}

For a seed $(\Q,\mathbf{a})$ of $\A$, let \[\{a_1,a_2,...,a_i\}\subset\mathbf{a}\] be a collection of mutable variables. Define the \textbf{freezing} of $(\Q',\mathbf{a})$ at $\{a_1,a_2,...,a_i\}$ to be the same seed as before, except that the set $\{a_1,...,a_i\}$ is now frozen.

For the rest of this section, let $\A'$ denote the cluster algebra associated to the seed $(A',\a)$; such a cluster algebra will be called a \textbf{freezing} of $\A$.  Let $\U'$ be the upper cluster algebra of $\A'$.  Like $\A$ and $\U$, the algebras $\A'$ and $\U'$ are a subalgebras of $\mathbb{Q}(x_1,...,x_n)$.

\begin{prop}\label{prop: inclusions}
There are natural inclusions in $\mathbb{Q}(x_1,...,x_n)$
\[\A'\subseteq \A[a_1^{-1},a_2^{-1},...,a_i^{-1}]\subseteq \U[a_1^{-1},a_2^{-1},...,a_i^{-1}]\subseteq \U'\]
\end{prop}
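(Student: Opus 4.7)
The plan is to prove each of the three inclusions in turn; each follows from a basic compatibility between freezing and the cluster/upper cluster constructions, together with the convention (visible in Lemma \ref{lemma: acyclicpres}) that frozen variables are invertible in a cluster algebra of geometric type.

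For the leftmost inclusion $\A'\subseteq \A[a_1^{-1},\ldots,a_i^{-1}]$, the key observation is purely combinatorial: freezing vertices restricts which vertices admit mutation but leaves the mutation rule on the surviving mutable vertices unchanged. Consequently, any sequence of mutations of the frozen seed at vertices unfrozen in $\Q'$ is also a valid sequence of mutations of $(\Q,\mathbf{a})$, producing the same resulting cluster. Hence every cluster variable of $\A'$ is either a cluster variable of $\A$ (and so lies in $\A$) or is one of the newly-frozen $a_1,\ldots,a_i$; the inverses of these last, which are adjoined in $\A'$ because they are now frozen, land in $\A[a_1^{-1},\ldots,a_i^{-1}]$ by construction. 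Since these elements generate $\A'$, the inclusion follows.

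The middle inclusion is immediate: localizing the inclusion $\A\subseteq \U$ at $\{a_1,\ldots,a_i\}$ preserves it. For the rightmost inclusion $\U[a_1^{-1},\ldots,a_i^{-1}]\subseteq \U'$, I combine two facts. First, by the same combinatorial observation as above, every cluster of $\A'$ is also (as an $n$-tuple) a cluster of $\A$; thus $\U'$ is an intersection of Laurent rings taken over a subset of the clusters defining $\U$. A larger family of constraints yields a smaller intersection, so $\U\subseteq \U'$. Second, the variables $a_1,\ldots,a_i$ are frozen in $\A'$, hence invertible in $\A'$ and so in $\U'$. Together these two facts produce the inclusion $\U[a_1^{-1},\ldots,a_i^{-1}]\subseteq \U'$.

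The only real obstacle is the bookkeeping in the first step: one must be careful to distinguish between a cluster as an ordered tuple, a seed as a tuple together with a quiver, and the cluster algebra generated by these, and to verify that freezing really does not alter mutation outcomes at the surviving vertices. Once that is settled, all three inclusions are formal.
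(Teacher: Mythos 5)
Your argument is correct and follows the paper's own proof essentially step for step: cluster variables of $\A'$ are among those of $\A$ plus the inverted newly-frozen variables (first inclusion), the middle inclusion is the localization of $\A\subseteq\U$, and the last follows because $\U'$ is an intersection over fewer clusters and contains $a_1^{-1},\ldots,a_i^{-1}$. The only difference is that you spell out the combinatorial fact that freezing does not change mutations at the remaining mutable vertices, which the paper leaves implicit.
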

\begin{proof}
The cluster variables of ${\A}'$ are a subset of the cluster variables of $\A$.  The only new generators are the inverses of the newly-frozen variables, but those are in the localization by construction.  This gives the first inclusion.
Similarly, ${\A}'$ has fewer clusters than $\A$, so the intersection defining ${\U}'$ has strictly fewer terms than $\U$; so ${\U}\subseteq {\U}'$.  Since ${\U}'$ also contains the inverses of $\{a_1,...,a_i\}$, this gives the last inclusion.
The middle inclusion follows from the inclusion $\A\subseteq \U$.
\end{proof}

\begin{lemma}
Let $\A$ be a cluster algebra, and $\A'$ the freezing of $\A$ at the set $\{a_1,..,a_i\}$.  Then the following are equivalent.
\begin{enumerate}
\item $\A\subseteq \A'$, as subalgebras of $\mathbb{Q}(x_1,...,x_n)$.
\item $\A'=\A[a_1^{-1},...,a_i^{-1}]$, as subalgebras of $\mathbb{Q}(x_1,...,x_n)$.
\item Every cluster variable in $\A$ can be written as a polynomial in the cluster variables of $\A'$, divided by a monomial in the frozen variables of $\A'$.
\end{enumerate}
\end{lemma}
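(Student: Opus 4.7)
The plan is to run a short cycle of implications, with Proposition \ref{prop: inclusions} doing most of the heavy lifting; it already provides the containment $\A' \subseteq \A[a_1^{-1}, \ldots, a_i^{-1}]$, so only one further inclusion is needed to promote (1) to the equality in (2).

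For $(2) \Rightarrow (1)$, there is nothing to do: $\A$ is contained in its own localization. For $(1) \Rightarrow (2)$, since $a_1, \ldots, a_i$ are frozen cluster variables of $\A'$ and frozen variables are invertible in a cluster algebra of geometric type (as witnessed by the Laurent embedding and by Lemma \ref{lemma: acyclicpres}), the hypothesis $\A \subseteq \A'$ upgrades to $\A[a_1^{-1}, \ldots, a_i^{-1}] \subseteq \A'$; combining this with the proposition yields equality.

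For $(1) \Leftrightarrow (3)$, the key observation is that $\A'$, as a cluster algebra, is generated as a $\mathbb{Z}$-algebra by its cluster variables together with the inverses of its frozen cluster variables. Consequently, every element of $\A'$ can be expressed as a polynomial in the cluster variables of $\A'$ divided by a monomial in its frozen variables. So if (1) holds, each cluster variable of $\A$ lies in $\A'$ and thus admits such an expression, giving (3). Conversely, any element of the form described in (3) is visibly an element of $\A'$, so (3) forces every cluster variable of $\A$ --- and hence all of $\A$ --- into $\A'$.

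I do not anticipate any genuine obstacle; the whole argument is bookkeeping once Proposition \ref{prop: inclusions} is invoked. The one mild subtlety worth flagging is to keep straight which variables are frozen in which algebra: the elements $a_1, \ldots, a_i$ are mutable in $\A$ but frozen (and so invertible) in $\A'$, and this distinction is precisely what drives each nontrivial implication.
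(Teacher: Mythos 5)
Your proof is correct and follows essentially the same route as the paper: Proposition \ref{prop: inclusions} gives $\A'\subseteq\A[a_1^{-1},\ldots,a_i^{-1}]$, the invertibility of the newly frozen variables in $\A'$ gives the reverse inclusion under (1), and (3) is just an unpacking of (1). Your spelled-out treatment of $(1)\Leftrightarrow(3)$ merely makes explicit what the paper dismisses as ``a restatement,'' so there is nothing to change.
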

\begin{proof}
If $\A\subseteq\A'$, then $\A[a_1^{-1},...,a_n^{-1}]\subseteq\A'$. The reverse inclusion is by the Proposition, so $(1)$ implies $(2)$.  Statement $(2)$ immediately implies $(1)$, and $(3)$ is a restatement of $(1)$.
\end{proof}
\begin{defn}
A freezing $\A'$ of $\A$ is called a \textbf{cluster localization} if any of the equivalent conditions hold.
\end{defn}

%
%



Proposition \ref{prop: inclusions} gives a large class of freezings which are cluster localizations.
\begin{lemma}\label{lemma: acycliclocal}
If ${\A}'=\U'$, then $\A'$ is a cluster localization and \[\A'= \A[a_1^{-1},a_2^{-1},...,a_i^{-1}]= \U[a_1^{-1},a_2^{-1},...,a_i^{-1}]= \U'\]
\end{lemma}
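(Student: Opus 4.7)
The plan is to observe that the desired chain of equalities is essentially handed to us by Proposition \ref{prop: inclusions}, combined with the assumption $\A' = \U'$. Specifically, the proposition provides the chain of inclusions
\[ \A' \subseteq \A[a_1^{-1},a_2^{-1},\dots,a_i^{-1}] \subseteq \U[a_1^{-1},a_2^{-1},\dots,a_i^{-1}] \subseteq \U' \]
inside $\mathbb{Q}(x_1,\dots,x_n)$. Under the hypothesis $\A' = \U'$, the two ends of this chain coincide, so every intermediate containment must be an equality. This immediately yields
\[ \A' = \A[a_1^{-1},\dots,a_i^{-1}] = \U[a_1^{-1},\dots,a_i^{-1}] = \U'. \]

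It then remains to check that $\A'$ is a cluster localization. But the equality $\A' = \A[a_1^{-1},\dots,a_i^{-1}]$ obtained above is precisely condition (2) of the preceding lemma characterizing cluster localizations, so this is automatic.

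There is no real obstacle here; the work was all done in setting up Proposition \ref{prop: inclusions}. The main point is simply to record that the hypothesis $\A' = \U'$ collapses the four-term chain of inclusions to a single equivalence class, which simultaneously identifies $\A'$ as a cluster localization (via the middle equality) and gives the description of $\A'$ and $\U'$ as localizations of $\A$ and $\U$ at the frozen variables.
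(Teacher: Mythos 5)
Your argument is correct and is exactly the one the paper intends: the paper states the lemma as an immediate consequence of Proposition \ref{prop: inclusions} (the hypothesis $\A'=\U'$ collapses the chain of inclusions, and the resulting middle equality is condition (2) of the lemma characterizing cluster localizations), which is precisely your sandwich argument. Nothing is missing.
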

\noindent A freezing $\A'$ which is \emph{not} a cluster localization is given in Proposition \ref{prop: notclusterlocal}.

%

Geometrically, a cluster localization gives an open inclusion $\X_{{\A}'}\subseteq \X_\A$ of schemes, which will be called a \textbf{cluster chart}.  The subscheme $\X_{\A'}$ is the subset of $\X_\A$ where the cluster variables $\{a_1,a_2,...,a_i\}$ are not zero.

\begin{rem}[\textbf{Warning!}] In much of what follows, cluster charts will be treated as a `good' collection of open sets in $\X_\A$.  However, the intersection of two cluster charts is not necessarily a cluster chart.  Therefore, they cannot be used as the basis of a topology on $\X_\A$, resulting in some counter-intuitive facts (see Remark \ref{rem: LAnonlocal}).
\end{rem}



\begin{rem}[Relation to Laurent embeddings]
 When $\{a_1,...,a_n\}$ is a full cluster, then $\A'$ is the Laurent ring of that cluster, and the inclusion $\A\subseteq \A'$ becomes the Laurent embedding.  In this way, cluster localizations can be regarded as generalizations of Laurent embeddings, and cluster charts generalize cluster tori.
%
\end{rem}

\subsection{Deleting vertices}
Freezing vertices is closely related to another quiver operation: \textbf{deleting} vertices.  Let $(\Q,\a)$ be a seed, with $\{a_1,..,a_i\}\subset\a$, and let $(\Q^\dag,\a^\dag)$ be the seed where the vertices corresponding to $\{a_1,...,a_i\}$ have been deleted (as well as any incident arrows).

\begin{prop}If all the vertices $\{a_1,...,a_i\}$ are frozen, there is an isomorphism
\[  \A(\Q,\a)/\langle a_1-1,...,a_i-1\rangle  \stackrel{\sim}{\longrightarrow} \A(\Q^\dag,\a^\dag)\]
\end{prop}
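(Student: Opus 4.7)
The plan is to build the isomorphism as the restriction of the evaluation map $a_j \mapsto 1$ for $j \leq i$. Concretely, view $\A(\Q,\a)$ inside the Laurent ring $B := \Z[a_1^{\pm 1},\ldots,a_n^{\pm 1}]$ of its initial cluster, and $\A(\Q^\dag,\a^\dag)$ inside $B^\dag := \Z[a_{i+1}^{\pm 1},\ldots,a_n^{\pm 1}]$. There is a natural ring surjection $\rho \colon B \twoheadrightarrow B^\dag$ with $\rho(a_j) = 1$ for $j \leq i$ and $\rho(a_j) = a_j$ otherwise, whose kernel is $(a_1-1,\ldots,a_i-1)B$. Setting $\Phi := \rho|_{\A(\Q,\a)}$, the ideal $\langle a_1-1,\ldots,a_i-1\rangle$ clearly lies in $\ker\Phi$, so the proposition will follow once I show that (i) $\Phi(\A(\Q,\a)) = \A(\Q^\dag,\a^\dag)$ and (ii) $\ker\Phi$ equals this ideal.

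For (i), I would induct on the length of a mutation sequence producing a given cluster variable. The key combinatorial observation is that deletion of the frozen vertices $\{1,\ldots,i\}$ commutes with mutation at any mutable vertex, so $(\mu_k\Q)^\dag = \mu_k(\Q^\dag)$, and the exchange monomials $\pi_k^{\pm}$ for $\A(\Q,\a)$ differ from those for $\A(\Q^\dag,\a^\dag)$ only by factors $a_j^{|\Q_{kj}|}$ with $j \leq i$ coming from arrows at $k$ involving the deleted vertices. These factors become $1$ under $\rho$, so the parallel-mutation correspondence sends cluster variables of $\A(\Q,\a)$ to cluster variables of $\A(\Q^\dag,\a^\dag)$ (or to $1$ for the deleted frozen generators), proving both that $\Phi(\A(\Q,\a)) \subseteq \A(\Q^\dag,\a^\dag)$ and that every cluster variable of $\A(\Q^\dag,\a^\dag)$ is in the image.

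The main obstacle is (ii): showing $\ker\Phi \subseteq \langle a_1-1,\ldots,a_i-1\rangle_{\A(\Q,\a)}$. Since $\ker\Phi = \A(\Q,\a) \cap (a_1-1,\ldots,a_i-1)B$, the subtlety is that divisibility in $B$ does not automatically descend to divisibility in $\A(\Q,\a)$. My plan is to construct an inverse ring homomorphism $\Psi \colon \A(\Q^\dag,\a^\dag) \to \A(\Q,\a)/\langle a_1-1,\ldots,a_i-1\rangle$, defining $\Psi$ on each cluster variable $c^\dag$ as the class of its lift $\tilde c \in \A(\Q,\a)$ determined by the parallel-mutation correspondence. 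Well-definedness of $\Psi$ on cluster variables — that two mutation sequences producing the same $c^\dag$ yield lifts whose difference lies in $\langle a_1-1,\ldots,a_i-1\rangle_{\A(\Q,\a)}$ — is the critical step; I would prove it by induction along a path in the exchange graph of $\A(\Q^\dag,\a^\dag)$ joining the two representative seeds, at each step using that the exchange relation in $\A(\Q^\dag,\a^\dag)$ is the image modulo $\langle a_1-1,\ldots,a_i-1\rangle$ of the exchange relation in $\A(\Q,\a)$. Granting this, $\Psi$ extends multiplicatively to a ring homomorphism inverse to the induced map $\bar\Phi$, completing the proof.
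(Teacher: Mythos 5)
Your step (i) is sound, and it is essentially the whole of the paper's own argument: the specialization at $a_1=\dots=a_i=1$ commutes with mutation because deleting the frozen vertices commutes with mutation at mutable vertices and only strips frozen factors from the exchange monomials, so $\Phi$ carries cluster variables to cluster variables and maps $\A(\Q,\a)$ onto $\A(\Q^\dag,\a^\dag)$. The gap is in step (ii), which you correctly identify as the crux but do not actually close. Write $I=\langle a_1-1,\dots,a_i-1\rangle\subset\A(\Q,\a)$. Knowing that $\Psi$ is well defined on the \emph{set} of cluster variables does not allow you to ``extend multiplicatively'' to a ring homomorphism: a general element of $\A(\Q^\dag,\a^\dag)$ is a polynomial in cluster variables, and well-definedness of the extension requires that \emph{every} polynomial relation satisfied by the cluster variables of $\A(\Q^\dag,\a^\dag)$ be satisfied by your chosen lifts modulo $I$. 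That requirement is precisely the containment $\ker\Phi\subseteq I$ you set out to prove, so the construction of $\Psi$ is circular as stated; and it cannot be reduced to exchange relations, since a cluster algebra is not in general presented by its cluster variables subject to exchange relations (that is a special feature of the acyclic case, Lemma \ref{lemma: acyclicpres}).

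The preliminary ``critical step'' also does not go through as described. Distinct mutation paths in $\Q^\dag$ can reach the same cluster variable while the parallel paths in $\Q$ reach genuinely different seeds, so the lift $\tilde c$ is path-dependent, and your induction along a connecting path must propagate congruences through an exchange relation: from $\tilde c'\tilde c\equiv \tilde d'\tilde d \pmod{I}$ and $\tilde c\equiv\tilde d\pmod{I}$ you only obtain $(\tilde c'-\tilde d')\tilde c\in I$, and cancelling $\tilde c$ requires it to be a non-zero-divisor in $\A(\Q,\a)/I$ --- in effect that this quotient is a domain, which is a \emph{consequence} of the proposition rather than something available beforehand. For comparison, the paper does not attempt this kernel analysis at all: its proof consists of the observation in your part (i), that setting a frozen variable to $1$ has the same effect on every exchange relation as deleting the corresponding vertex, so the bijection of cluster variables intertwines the two mutation structures and induces the map in question; the additional precision you aim for (identifying $\ker\Phi$ with $I$) is exactly where your proposal, as written, has a genuine hole.
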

\begin{proof}
There is a natural bijection from the cluster variables of $\A(\Q,\a)$ to the cluster variables of $\A(\Q^\dag,\a^\dag)$.  The mutations will differ by the deleting extra terms in the products corresponding to frozen variables.  Since setting a variable to 1 is the same as deleting it from a product, and frozen variables only ever appear in products, the induced map is an isomorphism.
\end{proof}
However, if the variables are mutable, then the situation is similar to freezing.
\begin{prop}
There are inclusions
\[\A(\Q^\dag,\a^\dag)\subseteq \A(\Q,\a)/\langle a_1-1,...,a_i-1\rangle\subseteq \U(\Q,\a)/\langle a_1-1,...,a_i-1\rangle\subseteq \U(\Q^\dag,\a^\dag)\]
\end{prop}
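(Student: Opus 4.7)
The plan is to realize all four algebras as subrings of the Laurent ring $R = \Z[x_{i+1}^{\pm 1}, \ldots, x_n^{\pm 1}]$ of the initial cluster $\a^\dag$, and to verify the chain there. After identifying $\a$ with coordinates $(x_1, \ldots, x_n)$, set $L = \Z[x_1^{\pm 1}, \ldots, x_n^{\pm 1}]$ and let $\pi \colon L \to R$ be the substitution $x_j \mapsto 1$ for $j \leq i$ (identity otherwise), whose kernel is $\langle x_1 - 1, \ldots, x_i - 1\rangle_L$. Composing the Laurent embeddings $\A(\Q,\a), \U(\Q,\a) \hookrightarrow L$ with $\pi$ yields ring maps that factor through the respective quotients by $\langle a_1 - 1, \ldots, a_i - 1\rangle$, so the whole chain will arise as a chain of subrings of $R$.

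The engine is a mutation-substitution compatibility lemma, proved by induction on sequence length: for any mutation sequence $\vec\mu = \mu_{k_s} \cdots \mu_{k_1}$ with each $k_t > i$, applying $\vec\mu$ to $(\Q,\a)$ and to $(\Q^\dag,\a^\dag)$ yields seeds $(\Q',\mathbf{b})$ and $((\Q^\dag)',\mathbf{b}^\dag)$ satisfying $b_j = x_j$ for $j \leq i$, $\pi(b_j) = b^\dag_j$ for $j > i$, and $\Q'|_{\{i+1,\ldots,n\}} = (\Q^\dag)'$. In the inductive step at some $k > i$, the matrix-mutation formula for entries indexed in $\{i+1,\ldots,n\}$ depends only on entries with those indices, which by hypothesis already agree in $\Q'$ and $(\Q^\dag)'$; and the exchange relation for $b_k$ becomes that for $b^\dag_k$ under $\pi$, since factors $\pi(b_j)^{\Q'_{kj}} = 1^{\Q'_{kj}}$ for $j \leq i$ collapse. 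The three inclusions now follow: (a) every cluster variable of $\A(\Q^\dag,\a^\dag)$ arises as $\pi(b_j)$ for some cluster variable $b_j$ of $\A(\Q,\a)$, so $\A(\Q^\dag,\a^\dag) \subseteq \pi(\A(\Q,\a))$ in $R$; (b) the middle map is induced by $\A(\Q,\a) \subseteq \U(\Q,\a)$; (c) for $f \in \U(\Q,\a)$ and any cluster $\mathbf{b}^\dag$ of $\A(\Q^\dag,\a^\dag)$, lift to a cluster $\mathbf{b}$ of $\A(\Q,\a)$ via $\vec\mu$ --- writing $f$ as a Laurent polynomial in $\mathbf{b}$ and applying $\pi$ produces a Laurent expression for $\pi(f)$ in $\mathbf{b}^\dag$, so $\pi(f) \in \U(\Q^\dag,\a^\dag)$.

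The main obstacle I anticipate is promoting these natural maps to honest inclusions of quotient rings, not merely of their images in $R$: this amounts to showing that the kernel of $\A(\Q,\a) \to R$ (and similarly of $\U(\Q,\a) \to R$) equals $\langle a_1 - 1, \ldots, a_i - 1\rangle$ inside the algebra itself, i.e., that any relation $f = \sum_{j \leq i}(x_j - 1) g_j$ holding in $L$ with $f$ in the algebra can be rewritten with all $g_j$ in the algebra. The composition of the three maps in the chain equals the Laurent embedding $\A(\Q^\dag,\a^\dag) \hookrightarrow \U(\Q^\dag,\a^\dag) \hookrightarrow R$ and is automatically injective, so the first map is injective and the latter two are jointly injective; separating them is the delicate step, which I would approach either by a flatness argument on $L$ over the relevant subalgebra or by exploiting the explicit intersection-of-Laurent-rings description of $\U$, each factor of which splits off $\langle x_j - 1\rangle$ cleanly.
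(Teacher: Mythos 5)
Your argument is correct in substance, but it is organized quite differently from the paper's proof, which is a one-line reduction: apply Proposition \ref{prop: inclusions} to the \emph{freezing} $\A'$ of $(\Q,\a)$ at $\{a_1,\ldots,a_i\}$, giving $\A'\subseteq\A[a_1^{-1},\ldots,a_i^{-1}]\subseteq\U[a_1^{-1},\ldots,a_i^{-1}]\subseteq\U'$ inside the Laurent ring of $\a$, and then specialize $a_1=\cdots=a_i=1$, using the preceding proposition (setting frozen variables to $1$ equals deleting them) to identify the specialized outer terms with $\A(\Q^\dag,\a^\dag)$ and, via the same bijection of clusters, to land the specialization of $\U'$ inside $\U(\Q^\dag,\a^\dag)$. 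You bypass the freezing entirely: your induction showing that mutation at vertices $>i$ commutes with the substitution $x_j\mapsto 1$ is a from-scratch re-derivation of exactly the content the paper extracts from Proposition \ref{prop: inclusions} plus the frozen-deletion proposition, and your steps (a)--(c) then give the chain of specialized images $\A(\Q^\dag,\a^\dag)\subseteq\pi(\A)\subseteq\pi(\U)\subseteq\U(\Q^\dag,\a^\dag)$ in $R$. Your route is self-contained and makes explicit one point the paper leaves silent (to apply $\pi$ termwise to a Laurent expansion in a non-initial cluster you must first extend $\pi$ to the localization at the cluster variables, which is legitimate since no cluster variable lies in $\ker\pi$); the paper's route buys brevity by reusing results already proved. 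Concerning the ``main obstacle'' you flag---identifying $\ker(\pi|_\A)$ and $\ker(\pi|_\U)$ with $\langle a_1-1,\ldots,a_i-1\rangle$ so that the displayed quotients literally inject---be aware that the paper's proof does not address this either: ``setting those variables to $1$'' likewise only yields the chain of images, the quotients being implicitly read as those images, and the one later use (Proposition \ref{prop: notclusterlocal}) needs nothing more; so this is not a gap in your argument relative to the paper's own standard. Do note, however, that your remark that injectivity of the first map follows from injectivity of the composite is circular as written, since the map $\A(\Q^\dag,\a^\dag)\to\A(\Q,\a)/\langle a_1-1,\ldots,a_i-1\rangle$ is not even defined until that kernel identification (or a splitting sending each cluster variable to the class of its lift, checked to be well defined) is in place.
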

\begin{proof}
Take Proposition \ref{prop: inclusions} for freezing $a_1,...,a_i$, and then set those variables to 1.
\end{proof}
These are all equality if $\A(\Q^\dag,\a^\dag)=\U(\Q^\dag,\a^\dag)$, but an example where
\[ \A(\Q^\dag,\a^\dag)\neq \A(\Q,\a)/\langle a_1-1,...,a_i-1\rangle\]
is given in Proposition \ref{prop: notclusterlocal}.

\subsection{Local acyclicity} We arrive at the main definition of this note.

\begin{defn}
A \textbf{locally acyclic cluster algebra} $\A$ is a cluster algebra such that $\X_\A$ has a finite cover
by acyclic cluster charts.
\end{defn}
Algebraically, this means there is a finite set of acyclic cluster localizations $\A\hookrightarrow \A_i$ such that $\A=\bigcap_i\A_i$ and every prime ideal in $\A$ has a non-trivial extension to some $\A_i$.  As we shall see, the key advantage of this approach is that any local property which is true for acyclic cluster algebras is true for locally acyclic cluster algebras.

This property is independent of any frozen variables in $\A$.
\begin{prop}
For $\Q$ an ice quiver, the cluster algebra $\A(\Q)$ is locally acyclic if and only if $\A(\Q_{mut})$ is.
\end{prop}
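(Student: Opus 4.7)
The strategy is to set up a bijective correspondence between acyclic cluster-chart covers of $\X_{\A(\Q)}$ and of $\X_{\A(\Q_{mut})}$. Any subset $S$ of the mutable vertices of $\Q$ determines a freezing $\Q|_S$ of $\Q$ and a freezing $\Q_{mut}|_S$ of $\Q_{mut}$ having identical mutable subquiver (namely $\Q_{mut}$ with $S$ removed). Acyclicity depends only on the mutable part, so $\Q|_S$ is acyclic if and only if $\Q_{mut}|_S$ is; and when this holds, both $\A(\Q|_S)$ and $\A(\Q_{mut}|_S)$ coincide with their upper cluster algebras by Theorem \ref{lemma: acyclicA=U}, so each is a cluster localization of its parent by Lemma \ref{lemma: acycliclocal}.

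For the cover condition, I would invoke the deletion proposition just above to identify $\A(\Q_{mut}) \cong \A(\Q)/\langle f_1 - 1, \ldots, f_r - 1\rangle$, where $f_1, \ldots, f_r$ are the frozen variables of $\Q$, and hence identify $\X_{\A(\Q_{mut})}$ with a closed subscheme of $\X_{\A(\Q)}$. Since specialization $a \mapsto \bar a$ of cluster variables commutes with freezing, one checks $\X_{\A(\Q|_S)} \cap \X_{\A(\Q_{mut})} = \X_{\A(\Q_{mut}|_S)}$; that is, the bijective correspondence on freezings restricts charts correctly along the closed embedding. The forward implication is then immediate: intersecting a finite cover of $\X_{\A(\Q)}$ by acyclic cluster charts with the closed subscheme $\X_{\A(\Q_{mut})}$ yields the required acyclic cluster-chart cover of the latter.

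The backward implication is the main obstacle: from a cover $\{\X_{\A(\Q_{mut}|_{S_j})}\}$ of $\X_{\A(\Q_{mut})}$ I want to conclude that the corresponding charts $\{\X_{\A(\Q|_{S_j})}\}$ cover $\X_{\A(\Q)}$. These are acyclic cluster charts as established above, so only the covering property is in question; algebraically I need to lift a relation $1 = \sum_j c_j \bar\pi_j$ in $\A(\Q_{mut})$ (with $\bar\pi_j = \prod_{a \in S_j}\bar a$) to a similar relation $1 = \sum_j \tilde c_j \pi_j$ in $\A(\Q)$, eliminating the $(f_i - 1)$-contribution that arises from an arbitrary lift. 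The approach I envision exploits that the frozen variables are invertible in $\A(\Q)$ (visible in Lemma \ref{lemma: acyclicpres}), giving a flat map $\X_{\A(\Q)} \to (\mathbb{G}_m)^r$ with $\X_{\A(\Q_{mut})}$ as the fiber over $(1,\ldots,1)$. Under a natural $(\mathbb{G}_m)^r$-action that scales the frozen variables and lifts to $\X_{\A(\Q)}$ via a $\Z^r$-grading of the cluster algebra extending $\deg(f_i) = e_i$, the cluster variables become semi-invariants, so each cluster chart is torus-invariant; since the torus orbit of the fiber sweeps out all of $\X_{\A(\Q)}$, a cover of the fiber extends equivariantly to a cover of the whole. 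The technical heart of this direction is verifying the existence of the requisite $\Z^r$-grading (a linear-algebra condition on the extended exchange matrix), or replacing the torus argument by a direct algebraic lifting using the cluster relations in $\A(\Q)$.
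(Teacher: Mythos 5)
Your forward direction is fine, and it is essentially the paper's: freezings of $\Q$ and of $\Q_{mut}$ at corresponding sets of cluster variables (in corresponding, possibly mutated, seeds) have identical mutable subquivers, the acyclic ones are cluster localizations by Theorem \ref{lemma: acyclicA=U} and Lemma \ref{lemma: acycliclocal}, and intersecting a cover of $\X_{\A(\Q)}$ with the closed subscheme $\X_{\A(\Q_{mut})}\cong \X_{\A(\Q)/\langle f_1-1,\dots,f_r-1\rangle}$ yields the corresponding charts downstairs.

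The genuine gap is the backward implication, which you correctly identify as the main obstacle but do not close, and the mechanism you propose for it would fail. A $(\mathbb{G}_m)^r$-action rescaling the frozen variables under which all cluster variables are semi-invariant is the same thing as a $\Z^r$-grading in which every exchange relation is homogeneous; homogeneity of the relation at a mutable vertex $k$ forces $\sum_j \Q_{kj}\deg(a_j)=0$, so the degree vector must lie in the kernel of the exchange matrix. Already for the ice quiver with one mutable vertex $x$, one frozen vertex $f$, and a single arrow from $x$ to $f$, the relation $xx'=f+1$ forces $\deg(f)=0$, so a grading with $\deg(f_i)=e_i$ does not exist, and there is no torus action sweeping the fiber over $(1,\dots,1)$ across $\X_{\A(\Q)}$; the fibers of $\X_{\A(\Q)}\to(\mathbb{G}_m)^r$ genuinely vary, and your lifted relation only gives $\sum_j \tilde c_j\pi_j = 1+z$ with $z\in\langle f_1-1,\dots,f_r-1\rangle$, which is exactly the unresolved point. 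The paper does not argue this way at all: its proof runs the chart correspondence in both directions, asserting that an acyclic cover of $\A(\Q_{mut})$ becomes an acyclic cover of $\A(\Q)$ simply by re-inserting the frozen vertices into each seed of the cover (and conversely by deleting them), with no grading, no flatness over the coefficient torus, and no lifting of the identity $1=\sum_j \bar c_j\bar\pi_j$ through the quotient. So if you want to keep your algebraic formulation, the missing ingredient is precisely a proof that the corresponding charts still cover upstairs; as written, your proposal establishes only one of the two implications.
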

\begin{proof}
For an acyclic cover $\{\A_i\}$ of $\A(\Q)$, deleting the frozen variables in $\A_i$ which are frozen variables in $\A(\Q)$ gives an acyclic cover of $\A(\Q_{mut})$.  Similarly, any acyclic cover of $\A(\Q_{mut})$ gives an acyclic cover of $\A(\Q)$ by adding frozen variables.
\end{proof}

\begin{rem}\label{rem: LAnonlocal}
If $\A$ is locally acyclic, it does not follow that a freezing or cluster localization $\A'$ is also locally acyclic.  This is because cluster charts are not closed under intersection, and so an acyclic cover of $\X_\A$ cannot be intersected with $\X_{\A'}$ to give an acyclic cover of $\X_{\A'}$.

An example of a locally acyclic $\A$ with non-locally-acyclic freezing $\A'$ is given in Proposition \ref{prop: notclusterlocal}.  The author does not know of any examples locally acyclic $\A$ with non-locally-acyclic cluster localizations $\A'$.
\end{rem}

\subsection{Explicit examples and non-examples}

For the reader's benefit, here are some examples (proofs and further investigation can be found in Section \ref{section: examples}). 
\begin{figure}[h!]
\centering
\begin{tikzpicture}[scale=.5,auto,minimum size=2mm]
    \node (1) at (-1,-.58) [mutable] {};
    \node (2) at (1,-.58) [mutable] {};
    \node (3) at (0,1.15) [mutable] {};
    \node (4) at (0,0) [mutable] {};

    \draw (1) to [out=330,in=210,-angle 90] (2);
    \draw (2) to [out=90,in=330,-angle 90] (3);
    \draw (3) to [out=210,in=90,-angle 90] (1);
    \draw (1) to [-angle 90] (4);
    \draw (2) to [-angle 90] (4);
    \draw (3) to [-angle 90] (4);
\end{tikzpicture}
\hspace{.5in}
\begin{tikzpicture}[scale=.5,auto,minimum size=2mm]
    \node (1) at (-1,-.58) [mutable] {};
    \node (2) at (1,-.58) [mutable] {};
    \node (3) at (0,1.15) [mutable] {};
    \node (4) at (0,0) [mutable] {};
    \node (5) at (0,-2) [mutable] {};

    \draw (1) to [out=330,in=210,-angle 90] (2);
    \draw (2) to [out=90,in=330,-angle 90] (3);
    \draw (1) to [out=90,in=210,-angle 90] (3);
    \draw (4) to [-angle 90] (1);
    \draw (4) to [-angle 90] (2);
    \draw (3) to [out=300,in=60,-angle 90] (4);
    \draw (3) to [out=240,in=120,-angle 90] (4);
    \draw (1) to [out=270,in=150,angle 90-] (5);
    \draw (2) to [out=270,in=30,-angle 90] (5);
\end{tikzpicture}
\hspace{.5in}
\begin{tikzpicture}[scale=.5,auto,minimum size=2mm]
    \node (1) at (-2,1) [mutable] {};
    \node (2) at (-2,-1) [mutable] {};
    \node (3) at (-1,0) [mutable] {};
    \node (4) at (0,1) [mutable] {};
    \node (5) at (0,-1) [mutable] {};
    \node (6) at (1,0) [mutable] {};
    \node (7) at (2,1) [mutable] {};
    \node (8) at (2,-1) [mutable] {};

    \draw (1) to [-angle 90] (3);
    \draw (3) to [-angle 90] (2);
    \draw (3) to [-angle 90] (4);
    \draw (4) to [-angle 90] (6);
    \draw (6) to [-angle 90] (5);
    \draw (5) to [-angle 90] (3);
    \draw (8) to [-angle 90] (6);
    \draw (6) to [-angle 90] (7);
\end{tikzpicture}
\hspace{.5in}
\begin{tikzpicture}[inner sep=0.5mm,scale=.4,auto,minimum size=2mm]
	\node (1) at (0,0)  [mutable] {};
	\node (2) at (-2,0)  [mutable] {};
	\node (3) at (2,0)  [mutable] {};
	\node (4) at (-1,1.5)  [mutable] {};
	\node (5) at (1,1.5)  [mutable] {};
	\node (6) at (0,-1.5)  [mutable] {};
	\draw (1) to [-angle 90] (2);
	\draw (2) to [out=80, in=220,-angle 90] (4);
	\draw (2) to [out=40, in=260,-angle 90] (4);
	\draw (4) to [-angle 90] (1);
	\draw (1) to [-angle 90] (3);
	\draw (3) to [out=140, in=280,-angle 90] (5);
	\draw (3) to [out=100, in=320,-angle 90] (5);
	\draw (5) to [-angle 90] (1);
	\draw (1) to [-angle 90] (6);
\end{tikzpicture}
\caption{Some locally acyclic cluster algebras}
\label{fig: LA}
\end{figure}
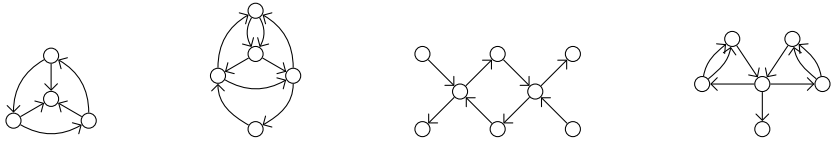

\begin{figure}[h!]
\begin{tikzpicture}[scale=.5,auto,minimum size=2mm]
	\node (1) at (0,1.15)  [mutable] {};
	\node (2) at (-1,-.58)  [mutable] {};
	\node (3) at (1,-.58)  [mutable] {};
	\draw (1) to [out=255, in=45,-angle 90] (2);
	\draw (2) to [out=15,in=165,-angle 90] (3);
	\draw (3) to [out=135,in=285,-angle 90] (1);
	\draw (1) to [out=210, in=90,-angle 90] (2);
	\draw (2) to [out=330,in=210,-angle 90] (3);
	\draw (3) to [out=90,in=330,-angle 90] (1);
\end{tikzpicture}
\hspace{.5in}
\begin{tikzpicture}[scale=.5,auto,minimum size=2mm]
    \node (1) at (-1,-.58) [mutable] {};
    \node (2) at (1,-.58) [mutable] {};
    \node (3) at (0,1.15) [mutable] {};
    \node (4) at (0,0) [mutable] {};

    \draw (1) to [out=330,in=210,-angle 90] (2);
    \draw (2) to [out=90,in=330,-angle 90] (3);
    \draw (1) to [out=90,in=210,-angle 90] (3);
    \draw (4) to [-angle 90] (1);
    \draw (4) to [-angle 90] (2);
    \draw (3) to [out=300,in=60,-angle 90] (4);
    \draw (3) to [out=240,in=120,-angle 90] (4);
\end{tikzpicture}
\hspace{.5in}
\begin{tikzpicture}[scale=.3,auto,minimum size=2mm]
    \node (1) at (0,1.14) [mutable] {};
    \node (2) at (-1,-.57) [mutable] {};
    \node (3) at (1,-.57) [mutable] {};
    \node (4) at (-2,1.14) [mutable] {};
    \node (5) at (2,1.14) [mutable] {};
    \node (6) at (0,-2.28) [mutable] {};

    \draw (1) to [-angle 90] (2);
    \draw (2) to [-angle 90] (3);
    \draw (3) to [-angle 90] (1);
    \draw (4) to [-angle 90] (1);
    \draw (1) to [-angle 90] (5);
    \draw (5) to [-angle 90] (3);
    \draw (3) to [-angle 90] (6);
    \draw (6) to [-angle 90] (2);
    \draw (2) to [-angle 90] (4);
    \draw (4) to [angle 90-,out=45,in=135] (5);
    \draw (5) to [angle 90-,out=285,in=15] (6);
    \draw (6) to [angle 90-,out=165,in=255] (4);
\end{tikzpicture}
\hspace{.5in}
\begin{tikzpicture}[inner sep=0.5mm,scale=.4,auto,minimum size=2mm]
	\node (1) at (0,0)  [mutable] {};
	\node (2) at (-2,0)  [mutable] {};
	\node (3) at (2,0)  [mutable] {};
	\node (4) at (-1,1.5)  [mutable] {};
	\node (5) at (1,1.5)  [mutable] {};
	\node (6) at (-1,-1.5)  [mutable] {};
	\node (7) at (1,-1.5)  [mutable] {};
	\draw (1) to [-angle 90] (2);
	\draw (2) to [out=80, in=220,-angle 90] (4);
	\draw (2) to [out=40, in=260,-angle 90] (4);
	\draw (4) to [-angle 90] (1);
	\draw (1) to [angle 90-] (3);
	\draw (3) to [out=140, in=280,angle 90-] (5);
	\draw (3) to [out=100, in=320,angle 90-] (5);
	\draw (5) to [angle 90-] (1);
	\draw (1) to [-angle 90] (7);
	\draw (7) to [out=160, in=20,-angle 90] (6);
	\draw (7) to [out=200, in=-20,-angle 90] (6);
	\draw (6) to [-angle 90] (1);
\end{tikzpicture}
\caption{Some non-locally-acyclic cluster algebras}
\label{fig: nonLA}
\end{figure}
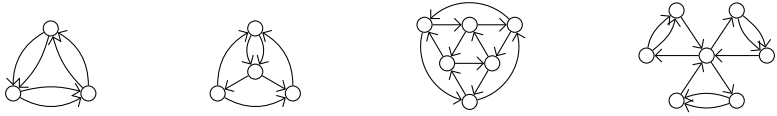

\section{Basic properties of locally acyclic cluster algebras}\label{section: properties}
 Locally acyclic cluster algebras will have many properties which follow from locality.

\subsection{The upper cluster algebra}  The first, and perhaps most interesting, property of locally acyclic cluster algebras is that they are equal to their own upper cluster algebra.
\begin{thm}\label{thm: A=U}
If $\A$ is locally acyclic, then $\A=\U$.
\end{thm}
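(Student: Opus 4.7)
The plan is to reduce to the acyclic case locally on a cover, then glue using the standard partition-of-unity argument for affine schemes.

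Fix a finite cover $\X_\A = \bigcup_{i=1}^r \X_{\A_i}$ by acyclic cluster charts, so each $\A_i$ is the cluster localization $\A[b_i^{-1}]$, where $b_i \in \A$ is the product of the cluster variables that are newly frozen in passing from $\A$ to $\A_i$. Let $\U_i$ denote the upper cluster algebra of $\A_i$. Because each $\A_i$ is acyclic, Theorem \ref{lemma: acyclicA=U} gives $\A_i = \U_i$, and Proposition \ref{prop: inclusions} places $\U$ inside $\U_i$. Consequently, any $f \in \U$ satisfies $f \in \U_i = \A_i = \A[b_i^{-1}]$, so there is an integer $N_i \geq 0$ with $b_i^{N_i} f \in \A$.

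The next step is to glue. The covering condition $\bigcup_i D(b_i) = \X_\A$ says that the ideal $(b_1, \ldots, b_r) \subseteq \A$ has empty vanishing locus, hence equals $\A$; raising the resulting identity $1 = \sum_i c_i b_i$ to a sufficiently high power produces a relation $1 = \sum_i d_i b_i^{N_i}$ with the $N_i$ above (or any larger integers). Multiplying by $f$ then yields
\[
f \;=\; \sum_i d_i \bigl(b_i^{N_i} f\bigr) \;\in\; \A,
\]
which establishes $\U \subseteq \A$ and, combined with the general inclusion $\A \subseteq \U$, completes the proof.

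The main obstacle to this argument is essentially absent: every step is either a direct citation or standard commutative algebra on an affine scheme. The real content is concentrated in the prior results: Berenstein--Fomin--Zelevinsky's theorem that acyclic cluster algebras equal their upper cluster algebras, and the functoriality of the upper cluster algebra under freezing recorded in Proposition \ref{prop: inclusions}. The definition of locally acyclic was set up precisely so that this gluing argument goes through with no further cluster-algebraic input.
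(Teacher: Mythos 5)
Your proof is correct and follows essentially the same route as the paper: restrict to the acyclic charts, where Proposition \ref{prop: inclusions} and the Berenstein--Fomin--Zelevinsky theorem give $\U \subseteq \U_i = \A_i = \A[b_i^{-1}]$, and then glue over the finite cover. The only difference is cosmetic: you perform the gluing by the explicit partition-of-unity identity $1 = \sum_i d_i b_i^{N_i}$, whereas the paper phrases the same step geometrically, noting that $\X_\U \rightarrow \X_\A$ is an isomorphism over each chart of a cover of the target and hence an isomorphism.
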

\begin{proof}
Let $\{\X_{\A_i}\}$ be a collection of acyclic charts which cover $\X_\A$, so that the $\A_i$ are acyclic localizations of $\A$.  By Lemma \ref{lemma: acycliclocal},
\[ \A_i\otimes_\A\U=\A[a_1^{-1},...,a_i^{-1}]\otimes_\A\U=\U[a_1^{-1},..,a_i^{-1}]=\U_i\]
Geometrically, there is a pullback diagram
\[\begin{array}{ccc}
\X_{\U_i} & \hookrightarrow & \X_\U \\
\downarrow & & \downarrow \\
\X_{\A_i} & \hookrightarrow & \X_\A
\end{array}\]
Since $\A_i$ is acyclic, $\X_{\U_i}\stackrel{\sim}{\longrightarrow} \X_{\A_i}$ by Lemma \ref{lemma: acycliclocal}.  Therefore, the map $\X_\U\rightarrow \X_\A$ is locally an isomorphism in the target, so it is an isomorphism.
\end{proof}

\subsection{Algebraic properties} Local acyclicity also implies several algebraic properties which are local in nature.
\begin{thm}\label{thm: LAprops}
If $\A$ is locally acyclic, then $\A$ is finitely-generated, integrally closed, and locally a complete intersection.
\end{thm}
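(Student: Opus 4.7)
The plan is to handle each of the three conclusions separately, leveraging the acyclic cover differently in each case.

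\emph{Integral closure} follows almost immediately. Theorem~\ref{thm: A=U} gives $\A = \U$, and the upper cluster algebra was shown to be integrally closed in an earlier proposition. So $\A$ is integrally closed.

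\emph{Locally a complete intersection} reduces to the fact that being LCI is a Zariski-local property: it can be checked after localizing at each prime, or equivalently on any open cover by affines. By hypothesis $\X_\A$ is covered by finitely many cluster charts $\X_{\A_i}$ with each $\A_i$ acyclic. By Corollary~\ref{coro: acyclic}, each $\A_i$ is a complete intersection (hence LCI), and since cluster localization gives an open inclusion $\X_{\A_i} \hookrightarrow \X_\A$, the stalks of $\O_{\X_\A}$ at points of $\X_{\A_i}$ agree with those of $\A_i$. As the cover is finite and exhausts all primes of $\A$, the conclusion follows.

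\emph{Finite generation} is the delicate point, because finite generation over $\Z$ is \emph{not} a local property. Here one uses the standard descent argument along a finite cover by distinguished opens. Writing $\A_i = \A[a_{i,1}^{-1},\ldots,a_{i,k_i}^{-1}]$ and setting $f_i := a_{i,1}\cdots a_{i,k_i}$, the fact that $\{\X_{\A_i}\}$ covers $\X_\A$ is equivalent to saying $(f_1,\ldots,f_r)$ is the unit ideal in $\A$; write $1 = \sum c_i f_i$ with $c_i \in \A$. Corollary~\ref{coro: acyclic} provides finite $\Z$-generating sets for each $\A_i$; clearing denominators writes each generator as $h_{ij}/f_i^{n_{ij}}$ with $h_{ij} \in \A$. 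Let $\A_0 \subseteq \A$ be the $\Z$-subalgebra generated by the finite set $\{f_i, h_{ij}, c_i\}$. Then $(\A_0)[f_i^{-1}] = \A[f_i^{-1}]$ for each $i$, and $(f_1,\ldots,f_r)$ is the unit ideal already in $\A_0$, so the inclusion $\A_0 \hookrightarrow \A$ is an isomorphism on the cover $\{D(f_i)\}$ of $\Spec \A_0$; hence $\A_0 = \A$ and $\A$ is finitely generated.

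The main obstacle is this last step: the other two properties transfer directly from the acyclic case via $\A=\U$ or via locality, but finite generation requires the explicit gluing of generators from each chart together with the coefficients witnessing the cover condition. Everything else is formal.
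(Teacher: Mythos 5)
Your proof is correct and follows essentially the same route as the paper: integral closure via $\A=\U$, locally a complete intersection by locality of that property together with Corollary \ref{coro: acyclic}, and finite generation from the finite cover by finitely generated acyclic charts. The only difference is cosmetic: the paper disposes of finite generation by citing \cite[Exercise II.3.3(b)]{Har77}, while you spell out the standard unit-ideal, clear-denominators descent argument that proves that exercise.
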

\begin{proof}
Acyclic cluster schemes are finite-type over $Spec(\mathbb{Z})$, by Lemma \ref{lemma: acyclicpres}.  Since $\X_\A$ can be covered by finitely many finite-type schemes, it follows from \cite[Exercise II.3.3(b)]{Har77} that $\A$ is finitely-generated.  Since $\A=\U$, $\A$ is integrally closed.  Being locally a complete intersection is a local property by definition.
\end{proof}

\subsection{The Weil-Petersson form}\label{section: WP}

 In \cite{GSV05}, the authors define a natural 2-form $\omega_{WP}$ on the union of the cluster tori $\X^\circ$, called the \textbf{Weil-Petersson} form, which encodes the information of the exchange matrix.  For $\mathbf{a}=\{a_1,...,a_n\}$ a cluster with exchange matrix $A$, the Weil-Petersson form is defined on the cluster torus of $\mathbf{a}$ by
\[ \omega_{WP}:= \sum_{i,j} A_{ij}\frac{dx_i\wedge dx_j}{x_ix_j}\]
It can then be shown that this definition is independent of the choice of cluster torus, and so it extends to all of $\X^\circ$.

One would hope that $\omega_{WP}$ would extend to $\X_\A$.  However, $\X_\A$ may be singular, even in the acyclic case, and so it is necessary to pass to a generalized notion of differential form, which is well-defined at singularities.  This is accomplished by passing to \emph{K\"ahler differential forms}, whose definition can be found in \cite[II.8]{Har77} or \cite{GregWP}.

\begin{lemma}\cite[Theorem 5.2.2.]{GregWP}
If $\A$ is acyclic, the Weil-Petersson form $\omega_{WP}$ extends to a K\"ahler 2-form on $\X_\A$.
\end{lemma}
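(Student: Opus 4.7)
The strategy is to use the explicit finite presentation of $\A$ from Lemma \ref{lemma: acyclicpres}, which realizes $\A$ as a quotient $R/(R_1,\dots,R_m)$ of a regular ring $R$ by the regular sequence $R_i := a_ia_i' - \pi_i^+ - \pi_i^-$. This exhibits $\X_\A$ as a complete intersection, hence Cohen--Macaulay and $S_2$. The module of K\"ahler $1$-forms $\Omega^1_{\A/\Z}$ is then computed by the Jacobian exact sequence as generated over $\A$ by the symbols $\{da_i\}_{i=1}^n$ and $\{da_i'\}_{i=1}^m$ modulo the relations $dR_i = a_i'\,da_i + a_i\,da_i' - d\pi_i^+ - d\pi_i^-$, and $\Omega^2_{\A/\Z}=\bigwedge^2\Omega^1_{\A/\Z}$ has an analogous explicit description.

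The plan is to extend $\omega_{WP}$ first to a sufficiently large open subset of $\X_\A$ and then apply a Hartogs-type extension. Enlarge $\X^\circ$ by including the cluster tori of all seeds obtained by mutating the initial seed exactly once; call this open set $\X'\subseteq \X_\A$. On the initial torus, the defining formula $\omega_{WP}=\sum_{i,j} A_{ij}\,da_i\wedge da_j/(a_ia_j)$ is patently regular, since every $a_i$ is invertible there. On the torus obtained by mutating at $k$, the analogous formula in the mutated cluster variables and the mutated exchange matrix exhibits $\omega_{WP}$ as a regular section on $\{a_i\neq 0\text{ for }i\neq k,\ a_k'\neq 0\}$. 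These regular expressions agree on overlaps because $\omega_{WP}$ is already a well-defined 2-form on $\X^\circ$ by \cite{GSV05}.

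A direct dimension count using the complete intersection presentation shows that $\X_\A\setminus\X'$ has codimension at least $2$: a point missing from every chart in $\X'$ must have either two distinct vanishing $a_j$'s, or a unique vanishing $a_{k_0}=0$ forcing $a_{k_0}'=0$ as well, and in the latter case the relation $R_{k_0}$ degenerates to $\pi_{k_0}^+ + \pi_{k_0}^-=0$, cutting out a locus of codimension $\geq 2$. Because $\X_\A$ is $S_2$, the regular 2-form defined on $\X'$ extends uniquely across this codimension-$\geq 2$ locus. The main subtlety is ensuring that the extended section lies in $\Omega^2_{\A/\Z}$ itself rather than merely in its reflexive hull; concretely, this can be verified by using the relations $dR_i = 0$ to rewrite $\omega_{WP}$ globally as an $\A$-linear combination of wedge products of the generators $da_i, da_i'$ with no denominators remaining, thereby presenting an honest element of $\Omega^2_{\A/\Z}$.
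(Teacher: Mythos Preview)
The paper does not prove this lemma at all; it is stated as a citation to \cite[Theorem 5.2.2]{GregWP}. From the remark immediately following Theorem~\ref{thm: WP}, the approach in \cite{GregWP} is to construct, around every point of $\X_\A$, an explicit local expression for $\omega_{WP}$ as a K\"ahler $2$-form using the presentation of Lemma~\ref{lemma: acyclicpres}; these local sections then glue because K\"ahler differentials form a sheaf. Crucially, that paper does \emph{not} produce a single global polynomial expression.

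Your strategy is genuinely different, and it has a real gap at exactly the step you flag as ``the main subtlety.'' The $S_2$/Hartogs principle extends sections of the structure sheaf across codimension $\ge 2$; it does not extend sections of $\Omega^2_{\A/\Z}$, which for a complete intersection is typically neither reflexive nor torsion-free. You correctly note that what you get a priori is only a section of the reflexive hull. Your proposed remedy---``use the relations $dR_i=0$ to rewrite $\omega_{WP}$ globally with no denominators''---is not an afterthought but the entire content of the lemma: if you could exhibit such a global expression, the Hartogs argument would be superfluous, and conversely the Hartogs step gives you nothing toward producing one. The remark after Theorem~\ref{thm: WP} explicitly says that \cite{GregWP} could not find such a global expression and had to work pointwise instead, which is strong evidence that the rewrite you assert is nontrivial and cannot simply be declared.

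The codimension computation for $\X_\A\setminus\X'$ is essentially right, though your case where a single $a_{k_0}$ and $a_{k_0}'$ both vanish deserves a bit more care: you need that $\pi_{k_0}^+ + \pi_{k_0}^-$ is not identically zero modulo the remaining relations, which holds because $\pi_{k_0}^\pm$ are monomials in variables $a_j$ with $j\neq k_0$ and each $R_i$ for $i\neq k_0$ involves its own $a_i'$. But this part of the argument is not where the proof breaks.
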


Since K\"ahler 2-forms form a sheaf on $\X_\A$, they can always be patched together from local data.

\begin{thm}\label{thm: WP} If $\A$ is locally acyclic, the Weil-Petersson form $\omega_{WP}$ extends to a K\"ahler 2-form on $\X_\A$.
\end{thm}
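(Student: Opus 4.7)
The plan is to piece together a global K\"ahler 2-form from local extensions on an acyclic cover, exploiting the sheaf property of K\"ahler differentials on $\X_\A$ emphasized in the paragraph preceding the theorem.  First, choose a finite acyclic cover $\X_\A = \bigcup_{i=1}^N \X_{\A_i}$, guaranteed by the hypothesis of local acyclicity.  Applying the cited acyclic case of the result to each $\A_i$ produces a K\"ahler 2-form $\omega_i \in \Omega^2_{\A_i/\Z}$ whose restriction to the dense cluster torus subscheme $\X^\circ \cap \X_{\A_i}$ equals $\omega_{WP}$.

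The crux is to verify that $\omega_i$ and $\omega_j$ restrict to the same section of $\Omega^2_{\X_\A/\Z}$ on the overlap $\X_{\A_i} \cap \X_{\A_j}$.  Both restrictions are K\"ahler 2-forms on this overlap agreeing with $\omega_{WP}$ on the dense open subscheme $\X^\circ \cap \X_{\A_i} \cap \X_{\A_j}$.  I expect the compatibility to follow from the canonical, formula-driven nature of the acyclic extension established in \cite{GregWP}: the extension on an acyclic chart is determined by a fixed Laurent expression in any acyclic seed of $\A_i$, and this expression is manifestly stable under further localization, so the overlap comparison reduces to an identity that already holds on any common cluster torus inside the overlap.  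Once this is in hand, the sheaf axiom for $\Omega^2_{\X_\A/\Z}$ uniquely assembles the $\omega_i$ into a global K\"ahler 2-form $\omega$ on $\X_\A$ restricting to each $\omega_i$, and hence to $\omega_{WP}$ on $\X^\circ$.

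The main technical obstacle is precisely this overlap compatibility.  K\"ahler differential modules can carry torsion supported on singular loci, so the naive principle ``two K\"ahler forms on a reduced scheme that agree on a dense open subscheme are equal'' is not automatic; although $\A$ is integrally closed by Theorem~\ref{thm: LAprops}, this gives only normality of $\X_\A$, not torsion-freeness of $\Omega^2$.  The resolution, as above, is to lean on the canonicity of the acyclic extension rather than on an ad hoc density argument, so that the restrictions $\omega_i$ and $\omega_j$ on the overlap coincide because they are built from the same cluster data, not merely because they happen to agree generically.  After this step everything else is a direct invocation of the sheaf property.
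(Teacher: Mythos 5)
Your overall route --- a finite acyclic cover, the acyclic extension of \cite[Theorem 5.2.2]{GregWP} on each chart, and then the sheaf property of K\"ahler 2-forms to assemble a global form --- is exactly the argument the paper intends; the paper itself offers no more justification than the sentence about patching local data. You are also right to flag overlap compatibility as the one step the sheaf axiom does not give for free: $\Omega^2$ may have torsion away from the cluster tori, and the intersection $\X_{\A_i}\cap\X_{\A_j}$ of two cluster charts need not itself be a cluster chart (see the Warning remark in Section 3), so one cannot simply rerun the acyclic statement on the overlap.

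The problem is the mechanism you propose to settle that compatibility. You claim the extension on an acyclic chart ``is determined by a fixed Laurent expression in any acyclic seed of $\A_i$'' and is ``manifestly stable under further localization.'' The remark immediately following Theorem \ref{thm: WP} says precisely the opposite: \cite{GregWP} does \emph{not} produce such a global expression; it only constructs explicit expressions locally around each point of the acyclic cluster scheme, and a single global formula is merely shown to exist, not exhibited. So the canonicity you lean on is not available in the form you use it, and the equality $\omega_i=\omega_j$ on $\X_{\A_i}\cap\X_{\A_j}$ remains unproved in your write-up --- which is the very step you identified as the crux. To close it you need a genuine rigidity input rather than an appeal to a global Laurent formula: for instance, an injectivity statement for the restriction $\Omega^2(U)\rightarrow\Omega^2(U\cap\X^\circ)$ on the relevant opens (i.e.\ no 2-form sections supported off the cluster tori), or a pointwise-local comparison carried out with the expressions that \cite{GregWP} actually constructs around each point of the overlap. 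As it stands, your proof is no more complete on this point than the paper's one-line argument, and its stated justification contradicts the paper's own description of the cited construction.
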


An algebraic interpretation of this says that it is possible to find $a_i,b_i,c_i\in\A$ such that
\[ \widetilde{\omega_{WP}}:= \sum_i a_i db_i\wedge dc_i\]
agrees with $\omega_{WP}$ on $\X^\circ$.
\begin{rem}
The paper \cite{GregWP} does not produce such an expression.  It constructs an explicit expression locally around any point in $\X_\A$, but it fails to produce a global expression (though such a form exists).
\end{rem}
\begin{rem}
This fact is best appreciated in comparison with a similar form which fails to extend.  On any cluster torus, there are non-zero $n$-forms $\omega_{vol}$ which are invariant for the $n$-torus action, and such $n$-forms differ by a scaling.  Such a form extends to a well-defined $n$-form $\omega_{vol}$ on $\X^\circ$.  However, in the case of the $A_3$ with no frozen vertices, this $3$-form does not extend to $\X_\A$.
%
\end{rem}

%

\section{Covering pairs and the Banff algorithm}

This section develops some important techniques for producing covers of cluster schemes.

%


\subsection{Covering pairs} To cover $\X_\A$ with cluster charts, it will be necessary when pairs of cluster variables are relatively prime to each other.

Let $\Q$ be an ice quiver.  A \textbf{(directed) bi-infinite path} in $\Q$ is a sequence of mutable vertices $i_k$ indexed by $k\in\mathbb{Z}$ such that $\Q_{i_k,i_{k+1}}>0$ for all $k$; that is, there is at least one arrow from $i_k$ to $i_{k+1}$.  We will say an arrow $e$ is in a bi-infinite path if there is a bi-infinite path $\{i_k\}_{k\in \mathbb{Z}}$ with $i_1$ the source of $e$, and $i_2$ the target.  A cycle determines a bi-infinite path, but an arrow can be in a bi-infinite path without being in a cycle.
\begin{prop}
An arrow $e$ is in a bi-infinite path iff there is a cycle upstream from $e$ and a cycle downstream from $e$.\footnote{These cycles may coincide.}
\end{prop}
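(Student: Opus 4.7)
The plan is to prove each direction separately, using the finiteness of the vertex set to convert infinite paths into cycles, and using the ability to traverse a cycle repeatedly to convert cycles into infinite paths.

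For the forward direction, suppose $e$ is an arrow in a bi-infinite path $\{i_k\}_{k\in\Z}$, with $i_1$ the source of $e$ and $i_2$ the target. Consider the one-sided sequence $\{i_k\}_{k\geq 2}$. Since $\Q$ has only finitely many mutable vertices, by the pigeonhole principle there must be indices $k < k'$ with $i_k = i_{k'}$. The sub-path $i_k \to i_{k+1} \to \cdots \to i_{k'}$ is then a directed cycle in $\Q_{mut}$, and the path $i_2 \to i_3 \to \cdots \to i_k$ witnesses that this cycle is reachable from the target of $e$, i.e.\ lies downstream. The same argument applied to $\{i_k\}_{k \leq 1}$ produces a cycle upstream from $e$.

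For the reverse direction, suppose there is a cycle $C_{down}$ downstream of $e$ and a cycle $C_{up}$ upstream of $e$. By hypothesis, there is a directed path $q$ from the target $t$ of $e$ to some vertex $w \in C_{down}$, and a directed path $p$ from some vertex $v \in C_{up}$ to the source $s$ of $e$. To build the forward part of the desired bi-infinite path, concatenate $e$, then the path $q$ from $t$ to $w$, then traverse the cycle $C_{down}$ starting at $w$ infinitely many times. Symmetrically, to build the backward part, take the path $p$ from $v$ to $s$, prefixed by infinitely many traversals of $C_{up}$ ending at $v$. Concatenating these yields a bi-infinite sequence of vertices with a directed arrow from each to the next, whose $(i_1, i_2)$-transition is the arrow $e$.

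There is no serious obstacle: both directions are straightforward once one notices that the pigeonhole principle converts a one-sided infinite directed walk in a finite quiver into a cycle, and that a cycle can be traversed arbitrarily many times to produce a one-sided infinite directed walk. The only mild subtlety is being careful about what ``upstream'' and ``downstream'' mean (namely, that there exist directed paths from $C_{up}$ to $s$ and from $t$ to $C_{down}$, respectively), so that the concatenation in the reverse direction genuinely produces a directed bi-infinite path.
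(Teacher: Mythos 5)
Your proof is correct and follows essentially the same route as the paper: the forward direction uses finiteness of the mutable vertex set (pigeonhole) to extract a repeated vertex and hence a cycle on each side of $e$, and the converse is the routine construction obtained by traversing the upstream and downstream cycles infinitely often and concatenating through $e$. The paper merely compresses both halves into one sentence plus ``the other direction is similar,'' so your write-up is just a fuller version of the same argument.
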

\begin{proof}
Since the set of mutable vertices is finite, there must be $1\leq k\leq l$ such that $i_k=i_l$; then $\{i_k,i_{k+1},...,i_l\}$ is a cycle upstream from from $e$.  The other direction is similar.
\end{proof}

\begin{defn}
A \textbf{covering pair} $(a_1,a_2)$ in an ice quiver $\Q$ is a pair of mutable vertices $a_1,a_2$ such that there is an arrow from $a_1$ to $a_2$ but this arrow is not in any bi-infinite path (in $\Q_{mut}$).

A \textbf{covering pair} $(a_1,a_2)$ in a cluster algebra $\A$ is a pair of cluster variables such that there is some seed $(\Q,\a)$ with $a_1,a_2\in \a$ and $(a_1,a_2)$ a covering pair in $\Q$.
\end{defn}

The following is the key lemma for finding pairs of cluster variables which do not simultaneously vanish on $\X_\A$.
\begin{lemma}\label{lemma: covering}
Covering pairs are relatively prime in $\A$.
\end{lemma}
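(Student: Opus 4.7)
The plan is to argue by contradiction: suppose $\mathfrak{p}$ is a prime ideal of $\A$ containing both $a_1$ and $a_2$, where $(a_1,a_2)$ is a covering pair realized in some seed $(\Q,\a)$. I will show that this forces the arrow $a_1\to a_2$ to sit inside a bi-infinite path of mutable vertices in $\Q_{mut}$, contradicting the covering pair hypothesis.

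First I would mutate at $a_1$ to obtain the exchange relation $a_1a_1'=\pi_1^++\pi_1^-$. Since there is an arrow $a_1\to a_2$ in $\Q$, the cluster variable $a_2$ appears as a factor of $\pi_1^+$, so $a_2\in\mathfrak{p}$ gives $\pi_1^+\in\mathfrak{p}$; combined with $a_1\in\mathfrak{p}$, this forces $\pi_1^-\in\mathfrak{p}$. The key observation is that frozen cluster variables are units in $\A$ (compare Lemma \ref{lemma: acyclicpres}), so no frozen factor of $\pi_1^-$ lies in $\mathfrak{p}$. Hence either $\pi_1^-$ has no mutable factors at all and is itself a unit (immediately contradicting $\pi_1^-\in\mathfrak{p}$), or some mutable factor $b$ of $\pi_1^-$ lies in $\mathfrak{p}$, giving a mutable vertex $b$ with $b\to a_1$ in $\Q$ and $b\in\mathfrak{p}$.

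I then iterate: given mutable $u,v\in\mathfrak{p}$ with an arrow $u\to v$ in $\Q$, the exchange relation at $u$ yields (by the same argument, with $v$ playing the role of $a_2$) either an immediate contradiction or a further mutable vertex $w\in\mathfrak{p}$ with $w\to u$. Starting from $(b,a_1)$ and iterating backwards produces an infinite sequence $\cdots\to b^{(2)}\to b^{(1)}\to b^{(0)}=b\to a_1$ of mutable vertices in $\mathfrak{p}$; finiteness of the mutable vertex set forces a repetition, producing a directed cycle in $\Q_{mut}$ upstream from the arrow $a_1\to a_2$. Symmetrically, the exchange relation at $a_2$ places $a_1$ in $\pi_2^-$ (since $\Q_{2,1}<0$), so $\pi_2^+\in\mathfrak{p}$, producing a mutable vertex $c$ with $a_2\to c$ and $c\in\mathfrak{p}$; iterating forward yields a cycle downstream from $a_1\to a_2$.

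Concatenating the upstream cycle, the arrow $a_1\to a_2$, and the downstream cycle gives a bi-infinite path in $\Q_{mut}$ through this arrow, by the proposition just proved characterizing bi-infinite paths in terms of upstream and downstream cycles; this contradicts $(a_1,a_2)$ being a covering pair. The main obstacle is handling the frozen factors appearing in $\pi_1^-$ and $\pi_2^+$, since the iteration could in principle get stuck if the only factors in $\mathfrak{p}$ were frozen; this is resolved by using that frozen variables are units in $\A$ and so never belong to a prime, forcing every containment of a $\pi^\pm$ in $\mathfrak{p}$ to be witnessed by a mutable factor.
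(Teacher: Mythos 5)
Your proposal is correct and is essentially the paper's own argument: assume a prime contains both variables, use the exchange relations to propagate membership in the prime upstream and downstream through mutable neighbors, and conclude the arrow lies on a bi-infinite path, contradicting the covering-pair hypothesis. The only difference is cosmetic — you make explicit the point (glossed in the paper) that frozen variables are units in $\A$ and hence cannot witness the primality step, which is a welcome bit of extra care but not a new idea.
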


\begin{proof}Let $(a_{i_1},a_{i_2})$ be a covering pair.
Assume for contradiction that $P$ is a prime ideal with $a_{i_1},a_{i_2}\in I$.  The mutation relation at $i_2$ is
\[ a_{i_2}a_{i_2}'=\prod_{j,\;\Q_{i_2j}>0} a_j^{\Q_{i_2j}}+\prod_{j,\;\Q_{i_2j}<0} a_j^{-\Q_{i_2j}}\]
Since $\Q_{i_1i_2}>0$, the second term on the right hand side is in $P$.  Since the left hand side is in $P$, it follows that $\prod_{j,\;\Q_{i_2j}>0} a_j^{\Q_{i_2j}}\in P$.  By primality, there is some mutable $j$ with $\Q_{i_2j}>0$ such that $a_j\in P$.

Set $i_3:=j$, and repeat this argument for the pair $(i_2,i_3)$.  Iterating this produces a list of mutable vertices $i_1,i_2,...$ such that $\Q_{i_k,i_{k+1}}>0$.  Similarly, this argument may be iterated in the opposite direction to produce a bi-infinite path $...i_{-1},i_0,i_1,i_2,...$ of mutable indices.  This contradicts the second condition.  Therefore, if $i_1$ and $i_2$ are as above, then $a_{i_1}$ and $a_{i_2}$ cannot be in the same prime ideal.
\end{proof}
Geometrically, for a covering pair $(a_{i_1},a_{i_2})$, at most one of $a_{i_1}$ and $a_{i_2}$ can vanish at any point in $\X_\A$.  This means that $\X_\A$ is the union of the corresponding localizations.
\begin{coro}
Let $(a_{i_1},a_{i_2})$ be a covering pair.  Then $\X_{\A[a_{i_1}^{-1}]}$ and $\X_{\A[a_{i_2}^{-1}]}$ are two open subschemes which cover $\X_\A$.
\end{coro}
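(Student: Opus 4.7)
The plan is to translate the algebraic content of Lemma \ref{lemma: covering} into the geometric language of the corollary. For any $f\in\A$, the localization $\A[f^{-1}]$ corresponds to the basic open subscheme
\[D(f) := \{P\in\mathrm{Spec}(\A) \mid f\notin P\} \subseteq \X_\A,\]
so $\X_{\A[a_{i_1}^{-1}]}$ and $\X_{\A[a_{i_2}^{-1}]}$ are already open subschemes of $\X_\A$ by the general theory of localization. The only real content is that their union is all of $\X_\A$.

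To verify the covering property, I would take an arbitrary prime ideal $P\in\mathrm{Spec}(\A)$ and show that $P$ does not contain both $a_{i_1}$ and $a_{i_2}$. This is exactly the content of Lemma \ref{lemma: covering}: its proof shows that if a prime $P$ contained both elements of a covering pair, one could iterate the mutation-relation argument to produce a bi-infinite directed path of mutable vertices through the arrow from $a_{i_1}$ to $a_{i_2}$, contradicting the definition of a covering pair. Therefore at least one of $a_{i_1},a_{i_2}$ lies outside $P$, so $P\in D(a_{i_1})\cup D(a_{i_2})$.

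Since this holds for every $P\in\mathrm{Spec}(\A)$, we conclude that $\X_\A = \X_{\A[a_{i_1}^{-1}]} \cup \X_{\A[a_{i_2}^{-1}]}$, which is the desired cover. There is really no obstacle here beyond invoking the lemma; the whole point is that Lemma \ref{lemma: covering} is designed precisely to guarantee that the two basic opens determined by a covering pair jointly exhaust $\X_\A$, giving the simplest instance of the kind of cover that will be iterated in the Banff algorithm.
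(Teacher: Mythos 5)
Your proof is correct and is essentially the paper's own argument: the paper also deduces the cover immediately from Lemma \ref{lemma: covering}, observing that no prime of $\A$ can contain both $a_{i_1}$ and $a_{i_2}$, so the two basic open subschemes exhaust $\X_\A$.
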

If $\A[a_{i_1}^{-1}]$ and $\A[a_{i_2}^{-1}]$ are cluster localizations, then these localizations are the same as freezing $a_{i_1}$ or $a_{i_2}$ (respectively), and thus this would give two cluster charts which cover $\X_\A$.  This will be our main technique for producing coverings by cluster charts.

\subsection{The Banff algorithm} The covering lemma gives a direct method of attempting to produce a locally acyclic cover of a cluster algebra $\A$.  At each step, one looks for a covering pair in any seed of the cluster algebra.  If this is possible, then one can refine the cover into simpler pieces, by consider the two quivers corresponding to freezing each part of a covering pair.  Iterating this procedure will either produce a locally acyclic cover of $\A$, or it will hit a `dead end' in the form of a seed with no covering pairs.

We encode this in a branching algorithm, affectionately called the \textbf{Banff algorithm}, which attempts to produce a finite acyclic cover of $\X_\A$.

\begin{framed}
\noindent \underline{Input}: a seed $(\Q,\mathbf{a})$.
\begin{enumerate}
\item If $(\Q,\mathbf{a})$ is equivalent to an acyclic seed, stop.
\item Otherwise, mutate $(\Q,\mathbf{a})$ to a seed $(\Q',\a')$ with a covering pair $(i_1,i_2)$.  If this is impossible, the algorithm \textbf{fails}.
\item Consider two seeds, which are the seed $(\Q',\a')$ with either $i_1$ or $i_2$ frozen.  Plug each of these seeds into Step 1.
\end{enumerate}
\noindent \underline{Output}: a finite set of acyclic seeds; or the algorithm \textbf{failed}.
\end{framed}

If the algorithm does not fail, it will terminate in finitely-many steps, since the number of mutable indices drops at each iteration.  However, this does not mean it will take finite time, since the absence of a covering pair cannot always be checked in finite time.  Also, the algorithm is far from deterministic; that is, it can produce different outputs from the same input.
\begin{thm}\label{thm: Banff}
If the Banff algorithm outputs a finite set of acyclic seeds, these seeds define a finite acyclic cover of $\X_\A$, and so $\A$ is locally acyclic.
\end{thm}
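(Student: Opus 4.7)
The plan splits into two parts: (a) show that each acyclic seed output by the algorithm gives a cluster chart of $\X_\A$, and (b) show that these cluster charts cover $\X_\A$. Together these establish local acyclicity.

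For part (a), I trace the path in the Banff tree from the root to a leaf $\A_\ell$, passing through cluster algebras $\A = \A^{(0)}, \A^{(1)}, \dots, \A^{(k)} = \A_\ell$, where each $\A^{(j)}$ is the freezing of $\A^{(j-1)}$ at a single cluster variable $f_j$; set $F = \{f_1, \dots, f_k\}$. Iterating the chains of Proposition \ref{prop: inclusions} along this path gives the two composite inclusions $\A_\ell \subseteq \A[F^{-1}]$ and $\U[F^{-1}] \subseteq \U(\A_\ell)$. Since the leaf $\A_\ell$ is acyclic, Theorem \ref{lemma: acyclicA=U} gives $\A_\ell = \U(\A_\ell)$, and combined with $\A \subseteq \U$ one squeezes
\[ \A[F^{-1}] \subseteq \U[F^{-1}] \subseteq \U(\A_\ell) = \A_\ell \subseteq \A[F^{-1}], \]
forcing $\A_\ell = \A[F^{-1}]$. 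Hence $\A_\ell$ is a cluster localization of $\A$, and $\X_{\A_\ell}$ is an acyclic cluster chart of $\X_\A$.

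For part (b), I fix a point $p \in \X_\A$ (a prime of $\A$) and descend the Banff tree, at each branching choosing a child whose newly-frozen variable does not vanish at $p$. Denote by $F_v$ the set of variables frozen along the path from the root to node $v$, and let $D(F_v) \subseteq \X_\A$ be the open locus where none of $F_v$ vanishes, i.e.\ $\mathrm{Spec}(\A[F_v^{-1}])$. At the root, $p \in D(\emptyset) = \X_\A$ trivially. At a branching node $v$ with inductive assumption $p \in D(F_v)$, extend $p$ to a prime $\tilde p$ of $\A[F_v^{-1}]$. The branching is determined by a covering pair $(a_{i_1}, a_{i_2})$ in the residual cluster algebra $\A_v$; by Lemma \ref{lemma: covering}, no prime of $\A_v$ contains both $a_{i_1}$ and $a_{i_2}$, so the prime $\tilde p \cap \A_v$ omits at least one of them, say $a_{i_1}$. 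Since $a_{i_1}$ is a cluster variable of $\A$ and $\tilde p \cap \A = p$, this gives $a_{i_1} \notin p$, so $p \in D(F_v \cup \{a_{i_1}\})$, and we descend into the child that freezes $a_{i_1}$. Iterating reaches a leaf $\ell$ with $p \in \X_{\A_\ell}$, as needed.

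The key delicacy is part (a): at an intermediate freezing $\A^{(j)}$ there is no reason to expect $\A^{(j)} = \U(\A^{(j)})$, so one cannot conclude step by step that each $\A^{(j)}$ is a cluster localization of $\A^{(j-1)}$. The squeeze only works because the terminal leaf is acyclic and therefore satisfies $\A_\ell = \U(\A_\ell)$, which collapses the entire iterated composition of freezings into a single cluster localization of $\A$. Once this is settled, part (b) is a clean tree-traversal driven by the covering pair lemma, and the theorem follows from the definition of locally acyclic.
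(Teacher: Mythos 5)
Your proposal is correct, and its overall architecture is the same as the paper's: show the output seeds give acyclic cluster localizations of $\A$, then descend the branching tree with the covering-pair lemma to show the corresponding charts cover $\X_\A$. The execution differs in two places, both in the direction of more care. For the first step, the paper observes that each output seed can be rewritten as a single freezing of one mutated seed of $\A$ (commuting the freezings past the later mutations) and then applies Lemma \ref{lemma: acycliclocal} once; you instead iterate Proposition \ref{prop: inclusions} along the chain of one-variable freezings to get $\A_\ell \subseteq \A[F^{-1}]$ and $\U[F^{-1}] \subseteq \U(\A_\ell)$, and close the squeeze with Theorem \ref{lemma: acyclicA=U} at the acyclic leaf. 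This avoids the commutation claim entirely, at the cost of not literally exhibiting $\A_\ell$ as a freezing of a single seed of $\A$ (which is what the paper's notion of cluster chart formally refers to); since the frozen variables $F$ all persist into the leaf's cluster, this identification is immediate, and your version correctly isolates the real point -- that intermediate freezings need not be cluster localizations, and only the acyclicity of the leaf collapses the chain -- which the paper itself only addresses in a remark after the proof. For the covering step, the paper simply asserts that at each branch $P$ misses one of $a_{i_1},a_{i_2}$; as you note, Lemma \ref{lemma: covering} gives relative primeness only in the intermediate algebra $\A_v$, and your transfer via a prime extension to $\A[F_v^{-1}]$ and contraction to $\A_v \subseteq \A[F_v^{-1}]$ supplies exactly the missing justification. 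So both proofs rest on the same inputs (Proposition \ref{prop: inclusions}, the acyclic $\A=\U$ theorem, and Lemma \ref{lemma: covering}); yours trades the paper's reorganization trick for an iterated-inclusion argument and fills in the prime-ideal bookkeeping that the paper leaves implicit.
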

\begin{proof}
The output of the algorithm is a finite set of seeds, $\{(\Q_i,\mathbf{a}_i)\}$, each of which can be obtained from the original seed $(\Q,\mathbf{a})$ first by mutation, and then by freezing a collection of vertices.  Since $(\Q_i,\mathbf{a}_i)$ is an acyclic seed, by Lemma \ref{lemma: acycliclocal}, the corresponding cluster algebra $\A_i$ is the localization of $\A$ at the set of newly-frozen variables.

Now let $P$ be a prime ideal in $\A$.  At each branch of the algorithm, at least one of $a_{i_1}$ and $a_{i_2}$ is not in $P$.  Inductively, there is some seed $(A_i,\mathbf{a}_i)$ in the output such that $P$ does not contain any of the newly-frozen variables.  Then $\X_{\A_i}$ is an open neighborhood of $P$ in $\X_\A$.  Since $P$ was arbitrary, the $\X_{\A_i}$ cover $\X_\A$, and so $\A$ is locally acyclic.
\end{proof}

\begin{ex}
Let $\Q$ be the quiver at the top of Figure \ref{fig: Banff1}.
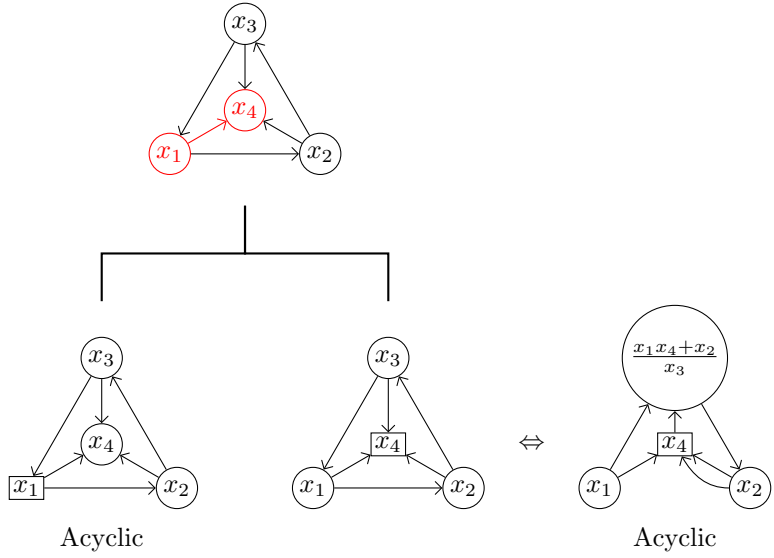
\begin{figure}
\begin{tikzpicture}[scale=1,auto]
\begin{scope}
    \node (1) at (-1,-.58) [red, mutable] {$x_1$};
    \node (2) at (1,-.58) [mutable] {$x_2$};
    \node (3) at (0,1.15) [mutable] {$x_3$};
    \node (4) at (0,0) [red, mutable] {$x_4$};

    \draw (1) to [-angle 90] (2);
    \draw (2) to [-angle 90] (3);
    \draw (3) to [-angle 90] (1);
    \draw (1) to [red,-angle 90] (4);
    \draw (2) to [-angle 90] (4);
    \draw (3) to [-angle 90] (4);
\end{scope}
\begin{scope}[yshift=-.75in,scale=.5]
        \draw[thick] (0,.5in) to (0,0) to (1.5in,0) to (1.5in,-.5in);
        \draw[thick] (0,.5in) to (0,0) to (-1.5in,0) to (-1.5in,-.5in);
\end{scope}
\begin{scope}[yshift=-1.75in,xshift=-.75in,scale=1,auto]
    \node (1) at (-1,-.58) [frozen] {$x_1$};
    \node (2) at (1,-.58) [mutable] {$x_2$};
    \node (3) at (0,1.15) [mutable] {$x_3$};
    \node (4) at (0,0) [mutable] {$x_4$};

    \draw (1) to [-angle 90] (2);
    \draw (2) to [-angle 90] (3);
    \draw (3) to [-angle 90] (1);
    \draw (1) to [-angle 90] (4);
    \draw (2) to [-angle 90] (4);
    \draw (3) to [-angle 90] (4);
\end{scope}
    \node (a1) at (-.75in,-2.25in) {Acyclic};
\begin{scope}[yshift=-1.75in,xshift=.75in,scale=1,auto]
    \node (1) at (-1,-.58) [mutable] {$x_1$};
    \node (2) at (1,-.58) [mutable] {$x_2$};
    \node (3) at (0,1.15) [mutable] {$x_3$};
    \node (4) at (0,0) [frozen] {$x_4$};

    \draw (1) to [-angle 90] (2);
    \draw (2) to [-angle 90] (3);
    \draw (3) to [-angle 90] (1);
    \draw (1) to [-angle 90] (4);
    \draw (2) to [-angle 90] (4);
    \draw (3) to [-angle 90] (4);
\end{scope}
		\node (=) at (1.5in,-1.75in) {$\Leftrightarrow$};
\begin{scope}[yshift=-1.75in,xshift=2.25in,scale=1,auto]
    \node (1) at (-1,-.58) [mutable] {$x_1$};
    \node (2) at (1,-.58) [mutable] {$x_2$};
    \node (3) at (0,1.15) [mutable] {$\frac{x_1x_4+x_2}{x_3}$};
    \node (4) at (0,0) [frozen] {$x_4$};

    \draw (2) to [angle 90-] (3);
    \draw (3) to [angle 90-] (1);
    \draw (1) to [-angle 90] (4);
    \draw (2) to [-angle 90] (4);
    \draw (2) to [-angle 90,out=180,in=300] (4);
    \draw (3) to [angle 90-] (4);
\end{scope}
    \node (a2) at (2.25in,-2.25in) {Acyclic};
\end{tikzpicture}
\caption{A successful implimentation of the Banff algorithm}
\label{fig: Banff1}
\end{figure}
The arrow from $x_1$ to $x_4$ is not in any bi-infinite path, so $(x_1,x_4)$ are a covering pair, and relatively prime in $\A(\Q)$ by Lemma \ref{lemma: covering}.  Each branch of the algorithm freezes one of these variables.  Since they are both equivalent to acyclic seeds, the algorithm produces a cover of $\X_\A$ by two acyclic charts.
%
\end{ex}
A more elaborate example was given as Figure \ref{fig: Banff2} in the Introduction.

\begin{rem}
The converse of the theorem (the failure of the Banff algorithm implies not locally acyclic) is not true. For an example of a locally acyclic cluster algebra for which the Banff algorithm can fail, but does not always fail, see Remark \ref{rem: Banfffail}.
\end{rem}


\begin{rem}
One unfortunate complication is that, in general, freezing a cluster variable is not the same as inverting it.  At the intermediate steps of the Banff algorithms (when some vertices have been frozen but the seed is not yet acyclic), it is not known that these describe cluster charts in $\X_\A$ (ie, localizations of cluster variables).  It is only when the seeds become acyclic that the two notions (freezing and localization) are known to coincide.  If the Banff algorithm does not fail, then every intermediate seed which appeared corresponds to a locally acyclic cluster algebra, and so by Lemma \ref{lemma: acycliclocal}, they correspond to cluster charts in $\X_\A$.   However, if the Banff algorithm fails, the seed $(\Q',\a')$ at which it fails may not correspond to an open subscheme in $\X_\A$.  This is an annoying conceptual stumbling block, but it ultimately does not affect the effectiveness of the algorithm at producing acyclic covers.
\end{rem}

\subsection{The reduced Banff algorithm}

When simply checking \emph{if} a cluster algebra is locally acyclic (instead of explicitly producing the acyclic cover), the Banff algorithm may be simplified.  This is because the acyclicity of a seed $(\Q,\a)$ only depends on the mutable subquiver $\Q_{mut}$.  Therefore,
\begin{itemize}
\item The cluster may be forgotten, leaving only the quivers.
\item Frozen variables may be deleted.
\end{itemize}
Furthermore, if $\X_\A$ can be covered by locally acyclic cluster charts, it can be covered by acyclic cluster charts.  Therefore, it will suffice to stop the branches whenever a quiver is known to determine a locally acyclic cluster algebra.

The simplified version of the algorith, the \textbf{reduced Banff algorithm}, is given below.

\begin{framed}
\noindent \underline{Input}: a quiver $\Q$ with no frozen vertices.
\begin{enumerate}
\item If $\Q$ determines a locally acyclic cluster algebra, stop.
\item Otherwise, mutate $\Q$ to a quiver $\Q'$ with a covering pair $(i_1,i_2)$.  If this is impossible, the algorithm \textbf{fails}.
\item Consider two quivers, which are the quiver $\Q'$ with either $i_1$ or $i_2$ deleted.  Plug each of these seeds into Step 1.
\end{enumerate}
\noindent \underline{Output}: a finite set of locally acyclic quivers; or the algorithm \textbf{failed}.
\end{framed}

\begin{prop}\label{prop: rBanff}
If the reduced Banff algorithm produces a finite set of locally acyclic quivers, then $\A(\Q)$ is locally acyclic.
\end{prop}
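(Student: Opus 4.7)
The plan is to reduce to Theorem \ref{thm: Banff} by lifting each deletion in the reduced algorithm to a freezing. Concretely, alongside the given run of the reduced Banff algorithm I would maintain a parallel tree of actual seeds of $\A$: the root is the initial seed $(\Q,\mathbf{x})$; each mutation in the reduced run is mirrored by the corresponding mutation of the seed; and each branching step, where the reduced algorithm deletes one element of a covering pair $(i_1,i_2)$, is mirrored by freezing the corresponding cluster variable. The leaves of this lifted tree are seeds $(\Q_i',\mathbf{a}_i)$ whose mutable subquivers are precisely the locally acyclic quivers $\Q_i$ output by the reduced algorithm.

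By hypothesis each $\Q_i$ determines a locally acyclic cluster algebra, and by the proposition that $\A(\Q)$ is locally acyclic iff $\A(\Q_{mut})$ is, the full seed $(\Q_i',\mathbf{a}_i)$ also determines a locally acyclic cluster algebra $\A(\Q_i',\mathbf{a}_i)$. Since locally acyclic cluster algebras satisfy $\A=\U$ (Theorem \ref{thm: A=U}), Lemma \ref{lemma: acycliclocal} identifies each $\A(\Q_i',\mathbf{a}_i)$ as a genuine cluster localization of $\A$, namely the localization at the cluster variables frozen along the branch from the root to the $i$-th leaf. In particular each $\X_{\A(\Q_i',\mathbf{a}_i)}$ is an open cluster chart inside $\X_\A$.

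It remains to show these cluster charts cover $\X_\A$ and then to refine each to an acyclic cover. The covering argument is identical to that in the proof of Theorem \ref{thm: Banff}: given any prime $P\subset \A$, at each internal node of the lifted tree Lemma \ref{lemma: covering} forces at least one member of the covering pair to avoid (the appropriate extension of) $P$, so by induction one reaches a leaf at which none of the newly-frozen variables lie in $P$, giving a cluster chart through $P$. Finally, since each $\A(\Q_i',\mathbf{a}_i)$ is itself locally acyclic, by definition it admits a finite cover by acyclic cluster charts, and stitching these together across all leaves produces a finite acyclic cover of $\X_\A$. The one delicate point, as in Theorem \ref{thm: Banff}, is that intermediate freezings along the tree are not a priori known to be cluster localizations; but this affects neither the covering-pair argument nor the conclusion, since only the leaves and the prime-avoidance combinatorics at each branching are used.
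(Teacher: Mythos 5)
Your proposal is correct and takes essentially the same route as the paper, which justifies the proposition by exactly this reduction to Theorem \ref{thm: Banff}: forgetting the cluster and deleting (rather than freezing) vertices changes nothing because acyclicity and covering pairs depend only on the mutable subquiver, and a finite cover by locally acyclic cluster charts refines to a finite acyclic cover since each locally acyclic chart is itself covered by acyclic cluster charts of $\A$. Your lifted tree of freezings just makes explicit the dictionary the paper leaves implicit, including the same caveat about intermediate freezings not yet being known to be cluster localizations.
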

The downside of this simplification is that it does not compute what the open cover of $\X_{\A(\Q)}$ is; merely that it exists.

\begin{figure}[h]
\begin{tikzpicture}[scale=.5,auto]
\begin{scope}
    \node (1) at (-1,-.58) [red, mutable] {};
    \node (2) at (1,-.58) [mutable] {};
    \node (3) at (0,1.15) [mutable] {};
    \node (4) at (0,0) [red, mutable] {};

    \draw (1) to [-angle 90] (2);
    \draw (2) to [-angle 90] (3);
    \draw (3) to [-angle 90] (1);
    \draw (1) to [red,-angle 90] (4);
    \draw (2) to [-angle 90] (4);
    \draw (3) to [-angle 90] (4);
\end{scope}
\begin{scope}[yshift=-.75in,scale=.5]
        \draw[thick] (0,.5in) to (0,0) to (1.5in,0) to (1.5in,-.5in);
        \draw[thick] (0,.5in) to (0,0) to (-1.5in,0) to (-1.5in,-.5in);
\end{scope}
\begin{scope}[yshift=-1.75in,xshift=-.75in,scale=1,auto]
    \node (2) at (1,-.58) [mutable] {};
    \node (3) at (0,1.15) [mutable] {};
    \node (4) at (0,0) [mutable] {};

    \draw (2) to [-angle 90] (3);
    \draw (2) to [-angle 90] (4);
    \draw (3) to [-angle 90] (4);
\end{scope}
    \node (a1) at (-.75in,-2.25in) {Acyclic};
\begin{scope}[yshift=-1.75in,xshift=.75in,scale=1,auto]
    \node (1) at (-1,-.58) [mutable] {};
    \node (2) at (1,-.58) [mutable] {};
    \node (3) at (0,1.15) [mutable] {};

    \draw (1) to [-angle 90] (2);
    \draw (2) to [-angle 90] (3);
    \draw (3) to [-angle 90] (1);
\end{scope}
		\node (=) at (1.5in,-1.75in) {$\Leftrightarrow$};
\begin{scope}[yshift=-1.75in,xshift=2.25in,scale=1,auto]
    \node (1) at (-1,-.58) [mutable] {};
    \node (2) at (1,-.58) [mutable] {};
    \node (3) at (0,1.15) [mutable] {};

    \draw (2) to [angle 90-] (3);
    \draw (3) to [angle 90-] (1);
\end{scope}
    \node (a2) at (2.25in,-2.25in) {Acyclic};
\end{tikzpicture}
\caption{The reduced version of Figure \ref{fig: Banff1}}
\label{fig: RBanff}
\end{figure}
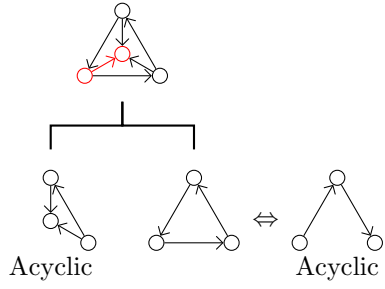

\section{Finite isolated covers}

As we will see, a finite acyclic cover of $\X_\A$ can be refined to a finite isolated cover of $\X_\A$.  Since isolated cluster algebras are easy to work with, this will have additional implications for locally acyclic cluster algebras.

\subsection{Refining acyclic covers}

It is not necessary to stop the Banff algorithm at an acyclic seed.  If $\Q$ is an acyclic quiver, it has no bi-infinite paths, so either it has a covering pair or it is an isolated quiver.  If we apply the Banff algorithm to $\Q$, stopping only when a seed is isolated, we can always produce a cover by isolated cluster charts.

\begin{lemma}\label{lemma: isolatedcover}
Let $(\Q,\a)$ be an acyclic seed for a cluster algebra $\A$.  Then there is a finite collection of freezings $\{(\Q_i,\a_i)\}$ of $(\Q,\a)$ which are isolated, and such that $\X_{\A(\Q_i,\a_i)}$ covers $\X_\A$.
\end{lemma}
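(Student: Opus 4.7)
The plan is to induct on the number of arrows in $\Q_{mut}$. Since $\Q$ is acyclic, $\Q_{mut}$ has no directed cycles, and in particular no bi-infinite paths; hence \emph{every} arrow of $\Q_{mut}$ is automatically a covering pair. This makes the Banff-style branching especially easy to control.

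The base case is when $\Q$ is isolated, in which case the one-element family $\{(\Q,\a)\}$ already satisfies the conclusion. For the inductive step, choose any arrow $i_1\to i_2$ in $\Q_{mut}$. By the observation above, $(i_1,i_2)$ is a covering pair, so by Lemma \ref{lemma: covering} the cluster variables $a_{i_1},a_{i_2}$ are relatively prime in $\A$, and hence $\X_{\A[a_{i_1}^{-1}]}\cup\X_{\A[a_{i_2}^{-1}]}=\X_\A$. Let $(\Q^{(1)},\a)$ and $(\Q^{(2)},\a)$ denote the freezings of $(\Q,\a)$ at $\{a_{i_1}\}$ and $\{a_{i_2}\}$ respectively. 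Both are acyclic (freezing only shrinks $\Q_{mut}$ and cannot create cycles), so by Theorem \ref{lemma: acyclicA=U} and Lemma \ref{lemma: acycliclocal} each is a cluster localization of $\A$, namely $\A[a_{i_1}^{-1}]$ and $\A[a_{i_2}^{-1}]$.

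Now apply the inductive hypothesis separately to $(\Q^{(1)},\a)$ and $(\Q^{(2)},\a)$: each admits a finite collection of isolated freezings whose cluster charts cover $\X_{\A[a_{i_1}^{-1}]}$ and $\X_{\A[a_{i_2}^{-1}]}$ respectively. Since a freezing of a freezing is itself a freezing of the original seed, combining these two finite collections yields a finite family of isolated freezings of $(\Q,\a)$ whose charts cover $\X_\A$. The induction terminates because each branching step strictly decreases the number of mutable vertices (and hence the number of arrows in $\Q_{mut}$).

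The only real subtlety is the compatibility between freezing and localization at the intermediate stages. This is where acyclicity is essential: Lemma \ref{lemma: acycliclocal} guarantees that the freezing of an acyclic seed literally equals the corresponding localization of $\A$, so at every stage of the induction the freezings define open subschemes of $\X_\A$ and the covering property transfers cleanly through the recursion. Aside from this observation, the argument is a straightforward recursion on the arrow count of $\Q_{mut}$.
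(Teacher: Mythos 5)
Your proposal is correct and follows essentially the same route as the paper: every arrow of an acyclic $\Q_{mut}$ is a covering pair (no bi-infinite paths), so Lemma \ref{lemma: covering} gives a cover by the two acyclic freezings/localizations at $a_{i_1}$ and $a_{i_2}$, and one iterates until all seeds are isolated. Your explicit appeal to Lemma \ref{lemma: acycliclocal} to identify freezings with localizations, and the observation that freezings of freezings are freezings of the original seed (so no mutation is ever needed), are exactly the points the paper's terser argument relies on.
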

\begin{proof}
If $\Q$ is not isolated, there is some edge $e$ in $\Q$ between two mutable vertices $i_1$ and $i_2$ which is not in a  bi-infinite path.  Then by Lemma \ref{lemma: covering}, $a_{i_1}$ and $a_{i_2}$ are relatively prime in $\A$.  Therefore, $\X_{\A}$ is covered by the acyclic cluster localizations $\X_{\A[a_{i_1}^{-1}]}$ and $\X_{\A[a_{i_2}^{-1}]}$.

Therefore, any acyclic, non-isolated cluster scheme can be covered by two acyclic cluster scheme with fewer mutable vertices.  Iterating this argument gives the desired cover.
\end{proof}
\begin{rem}
The statement is slightly stronger than the existence of an isolated cover.  It asserts that an isolated cover may be obtained \emph{without} mutating the quiver, so that all freezings are of the same seed.
\end{rem}

\begin{coro}\label{coro: isolatedcover}
If $\A$ is an acyclic cluster algebra, then $\X_\A$ has a finite cover by isolated cluster charts.  
\end{coro}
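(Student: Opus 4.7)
The plan is to read the corollary as an immediate repackaging of Lemma \ref{lemma: isolatedcover} once one observes that the isolated freezings produced there are automatically cluster charts, not merely freezings. Concretely, since $\A$ is acyclic by hypothesis, $\A$ admits an acyclic seed $(\Q,\a)$. Apply Lemma \ref{lemma: isolatedcover} to this seed to obtain a finite collection $\{(\Q_i,\a_i)\}$ of isolated freezings of $(\Q,\a)$ such that the schemes $\X_{\A(\Q_i,\a_i)}$ cover $\X_\A$.

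It remains to upgrade each freezing $\A(\Q_i,\a_i)$ to a cluster localization, so that the open subschemes $\X_{\A(\Q_i,\a_i)} \subseteq \X_\A$ are genuinely cluster charts. Each $\Q_i$ is isolated, which is in particular acyclic, so by Theorem \ref{lemma: acyclicA=U} the cluster algebra $\A(\Q_i,\a_i)$ coincides with its own upper cluster algebra $\U(\Q_i,\a_i)$. Then Lemma \ref{lemma: acycliclocal} applies verbatim, giving that $\A(\Q_i,\a_i)$ is a cluster localization of $\A$, which is exactly what it means for $\X_{\A(\Q_i,\a_i)}$ to be a cluster chart in $\X_\A$.

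Combining the two steps: the schemes $\X_{\A(\Q_i,\a_i)}$ are finitely many isolated cluster charts in $\X_\A$, and they cover $\X_\A$ by Lemma \ref{lemma: isolatedcover}, proving the corollary. There is no real obstacle; the only substantive content that was not already inside Lemma \ref{lemma: isolatedcover} is the acyclic $\A = \U$ equality, which is invoked precisely to promote the freezings to localizations.
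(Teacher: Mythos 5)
Your proposal is correct and follows essentially the same route as the paper, which treats the corollary as an immediate consequence of Lemma \ref{lemma: isolatedcover}; the extra step you spell out (isolated $\Rightarrow$ acyclic $\Rightarrow$ $\A_i=\U_i$ $\Rightarrow$ cluster localization via Lemma \ref{lemma: acycliclocal}) is exactly the promotion from freezing to cluster chart that the paper leaves implicit, since its proof of the lemma already builds the cover out of localizations $\A[a_{i_1}^{-1}]$, $\A[a_{i_2}^{-1}]$.
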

%
\begin{rem}\label{rem: subcover}
In some cases, it is even possible to cover an isolated cluster algebra by proper cluster localizations.  For the quiver below (which is a cluster localization of $A_5$), the three mutable vertices correspond to cluster variables which generate the trivial ideal.
\begin{center}
\begin{tikzpicture}[inner sep=0.5mm,scale=.5,auto,minimum size=2mm]
	\node (1) at (-2,0)  [mutable] {};
	\node (2) at (-1,0)  [frozen] {};
	\node (3) at (0,0)  [mutable] {};
	\node (4) at (1,0)  [frozen] {};
	\node (5) at (2,0)  [mutable] {};
	\draw (1) to [-angle 90] (2);
	\draw (2) to [-angle 90] (3);
	\draw (3) to [-angle 90] (4);
	\draw (4) to [-angle 90] (5);
\end{tikzpicture}
\end{center}
The corresponding scheme $\X_\A$ can be covered by three proper cluster charts, each corresponding to freezing a mutable variable.
\end{rem}


\subsection{Relation to other local types}

One might ask whether it is interesting to take another class of cluster algebras (besides acyclics), and consider those cluster algebras which are \emph{locally} in this class.  However, for many classes of cluster algebras, these local definitions coincide.
\begin{thm}\label{thm: localequiv} Let $\A$ be a cluster algebra.  Then the following are equivalent.
\begin{itemize}
\item (Locally acyclic) $\X_A$ has a finite cover by acyclic cluster charts.
\item (Locally tree-type) $\X_\A$ has a finite cover by tree-type cluster charts.
\item (Locally finite-type) $\X_A$ has a finite cover by finite-type cluster charts.
\item (Locally $A$-type) $\X_A$ has a finite cover by $A$-type cluster charts.
\item (Locally isolated) $\X_A$ has a finite cover by isolated cluster charts.
\end{itemize}
\end{thm}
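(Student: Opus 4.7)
The plan is to close a cycle of implications among the five conditions. Four of them come essentially for free from a nested hierarchy of quiver classes, and the only substantive implication, locally acyclic $\Rightarrow$ locally isolated, is obtained by applying Corollary~\ref{coro: isolatedcover} fiberwise to an acyclic cover.

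First I would record the quiver-level implications
\[
\text{isolated} \;\Rightarrow\; A\text{-type} \;\Rightarrow\; \text{finite-type} \;\Rightarrow\; \text{tree-type} \;\Rightarrow\; \text{acyclic}.
\]
An isolated mutable subquiver has no arrows at all, so it is a disjoint union of single-vertex (hence $A_1$) paths, which makes it $A$-type. The mutable subquiver of an $A$-type quiver is a disjoint union of oriented $A_n$'s, which are Dynkin, so it is finite-type. Each Dynkin diagram is a tree, so finite-type implies tree-type. Orientations of forests have no directed cycles, so tree-type implies acyclic. Each of these implications holds quiver-by-quiver, so every cluster chart of the stronger type is automatically a cluster chart of each weaker type. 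This yields the chain (locally isolated) $\Rightarrow$ (locally $A$-type) $\Rightarrow$ (locally finite-type) $\Rightarrow$ (locally tree-type) $\Rightarrow$ (locally acyclic).

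To close the cycle, assume $\A$ is locally acyclic with a finite acyclic cover $\{\X_{\A_i}\}$ by cluster charts. Applying Corollary~\ref{coro: isolatedcover} to each acyclic $\A_i$ produces a finite family $\{\A_{ij}\}$ of isolated cluster localizations of $\A_i$ whose cluster charts cover $\X_{\A_i}$. Taking the union over $i,j$ gives a finite family $\{\X_{\A_{ij}}\}$ covering $\X_\A$. What remains is a transitivity check: if $\A_i=\A[a_1^{-1},\ldots,a_k^{-1}]$ is a cluster localization of $\A$ and $\A_{ij}=\A_i[b_1^{-1},\ldots,b_\ell^{-1}]$ is a cluster localization of $\A_i$, then because the cluster variables of a freezing of $\A$ are cluster variables of $\A$, the algebra $\A_{ij}=\A[a_1^{-1},\ldots,a_k^{-1},b_1^{-1},\ldots,b_\ell^{-1}]$ is itself a cluster localization of $\A$. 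Hence $\{\X_{\A_{ij}}\}$ is a finite isolated cluster cover of $\X_\A$, proving locally isolated.

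The only real step of substance is this last transitivity verification, and it follows immediately from the characterization of cluster localizations in Section~3 as inversion of a finite set of cluster variables. Everything else is bookkeeping on the nested classes of quivers, so no serious obstacle is expected.
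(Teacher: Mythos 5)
Your proposal is correct and follows the same route as the paper: the nested hierarchy isolated $\Rightarrow$ $A$-type $\Rightarrow$ finite-type $\Rightarrow$ tree-type $\Rightarrow$ acyclic handles four implications for free, and the cycle is closed by refining an acyclic cover into an isolated one via Corollary~\ref{coro: isolatedcover}. The paper states this in one line, leaving implicit the refinement and the check that composite cluster localizations are cluster charts of $\A$, which you spell out explicitly.
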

\begin{proof}
Since each class of algebra is increasingly more restrictive, it suffices to know that locally acyclic implies locally isolated (Corollary \ref{coro: isolatedcover}).
\end{proof}

One might be curious about what local properties can distinguish finite type cluster algebras from more general acyclic cluster algebras, or $A$-type cluster algebras from other Dynkin type cluster algebras.  However, the theorem implies that locally acyclic cluster schemes are locally indistinguishable from isolated cluster schemes.
\begin{coro}
Any local property of a locally acyclic cluster algebra occurs in some isolated cluster algebra.
\end{coro}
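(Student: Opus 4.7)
The plan is to derive this corollary directly from Theorem \ref{thm: localequiv}, which asserts that every locally acyclic cluster algebra is in fact locally isolated. First I would recall the definition of a \emph{local property} given in the earlier footnote: a property holds for a ring (or scheme) precisely when it holds for each member of some finite open cover. Once this is unpacked, the corollary becomes essentially a direct consequence of the theorem's content, since the theorem delivers exactly the kind of finite open cover needed to activate the locality hypothesis.

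Concretely, given a locally acyclic cluster algebra $\A$ satisfying a local property $P$, I would apply Theorem \ref{thm: localequiv} to obtain a finite collection of isolated cluster localizations $\{\A_i\}$ whose associated cluster charts $\X_{\A_i}$ cover $\X_\A$. Since $P$ is local, the validity of $P$ on $\A$ is equivalent to its validity on each $\A_i$ in this finite open cover. In particular, $P$ must hold on each $\A_i$, and each $\A_i$ is by construction an isolated cluster algebra. This gives the desired conclusion that $P$ occurs in some (indeed every) $\A_i$ from the cover.

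There is no real obstacle to this argument beyond careful parsing of the statement; all the substantive work has already been carried out in establishing Theorem \ref{thm: localequiv}, which in turn rests on Corollary \ref{coro: isolatedcover} and the refinement of an acyclic cover via the Banff-style covering-pair argument of Lemma \ref{lemma: isolatedcover}. The only thing worth double-checking is that when Theorem \ref{thm: localequiv} produces cluster charts by isolated algebras, these are honest open immersions of affine schemes, so that the footnote's notion of ``finite open cover'' genuinely applies; this is built into the definition of a cluster chart as coming from a cluster localization.
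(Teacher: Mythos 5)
Your proposal is correct and matches the paper's (implicit) argument: the corollary is stated as an immediate consequence of Theorem \ref{thm: localequiv}, whose finite cover by isolated cluster charts is exactly what makes the locality hypothesis bite. Nothing further is needed beyond the observation, which you make, that the charts are genuine open subschemes coming from cluster localizations.
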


\subsection{New proofs of acyclic results}

All the properties of locally acyclic cluster algebras proven so far have relied on existing proofs of the analogous results for acyclic cluster algebras.  However, by the above theorem, it would suffice to know these results only for isolated cluster algebras.  Since isolated cluster algebras are exceedingly simple, these results can be proven more easily than the general acyclic case.  Therefore, this local approach gives new proofs of these results in the acyclic case.

\begin{rem} This can be extended to other results on acyclics, as well.  For example, it is possible to prove Lemma \ref{lemma: acyclicpres} locally.  One first proves the lemma for isolated cluster algebras, which is immediate.  Then, because Lemma \ref{lemma: isolatedcover} guarantees the existence of an isolated cover which are all freezings with respect to a given seed, intersecting the algebras gives the presentation of Lemma \ref{lemma: acyclicpres}.
\end{rem}

\section{Regularity and full rank}

A local property of schemes we have not yet addressed is regularity.  This can be a delicate question even in simple cases; the cluster algebra of coefficient-free $A_n$ is singular if and only if $n\equiv 3 \, (mod\, 4)$.  However, if the exchange matrix of $\A$ is `full rank' (ie, surjective over $\mathbb{Q}$), and $\A$ is locally acyclic, then $\mathbb{Q}\otimes\A$ is a regular.  This appears to be a new result, even for acyclic cluster algebras.

\subsection{Jacobian criterion for regularity}

We recall the Jacobian criterion for regularity of a domain of dimension $d$. Let $\k$ be a perfect field, and let
\[ R:= \k[x_1,...,x_I]/\langle r_1,...,r_J\rangle\]
where the $r_1,...,r_J\in \k[x_1,...,x_I]$.  Assume that $R$ is a domain of dimension $d$.  The \textbf{Jacobian matrix} is the $J\times I$ matrix with coefficients in $R$ given by
\[\J_{ij} := \frac{\partial r_i}{\partial x_j}\]
The Jacobian matrix is an invariant attached not just to a ring, but a specific presentation of that ring.  

\begin{lemma}\cite[Theorem 16.19(b)]{Eis95}
Let $P$ be a prime ideal in $R$, and let $\phi:R\rightarrow \kappa_P$ be the map to the residue field at $P$.  The local ring $R_P$ is regular if and only if $\phi(\J)$ has rank $I-d$ as a $\kappa_P$-linear map.
\end{lemma}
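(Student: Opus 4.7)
The plan is to reinterpret the rank condition on the Jacobian as a statement about the cotangent sheaf $\Omega^1_{R/\k}$, and then apply the classical dictionary between regularity of $R_P$ and the fiber dimension of $\Omega^1_{R/\k}$ at $P$.

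First I would use the given presentation of $R$ to write $\Omega^1_{R/\k}$ as a cokernel, giving the right-exact sequence
\[ R^J \xrightarrow{\J^T} R^I \longrightarrow \Omega^1_{R/\k} \longrightarrow 0. \]
Tensoring with $\kappa_P$ yields $\dim_{\kappa_P}\bigl(\Omega^1_{R/\k}\otimes_R \kappa_P\bigr) = I - \mathrm{rank}(\phi(\J))$, so the claim reduces to showing that $R_P$ is regular if and only if the fiber dimension of $\Omega^1_{R/\k}$ at $P$ equals $d$. The forward direction follows from the fundamental exact sequence
\[ \mathfrak{m}_P/\mathfrak{m}_P^2 \longrightarrow \Omega^1_{R/\k}\otimes_R \kappa_P \longrightarrow \Omega^1_{\kappa_P/\k} \longrightarrow 0, \]
which is left-exact when $\kappa_P$ is separably generated over $\k$ (automatic for perfect $\k$, since $\kappa_P$ is finitely generated over $\k$). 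If $R_P$ is regular, the left term has dimension $\dim R_P$, and the right term has dimension $\mathrm{tr.deg}_\k(\kappa_P) = \dim(R/P) = d - \dim R_P$ by the dimension formula for finitely generated domains over a field, so the middle has dimension $d$.

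The main obstacle is the converse. By generic smoothness (valid because $\k$ is perfect) the generic rank of $\Omega^1_{R/\k}$ is $d$, so upper semicontinuity of fiber dimension gives $\dim_{\kappa_P}(\Omega^1_{R/\k}\otimes\kappa_P)\geq d$ at every $P$, and equality forces $\Omega^1_{R/\k}$ to be locally free of rank $d$ at $P$. Running the split fundamental sequence backwards then yields $\dim_{\kappa_P}\mathfrak{m}_P/\mathfrak{m}_P^2 = \dim R_P$, i.e.\ $R_P$ is regular. The delicate point throughout is the perfectness of $\k$: without it, inseparable residue extensions inflate $\Omega^1_{\kappa_P/\k}$ beyond its expected dimension, and the accounting collapses even at regular primes. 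Since \cite[Theorem 16.19(b)]{Eis95} packages all of this carefully, in practice I would simply cite it rather than reproduce the argument.
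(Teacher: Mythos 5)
Your sketch is correct, and it is essentially the argument behind the cited result: the paper offers no proof of this lemma at all, simply quoting \cite[Theorem 16.19(b)]{Eis95}, and your route through the Jacobian presentation of $\Omega^1_{R/\k}$, the left-exact conormal sequence (valid by separability since $\k$ is perfect), and the dimension formula $\dim R_P+\operatorname{tr.deg}_\k\kappa_P=d$ is precisely the standard proof packaged in Eisenbud. Your closing instinct to cite the theorem rather than reprove it matches what the paper does.
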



\subsection{Exchange matrices}

For an ice quiver $\Q$ with vertex set $N$ and mutable vertex set $M\subset N$, the \textbf{exchange matrix} $Ex(\Q)$ is the $|M|\times |N|$ matrix with
\[ Ex(\Q)_{ij} = \Q_{ij}= \#\text{arrows from } i \text{ to } j - \#\text{arrows from } j \text{ to } i\]
The exchange matrix records most of the information from $\Q$; what is lost is the number of arrows between frozen vertices.\footnote{The reason for the distinction between $Ex(\Q)_{ij}$ and $\Q_{ij}$ is to emphasize that the rows of $Ex(\Q)$ only correspond to \emph{mutable} vertices.}

\begin{lemma}\cite[Lemma 1.2]{GSV03}
If $\Q$ and $\Q'$ are mutation-equivalent, then \[rank(Ex(\Q))=rank(Ex(\Q'))\]
\end{lemma}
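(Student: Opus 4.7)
The plan is to reduce the lemma to the case of a single mutation and then factor matrix mutation as two-sided multiplication by invertible integer matrices, so that rank preservation becomes automatic. Since mutation-equivalence means $\Q$ and $\Q'$ are connected by a finite sequence of single mutations, establishing the single-step case suffices.

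For a mutable vertex $k$ and a sign choice $\varepsilon \in \{+1,-1\}$, I would define two auxiliary matrices. Let $E_k^\varepsilon$ be the $|M|\times|M|$ matrix equal to the identity except that its $(k,k)$ entry is $-1$ and its $(i,k)$ entries for $i\neq k$ are $[\varepsilon B_{ik}]_+ := \max(\varepsilon B_{ik},0)$; and let $F_k^\varepsilon$ be the $|N|\times|N|$ matrix equal to the identity except that its $(k,k)$ entry is $-1$ and its $(k,j)$ entries for $j\neq k$ are $[-\varepsilon B_{kj}]_+$. After reordering so that $k$ is moved to the last position, each of these matrices is triangular with $\pm 1$ on the diagonal, so $\det E_k^\varepsilon = \det F_k^\varepsilon = -1$ and both are invertible over $\Z$.

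Next I would verify the factorization $\mu_k(Ex(\Q)) = E_k^\varepsilon \cdot Ex(\Q) \cdot F_k^\varepsilon$ by a direct entry-by-entry computation, writing $B = Ex(\Q)$ for brevity. The cases $i=k$ or $j=k$ reduce immediately to the sign flip built into the $(k,k)$-entries of $E_k^\varepsilon$ and $F_k^\varepsilon$. For $i \neq k$ and $j \neq k$, the product expands to
\[ B_{ij} + [\varepsilon B_{ik}]_+ B_{kj} + B_{ik}[-\varepsilon B_{kj}]_+, \]
and the identities $[a]_+ = (a+|a|)/2$ and $[-a]_+ = (|a|-a)/2$ collapse this to $B_{ij} + (|B_{ik}|B_{kj} + B_{ik}|B_{kj}|)/2$, matching the mutation rule from the preliminaries.

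Since $E_k^\varepsilon$ and $F_k^\varepsilon$ are invertible over $\mathbb{Q}$, left and right multiplication by them preserves rank, so $rank(\mu_k(Ex(\Q))) = rank(Ex(\Q))$. Iterating along any mutation sequence from $\Q$ to $\Q'$ finishes the proof. The main obstacle is the sign bookkeeping in verifying the factorization, especially the cross term in the $i\neq k,\,j\neq k$ case; this is mechanical once a sign convention is fixed, and in fact either choice of $\varepsilon$ produces the same answer, which provides a useful internal sanity check.
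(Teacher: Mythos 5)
Your proposal is correct: the factorization $\mu_k(Ex(\Q)) = E_k^\varepsilon\, Ex(\Q)\, F_k^\varepsilon$ checks out entry by entry (the cross terms collapse to $\frac{|B_{ik}|B_{kj}+B_{ik}|B_{kj}|}{2}$ for either choice of $\varepsilon$, and the diagonal $(k,k)$ entries $-1$ handle the sign flips since $B_{kk}=0$), and invertibility of $E_k^\varepsilon$ and $F_k^\varepsilon$ over $\mathbb{Q}$ plus induction along the mutation sequence gives the rank statement. The paper itself offers no proof here, only the citation to \cite{GSV03}, and your argument is essentially the standard one from that literature (the $E$- and $F$-matrix factorization of matrix mutation), so there is nothing to correct.
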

Therefore, the rank of $Ex(\Q)$ is an invariant of the associated cluster algebra.  We say an ice quiver $\Q$ or cluster algebra $\A(\Q)$ has \textbf{full rank}\footnote{This is a distinct meaning of `rank' than the rank of an ice quiver or cluster algebra.  Given Proposition \ref{prop: Krulldim}, perhaps \textbf{dimension} is a better name for the rank of a cluster algebra?} if $Ex(\Q)$ is surjective as a $\mathbb{Q}$-linear map.  This class of cluster algebras is stable under freezing (and thus cluster localization).
\begin{prop}\label{prop: fullrank}
If $\Q$ has full rank, then any freezing $\Q'$ of $\Q$ has full rank.
\end{prop}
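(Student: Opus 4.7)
The plan is to observe that freezing changes only which rows of the exchange matrix are indexed as ``mutable'' rows, not the matrix entries themselves, and that surjectivity of a $\mathbb{Q}$-linear map is preserved under post-composition with a coordinate projection.

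First I would unpack the definition: if $\Q'$ is the freezing of $\Q$ at a set $F \subseteq M$ of mutable vertices, then freezing does not add or remove any arrows of $\Q$, so $\Q'_{ij} = \Q_{ij}$ for all $i, j$ in the common vertex set $N$. The new mutable set is $M' := M \setminus F$, and therefore $Ex(\Q')$ is the $|M'| \times |N|$ matrix obtained from the $|M| \times |N|$ matrix $Ex(\Q)$ by simply deleting the rows indexed by $F$.

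Second, regarding $Ex(\Q)$ as a $\mathbb{Q}$-linear map $\mathbb{Q}^{|N|} \to \mathbb{Q}^{|M|}$ (by left-multiplication on column vectors), the coordinate projection $\pi \colon \mathbb{Q}^{|M|} \to \mathbb{Q}^{|M'|}$ that forgets the $F$-coordinates gives a factorization $Ex(\Q') = \pi \circ Ex(\Q)$, since the $i$-th coordinate of the output agrees for every $i \in M'$. By hypothesis $Ex(\Q)$ is surjective, and $\pi$ is surjective, so their composition $Ex(\Q')$ is surjective. Hence $\Q'$ has full rank.

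There is no real obstacle here; the statement reduces to the elementary fact that deleting rows of a matrix with full row rank leaves a matrix with full row rank. The only thing worth emphasizing is the observation that freezing is a row-index operation on the exchange matrix, rather than a genuine modification of its entries.
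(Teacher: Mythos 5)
Your proposal is correct and is essentially the paper's own argument: the paper notes that freezing a vertex simply discards a row of $Ex(\Q)$ and that discarding a row cannot break surjectivity, which is exactly your factorization $Ex(\Q')=\pi\circ Ex(\Q)$ spelled out in more detail. No gap; your version just makes the projection explicit.
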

\begin{proof}
Freezing a vertex in $\Q$ amounts to discarding a row in $Ex(\Q)$.  Discarding a row cannot break surjectivity.
\end{proof}

\subsection{Regularity lemma}

The first step is a lemma which says that full rank isolated cluster algebras cannot have singularities at any point where all mutable variables vanish.

\begin{lemma}\label{lemma: isolatedsmooth}
Let $(\Q,\a)$ be an isolated seed of $\A$, and let $P$ be a prime ideal in $\mathbb{Q}\otimes\A$ containing the mutable variables $\{a_1,..,a_m\}$ in $\a$.  The exchange matrix $Ex(\Q)$ has full rank if and only if $(\mathbb{Q}\otimes\A)_P$ is a regular local ring.
\end{lemma}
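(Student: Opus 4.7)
The plan is to apply the Jacobian criterion to the explicit presentation of $\A$ furnished by Lemma \ref{lemma: acyclicpres}. Because the seed is isolated, each exchange polynomial $\pi_i^{+},\pi_i^{-}$ is a monomial in the frozen variables alone, which forces every partial derivative below to be computable in closed form.

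First I would write
\[
\mathbb{Q}\otimes\A \;\cong\; \mathbb{Q}\bigl[a_1,\dots,a_m,\,a_{m+1}^{\pm1},\dots,a_n^{\pm1},\,a_1',\dots,a_m'\bigr]\big/\bigl\langle R_1,\dots,R_m\bigr\rangle,\qquad R_i := a_ia_i'-\pi_i^{+}-\pi_i^{-},
\]
note that the ambient ring is a localization of a polynomial ring in $n+m$ variables, and invoke Proposition \ref{prop: Krulldim} to record $\dim(\mathbb{Q}\otimes\A)=n$.

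Next I would compute the $m\times(n+m)$ Jacobian $\J$ and evaluate it modulo $P$, organizing the columns into the three blocks indexed by mutable $a_k$, mutated $a_k'$, and frozen $a_j$:
\[
\frac{\partial R_i}{\partial a_k}\bigg|_P = c_k'\,\delta_{ik},\qquad
\frac{\partial R_i}{\partial a_k'}\bigg|_P = a_k|_P\,\delta_{ik} = 0,
\]
where $c_k':=a_k'|_P\in\kappa_P$; and, using the identity $\pi_i^{+}|_P=-\pi_i^{-}|_P$ that is forced by $R_i\in P$, the uniform (sign-independent) formula
\[
\frac{\partial R_i}{\partial a_j}\bigg|_P = -\Q_{ij}\,\pi_i^{+}|_P\,(a_j|_P)^{-1},\qquad j\in\{m+1,\dots,n\}.
\]
Assembling these gives
\[
\J|_P \;=\; \bigl[\;\mathrm{diag}(c_1',\dots,c_m')\ \big|\ 0\ \big|\ -D_1\,\tilde Q\,D_2\;\bigr],
\]
where $D_1=\mathrm{diag}(\pi_i^{+}|_P)$ and $D_2=\mathrm{diag}(a_j|_P^{-1})$ are invertible diagonal matrices (since frozen variables and their monomials are units in $\A$, hence avoid $P$), and $\tilde Q$ is the mutable-frozen block of $Ex(\Q)$. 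Isolation of the seed makes the mutable-mutable block of $Ex(\Q)$ vanish, so $\mathrm{rank}\,\tilde Q = \mathrm{rank}\,Ex(\Q)$.

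Finally I would invoke the Jacobian criterion: $(\mathbb{Q}\otimes\A)_P$ is regular iff $\mathrm{rank}\,\J|_P = (n+m)-n = m$. For the forward direction, full rank of $Ex(\Q)$ makes the third block $-D_1\tilde Q D_2$ have $m$ linearly independent rows all by itself, so $\J|_P$ has rank $m$ regardless of the $c_k'$, producing regularity. For the converse, I would specialize $P$ to also contain the mutated variables $a_1',\dots,a_m'$ so that every $c_k'=0$; this is possible whenever any $P$ containing the mutable variables exists, because the zero locus $V(a_1,\dots,a_m)$ factors as $\mathrm{Spec}(W)\times\mathbb A^m_{a_1',\dots,a_m'}$ with $W=\mathbb Q[a_{m+1}^{\pm1},\dots,a_n^{\pm1}]/\langle\pi_i^{+}+\pi_i^{-}\rangle$ (and if no such $P$ exists at all, the lemma is vacuous, which coincides exactly with some row of $\tilde Q$ being zero and hence $Ex(\Q)$ failing to be full rank). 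At such a $P$ we have $\mathrm{rank}\,\J|_P = \mathrm{rank}\,Ex(\Q)$, so failure of full rank forces $(\mathbb{Q}\otimes\A)_P$ to be singular.

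The main obstacle is the verification of the sign-independent frozen-column formula, which is the single algebraic step where the relation $R_i\in P$ is really used; after that, the result is a linear-algebra statement about the block decomposition of $\J|_P$. A secondary subtlety is picking the correct $P$ in the backward direction so that the diagonal block $\mathrm{diag}(c_k')$ cannot mask a rank deficiency of $\tilde Q$.
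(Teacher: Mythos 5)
Your argument is correct and, for the direction that actually gets used later, it is the paper's argument: the same presentation from Lemma \ref{lemma: acyclicpres}, the same Jacobian computation, the same use of $\phi(\pi_i^-)=-\phi(\pi_i^+)$ forced by $R_i\in P$, and the same observation that isolation makes $\pi_i^{\pm}$ invertible monomials in frozen variables, so the frozen block of $\J|_P$ is the frozen block of $Ex(\Q)$ up to invertible row and column scalings, and (since the mutable--mutable block vanishes) full rank gives $\mathrm{rank}\,\J|_P=m$ and hence regularity. Where you genuinely diverge is the converse, and your version is the more careful one. The paper evaluates $\J$ at the \emph{given} prime $P$ and asserts that the columns indexed by the $a_k$ and the $a_k'$ both vanish, i.e.\ it treats $\phi(a_k')$ as $0$; but $a_k'\in P$ does not follow from $P\supseteq(a_1,\dots,a_m)$, and the literal ``regular $\Rightarrow$ full rank'' for an arbitrary such $P$ can fail: with two mutable vertices each having a single arrow to one frozen vertex $b$, the relations are $a_1a_1'=b+1$, $a_2a_2'=b+1$, the prime $P=(a_1,a_2,b+1)$ contains the mutable variables and $(\mathbb{Q}\otimes\A)_P$ is regular (the nonzero residues $\phi(a_k')$ give $\J|_P$ rank $2$), yet $Ex(\Q)$ has rank $1$. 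You spotted exactly this --- $\mathrm{diag}(c_1',\dots,c_m')$ can mask a rank deficiency --- and repaired it by passing to a prime containing $a_1',\dots,a_m'$ as well, which exists whenever any prime contains the mutable variables because the quotient by $(a_1,\dots,a_m)$ is $W[a_1',\dots,a_m']$. The price is that you prove ``not full rank $\Rightarrow$ some such prime is singular'' rather than ``the given $P$ is singular''; that is the correct content of the ``only if'' and it suffices for everything downstream, since Theorem \ref{thm: LAsmooth} uses only the ``full rank $\Rightarrow$ regular'' half.

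One small inaccuracy in your parenthetical: the vacuous case is not characterized by a zero row of $\tilde Q$. The ideal $\langle \pi_1^++\pi_1^-,\dots,\pi_m^++\pi_m^-\rangle$ can be the unit ideal with all rows nonzero, e.g.\ $b+1$ and $b^2+1$ already generate the unit ideal in $\mathbb{Q}[b^{\pm1}]$. What is true, and all you need, is that vacuousness implies $Ex(\Q)$ is not full rank: if the rows of the frozen block were linearly independent, the monomial map $t\mapsto(\pi_1^+/\pi_1^-,\dots,\pi_m^+/\pi_m^-)$ from $(\mathbb{C}^*)^{n-m}$ to $(\mathbb{C}^*)^m$ would be surjective, so the locus where each $\pi_i^+/\pi_i^-=-1$ would be nonempty and a prime containing the mutable variables would exist. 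Alternatively, simply drop the aside: if no such $P$ exists, the statement quantifies over an empty set and there is nothing to prove.
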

\begin{proof}
As in Lemma \ref{lemma: acyclicpres}, define
\[ \pi_i^+:=\prod_{j,\Q_{ij}>0}a_j^{\Q_{ij}},\;\;\;\pi_i^-:=\prod_{j,\Q_{ij}<0}a_j^{-\Q_{ij}}\]
Then $\mathbb{Q}\otimes\A$ has a presentation
\[\mathbb{Q}\otimes \A=\mathbb{Q}[a_1,...a_m,a_{m+1}^{\pm1},...,a_n^{\pm1}, a_1',...a_m']/ \langle a_1a_1'-\pi_1^+-\pi_1^-,...a_ma_m'-\pi_m^+-\pi_m^-\rangle\]
Let $\J$ be the Jacobian matrix of this presentation

For mutable $i$, and $j$ corresponding to a frozen variable,
\[ \J_{ij} = \frac{\partial r_i}{\partial a_j} = \max(\Q_{ij},0) \frac{\pi_i^+}{a_j}-\min(\Q_{ij},0) \frac{\pi_i^-}{a_j}\]
Let $\phi:\A\rightarrow \kappa_P$ be the map to the residue field.  Since the kernel is $P$,
\[ \phi(\pi_i^++\pi_i^-)=\phi(a_ia_i')=0\]
so $\phi(\pi_i^-)=-\phi(\pi_i^+)$.  Then
\[\phi(\J_{ij}) = \max(\Q_{ij},0) \frac{\phi(\pi_i^+)}{\phi(a_j)}-\min(\Q_{ij},0) \frac{\phi(\pi_i^-)}{\phi(a_j)}=\Q_{ij}\frac{\phi(\pi_i^+)}{\phi(a_j)}\]
Since $\Q$ is isolated, $\pi_i^{\pm1}$ consist of monomials of frozen variables, and so they are invertible.  Therefore, the $m\times (n-m)$ submatrix of $\phi(\J)$ with only frozen columns is related to the submatrix of $Ex(\Q)$ with only frozen columns by a multiplying rows and columns by non-zero elements of $\kappa_P$.  

For mutable $i$, and $j$ mutable, $\phi(\J_{ij})=\delta_{ij}\phi(a_j')=0$. For mutable $i$ and $j$ one-step mutated, $\phi(\J_{ij})=\delta_{ij}\phi(a_i)=0$.  Therefore, $\phi(\J)$ is related to $Ex(\Q)$ by invertible row operations, invertible column operations and extending a matrix by zero.  Since none of these affect rank,
\[rank(\phi(\J)) = rank(Ex(\Q))\]
%

The ring $\mathbb{Q}[a_1,...a_m,a_{m+1}^{\pm1},...,a_n^{\pm1}, a_1',...a_m']$ has dimension $m+n$, and $\mathbb{Q}\otimes\A$ has dimension $n$ (Proposition \ref{prop: Krulldim}).  Therefore, by the Jacobian criterian, $(\mathbb{Q}\otimes\A)_P$ is regular if and only if $Ex(\Q)$ has full rank.
\end{proof}

\begin{rem}
The cluster algebra $\A$ may have no non-trivial ideals containing the mutable cluster variables in $\a$, and so a non-full rank cluster algebra can still be regular.
\end{rem}

\subsection{Regularity theorems}

A ring is \textbf{regular} if every local ring is regular.

\begin{lemma}
If $\A$ is an isolated cluster algebra of full rank, then $\mathbb{Q}\otimes \A$ is regular.
\end{lemma}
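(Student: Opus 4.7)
The plan is to prove this by induction on the number $m$ of mutable vertices in an isolated seed $(\Q,\a)$ of $\A$, reducing every prime to a situation where Lemma \ref{lemma: isolatedsmooth} applies.

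For the base case $m=0$, the cluster algebra $\mathbb{Q}\otimes\A$ is just a Laurent polynomial ring in the frozen variables, which is regular; equivalently, every prime vacuously contains all (zero) mutable variables, so Lemma \ref{lemma: isolatedsmooth} suffices.

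For the inductive step, let $P$ be any prime of $\mathbb{Q}\otimes\A$. If $P$ contains every mutable variable in $\a$, then $(\mathbb{Q}\otimes\A)_P$ is regular by Lemma \ref{lemma: isolatedsmooth} together with the full-rank hypothesis. Otherwise, some mutable variable $a_i\in\a$ lies outside $P$. I would then freeze $a_i$ to obtain a new seed $(\Q',\a)$ which is still isolated (freezing adds no arrows, so $\Q'_{mut}$ remains discrete) and still of full rank by Proposition \ref{prop: fullrank}. Since isolated quivers are acyclic, Theorem \ref{lemma: acyclicA=U} gives $\A'=\U'$, so Lemma \ref{lemma: acycliclocal} identifies the freezing with the cluster localization:
\[\A' = \A[a_i^{-1}].\]
Now $\A'$ is isolated of full rank with $m-1$ mutable vertices, so by the inductive hypothesis $\mathbb{Q}\otimes\A'$ is regular. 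Since $a_i\notin P$, the prime $P$ extends to a prime $P'$ of $\mathbb{Q}\otimes\A[a_i^{-1}]$ with $(\mathbb{Q}\otimes\A)_P = (\mathbb{Q}\otimes\A[a_i^{-1}])_{P'}$, which is regular.

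The only delicate point is the identification of the freezing at $a_i$ with the honest localization $\A[a_i^{-1}]$; this is where the assumption of isolatedness (not just full rank) is used, via the chain Theorem \ref{lemma: acyclicA=U} $\Rightarrow$ Lemma \ref{lemma: acycliclocal}. Everything else is bookkeeping: decompose $\mathrm{Spec}(\mathbb{Q}\otimes\A)$ into the closed locus cut out by the mutable variables (handled by Lemma \ref{lemma: isolatedsmooth}) and its open complement (handled by induction after localizing away a mutable variable).
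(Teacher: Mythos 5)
Your proposal is correct and follows essentially the same route as the paper: identify the freezing at mutable variables outside $P$ with a genuine localization via Lemma \ref{lemma: acycliclocal}, preserve full rank via Proposition \ref{prop: fullrank}, and finish with the Jacobian computation of Lemma \ref{lemma: isolatedsmooth}. The only difference is bookkeeping: the paper freezes all mutable variables not in $P$ in a single step, whereas you peel them off one at a time by induction on the number of mutable vertices, which unwinds to the same argument.
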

\begin{proof}
Consider a prime ideal $P$ in $\mathbb{Q}\otimes\A$.  Let $(\Q,\a)$ be a seed for $\A$, and let $S$ be the set of mutable cluster variables in $\a$ which are not in $P$.  By Lemma \ref{lemma: acycliclocal}, the localization of $\A$ at $S$ is a cluster localization $\A'$ of $\A$, corresponding to the freezing $(\Q',\a)$ of $(\Q,\a)$ at $S$.  By Proposition \ref{prop: fullrank}, $\A'$ is again an isolated cluster algebra of full rank, and $P$ generates a non-trivial prime ideal in $\mathbb{Q}\otimes\A'$ containing all mutable variables in $(\Q',\a)$.  By Lemma \ref{lemma: isolatedsmooth},  $(\mathbb{Q}\otimes\A')_P=(\mathbb{Q}\otimes \A)_P$ is regular.
\end{proof}
\begin{thm}\label{thm: LAsmooth}
If $\A$ is a locally acyclic cluster algebra of full rank, then $\mathbb{Q}\otimes \A$ is regular.
\end{thm}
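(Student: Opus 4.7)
The plan is to reduce immediately to the isolated case, where we already have the regularity lemma just proved. Since regularity is a local property of schemes, it suffices to produce a finite cover of $\X_{\mathbb{Q}\otimes\A}$ by open affines each of which is regular. Theorem \ref{thm: localequiv} (locally acyclic $\Leftrightarrow$ locally isolated) provides exactly such a cover on the level of cluster charts: $\X_\A$ admits a finite cover by isolated cluster charts $\X_{\A_i}$, where each $\A_i$ is a cluster localization of $\A$, hence a freezing (after some mutation) of a seed of $\A$.

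Next, I would verify that every $\A_i$ inherits the full rank hypothesis. Mutation preserves the rank of the exchange matrix (\cite[Lemma 1.2]{GSV03}), and Proposition \ref{prop: fullrank} says freezing preserves full rank as well. Hence each $\A_i$ is an isolated, full rank cluster algebra, and the preceding lemma gives that $\mathbb{Q}\otimes\A_i$ is regular.

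Finally, I would upgrade the cover of $\X_\A$ by the $\X_{\A_i}$ to a cover of $\X_{\mathbb{Q}\otimes\A}$ by the $\X_{\mathbb{Q}\otimes\A_i}$ (tensoring with $\mathbb{Q}$ is flat, so open subschemes pull back to open subschemes and still cover). A local ring of $\mathbb{Q}\otimes\A$ at a prime $P$ coincides with the local ring of $\mathbb{Q}\otimes\A_i$ at the induced prime for any $i$ with $\X_{\A_i}$ containing (the image of) $P$, so the regularity of each $\mathbb{Q}\otimes\A_i$ implies the regularity of $\mathbb{Q}\otimes\A$.

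The main obstacle is bookkeeping rather than anything substantial: I need to confirm that the cluster charts from Theorem \ref{thm: localequiv} may genuinely be assumed to arise from freezings of a single common seed (so that Proposition \ref{prop: fullrank} applies directly to each), or else argue that full rank is preserved under the combination of mutation and freezing taken together. Both are immediate from the cited results, but the statement should be invoked carefully so that each $\A_i$ is manifestly both isolated and of full rank before applying the previous lemma.
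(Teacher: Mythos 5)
Your proposal is correct and follows essentially the same route as the paper: cover $\X_\A$ by isolated cluster charts via Theorem \ref{thm: localequiv}, note each chart is full rank (mutation invariance of rank plus Proposition \ref{prop: fullrank}), base change to $\mathbb{Q}$, apply the isolated regularity lemma, and conclude by locality of regularity. The bookkeeping worry you raise is harmless, since full rank is preserved separately under mutation and under freezing, so no common seed is needed.
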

\begin{proof}
By Theorem \ref{thm: localequiv}, $\X_\A$ may be covered by finitely many isolatd cluster charts $\{\X_{\A_i}\}$, and by Proposition \ref{prop: fullrank}, the $\{\A_i\}$ are full rank.  Changing base to $\mathbb{Q}$, $Spec(\mathbb{Q}\otimes\A)$ is covered by $\{Spec(\mathbb{Q}\otimes \A_i)\}$.  By the previous lemma, these open subschemes are regular.  Since regularity is a local property by definition, $\mathbb{Q}\otimes \A$ is regular.
%
\end{proof}
\begin{rem}
This is not a sharp criterion for the regularity of $\mathbb{Q}\otimes \A$.  The isolated cluster algebra in Remark \ref{rem: subcover} is \emph{not} full rank, but it may be covered by isolated cluster localizations which are full rank, and so it is regular.
\end{rem}
This has the following standard geometric corollary.
\begin{coro}\label{coro: LAsmooth}
If $\A$ is a locally acyclic cluster algebra of full rank, then $\X_\A(\mathbb{C}):=Hom(\A,\mathbb{C})$ is a smooth complex manifold, and $\X_\A(\R):=Hom(\A,\mathbb{R})$ is a smooth real manifold.\footnote{The $Hom$ here is in the category of ringsx.}
\end{coro}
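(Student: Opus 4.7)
The plan is to reduce the question to a local statement at an arbitrary point, in an isolated full-rank cluster chart, and then apply the holomorphic (resp. smooth) implicit function theorem. By Theorem \ref{thm: localequiv}, $\X_\A$ admits a finite cover by isolated cluster charts $\X_{\A_i}$, and by Proposition \ref{prop: fullrank} each $\A_i$ is full rank. A scheme-theoretic cover by open subschemes is in particular a covering of $k$-points for any ring $k$, so every $p \in \X_\A(\mathbb{C})$ (resp. $\X_\A(\R)$) lies in some $\X_{\A_i}(\mathbb{C})$ (resp. $\X_{\A_i}(\R)$); each $\X_{\A_i}(\mathbb{C})$ is open in $\X_\A(\mathbb{C})$ in the analytic topology, since its complement is the simultaneous vanishing locus of finitely many polynomials. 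So it suffices to show that each $\X_{\A_i}(\mathbb{C})$ and $\X_{\A_i}(\R)$ is a smooth manifold of the expected dimension.

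Fix an isolated full-rank $\A_i$ and a point $p \in \X_{\A_i}(\mathbb{C})$ (the real case is identical). I would first further localize by freezing every mutable variable $a_k$ in the seed with $a_k(p) \ne 0$; this is again an isolated full-rank cluster algebra by Proposition \ref{prop: fullrank}, and Lemma \ref{lemma: acycliclocal} guarantees the freezing is a genuine cluster localization, so the resulting cluster chart is an open neighborhood of $p$ in $\X_{\A_i}$. In this new chart the maximal ideal at $p$ contains every mutable cluster variable, so the proof of Lemma \ref{lemma: isolatedsmooth} applies at $p$: the Jacobian matrix of the explicit presentation from Lemma \ref{lemma: acyclicpres} is related to the exchange matrix by invertible row and column operations plus padding by zeros, and so has rank $m$ at $p$. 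The holomorphic implicit function theorem (resp. its smooth real analogue) then writes a neighborhood of $p$ in $\X_{\A_i}(\mathbb{C})$ (resp. $\X_{\A_i}(\R)$) as the graph of an analytic function over an open subset of $\mathbb{C}^n$ (resp. $\R^n$), where $n = \dim_{\mathbb{Q}}(\mathbb{Q}\otimes \A)$ by Proposition \ref{prop: Krulldim}. This furnishes local analytic charts of the expected dimension at every point.

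The main obstacle is the reduction in the second paragraph: Lemma \ref{lemma: isolatedsmooth} is formulated only at primes containing all mutable variables, so one must verify that freezing the $a_k$ which do not vanish at $p$ both preserves full rank and places $p$ in the setting of that lemma. Proposition \ref{prop: fullrank} handles full rank, and after freezing, the non-vanishing $a_k$ become units, leaving only vanishing mutables to consider at $p$. Once this local reduction is justified, the Jacobian calculation is already done inside Lemma \ref{lemma: isolatedsmooth}, and the passage from a maximal-rank Jacobian to smooth manifold structure on $\mathbb{C}$- and $\R$-points is the standard implicit function theorem.
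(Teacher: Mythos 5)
Your proposal is correct and follows essentially the same route as the paper: cover $\X_\A$ by isolated full-rank cluster charts (Theorem \ref{thm: localequiv} and Proposition \ref{prop: fullrank}), use the presentation of Lemma \ref{lemma: acyclicpres} to embed each chart's $\k$-points in affine space, and convert the Jacobian computation into the implicit/inverse function theorem over $\mathbb{C}$ or $\R$. Your extra step of freezing the mutable variables not vanishing at $p$ is exactly the maneuver the paper performs in its regularity lemma for isolated full-rank cluster algebras, so you have simply made explicit, at the level of $\k$-points, what the paper delegates to that lemma.
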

\begin{proof}
Let $\k=\mathbb{R}$ or $\mathbb{C}$; note that 
\[\X_\A(\k)=Hom(\A,\k)=Hom(\mathbb{Q}\otimes\A,\k)\]  
For isolated $\A$, the presentation in Lemma \ref{lemma: acyclicpres} gives an embedding $\X_\A(\k)\subset \k^n$.  The Jacobian criterion then becomes the inverse function theorem over $\k$; in particular, if $\A$ is full rank, then $\X_\A(\k)$ is smooth.  For locally isolated $\A$, an isolated cover of $\X_\A$ gives an cover of $\X_\A(\k)$ by smooth $\k$-manifolds, and so it is a smooth $\k$-manifold.  (see e.g. \cite[Example 10.0.3]{Har77})
\end{proof}

\section{Failure of local acyclicity}

This section explores how a cluster algebra can fail to be locally acyclic.  The first way to show $\A$ is not locally acyclic is to show that it lacks any of the properties in Section \ref{section: properties}.  Since these can be difficult to check, we develop other techniques for showing $\A$ is not locally acyclic.

\subsection{Obstructions to coverings}

The simplest way for the Banff algorithm to fail for $\A$ is if $\A$ has no covering pairs.  This is equivalent to the following condition on quivers.
\begin{prop}
A cluster algebra $\A$ has no covering pairs if and only if, for every seed $(\Q,\a)$ of $\A$, the mutable subquiver $\Q_{mut}$ has no sources or sinks.\footnote{For the purpose of this paper, a \textbf{sink} is a vertex with at least one incoming arrow and no outgoing arrows.  A \textbf{source} is the dual notion.  Hence, an isolated vertex is neither a source nor a sink.}
\end{prop}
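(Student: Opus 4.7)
The plan is to prove both implications by contrapositive, reducing each to a statement about a single ice quiver by means of the criterion from the preceding proposition: an arrow lies in a bi-infinite path if and only if there is both a cycle upstream and a cycle downstream from it. The reduction is immediate, since a covering pair in $\A$ means a covering pair in \emph{some} seed, while the source/sink condition is supposed to hold in \emph{every} seed; so it suffices to show, for a single ice quiver $\Q$, that $\Q$ admits a covering pair if and only if $\Q_{mut}$ has a source or a sink.

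First I would dispatch the easy direction. If $\Q_{mut}$ has a sink $v$, pick any incoming mutable arrow $u \to v$; since no arrow leaves $v$ in $\Q_{mut}$, this arrow cannot be extended forward, hence cannot lie in any bi-infinite path of mutable vertices. Thus $(u,v)$ is a covering pair in $\Q$, producing a covering pair $(a_u,a_v)$ in $\A$. The argument for a source is symmetric, using an outgoing edge that cannot be extended backward.

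For the other direction, suppose $(a_1,a_2)$ is a covering pair with defining arrow $a_1 \to a_2$ in $\Q_{mut}$. By the preceding proposition, either there is no cycle downstream from $a_2$ or no cycle upstream from $a_1$. Assume the former. Let $D \subseteq \Q_{mut}$ be the set of mutable vertices reachable from $a_2$, including $a_2$ itself. Since $D$ is closed under taking forward mutable neighbors, the induced subquiver on $D$ is a finite acyclic quiver, and therefore contains a sink $w$. By closure of $D$, every outgoing arrow from $w$ in $\Q_{mut}$ would have its target in $D$, so $w$ has no outgoing arrows in $\Q_{mut}$ at all; moreover $w$ has at least one incoming mutable arrow (either from $a_1$ if $w = a_2$, or along the final step of a path from $a_2$ to $w$). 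Hence $w$ is a sink in the full $\Q_{mut}$. The other case produces a source of $\Q_{mut}$ by the same argument with arrows reversed.

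The only real subtlety is verifying that the sink of the downstream subquiver on $D$ is in fact a sink of the \emph{full} mutable subquiver; this is exactly where one uses that $D$ is forward-closed in $\Q_{mut}$. The rest is a routine unwinding of definitions through the upstream/downstream-cycle criterion.
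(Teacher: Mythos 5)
Your proof is correct. The easy half is identical to the paper's: a source or sink of $\Q_{mut}$ (which, by the footnote's definition, carries at least one arrow) gives an arrow that cannot be extended in one direction, hence a covering pair. The difference is in the other half. The paper argues it directly: if $\Q_{mut}$ has no sources and no sinks, then any arrow between mutable vertices can be prolonged forwards and backwards indefinitely, so it lies in a bi-infinite path and no covering pair can exist in that seed. You prove the contrapositive instead, routing through the preceding upstream/downstream-cycle criterion: a covering pair forces, say, no cycle downstream, so the forward-closed set $D$ of vertices reachable from $a_2$ induces a finite acyclic subquiver (acyclicity here really comes from the absence of downstream cycles, with forward-closedness used only for the next step), whose vertex $w$ with no outgoing arrows is then a sink of all of $\Q_{mut}$, and your check that $w$ has an incoming arrow correctly meets the footnote's exclusion of isolated vertices. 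The paper's route is shorter and needs no auxiliary lemma or extremal argument; yours buys a slightly sharper statement — every covering pair has a sink of $\Q_{mut}$ downstream of it or a source upstream of it — and avoids reasoning about infinite extensions. Both arguments are complete.
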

\begin{proof}
If $\Q_{mut}$ has no sources or sinks, then any arrow between mutable vertices in $\Q$ can be extended to a bi-infinite path.  Therefore, if $\Q_{mut}$ has no sources or sinks for every seed, it cannot have any covering pairs.  If $\Q_{mut}$ has a source $\alpha$, then there is some $\beta$ with an arrow from $\alpha$.  Since this arrow cannot be extended to a bi-infinite path, $(\alpha,\beta)$ determine a covering pair in $\A$.  Symmetrically, a sink guarentees a covering pair exists.
\end{proof}
%

There are then two fundamentally distinct ways that a cluster algebra $\A$ can fail to have a covering pair.
\begin{itemize}
\item Every seed is isolated, or
\item Every seed contains a bi-infinite path, and every arrow in $\Q_{mut}$ is in a bi-infinite path.
%
\end{itemize}
The first case corresponds to the minimal cluster charts for a locally acyclic cluster algebra.  In the second case, $\A$ has some irreducible complexity which cannot be covered by simpler cluster algebras. As we will see, these algebras are never locally acyclic.

\begin{lemma}\label{lemma: nocoveringpairs}
If there are no covering pairs for $\A$, then either $\A$ is an isolated cluster algebra, or $\A$ is not locally acyclic.
\end{lemma}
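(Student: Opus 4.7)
The plan is to proceed by contraposition: assume that $\A$ is locally acyclic, and deduce that either $\A$ is isolated or $\A$ has a covering pair.

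If every seed of $\A$ has isolated mutable subquiver, then $\A$ is an isolated cluster algebra by definition, and we are done. Otherwise, fix a seed $(\Q,\a)$ of $\A$ with at least one mutable arrow, and suppose for contradiction that $\A$ has no covering pair. By the preceding proposition, no seed of $\A$ has any source or sink in its mutable subquiver, so every non-isolated mutable vertex lies on a directed cycle. By Theorem \ref{thm: localequiv}, local acyclicity implies $\X_\A$ admits a finite cover by isolated cluster charts $\{\X_{\A_k}\}$, with $\A_k=\A[S_k^{-1}]$ and $S_k$ a vertex cover of the arrows of some $\Q^{(k)}_{mut}$. Since no seed of $\A$ is isolated, each $S_k$ is nonempty; since every arrow of $\Q^{(k)}_{mut}$ lies on a directed cycle, the vertex cover $S_k$ intersects every such cycle.

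To derive a contradiction, I would construct a prime ideal $P\subset \A$ such that $P\cap S_k\neq\emptyset$ for every $k$, so that $P$ is represented in no cluster chart of the cover. The construction runs the proof of Lemma \ref{lemma: covering} in reverse: starting from the arrow $\alpha\to\beta$ in the fixed non-isolated seed, the hypothesis of no covering pair allows a hypothetical prime containing $a_\alpha, a_\beta$ to propagate along a bi-infinite, and thus eventually cyclic, directed path of cluster variables. The ideal $I$ generated by the cluster variables on a resulting directed cycle is the natural candidate; extending to a prime $P\supseteq I$ that contains a cluster variable on each directed cycle of each $\Q^{(k)}_{mut}$ would yield the desired contradiction, since $P$ would meet every vertex cover $S_k$.

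The main obstacle is establishing that the constructed ideal $I$ is a proper ideal of $\A$, i.e., that the cluster variables on a directed cycle do not generate the unit ideal. This is a delicate algebraic question: one must exhibit a non-trivial quotient of $\A$ in which the cycle cluster variables all vanish consistently under the mutation relations. For the Markov example (see Section \ref{section: Markov}), this is verifiable directly via an explicit maximal ideal at the origin. For the general no-covering-pair case, mutation-equivariance should allow an analogous construction — mutating a directed cycle yields a directed cycle in the mutated seed, and the ideal transfers — but the details will require careful bookkeeping of how the vanishing propagates across the infinitely many seeds of $\A$.
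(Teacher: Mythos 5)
There is a genuine gap, and it sits exactly where you flag it yourself: you never produce the prime ideal. The entire content of the lemma is the existence of a point of $\X_\A$ that no acyclic (or isolated) chart can contain, and your plan defers this to "extending the ideal $I$ generated by the cycle variables to a suitable prime," with the properness of $I$ left as an open "delicate algebraic question." That question is not bookkeeping; it is the theorem. The paper resolves it by a different device (Theorem \ref{thm: coveringhomo}): when no covering pair exists, every mutable vertex in every seed is either isolated or lies on a bi-infinite path, and one can then write down an explicit ring homomorphism $\psi:\A\to\Z$ sending frozen variables to $1$, all non-isolated mutable variables to $0$, and the two members of each isolated mutation pair to $1$ and $2$; well-definedness is checked directly against the exchange relations (both sides of a non-isolated relation map to $0$, both sides of an isolated relation map to $2$). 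The kernel of $\psi$ is then automatically a prime containing every non-isolated mutable cluster variable in every seed — precisely the object your sketch postulates but does not construct. Combined with Theorem \ref{thm: nonnegnonLA} (a locally acyclic cluster algebra admits no non-negative map that fails to be positive), this gives the lemma; your route is the same in spirit but the key existence step is missing.

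There is also a secondary logical slip in your concluding inference. Even granting a prime $P$ containing a cluster variable on each directed cycle of each $\Q^{(k)}_{mut}$, it does not follow that $P$ meets each inverted set $S_k$: $S_k$ meets every cycle and $P$ meets every cycle, but they may do so at different vertices, so $P$ could still extend nontrivially to $\A[S_k^{-1}]$. (Your structural claim that each isolated chart is the localization at a vertex cover of the arrows of a single seed of $\A$ is also not justified in general, since the isolated seed of a chart may only be reached after further mutation.) The paper's argument in Theorem \ref{thm: nonnegnonLA} avoids this entirely: since $\ker\psi$ contains a bi-infinite directed path of cluster variables in \emph{every} seed, any acyclic chart whose newly-frozen variables avoid $\ker\psi$ would retain that path among its mutable vertices in every one of its seeds, forcing a cycle in every seed and contradicting acyclicity. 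If you want to salvage your approach, you should replace "meets every vertex cover" by this stronger containment property of the kernel, and you must actually construct the homomorphism (or prime) rather than assume it can be extended.
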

\begin{proof}
This will follow from Theorems \ref{thm: coveringhomo} and \ref{thm: nonnegnonLA}.
\end{proof}


\subsection{Non-negative and positive maps}

The absence of covering pairs may also be measured by the existence of a special algebra homomorphism.

\begin{thm}\label{thm: coveringhomo}
Let $\A$ be a cluster algebra.  Then either $\A$ admits a covering pair in some seed, or there is an algebra homomorphism $\psi:\A\rightarrow \Z$ which sends frozen variables to $1$, non-isolated mutable vertices to $0$, and isolated vertices into $\{1,2\}$.
\end{thm}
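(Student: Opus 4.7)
The plan is to leverage the no-covering-pair hypothesis to show that mutable cluster variables fall into two intrinsic classes — isolated and non-isolated — and then to define $\psi$ by sending non-isolated mutables to $0$ and isolated mutables to the two values of a pair $\{1,2\}$ dictated by the exchange relation.

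By the preceding proposition, the hypothesis forces each mutable vertex in any seed to be either isolated in $\Q_{mut}$ or to have at least one incoming and one outgoing mutable arrow. First I would prove that this status is preserved under mutation, giving each mutable cluster variable a well-defined type. If $j$ is isolated in $\Q_{mut}$, then $\Q_{ji}=0$ for every mutable $i$, so the correction $(|\Q_{ji}|\Q_{ik}+\Q_{ji}|\Q_{ik}|)/2$ in the mutation formula vanishes for every mutable $k$, leaving $j$'s mutable arrows unchanged by any $\mu_i$. If $j$ is non-isolated and we mutate at a mutable $i\neq j$, then either $\Q_{ji}=0$ (same argument) or $\Q_{ji}\neq 0$ and $\mu_i(\Q)_{ji}=-\Q_{ji}\neq 0$ retains $i$ as a mutable neighbor of $j$. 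Mutation at $j$ itself only reverses the direction of $j$'s arrows. Hence every cluster variable is intrinsically frozen, isolated mutable, or non-isolated mutable.

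Next I would verify that the proposed $\psi$ respects every exchange relation $a_ia_i'=\pi_i^++\pi_i^-$. For isolated $i$, the monomials $\pi_i^\pm$ involve only frozen variables (since $i$ has no mutable neighbors in $\Q_{mut}$), so $\pi_i^++\pi_i^-=1+1=2$ under $\psi$, compatible with $\{\psi(a_i),\psi(a_i')\}=\{1,2\}$ via a binary choice per isolated pair. For non-isolated $i$, both $\pi_i^\pm$ must contain at least one mutable factor (a mutable neighbor exists on each side), and by the no-covering-pair hypothesis such a neighbor is itself non-isolated; so $\psi(\pi_i^\pm)=0$, and the relation reads $0\cdot 0=0+0$, consistent with $\psi(a_i)=\psi(a_i')=0$.

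The main obstacle will be promoting this exchange-compatible assignment on cluster variables to an actual ring homomorphism $\A\to\Z$, since in principle $\A$ may satisfy polynomial relations in $\mathbb{Q}(x_1,\ldots,x_n)$ beyond the exchange relations. My plan is to factor $\psi$ through the quotient $\A/J$, where $J\subset\A$ is the ideal generated by all non-isolated mutable cluster variables. The computations above show the exchange relations at every vertex descend consistently to $\A/J$: non-isolated exchange relations live entirely inside $J$ (both sides of $a_ia_i'=\pi_i^++\pi_i^-$ lie in $J$ whenever $i$ is non-isolated), while isolated exchange relations only involve frozens and isolated variables. By Lemma \ref{lemma: acyclicpres}, the isolated cluster algebra obtained by freezing all non-isolated mutables in the initial seed admits the explicit presentation $\Z[F^{\pm 1}][I,I']/\langle aa'-\pi_a^+-\pi_a^-\rangle$ and hence a canonical map to $\Z$ with the prescribed values; identifying $\A/J$ with a suitable quotient of this algebra then yields the desired $\psi$. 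The closure of $J$ under all further iterated mutations — which is needed to ensure no new cluster variable escapes $J$ — follows inductively from the observation that the exchange relation at a non-isolated vertex keeps both sides inside the ideal of non-isolated mutables.
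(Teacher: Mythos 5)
Your first two paragraphs are essentially the paper's own proof: assuming no covering pairs, every mutable vertex of every seed is either isolated or has mutable arrows both in and out; $\psi$ is defined on cluster variables by type (frozen $\mapsto 1$, non-isolated $\mapsto 0$, each isolated exchange pair $\mapsto\{1,2\}$); and the two kinds of exchange relations are checked to be compatible ($0\cdot 0=0+0$ at non-isolated vertices, $1\cdot 2=1+1$ at isolated ones). Your added observation that a vertex's isolated/non-isolated status, and the exchange monomials $\pi_j^{\pm}$ at an isolated vertex, are unchanged by mutations at other mutable vertices is correct and usefully sharpens something the paper leaves implicit (it is what makes ``pairs of isolated variables related by a mutation'' and the binary choice unambiguous, modulo the caveat that single-step invariance along mutation paths does not by itself rule out one rational function occurring with different statuses in far-apart seeds). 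The paper stops at this point: it treats the verification of the cluster relations as establishing that $\psi$ is well-defined on $\A$, and then adds the remark (not needed for the stated disjunction) that a covering pair excludes such a $\psi$ via Lemma \ref{lemma: covering}.

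Where you diverge is your final paragraph, and as written it does not achieve what it sets out to do. Passing to $\A/J$, with $J$ generated by the non-isolated mutable variables, and mapping in from $B:=\Z[F^{\pm1}][I,I']/\langle aa'-\pi_a^+-\pi_a^-\rangle$ is fine as far as it goes; indeed, your invariance argument, pushed along mutation paths from the initial seed, shows every cluster variable is frozen, equal to some $a_j$ or $a_j'$ at an initial isolated vertex, or lies in $J$, so the natural map $B\rightarrow\A/J$ is surjective. But the phrase ``identifying $\A/J$ with a suitable quotient of this algebra then yields the desired $\psi$'' is precisely the unproved point: to descend the evaluation $B\rightarrow\Z$ along the surjection $B\twoheadrightarrow\A/J$ you must show its kernel is killed by the evaluation, i.e.\ that the ideal of $\A$ generated by $J$, the elements $f-1$ for $f$ frozen, and $a_j-\psi(a_j)$, $a_j'-\psi(a_j')$ is proper. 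That is a restatement of the very ``relations beyond the exchange relations'' worry you raised, not a resolution of it; Lemma \ref{lemma: acyclicpres} presents $B$, not $\A/J$. So either drop the detour and argue as the paper does, taking compatibility with the cluster relations as the well-definedness check, or supply an actual argument for the descent step, which your proposal currently assumes rather than proves.
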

\begin{proof}
Assume that $\A$ has no covering pairs.  Then, for every seed, every mutable vertex is either isolated or contained in bi-infinite path.  For each pair of isolated cluster variables related by a mutation, choose one for $\psi$ to send to $1$, and $\psi$ sends the other to $2$.

We check that $\psi$ is well-defined, by checking that it satisfies the cluster relations.  The cluster relation at a non-isolated mutable vertex will contain three monomials, each of which contain non-isolated mutable cluster variables, so they are all sent to $0$ by $\psi$.  The cluster relation at an isolated vertex will equate the product of two isolated variables to the sum of two monomials in the frozen variables. Both sides of this are sent to $2$ by $\psi$.  Hence, $\psi$ is well-defined on $\A$.

Now, assume that $\A$ has a covering pair $(a_{i_1},a_{i_2})$ in some seed $(\Q,\a)$.  By the covering lemma, $a_{i_1}$ and $a_{i_2}$ are relatively prime, so the ideal they generate contains $1$.  In particular, no algebra homomorphism can contain both variables in the kernel, and so such a $\psi$ is impossible.
\end{proof}

This map is a specific case of a general class which will be of interest.

\begin{defn}
An algebra map $\psi:\A\rightarrow \Z$ or $\R$ is called...
\begin{itemize}
\item \textbf{positive} if, for every cluster variable $a$, $\psi(a)>0$, and
\item \textbf{non-negative} if, for every cluster variable $a$, $\psi(a)\geq0$.
\end{itemize}
%
\end{defn}

%

Positive maps are already an active area of interest in the study of cluster algebras.  The set of all positive maps can be naturally identified with the `positive part' of $\X_\A$, which is important in connections to decorated Teichm\"uller space \cite{GSV05},\cite{FG06} and total positivity \cite{FZ99}.

\begin{prop}\label{prop: totalpositive}
If a map $\psi:\A\rightarrow \Z$ or $\R$ is positive on some cluster $\a=\{a_1,..,a_n\}$, then it is positive.
%
\end{prop}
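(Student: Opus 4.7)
The plan is to induct on the length of a mutation sequence from the given cluster $\a$, using the mutation relation itself to propagate positivity from one cluster to any adjacent cluster.

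First I would set up the induction. Let $\a' = \{a_1', \ldots, a_n'\}$ be any cluster mutation-equivalent to $\a$, and suppose inductively that $\psi(a_i') > 0$ for all $i$. To complete the induction, it suffices to show that for any mutable index $k$, the mutated cluster variable $a_k'' := \mu_k(a_k')$ also satisfies $\psi(a_k'') > 0$. Since every cluster variable lies in some cluster reached by a finite sequence of mutations from $\a$, this will force $\psi(b) > 0$ for every cluster variable $b$, and hence $\psi$ is positive.

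The inductive step is essentially tautological from the mutation relation. By definition,
\[ a_k' \cdot a_k'' \;=\; \prod_{j,\; \Q_{kj}>0} (a_j')^{\Q_{kj}} \;+\; \prod_{j,\; \Q_{kj}<0} (a_j')^{-\Q_{kj}}, \]
an equality inside $\A$. Applying the ring homomorphism $\psi$ and using the inductive hypothesis that each $\psi(a_j') > 0$, the right-hand side becomes a sum of two products of strictly positive real numbers, hence is strictly positive. Since $\psi(a_k') > 0$, dividing gives $\psi(a_k'') > 0$.

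The only subtlety, and the single step where one has to pause, is justifying that $\psi(a_k') \neq 0$ so that the division is legitimate; but this is immediate from the inductive hypothesis $\psi(a_k') > 0$. There is no genuine obstacle: the proposition is really just the observation that the mutation formula is a positive rational expression in the previous cluster, so positivity is manifestly preserved under mutation and therefore propagates to the entire cluster complex.
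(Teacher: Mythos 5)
Your proof is correct and is essentially the same argument as the paper's: apply the ring homomorphism $\psi$ to the exchange relation, observe the right-hand side is a sum of two positive products while the denominator $\psi(a_k')>0$, and iterate over mutation sequences (the paper's ``iterating this argument'' is exactly your induction). No issues.
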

\begin{proof}
Let $a_i'$ be the mutation of $a_i$ in $\a$.  Then
\[ \psi(a_i')=\frac{\prod_{\Q_{ij}>0}\psi(a_j)^{\Q_{ij}}+\prod_{\Q_{ij}<0}\psi(a_j)^{-\Q_{ij}}}{\psi(a_i)}\]
Note the denominator is non-zero, so the expression is well-defined.  The right hand side is positive, so $\psi(a_i')>0$.  Then $\psi$ is positive on every cluster variable in the cluster $\mu_i(\a)=\{a_1,...,a_i',...,a_n\}$.  Iterating this argument, it is true for any cluster.
\end{proof}

Non-negative maps have been less studied.  As the next lemma shows, a non-negative map will either be positive or send many cluster variables to zero.
\begin{lemma}
Let $\psi$ be a non-negative map.  Either $\psi$ is positive, or in every seed $(\Q,\a)$ of $\A$, there is bi-infinite path $\{a_{i_k}\}_{k\in\mathbb{Z}}$ of cluster variables in the kernel of $\psi$.
\end{lemma}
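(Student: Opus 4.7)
The plan is to start from a seed in which some mutable cluster variable lies in $\ker\psi$, and then to propagate locally via exchange relations to build a bi-infinite directed path of mutable kernel vertices, using crucially that frozen variables cannot lie in the kernel.

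First I would verify that every frozen variable satisfies $\psi(a_j)>0$. The presentation in Lemma \ref{lemma: acyclicpres} shows that each frozen variable is a unit in $\A$, so $\psi(a_j)$ must be a unit in $\Z$ (respectively $\R$), and non-negativity upgrades this unit to a strictly positive value. Consequently, among the cluster variables, only mutable ones can lie in $\ker\psi$. Combining this with the contrapositive of Proposition \ref{prop: totalpositive}, which says that if $\psi$ is not positive then it is positive on no cluster, I conclude that in any fixed seed $(\Q,\a)$ the set $S:=\{i\text{ mutable}: \psi(a_i)=0\}$ is nonempty.

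The heart of the argument is the following local propagation step: for each $i\in S$ there exist $j,k\in S$ with $\Q_{ij}>0$ and $\Q_{ki}>0$. Applying $\psi$ to the exchange relation $a_ia_i'=\pi_i^{+}+\pi_i^{-}$ gives $0=\psi(\pi_i^{+})+\psi(\pi_i^{-})$, and since both summands are non-negative they must vanish individually. Because the target is a domain and each $\pi_i^{\pm}$ is a monomial in cluster variables, some factor of $\pi_i^{+}$ and some factor of $\pi_i^{-}$ lies in $\ker\psi$; by the first step these factors are mutable, producing the desired $j$ and $k$ in $S$.

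Finally, the induced subquiver of $\Q_{mut}$ on $S$ has minimum in-degree and out-degree at least one, and $S$ is finite, so it must contain a directed cycle, which unrolls to a bi-infinite directed path $\{a_{i_k}\}_{k\in\Z}$ of cluster variables in $\ker\psi$. The genuinely nontrivial step is the opening observation about frozen variables: without it, the local propagation could halt at a frozen kernel vertex where no exchange relation is available, and the resulting sequences might fail to consist of mutable vertices as the definition of bi-infinite path demands.
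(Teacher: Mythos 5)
Your proof is correct and takes essentially the same route as the paper's: kernel variables must be mutable because frozen variables are invertible, the exchange relation together with non-negativity propagates kernel vertices both upstream and downstream, and finiteness of the seed yields the bi-infinite path (the paper simply iterates where you extract a directed cycle and unroll it, which is the same argument made explicit). One small repair: justify $\psi>0$ on frozen variables by the convention that frozen variables are invertible in $\A$ (as the paper does, noting $\psi(a_i^{-1})$ could not exist), rather than by Lemma \ref{lemma: acyclicpres}, which only applies to acyclic seeds and so is not available for a general $\A$.
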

\begin{proof}
Let $(\Q,\a)$ be any seed of $\A$.  If $\psi(\a)>0$, then Proposition \ref{prop: totalpositive} implies $\psi$ is positive.  Otherwise, there some $a_i\in \a$ with $\psi(a_i)=0$.

The variable $a_i$ cannot be a frozen variable, since $\psi(a_i^{-1})$ cannot exist.  Let $a_i'$ be the mutation of $a_i$ in $\a$.  The cluster mutation at $a_i$ gives
\[  \prod_{\Q_{ij}>0}\psi(a_j)^{\Q_{ij}}+\prod_{\Q_{ij}<0}\psi(a_j)^{-\Q_{ij}}=\psi(a_i)\psi(a_i')=0\]
Since each term on the left is $\geq0$, they both must be zero.  Therefore, there is some $a_{i+1}\in \a$ with $\psi(a_{i+1})=0$ and $A_{i,i+1}>0$, and some $a_{i-1}\in \a$ with $\psi(a_{i+1})=0$ and $\Q_{i-1,i}>0$.  Iterating this argument gives the desired directed path.
\end{proof}
\begin{coro}
If $\A$ is acyclic, every non-negative map is positive.
\end{coro}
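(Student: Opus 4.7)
The plan is to apply the contrapositive of the preceding lemma. Assume $\A$ is acyclic, and let $\psi$ be a non-negative map on $\A$; we want to show $\psi$ is positive. Suppose for contradiction that $\psi$ is not positive. Pick any acyclic seed $(\Q,\a)$ for $\A$, which exists by hypothesis.

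By the lemma immediately above, since $\psi$ is non-negative and not positive, there exists a bi-infinite path $\{a_{i_k}\}_{k\in\Z}$ of cluster variables in the cluster $\a$ such that each $a_{i_k}\in\ker\psi$ and, by definition of a bi-infinite path in $\Q$, we have $\Q_{i_k,i_{k+1}}>0$ for all $k\in\Z$. The vertices $i_k$ are mutable (frozen variables cannot lie in $\ker\psi$ since they are inverted by $\psi$, but they are also excluded from the bi-infinite path by definition).

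Since $\Q$ has only finitely many mutable vertices, the sequence $\{i_k\}_{k\in\Z}$ must repeat: there exist $k<l$ with $i_k=i_l$. Then $i_k,i_{k+1},\ldots,i_l=i_k$ is a directed cycle of mutable vertices in $\Q$, contradicting the acyclicity of $(\Q,\a)$. Hence $\psi$ must have been positive to begin with.

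The only potential subtlety is verifying that the bi-infinite path consists of mutable variables (so that a cycle gives an honest obstruction to acyclicity), but this is already built into the definition of bi-infinite path used in the preceding lemma's proof. No further ingredients are needed; the argument is just three lines once the lemma is in hand.
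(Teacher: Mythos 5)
Your proof is correct and is exactly the argument the paper intends: apply the preceding lemma to an acyclic seed, then use finiteness of the mutable vertex set (pigeonhole) to extract a directed cycle from the bi-infinite path, contradicting acyclicity — the same pigeonhole step the paper already uses in its earlier proposition characterizing arrows in bi-infinite paths. No gaps; your observation that the path consists of mutable vertices by definition is the right thing to check.
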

This can be generalized to locally acyclic cluster algebras.


\begin{thm}\label{thm: nonnegnonLA}
If $\A$ is locally acyclic, every non-negative map is positive.
\end{thm}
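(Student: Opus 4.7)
The plan is to push $\psi$ through the locally acyclic cover into a single acyclic chart and then invoke the corollary for acyclic cluster algebras. Without loss of generality the codomain is $\R$, since a $\Z$-valued $\psi$ factors through $\Z\hookrightarrow\R$ and this changes neither non-negativity nor positivity. Fix a finite acyclic cover of $\X_\A$ by cluster charts $\X_{\A_i}$, where each $\A_i=\A[s_{i,1}^{-1},\ldots,s_{i,k_i}^{-1}]$ is a cluster localization of $\A$ at certain mutable cluster variables.

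First I would extend $\psi$ to some $\A_i$. The kernel $\ker(\psi)$ is a prime ideal of $\A$, and hence corresponds to a point of $\X_\A$; by the covering property there is an index $i$ with $\ker(\psi)\in\X_{\A_i}$. Algebraically this says $\psi(s_{i,j})\neq 0$ for every $j$, and non-negativity of $\psi$ then forces $\psi(s_{i,j})>0$. The universal property of localization yields a unique homomorphism $\tilde\psi:\A_i\to\R$ restricting to $\psi$ on $\A$.

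Next I would verify that $\tilde\psi$ is non-negative on $\A_i$. Every cluster variable of $\A_i$ appears in a seed obtained from the initial seed by mutations at vertices that remain mutable after freezing; running the same mutation sequence inside $\A$ produces a seed of $\A$ carrying the identical tuple of rational functions. Hence the set of cluster variables of $\A_i$ is a subset of the cluster variables of $\A$, on which $\tilde\psi=\psi\geq 0$. Since $\A_i$ is acyclic, the corollary for acyclic cluster algebras (non-negative implies positive) gives $\tilde\psi>0$ on every cluster variable of $\A_i$.

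To finish, pick any cluster $\mathbf{a}$ of $\A_i$. Viewed as a tuple of rational functions, $\mathbf{a}$ is also a cluster of $\A$ (freezing vertices leaves the cluster tuple of a seed unchanged), so $\psi$ takes strictly positive values on $\mathbf{a}$, and Proposition \ref{prop: totalpositive} propagates this positivity to every cluster variable of $\A$. The only substantive point is the first step, and it is essentially the scheme-theoretic translation of "$\X_\A$ is covered by the $\X_{\A_i}$": the point $\ker(\psi)$ must avoid the localizing set of some chart. Once this is established, the remaining ingredients are formal.
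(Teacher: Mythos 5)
Your proposal is correct, and its geometric core is the same as the paper's: since $\ker(\psi)$ is a prime ideal and the acyclic cluster charts cover $\X_\A$, some chart $\X_{\A_i}$ contains it, i.e.\ $\psi$ is strictly positive on the variables inverted in that chart. Where you diverge is in what you do next. The paper never extends $\psi$: assuming $\psi$ is not positive, it invokes the lemma that every seed of $\A$ then carries a directed bi-infinite path of cluster variables lying in $\ker(\psi)$, observes that these variables are necessarily still mutable in $\A_i$ (the frozen ones avoid the kernel), and so every seed of $\A_i$ contains a cycle of mutable vertices, contradicting acyclicity. You instead use the cluster-localization equality $\A_i=\A[s_{i,1}^{-1},\dots,s_{i,k_i}^{-1}]$ to extend $\psi$ to $\tilde\psi:\A_i\to\R$, note that the cluster variables (and clusters) of a freezing are among those of $\A$ so that $\tilde\psi$ is non-negative, apply the acyclic corollary to get positivity on $\A_i$, and then transport positivity back to $\A$ through a shared cluster via Proposition \ref{prop: totalpositive}. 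Each step you rely on is available in the paper (Lemma \ref{lemma: acycliclocal} for the localization identity, Proposition \ref{prop: inclusions} for the containment of cluster variables, and the corollary to the non-negative-map lemma for the acyclic case), so the argument is complete. What your route buys is a clean reduction of the locally acyclic statement to the acyclic one, treated as a black box; what the paper's route buys is that it works purely combinatorially with the kernel and, as the paper remarks, makes it transparent that finiteness of the cover is never used — a feature your argument shares, since you only ever use one chart.
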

\begin{proof}
Assume that $\A$ has a locally acyclic cover.  Let $K_\psi$ be the kernel of $\psi$; it is prime since $\Z$ and $\R$ are domains.  By assumption, there is some acyclic chart $\X_{\A'}$ which contains $K_\psi$; that is, the cluster variables which get frozen in $\A'$ are not in $K_\psi$.  By the previous lemma, each seed of $\A$ contains a direct path of cluster variables in the kernel of $\psi$.  Therefore, every seed of $\A'$ contains a directed bi-infinite path of unfrozen vertices.  In particular, every seed contains a directed cycle, this contradicts $\A'$ being acyclic.
\end{proof}
\begin{rem}\label{rem: nonnegnonLA}
The proof never uses the fact that the acyclic cover was finite.  Therefore, if $\X_\A$ has an infinite cover by acyclic cluster charts, then every non-negative map is positive.
\end{rem}
It is tempting to ask the converse.
\begin{quest}\label{quest: converse}
If $\A$ is not locally acyclic, is there always a non-negative map which is not positive?
\end{quest}
If this is true, then the converse to Remark \ref{rem: nonnegnonLA} is true: $\A$ is locally acyclic if $\X_\A$ has an infinite cover by acyclic cluster charts.  In particular, if $\X_\A=X^\circ$, then the cluster tori themselves define an infinite cover by acyclic cluster charts, and so $\A$ would be locally acyclic.

\subsection{Geometric interpretation}

Positive and non-negative maps have an important geometric interpretation.

Let $\X_\A(\R)$ denote the set of all homomorphisms $\psi:\A\rightarrow \R$.  Elements $f\in \A$ give $\mathbb{R}$-valued functions $e_f$ on $\X_\A(\R)$, by $e_f(\psi):=\psi(f)$.  The set $\X_\A(\R)$ may be made into a topological space, by equipping it with the weakest topology such that all the $e_f$ are continuous for the metric topology on $\R$.  This is called the \emph{transcendental topology} on $\X_\A(\R)$.

Each cluster torus $(\mathbb{A}_\Z^*)^n\subset \X_\A$ determines an open manifold $(\mathbb{R}^*)^n\subset \X_\A(\R)$.\footnote{These are the maps $\psi:\A\rightarrow \R$ which factor through the corresponding Laurent embedding.}  Each of these $(\mathbb{R}^*)^n$ has a connected component $\R_+^n$ on which the $e_a$ are positive.  It follows from Proposition \ref{prop: totalpositive} that this subset $\R_+^n\subset \X_\A(\R)$ is the same for each cluster torus,\footnote{Though the set is the same, the identification with $\R_+^n$ depends on a choice of cluster.} and it coincides with the set of positive maps $\psi:\A\rightarrow \R$.

By continuity, any map $\psi$ in the closure of $\R_+^n\subset\X_\A(\R)$ is non-negative.
\begin{quest}
Is every non-negative map in the closure of the set of positive maps in $\X_\A(\R)$, in the transcendental topology?
\end{quest}
\noindent This can be regarded as a question about the rigidity of non-negative maps and whether they can be deformed to positive maps.  If this question and Question \ref{quest: converse} have positive answers, then local acyclicity is equivalent to the positive part $\R_+^n$ being closed in $\X_A(\R)$.

\medskip


It is also tempting to wonder whether there is a geometric reason that all non-negative maps on a locally acyclic cluster algebra are positive.
\begin{quest}
Let $\psi$ be a non-negative map which is not positive, and let $ker(\psi)$ be its kernel, thought of as a point in $\X_\A$.  Is there always some bad local geometric behavior at $ker(\psi)\in \X_\A$ which is impossible in a locally acyclic cluster scheme?
\end{quest}
\noindent Possibilities for this bad behavior could be a singularity which is not a local complete intersection, or the Weil-Petersson form not extending to $ker(\psi)$ (see Section \ref{section: WP}).

\begin{rem}
A related question is whether the points of the form $ker(\psi)\in \X_\A$ (for $\psi$ non-negative but not positive) are always singular.  If this and the converse to Theorem \ref{thm: nonnegnonLA} are true, then $\X_\A$ being smooth implies that $\A$ is locally acyclic.  Since the cluster algebras arising in many examples are smooth, such as double Bruhat cells and Grassmannians, this would imply these are always locally acyclic.
\end{rem}

\section{Cluster algebras of marked surfaces}

An important class of cluster algebras are those coming from marked surfaces (\cite{GSV05}, \cite{FST08}, \cite{FG06}).  This section briefly reviews this construction, largely following \cite{FST08}.

%

\subsection{Marked surfaces}

A \textbf{marked surface} $(\S,\M)$ will be an oriented 2-dimensional real manifold\footnote{In contrast with \cite{FST08}, we do not assume $\S$ is connected.} $\S$, with boundary $\partial S$, together with a finite set of \textbf{marked points} $\M\subset  \S$ (some additional requirements will be imposed below).
By the classification of surfaces, any marked surface can be constructed by starting with a genus $g$ surface, cutting out a finite number of discs, and marking a finite number of points on the boundary of each disc and on the interior of $\S$.  The boundary of an excised disc will be called a \textbf{hole}, and a marked point in the interior of $\S$ will be called a \textbf{puncture}.
%

It will be necessary to exclude certain small examples, and so we assume the following of any marked surface $(\S,\M)$.
\begin{itemize}
\item $\M$ is non-empty,
\item each component of $\partial \S$ contains at least one marked point,
\item and no component of $(\S,\M)$ is one of...
\begin{itemize}
\item a sphere with at most three punctures,
\item a disc with one boundary marked point and at most one puncture, or
\item a disc with at most two boundary marked points and no punctures.\footnote{In \cite{FST08}, the authors prohibit the disc with three boundary marked points.  It will be simpler for us to allow this case, but note that it cannot have tagged arcs, and so it corresponds to the empty quiver and the cluster algebra $\Z$.}
\end{itemize}
\end{itemize}




\subsection{Tagged arcs} Inside the marked surface, we will consider arcs which connect marked points.

\begin{defn}\label{defn: arc} A \textbf{tagged arc} $\alpha$ in $(\S,\M)$ is a curve in $\S$ such that,
\begin{itemize}
\item $\alpha$ is homeomorphic to the interval $[0,1]$;
\item the endpoints are in $\M$;
\item each endpoint has a `tagging' of either \textbf{plain} (denoted
\begin{tikzpicture}[inner sep=0.5mm,scale=.5]
	\node (1) at (0,0) [marked] {};
	\draw (1,0) to [-] (1);
\end{tikzpicture}
)
or \textbf{notched} (denoted
\begin{tikzpicture}[inner sep=0.5mm,scale=.5]
	\node (1) at (0,0) [marked] {};
	\draw (1,0) to [-hooks reversed] (1);
\end{tikzpicture}
);
\item $\alpha$ has no self-intersections, except the endpoints may coincide;
\item if the endpoints coincide, they are either both plain or both notched;
\item other than the endpoints, $\alpha$ is disjoint from $\M$;
\item any endpoint in $\partial\S$ is plain; and
\item $\alpha$ does not cut out...
\begin{itemize}
\item a disc with one boundary marked point and at most one puncture
, or
\item a disc with two boundary marked points and no punctures.
\end{itemize}
\end{itemize}
\end{defn}
Tagged arcs are defined up to endpoint-fixed homotopy.  That is, two tagged arcs $\alpha$ and $\beta$ are \textbf{equivalent} if there is a continuously-varying family of arcs from $\alpha$ to $\beta$ (where the taggings are constant).  Arcs should then be thought of as representing flexible strands connecting their endpoints.  Furthermore, they have no prefered orientation.

\subsection{Triangulations} A pair of tagged arcs $\alpha$ and $\beta$ are \textbf{compatible} if
\begin{itemize}
\item $\alpha$ and $\beta$ do not intersect (except possibly at the endpoints);
\item if the untagged versions of $\alpha$ and $\beta$ are different, and $\alpha$ and $\beta$ share an endpoint, then the taggings at that endpoint agree;
\item if the untagged versions of $\alpha$ and $\beta$ coincide, then at least one endpoint of $\alpha$ must have the same tagging as $\beta$.
\end{itemize}
A collection of tagged arcs $\Delta$ is \textbf{compatible} if it is pairwise compatible.


\begin{defn}\label{def: tri}
A \textbf{tagged triangulation} $\Delta$ of $(\S,\M)$ is a maximal compatible collection of distinct tagged arcs.
\end{defn}

Given a tagged triangulation $\Delta$ and $\alpha\in \Delta$, there is exactly one other tagged arc $\beta$ such that $(\Delta-\{\alpha\})\cup \{\beta\}$ is also a tagged triangulation.  This is arguably the purpose behind adding these taggings and the many rules they satisfy.

\begin{rem}
A triangulation of $(\S,\M)$ gives a tagged triangulation (where all taggings are plain) exactly when there are no triangles with two edges identified.
\end{rem}

\begin{rem}
If $(\S,\M)$ has no punctures, then every endpoint of a tagged arc must be plain.  In this case, `tagged arcs' coincide with regular arcs, and `tagged triangulations' coincide with regular triangulations.
\end{rem}

%

The number of tagged arcs in a tagged triangulation can be computed from the genus $g$, the number of punctures $p$, the total number of marked points $|\M|$, and the number of holes $h$.
\begin{prop}\cite[Proposition 2.10]{FST08}
Every tagged triangulation of $(\S,\M)$ is finite, with total number of tagged arcs given by
\[ 6g+3h+2p +|\M|-6\]
\end{prop}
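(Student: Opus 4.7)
The plan is to reduce the statement to an Euler-characteristic computation on an ordinary (untagged) ideal triangulation, then transfer the count to tagged triangulations via the untagging correspondence of \cite{FST08}.

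For the ordinary case, suppose first that $\Delta$ is an ideal triangulation of $(\S,\M)$, meaning all endpoints are plain and no tagging is needed to resolve self-folded configurations. Then $\Delta$ together with the boundary segments between consecutive marked points on $\partial\S$ gives $(\S,\M)$ the structure of a CW complex. Let $n$ be the number of tagged arcs, $b = |\M| - p$ the number of boundary marked points (equivalently, the number of boundary segments, summed over the $h$ holes), and $t$ the number of triangles. Then Euler's formula reads
\[ |\M| - (n+b) + t = \chi(\S) = 2 - 2g - h. \]
Double-counting triangle--edge incidences, using that each triangle has three edges, each arc borders two triangles, and each boundary segment borders exactly one triangle, yields $3t = 2n + b$. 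Eliminating $t$ and substituting $b = |\M| - p$ gives $n = 6g + 3h + 2p + |\M| - 6$.

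For the general case, I would appeal to the untagging map of \cite{FST08}: to each tagged triangulation $\Delta$ one associates an ideal triangulation $\Delta^\circ$ by resolving notched endpoints at punctures into self-folded triangle configurations. The key point is that this replacement rule preserves the total number of arcs in each local configuration at a puncture, so $\Delta$ and $\Delta^\circ$ have the same arc count. Since the ordinary case has already established the formula for $\Delta^\circ$, it follows that $\Delta$ itself has $6g+3h+2p+|\M|-6$ arcs. Finiteness is automatic once an explicit integer value is computed.

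The main obstacle is the untagging step: one must carefully verify that the replacement of notched endpoints preserves the arc count in every configuration allowed by Definition \ref{defn: arc}, and that the output is genuinely an ideal triangulation to which the CW-complex argument applies. The excluded small examples in the definition of a marked surface are essential here, since they are precisely the degenerate cases (sphere with $\le 3$ punctures, small discs) where $6g+3h+2p+|\M|-6 \le 0$ and no valid tagged triangulation exists; once these are ruled out, both the Euler characteristic calculation and the untagging correspondence behave uniformly.
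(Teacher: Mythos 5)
Your argument is correct, and the paper itself offers no proof of this statement—it simply cites \cite[Proposition 2.10]{FST08}, whose proof is the same Euler-characteristic count you give (with the tagged case reduced to the ideal case by the untagging correspondence, exactly as you outline). Your computation $|\M|-(n+b)+t=2-2g-h$, $3t=2n+b$, $b=|\M|-p$ checks out, so there is nothing to add beyond the routine verification of the untagging step that you already flag.
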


\subsection{Cluster algebras of marked surfaces.}  Let $(\S,\M)$ be a marked surface, and let $\Delta$ be a tagged triangulation.  For simplicity, if $\alpha,\beta\in \Delta$ are homotopic, we will treat them as if they are identical (except for taggings). We construct a quiver $\Q_\Delta$ as follows.  Notice that the taggings are ignored for the construction of $\Q_\Delta$.
\begin{itemize}
\item $\Q_\Delta$ has a vertex for each tagged arc in $\Delta$.
\item At each marked point $m$ (which is not a puncture contained in two tagged arcs in $\Delta$), add an arrow from $\alpha$ to $\beta$ whenever $\beta$ appears immediately after $\alpha$ when going counter-clockwise around $m$.\footnote{If $\beta'$ is homotopic to $\beta$ except for taggings, there will also be an arrow from $\alpha$ to $\beta'$.  Similarly, if $\alpha'$ is homotopic to $\alpha$ except for taggings, there will also be an arrow from $\alpha'$ to $\beta$ (and $\alpha'$ to $\beta'$).}
\end{itemize}
%
\noindent This can be regarded as an ice quiver with no frozen vertices.  As defined, $\Q_\Delta$ is automatically loop and 2-cycle free.
\begin{rem}
Punctures in two tagged arcs are ignored for simplicity.  A puncture in two arcs would create two opposite arrows which would cancel each other.
\end{rem}
\begin{ex}
Let $(\S,\M)$ be the disc with two marked points on the boundary, and three punctures.  
\begin{figure}[h!]
\centering
\begin{tikzpicture}[inner sep=0.5mm,xscale=1.5,scale=.5,auto]
	\draw[fill=black!10] (0,0) circle (2);
	\node (1) at (0,2) [circle,draw,fill=black!50] {};
	\node (2) at (0,-2) [circle,draw,fill=black!50] {};
	\node (4) at (-1,0) [circle,draw,fill=black!50] {};
	\node (5) at (0,0) [circle,draw,fill=black!50] {};
	\node (6) at (1,0) [circle,draw,fill=black!50] {};
\end{tikzpicture}
\end{figure}

Three tagged triangulations and their quivers are shown in Figure \ref{fig: triang}.
\begin{figure}[h!]
\centering
\begin{tikzpicture}
\begin{scope}
\begin{scope}[xshift=-1.5in,xscale=1.5,inner sep=0.5mm,scale=.5,auto]
	\draw[fill=black!10] (0,0) circle (2);
	\node (1) at (0,2) [circle,draw,fill=black!50] {};
	\node (2) at (0,-2) [circle,draw,fill=black!50] {};
	\node (4) at (-1.2,0) [circle,draw,fill=black!50] {};
	\node (5) at (0,0) [circle,draw,fill=black!50] {};
	\node (6) at (1.2,0) [circle,draw,fill=black!50] {};
	\draw (1) to [-] (4);
	\draw (1) to [-] (5);
	\draw (1) to [-] (6);
	\draw (2) to [-] (4);
	\draw (2) to [-] (5);
	\draw (2) to [-] (6);
	\draw (4) to (5);
	\draw (5) to (6);
\end{scope}
\begin{scope}[xshift=0in,xscale=1.5,inner sep=0.5mm,scale=.5,auto]
	\draw[fill=black!10] (0,0) circle (2);
	\node (1) at (0,2) [circle,draw,fill=black!50] {};
	\node (2) at (0,-2) [circle,draw,fill=black!50] {};
	\node (4) at (-1.2,0) [circle,draw,fill=black!50] {};
	\node (5) at (0,0) [circle,draw,fill=black!50] {};
	\node (6) at (1.2,0) [circle,draw,fill=black!50] {};
	\draw (1) to [-] (4);
	\draw (1) to [-] (5);
	\draw (1) to [-] (6);
	\draw (2) to [-] (4);
	\draw (2) to [-] (5);
	\draw (2) to [-] (6);
	\draw (1) to [out=290,in=70] (2);
	\draw (1) to [out=250,in=110] (2);
\end{scope}
\begin{scope}[xshift=1.5in,xscale=1.5,inner sep=0.5mm,scale=.5,auto]
	\draw[fill=black!10] (0,0) circle (2);
	\node (1) at (0,2) [circle,draw,fill=black!50] {};
	\node (2) at (0,-2) [circle,draw,fill=black!50] {};
	\node (4) at (-1.2,0) [circle,draw,fill=black!50] {};
	\node (5) at (0,0) [circle,draw,fill=black!50] {};
	\node (6) at (1.2,0) [circle,draw,fill=black!50] {};
  \draw (2) to [out=30,in=135,relative,-hooks reversed] (4);
	\draw (1) to [-] (5);
	\draw (2) to [out=-30,in=225,relative,-hooks reversed] (6);
	\draw (2) to [-] (4);
	\draw (1) to [out=15,in=135,relative,-hooks reversed] (5);
	\draw (2) to [-] (6);
	\draw (1) to [out=45,in=165,relative] (2);
	\draw (1) to [out=250,in=110] (2);
\end{scope}
\end{scope}
\vspace{.3in}
\begin{scope}
\begin{scope}[xshift=-1.5in,yshift=-1in,inner sep=0.5mm,scale=.5,auto]
    \node (1) at (-2,1) [mutable] {};
    \node (2) at (-2,-1) [mutable] {};
    \node (3) at (-1,0) [mutable] {};
    \node (4) at (0,1) [mutable] {};
    \node (5) at (0,-1) [mutable] {};
    \node (6) at (1,0) [mutable] {};
    \node (7) at (2,1) [mutable] {};
    \node (8) at (2,-1) [mutable] {};

    \draw (1) to [-angle 90] (4);
    \draw (4) to [-angle 90] (7);
    \draw (8) to [-angle 90] (5);
    \draw (5) to [-angle 90] (2);
    \draw (1) to [-angle 90] (2);
    \draw (2) to [-angle 90] (3);
    \draw (3) to [-angle 90] (1);
    \draw (8) to [-angle 90] (7);
    \draw (7) to [-angle 90] (6);
    \draw (6) to [-angle 90] (8);
    \draw (3) to [-angle 90] (5);
    \draw (5) to [-angle 90] (6);
    \draw (6) to [-angle 90] (4);
    \draw (4) to [-angle 90] (3);
\end{scope}
\begin{scope}[xshift=0in,yshift=-1in,inner sep=0.5mm,scale=.5,auto]
    \node (1) at (-2,1) [mutable] {};
    \node (2) at (-2,-1) [mutable] {};
    \node (3) at (-1,0) [mutable] {};
    \node (4) at (0,1) [mutable] {};
    \node (5) at (0,-1) [mutable] {};
    \node (6) at (1,0) [mutable] {};
    \node (7) at (2,1) [mutable] {};
    \node (8) at (2,-1) [mutable] {};

    \draw (1) to [-angle 90] (3);
    \draw (3) to [-angle 90] (2);
    \draw (3) to [-angle 90] (4);
    \draw (4) to [-angle 90] (6);
    \draw (6) to [-angle 90] (5);
    \draw (5) to [-angle 90] (3);
    \draw (8) to [-angle 90] (6);
    \draw (6) to [-angle 90] (7);
\end{scope}
\begin{scope}[xshift=1.5in,yshift=-1in,inner sep=0.5mm,scale=.5,auto]
    \node (1) at (-2,1) [mutable] {};
    \node (2) at (-2,-1) [mutable] {};
    \node (3) at (-1,0) [mutable] {};
    \node (4) at (0,1) [mutable] {};
    \node (5) at (0,-1) [mutable] {};
    \node (6) at (1,0) [mutable] {};
    \node (7) at (2,1) [mutable] {};
    \node (8) at (2,-1) [mutable] {};

    \draw (1) to [-angle 90] (3);
    \draw (2) to [-angle 90] (3);
    \draw (4) to [-angle 90] (3);
    \draw (6) to [-angle 90] (4);
    \draw (6) to [-angle 90] (5);
    \draw (5) to [-angle 90] (3);
    \draw (6) to [-angle 90] (8);
    \draw (6) to [-angle 90] (7);
    \draw (3) to [-angle 90] (6);
\end{scope}
\end{scope}
\end{tikzpicture}
\caption{Sample triangulations and their quivers}
\label{fig: triang}
\end{figure}
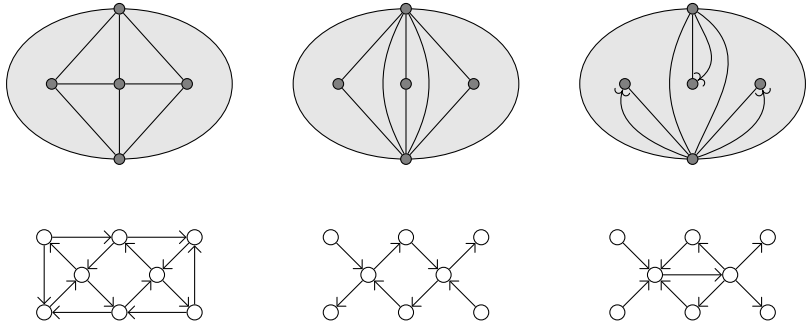
\end{ex}

The quivers coming from tagged triangulations can be used to defined cluster algebras because of the following lemma.
\begin{lemma}\cite[Proposition 4.10]{FST08}
Let $\Delta$ and $\Delta'$ be two tagged triangulations of $(\S,\M)$.  Then there is a canonical isomorphism
\[ \A(\Q_{\Delta})\simeq \A(\Q_{\Delta'})\]
\end{lemma}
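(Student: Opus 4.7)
The plan is to reduce the claim to two structural facts: (i) the flip graph of tagged triangulations of $(\S,\M)$ is connected, and (ii) a flip of a triangulation at an arc $\alpha$ corresponds, under $\Delta\mapsto \Q_\Delta$, to the quiver mutation $\mu_\alpha$ at the vertex corresponding to $\alpha$. Granting these, any two tagged triangulations $\Delta$ and $\Delta'$ can be joined by a finite sequence of flips, which lifts to a mutation sequence from $\Q_\Delta$ to $\Q_{\Delta'}$, so the two quivers are mutation-equivalent and therefore generate the same cluster algebra inside $\mathbb{Q}(x_1,\ldots,x_n)$.

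For (i), I would invoke the flip-connectivity theorem for tagged triangulations: for each $\alpha\in\Delta$, the discussion following Definition~\ref{def: tri} guarantees a unique alternative tagged arc $\beta$ such that $(\Delta\setminus\{\alpha\})\cup\{\beta\}$ is again a tagged triangulation, and the resulting flip graph is connected. The whole point of introducing taggings in Definition~\ref{defn: arc} is to make these flips always well-defined, even at arcs adjacent to punctures where flips of ordinary arcs would fail. For (ii), the analysis is local: the arrows of $\Q_\Delta$ incident to the vertex $\alpha$ are determined by the triangles sharing $\alpha$, and replacing $\alpha$ with $\beta$ replaces those triangles by the other pair formed from the same boundary quadrilateral. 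A direct check in each of the finitely many local pictures (an interior arc between two distinct triangles, an arc on a self-folded triangle, and a pair of arcs differing only by tagging at a puncture) shows that the rearrangement of arrows at the flipped vertex reproduces the mutation rule
\[
\mu_\alpha(\Q)_{ij}=\Q_{ij}+\tfrac{1}{2}\bigl(|\Q_{i\alpha}|\Q_{\alpha j}+\Q_{i\alpha}|\Q_{\alpha j}|\bigr)
\]
together with the sign-reversal of arrows incident to $\alpha$.

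Granting (i) and (ii), the isomorphism $\A(\Q_\Delta)\simeq \A(\Q_{\Delta'})$ is just the identification of two cluster algebras inside $\mathbb{Q}(x_1,\ldots,x_n)$ whose seeds lie in a common mutation-equivalence class. The isomorphism is \emph{canonical} because the cluster variables are in natural bijection with the tagged arcs of $(\S,\M)$ independent of any initial seed: the cluster variable labeled by the tagged arc $\gamma$ in $\A(\Q_\Delta)$ matches the one labeled by $\gamma$ in $\A(\Q_{\Delta'})$.

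The main obstacle is (ii): the case analysis near punctures, at self-folded triangles, and for pairs of arcs differing only in tagging is the technical heart of \cite{FST08}, and the bookkeeping of Definition~\ref{defn: arc} is engineered precisely so that the correspondence holds uniformly across all local configurations. Flip connectivity (i), by contrast, is a more combinatorial statement about the topology of the marked surface, proven by exhibiting explicit flip sequences between any two triangulations.
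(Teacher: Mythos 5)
The paper offers no proof of this lemma --- it is quoted directly from \cite{FST08} --- and your two-step plan (flip-connectivity of tagged triangulations, plus ``flip corresponds to mutation'') is indeed the standard argument underlying that citation. But step (i) as you state it has a genuine gap: the flip graph of \emph{tagged} triangulations is \emph{not} always connected. Precisely when $\S$ is closed and $\M$ consists of a single puncture (e.g.\ the once-punctured torus, which the paper's conventions allow and which is exactly the Markov example of Section \ref{section: Markov}), every tagged arc is a loop at the puncture with both ends tagged alike, the compatibility rules force a tagged triangulation to be all-plain or all-notched, and a flip never changes the tagging; so the tagged arc complex has two connected components, and your argument only proves the lemma for $\Delta,\Delta'$ lying in the same component. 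This is also why the paper's subsequent theorem quoting \cite{FST08} treats the one-puncture case separately, identifying cluster variables only with \emph{plain} tagged arcs. The repair is short but must be made explicit: the involution that simultaneously notches/un-notches all ends at the puncture is a bijection between the two components preserving the underlying untagged arcs, and since the construction of $\Q_\Delta$ ignores taggings, an all-notched triangulation has literally the same quiver as its all-plain counterpart; the cross-component case then reduces to the in-component one.

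A secondary concern is your justification of canonicity. You appeal to the bijection between cluster variables and tagged arcs ``independent of any initial seed,'' but that is the much harder theorem of \cite{FST08} quoted later in the paper, it sits logically downstream of the present lemma, and it is false as you state it in the once-punctured closed case (only plain tagged arcs label cluster variables there). For the lemma itself it is cleaner to obtain the identification from the flip sequence: transporting the initial seed of $\A(\Q_\Delta)$ along the flips produces a seed with quiver $\Q_{\Delta'}$ inside the same ambient field, which gives the isomorphism; canonicity (path-independence) then either needs a separate argument or should simply be attributed to \cite{FST08} rather than derived from the arc-labeling theorem.
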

Define the \textbf{cluster algebra} of $(\S,\M)$ by
\[\A(\S,\M):=\A(\Q_{\Delta}),\] where $\Delta$ is any triangulation  of $(\S,\M)$.  By the lemma, this is independent of the choice of tagged triangulation.
\begin{thm}\cite[Theorems 5.6 and 7.11]{FST08},\cite[Theorem 5.1]{FST??}
\begin{itemize}
\item If $\M$ does not consist of one or two punctures, the cluster variables in $\A(\S,\M)$ are canonically identified with tagged arcs in $(\S,\M)$; and seeds in $\A(\S,\M)$ are identified with tagged triangulations $\Delta$ with quiver of the form $\Q_\Delta$.
\item If $\M$ consists of exactly one puncture,  the cluster variables in $\A(\S,\M)$ are canonically identified with plain tagged arcs in $(\S,\M)$; and seeds in $\A(\S,\M)$ are identified with plain tagged triangulations $\Delta$ with quiver of the form $\Q_\Delta$.
\end{itemize}
\end{thm}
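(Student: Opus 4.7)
The plan is to construct the identification by induction on flip-distance from a fixed initial tagged triangulation. Fix a tagged triangulation $\Delta_0$, and take $(\Q_{\Delta_0},\mathbf{x}_{\Delta_0})$ as the initial seed of $\A(\S,\M)$, with one initial cluster variable for each tagged arc in $\Delta_0$. I will show that (a) flips of tagged triangulations correspond exactly to quiver mutations of their associated quivers, and (b) any two tagged triangulations of the appropriate class can be joined by a finite sequence of flips. Together these give the identification.

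For step (a), the key observation is that for any tagged triangulation $\Delta$ and any $\alpha\in\Delta$, maximality in Definition \ref{def: tri} guarantees a unique tagged arc $\alpha'\neq\alpha$ such that $(\Delta\setminus\{\alpha\})\cup\{\alpha'\}$ is again a tagged triangulation. Call this new triangulation $\mu_\alpha(\Delta)$. Then the equality $\Q_{\mu_\alpha(\Delta)}=\mu_\alpha(\Q_\Delta)$ is verified by a finite case analysis of the two triangles incident to $\alpha$, inspecting what happens to the counter-clockwise arrows around each marked point when $\alpha$ is replaced by $\alpha'$. Self-folded triangles and notched endpoints generate the case distinctions, but each configuration can be checked locally. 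For step (b), one proves connectivity of the flip graph of tagged triangulations, either by induction on topological complexity (cutting $(\S,\M)$ along an arc in the symmetric difference of two triangulations) or by a direct argument reducing to the case of a once-punctured polygon.

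Combining these, the seed pattern generated from $(\Q_{\Delta_0},\mathbf{x}_{\Delta_0})$ is naturally indexed by sequences of flips, and by (a)--(b) the resulting seeds exhaust the tagged triangulations, each appearing exactly once. To define $x_\alpha$ for a tagged arc $\alpha$, one picks any tagged triangulation $\Delta\ni\alpha$, writes down the cluster variable attached to $\alpha$ in that seed, and uses the Laurent phenomenon combined with the flip-connectivity in (b) to see that the resulting element of $\A(\S,\M)$ does not depend on the choice of $\Delta$. Injectivity and surjectivity of the correspondence then follow from the fact that every cluster variable arises through a sequence of mutations, which corresponds to a sequence of flips leading to a triangulation containing the appropriate arc.

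The main obstacle is the exceptional behavior when $\M$ is a single puncture or two punctures. In the once-punctured case, tagged arcs come in two variants differing only by the tag at the puncture, and two plain triangulations are not generally connected through flips of plain triangulations only; the plain-tagged restriction must be imposed so that the flip graph is connected and each cluster variable corresponds unambiguously to a plain tagged arc. The two-puncture case introduces a symmetry (flipping both punctures' tags simultaneously) that must be treated carefully to avoid double-counting. These are precisely the configurations excluded in the statement, and working out the corresponding restricted flip graphs --- verifying connectivity and that flips still realize mutations --- is where the bulk of the technical labor lies. This is why the two bullets are separated and why the theorem invokes results of \cite{FST08} and \cite{FST??} rather than being elementary.
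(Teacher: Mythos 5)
First, a remark on what you are being compared to: the paper offers no proof of this statement at all --- it is imported wholesale from \cite{FST08} (Theorems 5.6 and 7.11) and \cite{FST??} (Theorem 5.1) --- so your proposal is really an attempt to sketch the original Fomin--Shapiro--Thurston argument, and it does reproduce its skeleton: flips of tagged triangulations are compatible with quiver mutation, the tagged flip graph is connected, and the closed surfaces with very few punctures must be treated separately.

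The genuine gap is at the step you dispatch with the phrase ``each appearing exactly once.'' From (a) flips realize mutations and (b) flip-connectivity, you obtain only a surjection: starting from $(\Q_{\Delta_0},\mathbf{x}_{\Delta_0})$, every tagged triangulation is reached by some flip sequence and hence carries \emph{some} seed. What the theorem asserts in addition is that the seed so obtained is independent of the chosen flip path (so that ``the cluster variable attached to $\alpha$ in the seed of $\Delta$'' is even well defined), that distinct tagged triangulations carry distinct seeds, and that distinct tagged arcs give distinct cluster variables. None of this follows from (a) and (b), and the Laurent phenomenon does not supply it: Laurentness says nothing about two different mutation paths arriving at the same tagged triangulation producing the same collection of Laurent polynomials, nor about two non-isotopic arcs producing different ones. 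This well-definedness/injectivity is precisely the hard content, and it is why the theorem cites \cite{FST??} in addition to \cite{FST08}: there the identification is completed by a geometric realization, assigning to each tagged arc a lambda length on decorated Teichm\"uller space, showing these functions satisfy the exchange relations under flips and are pairwise distinct and algebraically independent along any triangulation; this simultaneously forces path-independence of the seed attached to a triangulation and injectivity of the map from tagged arcs to cluster variables. Without some substitute for that step your induction on flip distance proves strictly less than the statement. Your reading of the exceptional cases is also only partly right: for a closed once-punctured surface the issue is that notched arcs are unreachable from plain ones (so one restricts to plain tagged arcs and plain triangulations), but the twice-punctured closed case is not a matter of ``avoiding double-counting'' under a tag-switching symmetry; it is excluded because the realization and connectivity arguments themselves degenerate there, which is exactly why the first bullet of the statement omits it rather than resolving it.
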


\subsection{Cutting marked surfaces} Let $\alpha$ be a tagged arc in $(\S,\M)$.  The \textbf{cutting} of $(\S,\M)$ along $\alpha$ is the marked surface obtained by cutting $\S$ along $\alpha$, compactifying $\S$ by adding boundary along the two sides of $\alpha$, and adding marked points where the endpoints of $\alpha$ were.  If $(\S',\M')$ is the cutting along $\alpha$, then there is a natural map
\[ (\S',\M')\rightarrow (\S,\M)\]
which is a bijection away from $\alpha\subset \S$, a 2-to-1 map over the interior of $\alpha$, and such that the preimage of $\M$ is $\M'$.  The three types of cut are pictured in Figure \ref{fig: cuts}, but note that the endpoints of $\alpha$ need not be distinct.

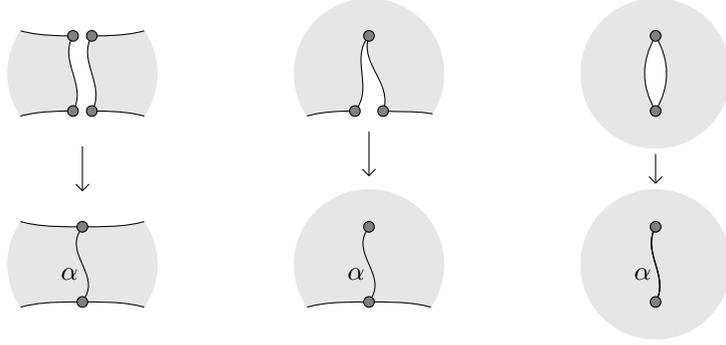
\begin{figure}[h!]
\centering
\begin{tikzpicture}[inner sep=0.5mm,auto]
\begin{scope}[xshift=-1.5in]
\begin{scope}[yshift=.5in,scale=.25]
	\clip (0,0) circle (4);
	\draw[fill=black!10] (-5,-3) to [in=180,out=30] (-.5,-2) to [in=240,out=60] (-.5,2) to [out=180,in=-30] (-5,3);
	\draw[fill=black!10] (5,3) to [in=0,out=210] (.5,2) to [in=60,out=240] (.5,-2) to [out=0,in=150] (5,-3);
	\node (1) at (-.5,-2) [marked] {};
	\node (2) at (-.5,2) [marked] {};
	\node (3) at (.5,-2) [marked] {};
	\node (4) at (.5,2) [marked] {};
\end{scope}
	\draw (0,.3) to [-angle 90] (0,-.3);
\begin{scope}[yshift=-.5in,scale=.25]
	\clip (0,0) circle (4);
	\draw[fill=black!10] (-5,-3) to [in=180,out=30] (0,-2) to [in=150,out=0] (5,-3) to [line to] (5,3) to [in=0,out=210] (0,2) to [in=-30,out=180] (-5,3);
	\node (1) at (0,-2) [marked] {};
	\node (2) at (0,2) [marked] {};
	\draw (1) to [in=240,out=60] node {$\alpha$} (2);
\end{scope}
\end{scope}
\begin{scope}[xshift=0in]
\begin{scope}[yshift=.5in,scale=.25]
	\clip (0,0) circle (4);
	\draw[fill=black!10] (-5,-3) to [in=180,out=30] (-.75,-2) to [in=150,out=-30,relative] (0,2) to [in=150,out=-30] (.75,-2) to [in=150,out=10] (5,-3) to [line to] (5,5) to (0,5) to (-5,5);
	\node (1) at (-.75,-2) [marked] {};
	\node (2) at (.75,-2) [marked] {};
	\node (3) at (0,2) [marked] {};
\end{scope}
	\draw (0,.5) to [-angle 90] (0,-.1);
\begin{scope}[yshift=-.5in,scale=.25]
	\clip (0,0) circle (4);
	\draw[fill=black!10] (-5,-3) to [in=180,out=30] (0,-2) to [in=150,out=0] (5,-3) to [line to] (5,5) to (0,5) to (-5,5);
	\node (1) at (0,-2) [marked] {};
	\node (2) at (0,2) [marked] {};
	\draw (1) to [in=240,out=60] node {$\alpha$} (2);
\end{scope}
\end{scope}
\begin{scope}[xshift=1.5in]
\begin{scope}[yshift=.5in,scale=.25]
	\clip (0,0) circle (4);
    \draw[fill=black!10] (-5,-5) to (-5,5) to (5,5) to (5,-5);
	\draw[fill=black!0] (0,-2) to [in=150,out=30,relative] (0,2) to [in=150,out=30] (0,-2);
	\node (1) at (0,-2) [marked] {};
	\node (2) at (0,2) [marked] {};
\end{scope}
	\draw (0,.2) to [-angle 90] (0,-.2);
\begin{scope}[yshift=-.5in,scale=.25]
	\clip (0,0) circle (4);
    \draw[fill=black!10] (-5,-5) to (-5,5) to (5,5) to (5,-5);
	\draw[fill=black!0] (0,-2) to [in=150,out=-30,relative] node {$\alpha$} (0,2) to [in=150,out=-30] (0,-2);
	\node (1) at (0,-2) [marked] {};
	\node (2) at (0,2) [marked] {};
\end{scope}
\end{scope}
\end{tikzpicture}
\caption{Types of cuts}
\label{fig: cuts}
\end{figure}

Let $\Delta$ be a tagged triangulation of $(\S,\M)$, with $\alpha\in \Delta$.  If $(\S',\M')$ is the cutting of $(\S,\M)$ along $\alpha$, then $\Delta$ determines a tagged triangulation $\Delta'$ of $(\S',\M')$ as follows.
\begin{itemize}
\item If there is a tagged arc $\alpha'\in \Delta$ which is identical to $\alpha$ except for the taggings, then there is exactly one endpoint $m$ of $\alpha$ where the taggings of $\alpha$ and $\alpha'$ match. Then $\Delta'$ is the preimage of $\Delta-\{\alpha,\alpha'\}$ in $(\S',\M')$ together with a new tagged arc connecting the two preimages of $m$.
\item If there is no such $\alpha'$, then $\Delta'$ is the preimage of $\Delta-\{\alpha\}$.
\end{itemize}
In each case, any notched ends of a tagged arc which touch the boundary are changed to plain.

Cutting has a very simple effect on the corresponding quivers, it deletes the vertex corresponding to $\alpha$.
\begin{lemma}\label{lemma: cutting}
Let $\Delta$ and $\Delta'$ be as above.  Then $\Q_{\Delta'}$ is the induced subquiver of $\Q_\Delta$ on the complement of the vertex corresponding to $\alpha$.
\end{lemma}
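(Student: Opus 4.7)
The plan is to prove the lemma by establishing a natural bijection between the vertices of $\Q_{\Delta'}$ and the vertices of $\Q_\Delta$ other than $\alpha$, and then showing that each marked point $m' \in \M'$ contributes the same arrows to $\Q_{\Delta'}$ as its image $m \in \M$ contributes to $\Q_\Delta$, with the single exception of arrows that used $\alpha$ as one of their endpoints. I will split into the two cases from the definition of $\Delta'$.

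\textbf{Case 1:} $\alpha$ has no twin in $\Delta$ (that is, no other arc in $\Delta$ coincides with $\alpha$ up to tagging). Here $\Delta' = \pi^{-1}(\Delta - \{\alpha\})$ where $\pi : (\S',\M')\to(\S,\M)$ is the cutting map, so vertices of $\Q_{\Delta'}$ correspond bijectively to vertices of $\Q_\Delta$ other than $\alpha$. I would fix a marked point $m'\in \M'$ and inspect the cyclic order of tagged arcs around $m'$. If $m' = \pi^{-1}(m)$ for a marked point $m$ not adjacent to $\alpha$, then the cyclic orders around $m'$ and $m$ agree arc-by-arc, so the same arrows are contributed. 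If $m$ is an endpoint of $\alpha$ with two preimages $m'_1, m'_2$, then the cyclic order of incident arcs around $m$ is the concatenation of the two cyclic orders around $m'_1$ and $m'_2$ with the two instances of $\alpha$ inserted; deleting $\alpha$ from the quiver deletes exactly the arrows having $\alpha$ as an endpoint, while the remaining adjacencies are precisely those appearing around $m'_1$ and $m'_2$. The case of a self-folded endpoint of $\alpha$ is handled the same way.

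\textbf{Case 2:} $\alpha$ has a twin $\alpha'\in \Delta$, differing in the tagging at an endpoint $m$ which is necessarily a puncture. Then $\Delta'$ consists of $\pi^{-1}(\Delta - \{\alpha,\alpha'\})$ together with a new arc $\beta$ connecting the two preimages of $m$ across the new boundary segments. I would identify $\beta \in \Delta'$ with $\alpha' \in \Delta$ as the vertex bijection; this again gives a bijection between vertices of $\Q_{\Delta'}$ and vertices of $\Q_\Delta - \{\alpha\}$. The crucial check is at the preimages of $m$: before cutting, the puncture $m$ was contained in exactly two arcs $\alpha, \alpha'$ of $\Delta$, so by the definition of $\Q_\Delta$ the puncture $m$ contributes no arrows. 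After cutting, the two preimages of $m$ each become boundary marked points, and $\beta$ is the unique arc at each preimage; so each contributes no arrows either. At every other marked point the same cyclic-order comparison as in Case~1 works, with $\beta$ taking the structural role of $\alpha'$ (it meets the same non-$\alpha$ neighbors in the same cyclic order).

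The main obstacle I expect is the careful bookkeeping of the cyclic orders at the endpoints of $\alpha$, and in Case~2 verifying that the new arc $\beta$ inherits exactly the adjacencies that $\alpha'$ had, neither gaining nor losing any arrows. To manage this, I would draw a small disk neighborhood of each endpoint and write out the cyclic sequence explicitly before and after cutting, then note that removing the two occurrences of $\alpha$ (or, in Case~2, contracting the $\{\alpha,\alpha'\}$ pair to $\beta$) preserves all other consecutive pairs. Once these local pictures are verified, the identification of $\Q_{\Delta'}$ with the induced subquiver of $\Q_\Delta$ on the complement of $\alpha$ follows by summing the arrow contributions over all marked points of $\M'$.
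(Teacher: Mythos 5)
The paper itself states this lemma without proof (it is left as an unproved assertion), so there is no official argument to compare against; judged on its own, your overall strategy is the right one: a vertex bijection plus a marked-point-by-marked-point comparison of the (cyclic or linear) orders of incident arcs, observing that cutting along $\alpha$ severs exactly the adjacencies that pass through $\alpha$. Your Case 1 is essentially correct, with one omission worth fixing: an endpoint of $\alpha$ that is a puncture has a \emph{single} preimage, not two. There the cyclic order of arcs around the puncture becomes a linear order around one new boundary marked point, and this is precisely where the wrap-around adjacency across $\alpha$ is destroyed; the same computation works, but your phrasing ("concatenation of the two cyclic orders around $m'_1$ and $m'_2$") does not literally cover it, so spell it out. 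You should also note that the excluded-puncture rule in the definition of $\Q_\Delta$ (punctures lying on exactly two tagged arcs contribute no arrows) is stable under the comparison at points away from $\alpha$.

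The genuine gap is in Case 2, where you have interchanged the two endpoints of $\alpha$. In the construction of $\Delta'$, $m$ is the endpoint at which the taggings of $\alpha$ and $\alpha'$ \emph{agree}; the endpoint where they differ, call it $p$, is forced by the compatibility rules to be a puncture contained in exactly two tagged arcs (namely $\alpha$ and $\alpha'$), and it is $p$, not $m$, that contributes no arrows to $\Q_\Delta$. The new arc of $\Delta'$ connects the preimages of $m$, not of $p$; in fact $p$, being an interior endpoint of the cut arc, has only one preimage, which becomes a boundary marked point lying on no arc of $\Delta'$ (so it trivially contributes no arrows). As written, your "crucial check" is therefore attached to the wrong vertex and asserts false things: the differing endpoint does not have two preimages, and $\beta$ is not incident to its preimage. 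The verification that actually needs doing is at $m$: the arcs of $\Delta$ incident to $m$ other than the twin pair get distributed to the preimage(s) of $m$, and $\beta$ occupies the position that the pair $\{\alpha,\alpha'\}$ occupied, so that—recalling the convention that an arc consecutive to the pair receives an arrow to or from \emph{both} $\alpha$ and $\alpha'$—the arrows at the preimages of $m$ involving $\beta$ match exactly the arrows at $m$ involving $\alpha'$, while the arrows involving $\alpha$ are exactly the ones deleted in the induced subquiver. You gesture at this when you say $\beta$ "takes the structural role of $\alpha'$", but that is where the real content of Case 2 lies; redo that case with the corrected labeling of $m$ and $p$ and the proof closes up.
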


\section{Marked surfaces and local acyclicity}

We turn to the question of when cluster algebras of marked surfaces are locally acyclic.  The main result will be that a marked surface with `enough' boundary marked points (at least two per component of $\S$) defines a locally acyclic cluster algebra.

\subsection{Covering pairs for marked surfaces}

To produce acyclic covers of cluster algebras of marked surfaces, we will need to be able to produce covering pairs.  The following two lemmas give simple topological methods for producing covering pairs.

\begin{lemma}\label{lemma: type1}
Let $\alpha$ and $\beta$ be tagged arcs in $(\S,\M)$ which cut out an unpunctured triangle (Figure \ref{fig: typeA}), with all endpoints in $\partial\S$ and the endpoints of $\beta$ are distinct.  Then $(\alpha,\beta)$ are a covering pair in $\A(\S,\M)$.
\end{lemma}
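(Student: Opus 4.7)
The plan is to construct, by mutations if necessary, a tagged triangulation $\Delta$ of $(\S,\M)$ containing both $\alpha$ and $\beta$ in which the only arcs incident to the shared endpoint $p := \alpha \cap \beta$ are $\alpha$ and $\beta$ themselves; then in the quiver $\Q_\Delta$ of the corresponding seed, $\alpha$ will be a source and $\beta$ a sink, so the arrow between them -- contributed by the triangle $T$ bounded by $\alpha$, $\beta$, and the boundary segment $s$ -- cannot be in any bi-infinite path, giving the covering pair directly.

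First, extend $\{\alpha,\beta\}$ to any tagged triangulation $\Delta_0$; the compatibility of $\alpha$ and $\beta$ makes this possible, and the unpunctured triangle $T$ is automatically one of the triangles of $\Delta_0$. If other arcs are incident to $p$ in $\Delta_0$, iteratively flip each such arc. The flip of an arc $A$ at $p$ replaces it with the other diagonal of the surrounding quadrilateral, whose endpoints are the two vertices of that quadrilateral distinct from $p$ and the other endpoint of $A$; in particular, the replacement arc does not meet $p$. Hence the number of arcs at $p$ strictly decreases with each flip, and after finitely many mutations we arrive at the desired $\Delta$.

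Now analyze $\Q_\Delta$. Since $\alpha$ and $\beta$ are the only arcs at $p$, the corner of $T$ at $p$ is the unique corner at $p$ between two internal arcs, yielding a single arrow in $\Q_\Delta$; orient it as $e \colon \alpha \to \beta$ without loss of generality. At the other endpoint $r$ of $\alpha$, the two sides of $T$ meeting $r$ are $\alpha$ and the boundary segment $s$; the consistent orientation of $\S$ forces the interior of $T$ to lie between $s$ and $\alpha$ in the counterclockwise order at $r$, so $\alpha$ sits at the boundary-extreme of the chain of arcs at $r$, and every arrow at $r$ involving $\alpha$ points out of $\alpha$. Symmetrically at the other endpoint $q$ of $\beta$, every arrow involving $\beta$ at $q$ points into $\beta$. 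Since $e$ is the unique arrow at $p$ involving $\alpha$ or $\beta$, it follows that $\alpha$ has no incoming arrows in $\Q_\Delta$ and $\beta$ has no outgoing arrows. So $\alpha$ is a source and $\beta$ is a sink, neither lies in any directed cycle, and $e$ cannot be contained in any bi-infinite path of mutable vertices; therefore $(\alpha,\beta)$ is a covering pair in $\A(\S,\M)$.

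The main obstacle is the mutation step: each flip must produce a valid tagged arc, avoiding forbidden monogons and bigons and not duplicating an existing arc of the triangulation. The hypotheses of the lemma -- all endpoints on $\partial\S$ and $\beta$'s endpoints distinct -- together with the general flexibility of flips in triangulations are what guarantee that the required sequence of flips exists, but verifying this rigorously is the most delicate part of the argument.
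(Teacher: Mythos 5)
Your analysis of the arrow directions at the two non-shared endpoints is essentially correct and matches the mechanism the paper exploits: because the triangle $T$ is glued to the boundary there, any arrow involving $\alpha$ at its far endpoint points out of $\alpha$, and any arrow involving $\beta$ at its far endpoint points into $\beta$. The genuine gap is the construction you rely on to control the shared endpoint $p$. Your reduction ``flip every other arc incident to $p$, and the count strictly decreases'' is not valid: the flip of an arc $A$ at $p$ is the other diagonal of the quadrilateral formed by the two triangles adjacent to $A$, and the two ``opposite'' vertices of that quadrilateral can again be $p$ --- for instance when a neighboring arc in the fan at $p$ is a loop based at $p$, or in tagged/self-folded configurations at a puncture, where the naive quadrilateral description of a flip does not even apply. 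So the number of arcs at $p$ need not drop, and the termination of your procedure is exactly the point you concede is unverified; since everything else in your argument is routine, this is the heart of the lemma, not a delicate detail to be deferred. Moreover, the goal you set (a triangulation in which $\alpha$ and $\beta$ are the \emph{only} arcs at $p$) is stronger than needed and brings in further degenerate cases you never address: the ``ear'' arcs hugging the boundary segments at $p$ may be homotopic to boundary segments, may cut out once-punctured monogons (hence are not tagged arcs), or may coincide with $\alpha$, and the source/sink picture has to be re-examined in each case.

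The paper avoids all of this by asking only that $\beta$ have no outgoing arrows. Using that the endpoints of $\beta$ are distinct, it takes the quadrilateral with diagonal $\beta$ whose second triangle consists of $\beta$, the boundary segment at $p$ on the far side of $\beta$ from $T$, and one auxiliary arc $\gamma$; if $\gamma$ is boundary-parallel it is discarded, and if it cuts out a once-punctured monogon it is replaced by the corresponding pair of tagged radii. Any triangulation containing $\alpha,\beta,\gamma$ then has, at \emph{both} ends of $\beta$, a boundary segment immediately counter-clockwise of $\beta$, so $\beta$ has no outgoing arrows and the arrow $\alpha\to\beta$ lies in no bi-infinite path --- no global clearing of the vertex $p$, and no appeal to sequences of flips, is needed. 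If you want to salvage your route, you should either prove termination of your flip procedure in the tagged setting (handling loops at $p$ and self-folded triangles), or, better, switch to constructing the single arc $\gamma$ directly as above and treat its degenerate cases explicitly.
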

\begin{figure}[h!]
\centering
\begin{tikzpicture}[inner sep=0.5mm,auto,scale=.75]
	\useasboundingbox (-2.5,-1.4) rectangle (2.5,1);
\begin{scope}[scale=.4]
\begin{scope}
	\clip (0,0) circle (4);
	\draw[fill=black!10] (-5,-3) to [in=190,out=30] (-2,-2) to [in=170,out=10] (1,-2) to [in=150,out=-10] (5,-3) to [line to] (5,3) to [in=10,out=210] (2,2) to [in=-10,out=190] (-1,2) to [in=-30,out=170] (-5,3);
	\node (1) at (1,-2) [marked] {};
	\node (2) at (-1,2) [marked] {};
	\node (3) at (-2,-2) [marked] {};
	\draw (1) to [in=150,out=-30,relative] node {$\beta$} (2);
	\draw (2) to [in=150,out=-30,relative] node[swap] {$\alpha$} (3);
\end{scope}
\end{scope}
\end{tikzpicture}
\caption{A covering pair (Type A)}
\label{fig: typeA}
\end{figure}
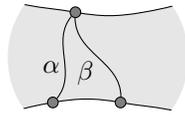
\begin{proof}
Since the endpoints of $\beta$ are distinct, we may consider the quadralateral with diagonal $\beta$ pictured below.
\begin{center}
\begin{tikzpicture}[inner sep=0.5mm,auto]
	\useasboundingbox (-2.5,-.8) rectangle (2.5,1);
\begin{scope}[scale=.4]
\begin{scope}
	\clip (0,0) circle (4);
	\draw[fill=black!10] (-5,-3) to [in=190,out=30] (-2,-2) to [in=170,out=10] (1,-2) to [in=150,out=-10] (5,-3) to [line to] (5,3) to [in=10,out=210] (2,2) to [in=-10,out=190] (-1,2) to [in=-30,out=170] (-5,3);
	\node (1) at (1,-2) [marked] {};
	\node (2) at (-1,2) [marked] {};
	\node (3) at (-2,-2) [marked] {};
	\node (4) at (2,2) [marked] {};
	\draw (1) to [in=150,out=-30,relative] node {$\beta$} (2);
	\draw (2) to [in=150,out=-30,relative] node[swap] {$\alpha$} (3);
	\draw (1) to [in=150,out=-30,relative] node[swap] {$\gamma$} (4);
\end{scope}
\end{scope}
\end{tikzpicture}
\end{center}
Despite the picture, it is possible that $\alpha$ and $\gamma$ coincide or share endpoints.  If $\gamma$ is a boundary segment, discard it.  If $\gamma$ cuts out a once-punctured monogon, replace it with the pair of tagged arcs pictured below.
\begin{center}
\begin{tikzpicture}[inner sep=0.5mm,auto,scale=.75]
	\useasboundingbox (-2.5,-.6) rectangle (2.5,1);
\begin{scope}[xshift=-.7in,scale=.25]
	\clip (0,0) circle (4);
	\draw[fill=black!10] (-5,-3) to [in=180,out=30] (0,-2) to [in=150,out=0] (5,-3) to [line to] (5,5) to (0,5) to (-5,5);
	\node (1) at (0,-2) [marked] {};
	\node (2) at (0,1.5) [marked] {};
	\draw (1) to [in=180,out=120] node{$\gamma$} (0,3) to [in=60,out=0] (1);
\end{scope}
	\node (=>) at (0,0) {$\Rightarrow$};
\begin{scope}[xshift=.7in,scale=.25]
	\clip (0,0) circle (4);
	\draw[fill=black!10] (-5,-3) to [in=180,out=30] (0,-2) to [in=150,out=0] (5,-3) to [line to] (5,5) to (0,5) to (-5,5);
	\node (1) at (0,-2) [marked] {};
	\node (2) at (0,1.5) [marked] {};
	\draw (1) to [in=240,out=120]  (2);
	\draw (1) to [in=300,out=60,-hooks reversed]  (2);
\end{scope}
\end{tikzpicture}
\end{center}

Choose any triangulation $\Delta$ of $(\S,\M)$ which contains $\alpha,\beta$ and $\gamma$.  There is an arrow from $\alpha$ to $\beta$ in $\Q_\Delta$.  At each end of $\beta$, there will be no tagged arcs in $\Delta$ which are counter-clockwise to $\beta$, and so there are no arrows out of $\beta$ in $\Q_\Delta$.
\end{proof}

\begin{lemma}\label{lemma: type2}
Let $\alpha$ be a tagged arc in $(\S,\M)$ which cuts out a once-punctured digon with radius $\beta$ (Figure \ref{fig: typeB}).\footnote{The endpoints of $\alpha$ need not be distinct.}  Then $(\alpha,\beta)$ are a covering pair in $\A(\S,\M)$.
\end{lemma}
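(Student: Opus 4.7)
The plan is to mimic the proof of Lemma \ref{lemma: type1}: construct a tagged triangulation $\Delta$ of $(\S,\M)$ containing both $\alpha$ and $\beta$ in which $\beta$ is a sink of $\Q_\Delta$. Once $\beta$ has no outgoing arrows in $\Q_\Delta$, the arrow $\alpha\to\beta$ cannot extend to a bi-infinite path of mutable vertices, and so $(\alpha,\beta)$ is a covering pair.

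First I construct $\Delta$. Let $p$ denote the boundary marked point where the radius $\beta$ meets $\partial\S$, and let $q$ be the puncture inside the digon $D$. To triangulate $D$, I take $\beta$ together with one additional arc: when the two vertices of $D$ are distinct, a second plain radius from the other vertex to $q$; when they coincide, the tagged arc $\bar\beta$ whose underlying curve is the same as $\beta$ but whose tagging at $q$ is flipped. The arc-count formula $3h+2p+|\M|-6$ evaluates to $2$ on $D$, confirming these two arcs form a tagged triangulation of $D$, and compatibility with the boundary arc $\alpha$ is immediate. Extend $\Delta$ arbitrarily to a tagged triangulation of the complement of $D$.

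Next I analyze the arrows at $\beta$ in $\Q_\Delta$ by examining its two endpoints. At the puncture $q$: since $q$ is contained in exactly two tagged arcs of $\Delta$ (namely $\beta$ and its partner in the triangulation of $D$), the parenthetical in the definition of $\Q_\Delta$ skips $q$ entirely, so no arrow at $q$ touches $\beta$. At $p$: within $D$, the angular position of $\beta$ at $p$ lies strictly between the boundary segment of $D$ on one side and the arc $\alpha$ on the other, and any further tagged arcs ending at $p$ lie beyond $\alpha$ in the cyclic order. Under the FST orientation convention, the counterclockwise neighbor of $\beta$ at $p$ is therefore the boundary (contributing no outgoing arrow), while the clockwise neighbor is $\alpha$, producing the arrow $\alpha\to\beta$. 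Thus $\beta$ is a sink, and the arrow $\alpha\to\beta$ cannot be in any bi-infinite path.

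The main obstacle is the degenerate case where the two vertices of $D$ coincide (so $\alpha$ is a loop): here one must verify that the substitute second arc $\bar\beta$ still leaves $q$ contained in exactly two tagged arcs so that the puncture-skipping rule applies, and that the two parallel vertices $\beta$ and $\bar\beta$ at $p$ do not generate any unexpected outgoing arrow from $\beta$. Once this local bookkeeping around the pinched digon is handled, the sink argument for $\beta$ goes through uniformly in both cases and completes the proof.
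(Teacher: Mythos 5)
Your proof is correct and takes essentially the same route as the paper: complete $\{\alpha,\beta\}$ by one more arc triangulating the once-punctured digon and observe that $\beta$ is then a sink (the puncture being skipped because it lies on exactly two tagged arcs, and $\beta$'s only neighbors at its boundary endpoint being $\alpha$ and the boundary), so the arrow $\alpha\to\beta$ lies in no bi-infinite path. The only deviation is cosmetic: in the coinciding-endpoint case the paper still uses the \emph{other plain radius} $\gamma$, which does exist even when the digon is pinched (the two radii are non-homotopic, being separated by the puncture and the boundary component), whereas you substitute the notched copy $\bar\beta$; both give valid tagged triangulations and the same sink argument.
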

\begin{figure}[h!]
\centering
\begin{tikzpicture}[inner sep=0.5mm,auto]
	\useasboundingbox (-2.5,-.8) rectangle (2.5,1.1);
\begin{scope}[scale=.25]
\begin{scope}[rotate=90]
	\clip (-.5,0) circle (5);
	\draw[fill=black!10] (3,0) circle (5);
	\node (1) at (0,4) [marked] {};
	\node (2) at (0,-4) [marked] {};
	\node (3) at (0,0) [marked] {};
	\draw (2) to [in=165,out=-15,relative] node[swap] {$\beta$} (3);
	\draw (1) to [in=127,out=53,relative] node {$\alpha$} (2);
\end{scope}
\end{scope}
\end{tikzpicture}
\caption{A covering pair (Type B)}
\label{fig: typeB}
\end{figure}
\begin{proof}
Let $\gamma$ be the other plain tagged radius of the digon.
\begin{center}
\begin{tikzpicture}[inner sep=0.5mm,auto]
	\useasboundingbox (-2.5,-.5) rectangle (2.5,1.2);
\begin{scope}[scale=.25]
\begin{scope}[rotate=90]
	\clip (-.5,0) circle (5);
	\draw[fill=black!10] (3,0) circle (5);
	\node (1) at (0,4) [marked] {};
	\node (2) at (0,-4) [marked] {};
	\node (3) at (0,0) [marked] {};
	\draw (1) to [in=165,out=-15,relative] node[swap] {$\gamma$} (3);
	\draw (1) to [in=127,out=53,relative] node {$\alpha$} (2);
	\draw (2) to [in=165,out=-15,relative] node[swap] {$\beta$} (3);
\end{scope}
\end{scope}
\end{tikzpicture}
\end{center}
Choose any triangulation $\Delta$ of $(\S,\M)$ which contains $\alpha,\beta$ and $\gamma$.  There is an arrow from $\alpha$ to $\beta$ in $\Q_\Delta$.  At each end of $\beta$, there will be no tagged arcs in $\Delta$ which are counter-clockwise to $\beta$, and so there are no arrows out of $\beta$ in $\Q_\Delta$.
\end{proof}

\subsection{Local acyclicity}

We now prove several classes of cluster algebra of a marked surface are locally acyclic by a topological version of the Banff algorithm.
\begin{lemma}\label{lemma: surfacecutting}
Let $(\alpha,\beta)$ be a covering pair in $\A(\S,\M)$, and let $(\S_\alpha,\M_\alpha)$ and $(\S_\beta,\M_\beta)$ be the cuttings along $\alpha$ and $\beta$, respectively.  If $\A(\S_\alpha,\M_\alpha)$ and $\A(\S_\beta,\M_\beta)$ are locally acyclic, then so is $\A(\S,\M)$.
\end{lemma}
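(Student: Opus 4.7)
The plan is to use the covering pair $(\alpha,\beta)$ to cover $\X_{\A(\S,\M)}$ by two cluster charts, and then refine each chart using the assumed local acyclicity of the two cut surfaces. Concretely, pick a tagged triangulation $\Delta$ of $(\S,\M)$ containing both $\alpha$ and $\beta$ so that $(\alpha,\beta)$ is a covering pair in $\Q_\Delta$; such a $\Delta$ exists by the definition of covering pair in $\A$. Let $\A'_\alpha$ and $\A'_\beta$ denote the freezings of $\A(\S,\M)$ at $\alpha$ and at $\beta$, respectively, built from the seed $(\Q_\Delta,\mathbf a)$. By Lemma \ref{lemma: covering}, $(\alpha,\beta)$ are relatively prime in $\A(\S,\M)$, so if each $\A'_\gamma$ turns out to be a locally acyclic cluster localization $\A(\S,\M)[\gamma^{-1}]$, then pasting together finite acyclic covers of $\X_{\A'_\alpha}$ and $\X_{\A'_\beta}$ produces a finite acyclic cover of $\X_{\A(\S,\M)}$, as required.

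The central observation driving the proof is the identification of mutable subquivers. Since the vertex $\alpha$ is the only vertex that changes status when passing from $(\Q_\Delta,\mathbf a)$ to its freezing at $\alpha$, the mutable subquiver of the freezing is exactly the induced subquiver of $\Q_\Delta$ on $V(\Q_\Delta)\setminus\{\alpha\}$. By Lemma \ref{lemma: cutting}, this induced subquiver equals $\Q_{\Delta_\alpha}$ for the induced triangulation $\Delta_\alpha$ of the cut surface $(\S_\alpha,\M_\alpha)$. By the Proposition stating that local acyclicity of $\A(\Q)$ depends only on $\Q_{mut}$, together with the hypothesis that $\A(\S_\alpha,\M_\alpha)=\A(\Q_{\Delta_\alpha})$ is locally acyclic, I conclude that $\A'_\alpha$ is locally acyclic. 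An identical argument gives the same for $\A'_\beta$.

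The main subtlety, and the step I expect to be the real content, is upgrading each freezing from an abstractly defined cluster algebra to an honest cluster localization equal to $\A(\S,\M)[\alpha^{-1}]$ (resp.\ $\A(\S,\M)[\beta^{-1}]$). This cannot be read off the seeds alone, and it is what the remark after Lemma \ref{lemma: acycliclocal} warns against. However, now that $\A'_\alpha$ is locally acyclic, Theorem \ref{thm: A=U} gives $\A'_\alpha=\U'_\alpha$, so Lemma \ref{lemma: acycliclocal} applies and yields
\[ \A'_\alpha \;=\; \A(\S,\M)[\alpha^{-1}] \;=\; \U(\S,\M)[\alpha^{-1}] \;=\; \U'_\alpha, \]
and symmetrically for $\beta$. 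In particular, $\X_{\A'_\alpha}$ and $\X_{\A'_\beta}$ are genuine cluster charts in $\X_{\A(\S,\M)}$.

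To finish, I invoke Lemma \ref{lemma: covering} once more: since $\alpha$ and $\beta$ are relatively prime in $\A(\S,\M)$, the two cluster charts $\X_{\A'_\alpha}$ and $\X_{\A'_\beta}$ already cover $\X_{\A(\S,\M)}$. Local acyclicity of each $\A'_\gamma$ provides a finite acyclic cover of each; taking the union of these covers yields a finite acyclic cover of $\X_{\A(\S,\M)}$, showing that $\A(\S,\M)$ is locally acyclic.
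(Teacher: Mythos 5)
Your proof is correct and follows essentially the same route as the paper: the paper's proof simply combines Lemma \ref{lemma: cutting} (cutting along $\alpha$ or $\beta$ deletes the corresponding vertex of $\Q_\Delta$) with Proposition \ref{prop: rBanff}, i.e.\ one branching step of the reduced Banff algorithm, which is exactly the covering-pair, cut-and-refine argument you carry out by hand. Your explicit resolution of the freezing-versus-localization subtlety via Theorem \ref{thm: A=U} and Lemma \ref{lemma: acycliclocal} is precisely the mechanism the paper uses (see the remark following Theorem \ref{thm: Banff}) to justify that machinery, so your write-up is just a self-contained expansion of the paper's citation rather than a different argument.
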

\begin{proof}
By Lemma \ref{lemma: cutting}, cutting along $\alpha$ or $\beta$ amounts to deleting the corresponding vertices in $\Q_{\Delta}$.  If $\A(\S_\alpha,\M_\alpha)$ and $\A(\S_\beta,\M_\beta)$ are locally acyclic, then by Proposition \ref{prop: rBanff}, $\A(\S,\M)$ is locally acyclic.
%
%
%
\end{proof}

We now prove a succession of more general classes of surface are locally acyclic.  Each will be by induction on the rank of a cluster algebra, which here is the size of any triangulation.  Cutting a marked surface drops its rank by 1, by Lemma \ref{lemma: cutting}.

\begin{lemma}
If $\S$ is a disc, then $\A(\S,\M)$ is locally acyclic.
\end{lemma}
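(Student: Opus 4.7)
I would induct on the rank of $\A(\S,\M)$, i.e.\ the common size of any tagged triangulation. The base case is the unpunctured triangle, where $\A = \mathbb{Z}$ is trivially locally acyclic. Every other admissible disc falls into one of the two cases below, in each of which the inductive step is carried out via Lemma \ref{lemma: surfacecutting} applied to a suitable covering pair.

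\textbf{No-puncture case.} If $\S$ is an unpunctured disc with $n = |\M| \geq 4$ boundary marked points, then $\A(\S,\M)$ is of type $A_{n-3}$, hence acyclic and so trivially locally acyclic. Alternatively, in the spirit of the Banff algorithm, for $n \geq 5$ a simple count (the $n-2$ triangles of any triangulation collectively have $2(n-3)$ arc-sides and $n$ boundary-sides) forces some triangle to have at least two diagonal sides; Lemma \ref{lemma: type1} then yields a Type A covering pair $(\alpha,\beta)$, and cutting along either arc reduces to two strictly smaller unpunctured discs.

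\textbf{Punctured case.} Suppose $(\S,\M)$ has a puncture $p$. The key step is to exhibit a Type B covering pair via Lemma \ref{lemma: type2}. I would choose two marked points $m_1, m_2$ (at least one a boundary marked point, which exists because $\partial\S$ is nonempty) and draw two tagged arcs between them, passing on opposite sides of $p$, that together bound a once-punctured digon containing only $p$. Extending these to a tagged triangulation, Lemma \ref{lemma: type2} supplies a covering pair $(\alpha,\beta)$ where $\alpha$ is one of the bounding arcs and $\beta$ is a radius to $p$. Lemma \ref{lemma: surfacecutting} then reduces local acyclicity of $\A(\S,\M)$ to that of $\A(\S_\alpha,\M_\alpha)$ and $\A(\S_\beta,\M_\beta)$; since $\S$ is a disc, each cutting is again a disjoint union of discs, each of strictly smaller rank by Lemma \ref{lemma: cutting}, so the induction closes.

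\textbf{Main obstacle.} The delicate subcase is a disc with exactly one boundary marked point and two or more punctures. Here the two digon vertices cannot both lie on $\partial\S$, so one must either take $m_1$ to be the unique boundary marked point and $m_2$ to be a second puncture, or take a loop at the boundary point in the degenerate sense permitted by the footnote to Lemma \ref{lemma: type2}. One must then verify that the two bounding arcs are legitimate tagged arcs in the sense of Definition \ref{defn: arc}—in particular that neither side of the digon cuts out a prohibited once-punctured monogon—and that each cutting remains an admissible disc meeting the hypotheses of the induction. Once these combinatorial cases are dispatched, the induction runs uniformly across all discs.
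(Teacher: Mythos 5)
Your overall strategy is the paper's: induct on rank, use a Type B covering pair in the punctured case, cut via Lemma \ref{lemma: surfacecutting}, and fall back on type $A$ acyclicity when there are no punctures. The gap is in how you produce the covering pair. Lemma \ref{lemma: type2} requires the once-punctured digon to be \emph{cut out by the single arc} $\alpha$, i.e.\ its second side is a segment of $\partial\S$; that is precisely what forces the radius $\beta$ to have no outgoing arrows in a triangulation containing the configuration (the puncture lies on only two arcs, and at the other end of $\beta$ the next position counter-clockwise is the boundary), so the arrow $\alpha\to\beta$ cannot lie on a bi-infinite path. In your configuration the digon is bounded by \emph{two} arcs between $m_1$ and $m_2$; then $\beta$ has an arrow out to the second bounding arc and Lemma \ref{lemma: type2} does not apply. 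Nor is the conclusion automatic: two-arc once-punctured digons with radii exist on surfaces with empty boundary, where by Theorem \ref{thm: noboundary} there are no covering pairs at all, so the configuration by itself cannot certify a covering pair. The correct move (and the paper's) is to choose a single arc $\alpha$ which, together with a boundary segment, cuts off a digon containing exactly one puncture, and take $\beta$ the radius adjacent to that boundary side.

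The subcase you flag as the main obstacle is also not actually resolved. Taking $m_2$ to be a puncture again yields a two-arc digon, outside the scope of Lemma \ref{lemma: type2}; and the degenerate loop permitted by the footnote only works when there are at least three punctures, since with exactly two punctures a loop at the unique boundary point having one puncture between it and $\partial\S$ would enclose a once-punctured monogon and is not a legal tagged arc by Definition \ref{defn: arc}. So the twice-punctured disc with one boundary marked point is left open; it needs a separate observation (for instance, the tagged triangulation consisting of the plain and notched arcs from the boundary point to each puncture has all arrows going from one plain/notched pair to the other, hence is acyclic). Finally, your induction needs the once-punctured digon as a base case alongside the triangle, as in the paper: it has a puncture but its quiver has no arrows, so no covering pair exists there and your punctured-case step cannot run; it is handled by noting it is isolated, hence acyclic.
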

\begin{proof}
The two basic cases are the unpunctured triangle, which is the empty cluster algebra; and the once-punctured digon, which is the cluster algebra on two isolated vertices.  Both are locally acyclic.  Now, assume that $\A$ is locally acyclic for every marked disc of rank strictly less than $n$, and let $(\S,\M)$ be a marked disc of rank $n$.

If $(\S,\M)$ has any punctures, it will have a covering pair $(\alpha,\beta)$ coming from Lemma \ref{lemma: type2}.  The cutting of $(\S,\M)$ along $\alpha$ is the union of a once-punctured digon with a disc of smaller rank, and so its cluster algebra is locally acyclic.  The cutting of $(\S,\M)$ along $\beta$ is a disc of smaller rank, and so its cluster algebra is locally acyclic.  Therefore, by Lemma \ref{lemma: surfacecutting}, $\A(\S,M)$ is locally acyclic.

If $(\S,\M)$ has no punctures, $\A(\S,\M)$ is of type $A_n$ and is acyclic.  
\end{proof}

\begin{thm}\label{thm: inadisc}
If $\S$ includes into a disc, then $\A(\S,\M)$ is locally acyclic.
\end{thm}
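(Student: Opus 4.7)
Proof plan: The plan is to proceed by induction on the rank $n$ of $\A(\S,\M)$, following the same template as the preceding lemma for discs. The base cases occur when $(\S,\M)$ is a disjoint union of discs (possibly with punctures), for which the preceding lemma applies componentwise. For the inductive step, my goal is to exhibit a covering pair $(\alpha,\beta)$ such that both cuttings $(\S_\alpha,\M_\alpha)$ and $(\S_\beta,\M_\beta)$ have strictly smaller rank and also embed in a disc; Lemma \ref{lemma: surfacecutting} together with the inductive hypothesis then yields local acyclicity. That the cuttings remain planar is immediate, since cutting a planar surface along an embedded arc produces a planar surface.

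The covering pair is produced in two main cases. If $\S$ has a puncture $p$, I apply Lemma \ref{lemma: type2} just as in the disc case: take a small loop $\alpha$ based at some marked point $m$ enclosing only $p$, together with a radius $\beta$ from $m$ to $p$. The cutting along $\alpha$ separates off a once-punctured digon from the rest of $\S$, while the cutting along $\beta$ removes $p$ from its component; both cuttings have strictly smaller rank and remain planar.

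If $\S$ has no punctures, then since $(\S,\M)$ is not a disjoint union of discs some component has at least two boundary components. Here I invoke Lemma \ref{lemma: type1} to produce a Type A covering pair: choose a boundary component $B$ carrying two adjacent marked points $m_1,m_2$, and a marked point $m_0$ on a different boundary component of the same component of $\S$. The arcs $\alpha$ from $m_0$ to $m_2$ and $\beta$ from $m_0$ to $m_1$ bound, together with the boundary arc of $B$ from $m_1$ to $m_2$, an unpunctured triangle; that this triangle contains no interior holes relies essentially on $\S$ being planar.

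The main obstacle will be the degenerate subcase in which every boundary of the non-disc component carries only a single marked point (the minimal example being a pair of pants with one marked point on each boundary). Here no two adjacent marked points lie on a common boundary, so the Type A construction above does not directly apply. To handle this subcase, I would first perform a sequence of flips on a tagged triangulation to reach a seed in which some boundary effectively acquires a Type A configuration, exploiting the fact that planarity allows flipping arcs incident to a singly-marked boundary into arcs connecting distinct boundary components. Verifying that such a sequence always exists, and that the resulting covering pair cuts $(\S,\M)$ into surfaces still embedded in a disc, is the technical heart of the argument; once handled, the induction closes.
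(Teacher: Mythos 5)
Your overall skeleton (induction on rank, covering pairs from Lemmas \ref{lemma: type1} and \ref{lemma: type2}, reduction via Lemma \ref{lemma: surfacecutting}) is the same as the paper's, but two points break the argument as written. First, in the punctured case the arc you describe --- a small loop based at a marked point $m$ enclosing only the puncture $p$ --- cuts out a once-punctured \emph{monogon}, which is explicitly forbidden for tagged arcs by Definition \ref{defn: arc}, so Lemma \ref{lemma: type2} does not apply to it. In the Type B configuration the once-punctured digon is bounded by $\alpha$ together with a boundary segment (possibly an entire boundary circle through its unique marked point, which is why the endpoints of $\alpha$ need not be distinct), and $\beta$ is the radius from a vertex of that digon to the puncture; this is how the disc lemma uses it, and your construction should be repaired accordingly.

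Second, and more seriously, the ``degenerate subcase'' you defer (every boundary circle of the non-disc component carrying a single marked point) is exactly where your proof stops: you only sketch a hoped-for sequence of flips and leave its existence unverified, so as submitted the argument is incomplete there. But no such detour is needed, because Lemma \ref{lemma: type1} only requires the endpoints of $\beta$ to be distinct, and the third side of the unpunctured triangle is allowed to be a whole boundary circle from its single marked point back to itself. Concretely, if $m_1$ is the unique marked point on a boundary circle $B_1$ and $m_0$ lies on a different boundary circle of the same component, take $\alpha$ from $m_0$ to $m_1$ and let $\beta$ run from $m_0$ to $m_1$ parallel to $\alpha$ and around the far side of $B_1$; then $\alpha$, $\beta$ and $B_1$ bound an unpunctured triangle, all endpoints lie in $\partial\S$, and $\beta$ has distinct endpoints, so $(\alpha,\beta)$ is a covering pair --- no flips are required, since the proof of Lemma \ref{lemma: type1} produces a triangulation containing these arcs directly and a covering pair need only exist in some seed. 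Cutting along either arc merges two boundary circles, strictly drops the rank, and preserves planarity, which is precisely how the paper closes the induction (it runs this Type A step for any connected planar surface with at least two boundary components, punctured or not, and falls back on the disc lemma when there is one boundary component); your separate top-level treatment of punctures is harmless but unnecessary once this is in place.
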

\begin{proof}
If the rank is small enough, $(\S,\M)$ must be a disc, and is locally acyclic.  Now, assume the theorem is true for $\A(S,\M)$ of rank strictly less than $n$, and let $(\S,\M)$ be a marked surface in a disc of rank $n$.

If $(\S,\M)$ is not connected, then it is the union of two marked surfaces of smaller rank which include into a disc, and so it is locally acyclic by the inductive hypothesis.  If $(\S,\M)$ is connected by only has one boundary component, it is a disc and is covered by the previous lemma.  If $(\S,\M)$ is connected and has more than one boundary component, let $\alpha$ be a tagged arc which connects distinct boundary components, and find a covering pair $(\alpha,\beta)$ as in Lemma \ref{lemma: type1}.  The cutting along $\alpha$ or $\beta$ will each be a marked surface of strictly smaller rank which includes into a disc, and so they are locally acyclic by the inductive hypothesis.
\end{proof}

\begin{thm}\label{thm: atleast2}
If $(\S,\M)$ has at least two boundary marked points in each component of $\S$, then $\A(\S,\M)$ is locally acyclic.
\end{thm}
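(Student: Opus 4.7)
The plan is to prove this by induction on the rank $n = 6g + 3h + 2p + |\M| - 6$ of $\A(\S,\M)$, applying Lemma \ref{lemma: surfacecutting} to reduce to cuttings of strictly smaller rank. At each step I want to produce a covering pair $(\alpha,\beta)$ such that both cuttings $(\S_\alpha,\M_\alpha)$ and $(\S_\beta,\M_\beta)$ continue to satisfy the hypothesis (at least two boundary marked points in each component). The base cases of small rank are either planar (handled by Theorem \ref{thm: inadisc}) or small annuli, which may be verified directly.

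For the inductive step, I first handle the easy sub-cases: if $\S$ is disconnected, apply induction componentwise, since each component inherits the hypothesis and has strictly smaller rank; if $\S$ is connected and embeds into a disc, apply Theorem \ref{thm: inadisc}. This leaves the case that $\S$ is connected of genus $g \geq 1$. If $\S$ contains a puncture $p$, I pick any boundary marked point $m$, an arc $\beta$ from $m$ to $p$, and a loop $\alpha$ at $m$ enclosing $p$ and $\beta$; Lemma \ref{lemma: type2} then gives that $(\alpha,\beta)$ is a covering pair. If $\S$ has no punctures, I pick two distinct adjacent boundary marked points $m, m'$ on a common boundary component (available by hypothesis) and a third marked point $n$, and construct arcs $\alpha$ from $m'$ to $n$ and $\beta$ from $m$ to $n$ which, together with the boundary segment from $m$ to $m'$, bound an unpunctured triangle; Lemma \ref{lemma: type1} then gives a covering pair. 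In both cases the rank strictly drops by Lemma \ref{lemma: cutting}, so the inductive hypothesis applies to each cutting.

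The main obstacle is verifying that each cutting still satisfies the hypothesis, i.e.\ every connected component of the cut surface has at least two boundary marked points. Cutting always introduces new boundary marked points at the endpoints of the cut arc, which usually helps, but when a cut disconnects $\S$ one must ensure that no component is stranded with $\leq 1$ boundary marked points. In the Type B case this is easy: the new once-punctured digon automatically carries two boundary marked points from the cut, and the complementary piece retains all original boundary marked points and gains two more at $m$. The Type A case is more delicate and requires a careful geometric choice of $n$ and of the arcs $\alpha,\beta$ to avoid stranding a small component with too few boundary marked points; this reduces to a finite case analysis on the topology of $\S$ near the triangle and the placement of other marked points. This geometric bookkeeping is the technical heart of the argument.
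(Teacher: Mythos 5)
Your overall strategy is the paper's: induct on rank, produce a covering pair from Lemma~\ref{lemma: type1} or~\ref{lemma: type2}, cut, and invoke Lemma~\ref{lemma: surfacecutting}, with the disconnected and planar cases peeled off first. But there is a genuine gap, and it sits exactly where you wave your hands: the verification that \emph{both} cuttings still satisfy the hypothesis is deferred to an unspecified ``finite case analysis,'' and that verification is the entire content of the theorem. The paper does not do bookkeeping after the fact; it chooses the arcs so that nothing can go wrong. If $\partial\S$ has at least two components, take $\beta$ joining marked points on two \emph{distinct} boundary circles; if $\partial\S$ is connected (so it carries both guaranteed marked points) and the genus is positive, take $\beta$ joining two \emph{distinct} marked points with $\beta$ non-separating; then choose $\alpha$ to form a thin unpunctured triangle as in Lemma~\ref{lemma: type1}. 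With these choices the cut surfaces stay connected and boundary marked points only proliferate, so the inductive hypothesis holds with nothing to check. (A cleaner way to see the danger: cutting along an arc whose two endpoints are \emph{distinct} marked points can never strand a component with fewer than two boundary marked points, since every component of the cut contains a copy of the arc and hence copies of both endpoints; the only problematic arcs are loops, which your construction does not exclude.)

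Beyond that, the constructions in your genus $\geq 1$ step do not always exist or are not legal. ``Two distinct adjacent boundary marked points on a common boundary component'' is \emph{not} available by hypothesis: the hypothesis gives two boundary marked points per connected component of $\S$, and they may lie on different boundary circles (genus one, two boundary circles, one marked point each) --- precisely the situation the paper's ``two boundary components'' branch is designed for. A ``third marked point $n$'' also need not exist (genus one, one boundary circle, exactly two marked points, no punctures), and setting $n=m'$ turns $\alpha$ into a loop, which is exactly the stranding scenario you left unresolved (note a stranded genus-positive piece with one marked point is the surface of Theorem~\ref{thm: oneboundary}, so the induction would genuinely die there). In your puncture case, a loop $\alpha$ at $m$ enclosing only $p$ and $\beta$ cuts out a once-punctured monogon, which is excluded by Definition~\ref{defn: arc}; such an $\alpha$ is not a cluster variable, so Lemma~\ref{lemma: type2} does not apply --- the digon in that lemma must have a boundary segment (or an entire boundary circle, when the endpoints of $\alpha$ coincide) as its second side. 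Finally, the puncture/no-puncture dichotomy is unnecessary for this theorem: punctures simply ride along, and the type~A triangle can always be taken thin enough to be unpunctured.
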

\begin{proof}
Assume the theorem is true for $\A(S,\M)$ of rank strictly less than $n$, and let $(\S,\M)$ be a marked surface of rank $n$ with at least two boundary marked points in each component. 

If $(\S,\M)$ is not connected, then each component is a marked surface of strictly smaller rank, and so $\A(\S,\M)$ is locally acyclic by the inductive hypothesis.

If $(\S,\M)$ is connected, and $\partial\S$ has at least two components, then there is some $\beta$ connecting marked points on distinct components.  Find an arc $\alpha$ as in Lemma \ref{lemma: type1}, so that $(\alpha,\beta)$ is a covering pair.  Then the cutting of $(\S,\M)$ along $\alpha$ or $\beta$ is a connected marked surface with at least two marked boundary points, of rank $<n$.  Then $\A(\S,\M)$ is locally acyclic.

If $(\S,\M)$ is connected, $\partial\S$ has a component with at least two marked points, and the genus of $\S$ is not zero, then there is some arc $\beta$ which connects distinct marked points on the same component of $\partial\S$, such that the cutting along $\beta$ does not disconnect $\S$.  Find an arc $\alpha$ as in Lemma \ref{lemma: type1}, so that $(\alpha,\beta)$ is a covering pair.  Then the cutting of $(\S,\M)$ along $\alpha$ or $\beta$ is again a marked surface with at least two marked boundary points in each component, of smaller rank.  Then $\A(\S,\M)$ is locally acyclic.

If the genus of $\S$ is zero, then $\S$ includes into a disc and $\A(\S,\M)$ is locally acyclic by a previous theorem.
\end{proof}

\begin{rem}
Compare Theorem \ref{thm: atleast2} with the following theorem.
\begin{thm}\cite[Corollary 12.4]{FST08}
The cluster algebra $\A(\S,\M)$ is acyclic if and only if it has a seed of finite-type or affine-type.
\end{thm}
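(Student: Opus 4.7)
The plan is to prove both implications. The key structural observation is that if $\Delta$ is a tagged triangulation and $T$ is an interior triangle of $\Delta$ (one whose three edges are all tagged arcs, not boundary segments), then the three corners of $T$ contribute three arrows in $\Q_\Delta$ forming a directed $3$-cycle in $\Q_\Delta^{mut}$. Thus for $\A(\S,\M)$ to be acyclic, one must find a tagged triangulation (possibly after flips) in which every triangle has at least one boundary edge. This already forces severe topological restrictions on $(\S,\M)$.

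For the easy direction, every finite-type or affine-type cluster algebra admits an acyclic seed: finite-type by orienting the corresponding Dynkin diagram, and affine-type by orienting the corresponding affine Dynkin diagram (for $\tilde{A}_n$, one takes any orientation of the $(n+1)$-cycle which is not fully cyclic, since the only obstruction in the affine case is the cyclic orientation of $\tilde{A}_n$, and any non-cyclic orientation becomes acyclic after a single mutation). So if $\A(\S,\M)$ is of finite or affine type, it has an acyclic seed.

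For the hard direction, I would carry out a topological classification. Using Euler-characteristic bookkeeping to count triangles and boundary edges in terms of the genus $g$, number of holes $h$, number of punctures $p$, and boundary marked points, the constraint ``every triangle hits $\partial\S$'' is highly restrictive: balancing the count of triangle-corners against the count of boundary-edge-corners rules out most surfaces. A case analysis should produce the short list: unpunctured polygons (type $A_n$), once-punctured polygons (type $D_n$), unpunctured annuli with marked points on each boundary (type $\tilde{A}_{p,q}$), and once-punctured discs with at least two boundary marked points (type $\tilde{D}_n$). Each of these admits a tagged triangulation whose quiver is a Dynkin or affine-Dynkin orientation by direct inspection.

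The main obstacle, and the delicate part of the argument, is showing that \emph{no} seed for any other marked surface can be acyclic: this requires a mutation-invariant obstruction, not merely the exhibition of one non-acyclic triangulation. One natural approach is to show that for each surface outside the short list, every tagged triangulation contains an interior triangle and that flips and tag-changes preserve this property; a more conceptual approach would be to exhibit a ``bi-infinite path of $3$-cycles'' structure (reminiscent of the obstruction in Theorem \ref{thm: nonnegnonLA}) that is preserved under all mutations. Either route is essentially the content of Corollary 12.4 of \cite{FST08}, and it is here that most of the work lives.
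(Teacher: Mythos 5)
First, note that the paper does not prove this statement at all: it is quoted directly from \cite{FST08} (Corollary 12.4), so there is no internal proof to compare with, and your own sketch ultimately defers the decisive step to that same source. As a proof proposal it therefore has a genuine gap, which you yourself flag: the ``hard'' direction requires a mutation-invariant obstruction showing that for every surface outside the finite/affine list, \emph{no} tagged triangulation has an acyclic quiver, and nothing in your outline supplies it. Moreover, the observation you propose to drive the classification --- interior triangles give directed $3$-cycles, so acyclicity forces a triangulation in which every triangle meets $\partial\S$ --- cannot carry the argument even as a necessary condition to classify by Euler-characteristic bookkeeping. Acyclicity of $\Q_\Delta$ can fail through longer directed cycles with no interior triangle (the cyclically oriented $\tilde{A}$ quiver of an annulus triangulation is exactly such a case), and conversely exhibiting one cyclic triangulation of a surface proves nothing, since the cluster algebra is acyclic as soon as \emph{some} seed is; also self-folded triangles and the cancellation of arrows at punctures incident to exactly two arcs mean that ``interior triangle $\Rightarrow$ $3$-cycle'' needs qualification.

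There is also a concrete error in the short list you expect the case analysis to produce: a once-punctured disc with boundary marked points \emph{is} the once-punctured polygon and has type $D_n$, not $\tilde{D}_n$; the affine type $\tilde{D}_n$ arises from the \emph{twice}-punctured disc, which is missing from your list. So the classification as stated is both redundant and incomplete, and any Euler-characteristic argument calibrated to it would misclassify the twice-punctured polygon. A smaller slip: a non-cyclic orientation of a cycle graph is already acyclic (no mutation is needed), and the cyclically oriented $n$-cycle is mutation-equivalent to type $D_n$; your easy direction is correct in substance, but the phrasing about ``becoming acyclic after a single mutation'' conflates these facts. In sum, the easy direction is fine, but the classification needs repair and the mutation-invariant obstruction --- which is the actual content of \cite{FST08}, Corollary 12.4 --- is not provided, so the proposal remains a plan rather than a proof.
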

\noindent Since most cluster algebras of marked surfaces are not finite-type or affine-type, marked surfaces give a large class of examples of locally acyclic cluster algebras which are not acyclic.
\end{rem}

\subsection{Marked surfaces and failure of local acyclicity} In this section, we show that some marked surfaces give cluster algebras which are not locally acyclic.

%
%

\begin{thm}\label{thm: noboundary}
If $\partial\S$ is empty, then $\A(\S,\M)$ has no covering pairs and is not locally acyclic.
\end{thm}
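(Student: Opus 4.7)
The plan is to combine Lemma \ref{lemma: nocoveringpairs} with a topological verification that every arrow in $\Q_\Delta$ is in a bi-infinite path. Concretely, I would show two things: (i) $\A(\S,\M)$ admits no covering pairs in any seed, and (ii) $\A(\S,\M)$ is not an isolated cluster algebra. Lemma \ref{lemma: nocoveringpairs} then forces $\A(\S,\M)$ to fail local acyclicity.

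For (ii), note that the rank formula $6g + 3h + 2p + |\M| - 6$ together with our standing exclusions on small surfaces gives rank $\geq 3$ for every allowed marked surface with $\partial\S = \emptyset$ (the extremal cases being the once-punctured torus, rank $3$, and the four-punctured sphere, rank $6$). Around any puncture $m$ of any tagged triangulation $\Delta$, at least three distinct tagged arcs meet in a counter-clockwise cyclic order, so the construction of $\Q_\Delta$ inserts honest arrows at $m$; hence $\Q_\Delta$ is never the empty quiver and $\A(\S,\M)$ is not isolated.

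For (i), I need the following key topological fact: for any tagged triangulation $\Delta$ of a marked surface $(\S,\M)$ with $\partial\S = \emptyset$, every arrow of $\Q_\Delta$ lies in a directed cycle of $\Q_\Delta$. Granting this, any arrow extends to a bi-infinite path (just iterate the cycle), so by definition no covering pair exists in any seed. The reason the fact holds is that, in a closed surface, the counter-clockwise arrow structure at each puncture is genuinely cyclic (one goes all the way around and returns to the starting arc), rather than being cut short as it would be at a boundary marked point. If $\alpha \to \beta$ is any arrow, it is contributed by some puncture $m$ (one of the ``legitimate'' ones in the FST construction), and the sequence of counter-clockwise successors at $m$ brings one back to $\alpha$ in finitely many steps, producing the desired cycle through the arrow $\alpha\to\beta$.

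The main obstacle is verifying this cyclicity claim uniformly across the FST conventions, in particular the exceptional punctures that are ``contained in two tagged arcs'' and the self-folded triangles that arise from notched endpoints. My approach would be to enumerate the local pictures at each puncture—an unfolded puncture of valence $\geq 3$, a puncture inside a self-folded triangle, and a puncture meeting two tagged arcs that differ only in a tagging—and in each case exhibit the directed cycle through the given arrow, either coming from the triangle boundary or from the counter-clockwise cycle at a neighbouring puncture where the two relevant arcs reappear. Once every such local configuration is handled, no seed of $\A(\S,\M)$ contains a covering pair, and combining with (ii) and Lemma \ref{lemma: nocoveringpairs} completes the proof.
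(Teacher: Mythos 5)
Your plan is correct and is essentially the paper's own argument: both rest on a local analysis of the counter-clockwise arrow structure at the punctures of a tagged triangulation to rule out covering pairs in every seed, and then conclude non-local-acyclicity via Lemma \ref{lemma: nocoveringpairs}. The only (cosmetic) difference is that the paper checks that no vertex of $\Q_\Delta$ is a source or a sink (each arc has an endpoint lying in at least three tagged arcs, hence arrows both in and out), whereas you check that each arrow lies on the closed directed walk around the puncture contributing it -- an equivalent verification given the paper's source/sink criterion for covering pairs -- and you additionally make explicit the non-isolated alternative of Lemma \ref{lemma: nocoveringpairs}, which the paper leaves implicit.
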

\begin{proof}
Let $\Delta$ be any triangulation of $(\S,\M)$, and $\alpha\in \Delta$.  Let $m$ be the endpoint of $\alpha$ which is in at least three tagged arcs.  There will be tagged arcs immediately clockwise and counter-clockwise of $\alpha$, and so in $\Q_\Delta$ there are arrows in and out of $\alpha$.  Therefore, there are no sources and sinks in any seed of $\A(\S,\M)$, and so there are no covering pairs.  By Lemma \ref{lemma: nocoveringpairs}, $\A(\S,\M)$ is not locally acyclic.
\end{proof}

\begin{thm}\label{thm: oneboundary}
If $|\M|=1$, then $\A(\S,\M)$ has no covering pairs and is not locally acyclic.
\end{thm}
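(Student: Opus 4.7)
The plan is to mirror Theorem \ref{thm: noboundary}: show that for every tagged triangulation $\Delta$ of $(\S,\M)$ the quiver $\Q_\Delta$ has no sources or sinks, invoke the proposition preceding Lemma \ref{lemma: nocoveringpairs} that identifies this property with the absence of covering pairs, and then conclude via Lemma \ref{lemma: nocoveringpairs}.

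First I split on where the unique marked point lies. If it is a puncture, then every boundary component would need its own marked point, so $\partial\S$ must be empty, and the claim is already Theorem \ref{thm: noboundary}. So I may assume the marked point $m$ lies on the boundary. Then $\partial\S$ is a single boundary loop carrying $m$, there are no punctures, and the excluded case of a disc with one boundary marked point and no punctures forces $\S$ to have genus at least one.

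The key topological input is that every tagged arc in $(\S,\M)$ is a loop at $m$, so in any tagged triangulation $\Delta$ the arc-ends at $m$ form a linear sequence between the two sides of $\partial\S$ at $m$. For each $\alpha\in\Delta$ I write its two positions in this sequence as $i_1<i_2$. The crux of the proof is the claim that these positions are never adjacent: if $i_2=i_1+1$, the angular wedge at $m$ between them is bounded on both sides by $\alpha$, so the triangle of the triangulation hosting this wedge would in fact be a monogon bounded by $\alpha$. With no puncture available to place inside (since $|\M|=1$ and $m$ is on the boundary), this monogon is empty, forcing $\alpha$ to be contractible and contradicting the definition of a tagged arc. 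Given $i_2\geq i_1+2$, position $i_1+1$ is an arc-end of some $\beta\neq\alpha$, producing an arrow $\alpha\to\beta$ in $\Q_\Delta$, and position $i_2-1$ is an arc-end of some $\gamma\neq\alpha$, producing an arrow $\gamma\to\alpha$; hence $\alpha$ has both an outgoing and an incoming arrow.

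So $\Q_\Delta$ has no sources or sinks in any seed, and by the proposition above, $\A(\S,\M)$ has no covering pairs. Since $\Q_\Delta$ has arrows, $\A(\S,\M)$ is not isolated, and Lemma \ref{lemma: nocoveringpairs} then concludes that $\A(\S,\M)$ is not locally acyclic. The main obstacle in this outline is the non-adjacency claim, which boils down to ruling out empty monogons in the triangulation; this is where the hypothesis of no punctures (forced by $|\M|=1$ with $m$ on the boundary) does the real work.
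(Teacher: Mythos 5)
Your proof is correct and takes essentially the same route as the paper: in every seed each vertex has both an incoming and an outgoing arrow because both ends of every arc lie at the unique marked point, so no seed has a source or sink, hence there are no covering pairs, and Lemma \ref{lemma: nocoveringpairs} concludes non-local-acyclicity. The only differences are cosmetic: you split off the puncture case by reducing to Theorem \ref{thm: noboundary}, and you make explicit (using the absence of punctures to rule out a degenerate face with two sides equal to $\alpha$) why the two ends of an arc cannot be adjacent at $m$, a point the paper's shorter argument leaves implicit.
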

\begin{proof}
Let $\Delta$ be any triangulation of $(\S,\M)$, and $\alpha\in \Delta$.  Both ends of $\alpha$ are at the unique endpoint, so at most one of them can be the most counter-clockwise arc coming into that point.  Then there is at least one arrow coming out of $\alpha$ in $\Q_\Delta$.  Similarly, there is at least one arrow coming into $\alpha$.  Therefore, there are no sources and sinks in any seed of $\A(\S,\M)$, and so there are no covering pairs.  By Lemma \ref{lemma: nocoveringpairs}, $\A(\S,\M)$ is not locally acyclic.
\end{proof}

\begin{rem}\label{rem: nonA}
Together with the results of the previous section, the only unknown case is when $(\S,\M)$ has a component with 
\begin{itemize}
\item $\partial\S$ connected with a single marked point,
\item there is at least one puncture, and
\item $\S$ has positive genus.  
\end{itemize}
Here, covering pairs exist, but the Banff algorithm can be shown to always fail.
\end{rem}

%

%
%
%
%

\section{Examples and non-examples}\label{section: examples}

\subsection{A small example}

Let $\Q$ be the quiver in Figure \ref{fig: smallex}, and $\A=\A(\Q)$.\footnote{This example is due to Jesse Levitt, a student in LSU's Cluster Algebras seminar.}
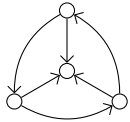
\begin{figure}[h!]
\centering
\begin{tikzpicture}[scale=.7,auto,minimum size=2mm]
    \node (1) at (-1,-.58) [mutable] {};
    \node (2) at (1,-.58) [mutable] {};
    \node (3) at (0,1.15) [mutable] {};
    \node (4) at (0,0) [mutable] {};

    \draw (1) to [out=330,in=210,-angle 90] (2);
    \draw (2) to [out=90,in=330,-angle 90] (3);
    \draw (3) to [out=210,in=90,-angle 90] (1);
    \draw (1) to [-angle 90] (4);
    \draw (2) to [-angle 90] (4);
    \draw (3) to [-angle 90] (4);
\end{tikzpicture}
\caption{A simple local acyclic}
\label{fig: smallex}
\end{figure}

The cluster algebra $\A$ is \textbf{locally acyclic} by Figure \ref{fig: Banff1} and Theorem \ref{thm: Banff}.  This quiver is likely not acyclic; however, this is difficult to check since this quiver is equivalent to infinitely many other quivers.  One can check, by hand or computer,\footnote{The applet at \cite{QuiverApplet} is highly recommended.} that there are no acyclic quivers within eight mutations.  Therefore is likely the smallest example of a locally acyclic cluster algebra which is not acyclic.

\subsection{The Markov cluster algebra}\label{section: Markov}

Let $\Q$ be the quiver in Figure \ref{fig: Markov}, and $\A=\A(\Q)$ be the \textbf{Markov cluster algebra}.
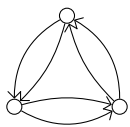
\begin{figure}[h!]
\centering
\begin{tikzpicture}[scale=.7,auto,minimum size=2mm]
	\node (1) at (0,1.15)  [mutable] {};
	\node (2) at (-1,-.58)  [mutable] {};
	\node (3) at (1,-.58)  [mutable] {};
	\draw (1) to [out=255, in=45,-angle 90] (2);
	\draw (2) to [out=15,in=165,-angle 90] (3);
	\draw (3) to [out=135,in=285,-angle 90] (1);
	\draw (1) to [out=210, in=90,-angle 90] (2);
	\draw (2) to [out=330,in=210,-angle 90] (3);
	\draw (3) to [out=90,in=330,-angle 90] (1);
\end{tikzpicture}
\caption{The Markov quiver}
\label{fig: Markov}
\end{figure}

The Markov cluster algebra is famous for its many pathologies, and for its usefulness in disproving general facts that might be true about cluster algebras.  It was introduced in \cite{BFZ05} as an example of a cluster algebra distinct from its own upper cluster algebra.  The Markov cluster algebra is the smallest example of a non-locally-acyclic cluster algebra, which will follow from any of the following propositions.

The quiver $\Q$ has the property that every mutation of $\Q$ is itself.  Moreso is true; for any pair of cluster variables $a,b$, there is an automorphism of $\A$ which sends cluster variables to cluster variables, clusters to clusters, and $a$ to $b$.

Furthermore, every vertex of $\Q$ has two incoming arrows, and two outgoing arrows.  Because of this, every cluster relation in $\A$ is homogenous of degree 2 in the cluster variables.  Therefore, $\A$ is an $\mathbb{N}$-graded algebra, with the degree of the cluster variables equal to $1$.

The Markov cluster algebra arises as the cluster algebra of a marked surface.
\begin{prop}
Let $(\S,\M)$ be the torus with no boundary, and a single puncture.  Then $\A=\A(\S,\M)$.
\end{prop}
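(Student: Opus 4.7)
The strategy is to exhibit an explicit plain tagged triangulation $\Delta$ of the once-punctured torus whose quiver $\Q_\Delta$ is manifestly the Markov quiver; the identity $\A(\S,\M)=\A(\Q_\Delta)=\A$ then follows from the definition.

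First, a sanity check on the rank: with $g=1$, $h=0$, $p=1$, and $|\M|=1$, the formula $6g+3h+2p+|\M|-6$ gives $3$, matching the three mutable vertices of the Markov quiver. Represent $\S$ as $\mathbb{R}^2/\mathbb{Z}^2$ with the puncture at the image of the integer lattice, and let $\Delta=\{\alpha,\beta,\gamma\}$ consist of the plain tagged arcs obtained as the images of the segments $[0,1]\times\{0\}$, $\{0\}\times[0,1]$, and the diagonal from $(0,0)$ to $(1,1)$. Each is a simple loop at the puncture, and together they cut $\S$ into two triangles, so $\Delta$ is a maximal compatible collection (pairwise disjointness off the puncture and the common plain tagging at the puncture satisfy the compatibility axioms in Section 9).

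Second, I would compute $\Q_\Delta$ from the local combinatorics at the puncture, which is the only marked point. Using the orientation of the torus, both triangles in the fundamental domain inherit a counter-clockwise boundary cycle of $(\alpha,\beta,\gamma)$. Each triangle then contributes arrows $\alpha\to\beta$, $\beta\to\gamma$, $\gamma\to\alpha$ via the counter-clockwise adjacency recipe. Summing over the two triangles produces doubled arrows around the 3-cycle, which is exactly the Markov quiver of Figure \ref{fig: Markov}. Combining with the theorem identifying $\A(\S,\M)$ with $\A(\Q_\Delta)$ for a plain tagged triangulation when $\M$ is a single puncture finishes the proof.

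The main subtlety is ensuring that both triangles induce the \emph{same} cyclic order $(\alpha,\beta,\gamma)$ rather than opposite cyclic orders, since in the latter case the arrows from the two triangles would cancel (the situation flagged in the remark after the quiver construction). This is the one step where the global orientation of the torus must be tracked carefully, but once the diagonal $\gamma$ is drawn from $(0,0)$ to $(1,1)$, a direct inspection of the two triangles in the fundamental domain confirms they are oriented consistently. Everything else is a routine verification that the tagged-arc axioms hold for loops at a single puncture.
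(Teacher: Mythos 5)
Your proposal is correct and takes essentially the same route as the paper: the paper's proof likewise triangulates the once-punctured torus (presented as a square with opposite sides identified) by the three standard arcs and observes that the resulting quiver is the Markov quiver. Your additional checks — the rank count, the tagged-triangulation axioms for loops at the puncture, and the orientation consistency of the two triangles so the doubled arrows do not cancel — are exactly the details the paper leaves implicit.
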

\begin{proof}
Triangulate $(\S,\M)$ with three arcs as follows, where opposite sides of this square have been identified.
\begin{center}
\begin{tikzpicture}[scale=.7]
    \draw [dashed,fill=black!10] (-1,-1) to (-1,1) to (1,1) to (1,-1) to (-1,-1);
    \node (1) at (0,0) [marked] {};
    \draw (1) to (0,1);
    \draw (1) to (1,1);
    \draw (1) to (0,-1);
    \draw (1) to (-1,-1);
    \draw (1) to (1,0);
    \draw (1) to (-1,0);
\end{tikzpicture}
\end{center}
This triangulation gives the quiver $\Q$.
\end{proof}
\begin{rem}
All triangulations of this surface are diffeomorphic; this gives an explanation why every mutation of $\Q$ is equivalent to itself.
\end{rem}

By Theorem \ref{thm: noboundary}, $\A$ is \textbf{not locally acyclic}.  We now show that $\A$ lacks many of the nice properties of locally acyclic cluster algebras.

\begin{prop}
$\A$ is infinitely generated and non-Noetherian.
\end{prop}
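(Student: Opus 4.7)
The plan is to exploit the $\mathbb{N}$-grading with $\A_0 = \Z$ and cluster variables in degree $1$ noted just above. First I would observe that if $\A$ were finitely generated as a $\Z$-algebra, we could take a finite set of homogeneous generators of positive degree. Since products of positive-degree generators lie in degree $\geq 2$, the degree-$1$ piece $\A_1$ would be the $\Z$-span of just the degree-$1$ generators, making $\A_1$ a finitely generated $\Z$-module of finite rank. So it suffices to exhibit an infinite $\Z$-linearly independent family of elements in $\A_1$.

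Since $\A = \A(\S,\M)$ for $(\S,\M)$ the once-punctured torus, its cluster variables are canonically identified with plain tagged arcs on $(\S,\M)$ by the theorem of FST cited earlier. The set of plain tagged arcs on the once-punctured torus is infinite (essentially indexed by primitive vectors in $\Z^2$ up to sign), so $\A_1$ contains infinitely many distinct cluster variables. To upgrade distinctness to $\Z$-linear independence, I would invoke distinctness of denominator vectors: expanded in a fixed initial cluster $(x_1,x_2,x_3)$, each cluster variable has a unique expression $N/x_1^{d_1}x_2^{d_2}x_3^{d_3}$ with $N\in\Z[x_1,x_2,x_3]$ not divisible by any $x_i$, and the $d$-vectors of distinct Markov cluster variables are distinct (they correspond to the distinct slopes of the parametrizing arcs). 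A nontrivial $\Z$-linear combination of cluster variables with pairwise distinct $d$-vectors cannot vanish in $\Z[x_1^{\pm 1},x_2^{\pm 1},x_3^{\pm 1}]$, because the Laurent monomial $x_1^{-d_1^{(i)}}x_2^{-d_2^{(i)}}x_3^{-d_3^{(i)}}$ appears in the $i$th term but not in any of the others. This gives the required contradiction and shows $\A$ is not finitely generated.

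For non-Noetherianness I would apply the graded converse of Hilbert's basis theorem: an $\mathbb{N}$-graded ring $R$ with Noetherian $R_0$ is Noetherian if and only if it is finitely generated as an $R_0$-algebra. The nontrivial direction follows by taking a finite homogeneous generating set of the irrelevant ideal $R_+$, which then generates $R$ over $R_0$ by induction on degree. Since $\A_0=\Z$ is Noetherian while $\A$ is not finitely generated over $\Z$, $\A$ is not Noetherian either.

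The main obstacle is establishing $\Z$-linear independence of the cluster variables cleanly; this depends on distinctness of denominator vectors for Markov, which is a standard but nontrivial fact, provable either by invoking the general $d$-vector or $g$-vector theory for skew-symmetric cluster algebras or by a direct analysis of the Markov recursion $a_i a_i' = a_j^2 + a_k^2$ showing that iterated mutations produce denominators of unboundedly growing support. Everything else in the argument is formal.
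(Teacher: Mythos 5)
Your overall architecture matches the paper's: both proofs exploit the $\mathbb{N}$-grading with $\A_0=\Z$ and all cluster variables in degree $1$, and both reduce everything to the fact that $\A_1$ is not a finitely generated $\Z$-module. For non-Noetherianness the paper writes down an explicit non-stabilizing chain of ideals $\A S_i$ generated by nested sets of cluster variables, whereas you invoke the graded converse of the Hilbert basis theorem (a Noetherian $\mathbb{N}$-graded ring is a finitely generated $\A_0$-algebra); that substitution is correct and, if anything, cleaner.

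The genuine problem is your justification of $\Z$-linear independence. First, the Laurent monomial $x_1^{-d_1}x_2^{-d_2}x_3^{-d_3}$ occurs in the expansion of a cluster variable only if the numerator has nonzero constant term, and this already fails at the first mutation in the Markov algebra: $x_1'=(x_2^2+x_3^2)/x_1$ contains no $x_1^{-1}$ term. Second, distinctness of denominator vectors by itself does not force linear independence: the elements $(x+y)/x$, $(x+y)/y$, $(x+y)^2/(xy)$ have pairwise distinct $d$-vectors, numerators divisible by no variable, and satisfy a nontrivial $\Z$-linear relation, so ``distinct $d$-vectors'' is not the right lemma. The step can be repaired along the lines you sketch at the end: any finitely generated $\Z$-submodule of $\Z[x_1^{\pm1},x_2^{\pm1},x_3^{\pm1}]$ involves only the finitely many Laurent monomials occurring in a generating set; since the numerator of each cluster variable is not divisible by $x_1$, each cluster variable exhibits a Laurent monomial whose $x_1$-exponent is exactly $-d_1$; and the entries of the denominator vectors are unbounded over the infinitely many cluster variables (the denominators grow under iterated mutation, as with Markov triples). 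Hence the cluster variables cannot lie in a finitely generated $\Z$-submodule, so $\A_1$ has infinite rank, which is what your degree argument needs. Alternatively, citing linear independence of cluster monomials, or the explicit expansion formulas for arcs on the once-punctured torus, also closes the gap. (The paper itself is terse here --- it passes directly from ``infinitely many cluster variables'' to ``$\A_1$ is infinitely generated'' --- but it does not rest on the false leading-term claim, which yours does as written.)
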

\begin{proof}
Since $\Q$ is not finite-type, there are an infinite number of cluster variables.  Therefore, $\A_1$ is an infinitely-generated module over $\A_0=\Z$, and so $\A$ cannot be finitely generated.  Now, let $\{S_i\}$, $i\in\mathbb{N}$ be collections of cluster variables such that $i<j$ implies that $S_i\subset S_j$.  Then $\{\A S_i\}$ define an ascending sequence of ideals which do not stabilize.
\end{proof}
\begin{rem}
The prime ideal $\A_{\geq1}$ has infinite Zariski tangent dimension, so $\A$ is infinitely generated even to first order around that point in $\X_\A$.
\end{rem}

\begin{prop}\cite[Proposition 1.26]{BFZ05}
The Markov cluster algebra $\A\neq \U$.
\end{prop}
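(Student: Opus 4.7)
The plan is to exhibit an explicit element $y \in \U \setminus \A$. Let
\[ y := \frac{x_1^2 + x_2^2 + x_3^2}{x_1 x_2 x_3} \in \mathbb{Q}(x_1, x_2, x_3). \]
Evidently $y$ lies in the Laurent ring of the initial cluster $(x_1, x_2, x_3)$. What must be done is to check that $y$ lies in the Laurent ring of every cluster, and that $y \notin \A$.

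For Laurent-ness in every cluster, I would exploit the enormous symmetry of the Markov quiver that the paper has already highlighted: every mutation of $\Q$ is isomorphic to $\Q$ itself, and there is a cluster-algebra automorphism carrying any cluster variable to any other. Because $y$ is symmetric in $x_1, x_2, x_3$, it suffices to check Laurent-ness after a single mutation, say at vertex $1$. Using the mutation relation $x_1 x_1' = x_2^2 + x_3^2$, I compute
\[ y \;=\; \frac{x_1^2 + x_1 x_1'}{x_1 x_2 x_3} \;=\; \frac{x_1 + x_1'}{x_2 x_3} \;=\; \frac{(x_1')^2 + x_2^2 + x_3^2}{x_1' x_2 x_3}, \]
so $y$ has exactly the same shape in the cluster $(x_1', x_2, x_3)$ as in the initial one. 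By the symmetry of $\Q$ (permuting the three vertices), mutation at $2$ or $3$ produces the analogous invariance; iterating these mutations through the whole mutation tree and using that each resulting seed is, up to relabeling, the Markov seed again, shows $y$ lies in every Laurent ring. Hence $y \in \U$.

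For $y \notin \A$, I would use the $\mathbb{N}$-grading recalled in the paper: since each exchange relation is homogeneous of degree $2$, assigning degree $1$ to every cluster variable makes $\A$ an $\mathbb{N}$-graded subring of $\Z[x_1^{\pm 1}, x_2^{\pm 1}, x_3^{\pm 1}]$ (with the grading on Laurent polynomials given by total degree). Each monomial of $y$ has total degree $1 - 1 - 1 = -1$, so $y$ lies in the degree $-1$ component of the ambient Laurent ring. But any element of $\A$ is a $\Z$-linear combination of products of cluster variables, each of degree $\ge 0$, so $\A$ is contained in the non-negative part of the grading. Therefore $y \notin \A$, and $\A \subsetneq \U$.

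The main obstacle is the step showing $y$ is Laurent in every cluster, because the Markov mutation graph is infinite and non-trivial. The key trick is to package this into a one-step computation plus the observation that the Markov quiver is fixed (up to relabeling) under any mutation and under the $S_3$-symmetry of its vertices, so that the formula above propagates through the entire mutation class.
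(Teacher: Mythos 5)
Your proposal is correct and takes essentially the same route as the paper: the same element $\frac{x_1^2+x_2^2+x_3^2}{x_1x_2x_3}$, shown to lie in $\U$ by Laurent-ness in every cluster and excluded from $\A$ by the degree $-1$ argument in the $\mathbb{N}$-grading. Your one-step mutation computation, propagated using the mutation-invariance of the Markov quiver, simply supplies the detail behind the Laurent-ness claim that the paper asserts with a citation.
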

\begin{proof}
For $(x,y,z)$ any cluster, the element
$ m=\frac{x^2+y^2+z^2}{xyz}$
is Laurent in each cluster, so it is in $\U$.  However, it has graded degree $-1$, so it is not in $\A$.
\end{proof}

\subsection{The torus with one boundary marked point} Let $\Q$ be the quiver in Figure \ref{fig: torus}, and $\A=\A(\Q)$.
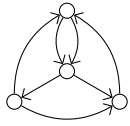
\begin{figure}[h!]
\begin{tikzpicture}[scale=.7,auto,minimum size=2mm]
    \node (1) at (-1,-.58) [mutable] {};
    \node (2) at (1,-.58) [mutable] {};
    \node (3) at (0,1.15) [mutable] {};
    \node (4) at (0,0) [mutable] {};

    \draw (1) to [out=330,in=210,-angle 90] (2);
    \draw (2) to [out=90,in=330,-angle 90] (3);
    \draw (1) to [out=90,in=210,-angle 90] (3);
    \draw (4) to [-angle 90] (1);
    \draw (4) to [-angle 90] (2);
    \draw (3) to [out=300,in=60,-angle 90] (4);
    \draw (3) to [out=240,in=120,-angle 90] (4);
\end{tikzpicture}
\caption{The torus with one boundary marked point}
\label{fig: torus}
\end{figure}

This quiver is closely related to the Markov quiver.  Every mutation of $\Q$ is itself, and for any pair of cluster variables $a,b$, there is an automorphism of $\A$ which sends cluster variables to cluster variables, clusters to clusters, and $a$ to $b$.

\begin{prop}
Let $(\S,\M)$ be the torus with a disc cut out, and one marked point on the boundary.  Then $\A=\A(\S,\M)$.
\end{prop}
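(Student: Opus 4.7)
The plan is to exhibit an explicit tagged triangulation $\Delta$ of $(\S,\M)$ whose associated quiver $\Q_\Delta$ coincides with the quiver $\Q$ in Figure \ref{fig: torus}, paralleling the argument just given for the Markov case.

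First, I would confirm the rank. With genus $g=1$, one hole ($h=1$), no punctures ($p=0$), and $|\M|=1$, the formula $6g+3h+2p+|\M|-6 = 4$ matches the four vertices of $\Q$. An Euler-characteristic check then forces any such triangulation to consist of $3$ triangles with one boundary segment.

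Second, I would realize $\S$ concretely as a fundamental square with opposite sides identified, minus a small open disc from the interior, and place the unique marked point $m$ on the resulting boundary circle. I would then describe four tagged arcs $\alpha_1,\alpha_2,\alpha_3,\alpha_4$, all loops based at $m$. Three of these ($\alpha_1,\alpha_2,\alpha_3$) are chosen as ``Markov-like'' loops, representing the three shortest non-trivial homotopy classes of loops on the torus, suitably pushed off of $m$ so that they avoid the hole; the fourth arc $\alpha_4$ is a short loop at $m$ that winds once around the hole. Drawing the picture in the fundamental square (in the same spirit as the Markov picture just above), I would check pairwise compatibility and verify that these four arcs cut $\S$ into exactly three triangles, so that $\Delta:=\{\alpha_1,\alpha_2,\alpha_3,\alpha_4\}$ is a tagged triangulation.

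Third, I would compute $\Q_\Delta$ using the recipe from the previous section. Since $m$ is the only marked point, every arrow in $\Q_\Delta$ comes from the cyclic order at $m$; and because $m$ lies on $\partial\S$, the rotation at $m$ is a \emph{linear} sequence from one side of the boundary segment to the other, not a closed cycle. Each arc contributes two ends at $m$, giving eight arc-ends in total and hence seven consecutive pairs. I would verify that with the chosen triangulation the counter-clockwise order reads
\[
\alpha_1,\ \alpha_3,\ \alpha_4,\ \alpha_1,\ \alpha_2,\ \alpha_3,\ \alpha_4,\ \alpha_2,
\]
whose consecutive pairs give precisely the seven arrows $1{\to}2$, $1{\to}3$, $2{\to}3$, $4{\to}1$, $4{\to}2$, and the doubled $3{\to}4$ of $\Q$. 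Combined with the canonical identification $\A(\S,\M)=\A(\Q_\Delta)$ from Section 8, this yields the result.

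The main obstacle is visualizing the triangulation. Because all four arcs are loops at a single marked point on a surface of genus one with a hole, the combinatorics at $m$ is delicate, and the asymmetry of $\Q$ (the distinguished vertex $4$, adjacent to a double arrow) must be realized by a correspondingly asymmetric choice of the fourth arc $\alpha_4$. Once the picture is drawn correctly, the identification of the cyclic sequence is a mechanical check.
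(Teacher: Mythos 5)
Your strategy is the same as the paper's: exhibit one explicit tagged triangulation of the one-holed torus with a single boundary marked point and read off $\Q_\Delta$ from the rotation order at the unique marked point $m$ (the paper draws its triangulation in a model of $\S$ as a disc with two interior circles removed and glued, rather than your square model, but that difference is cosmetic). Your bookkeeping is also right: the rank is $6g+3h+2p+|\M|-6=4$, a triangulation has three triangles and one boundary segment, the order at $m$ is linear rather than cyclic, and a linear order of eight arc-ends produces seven arrows, which is the correct arrow count for the quiver of Figure \ref{fig: torus}.

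The gap is in the triangulation itself: your $\alpha_4$ is not a legal tagged arc. A loop based at $m$ that winds exactly once around the hole is isotopic, rel endpoints, to the unique boundary segment: cutting along it separates off an unpunctured disc whose boundary carries exactly two marked points (the two copies of $m$), which is excluded by the last condition of Definition \ref{defn: arc}; such a boundary-parallel curve carries no cluster variable. One can also see the failure combinatorially: the three Markov-like loops $\alpha_1,\alpha_2,\alpha_3$ cut $(\S,\M)$ into a triangle and a quadrilateral whose fourth side is the boundary segment ($v=1$, $e=4$, $\chi=-1$ gives two faces with $3+4$ sides), and a boundary-parallel loop is parallel to a side of that quadrilateral, not a diagonal, so $\{\alpha_1,\alpha_2,\alpha_3,\alpha_4\}$ is not a triangulation and the claimed rotation sequence at $m$ cannot be realized by it. The repair is to take for $\alpha_4$ a diagonal of the quadrilateral face, i.e.\ a loop at $m$ homotopic to the concatenation of the boundary segment with one of the $\alpha_i$ (so it goes around the hole \emph{and} once around the torus in one direction). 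With that corrected fourth arc the face structure is three triangles, the linear order at $m$ can be read off, and the mechanical check you describe does yield the quiver of Figure \ref{fig: torus}; at that point your argument coincides with the paper's proof.
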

\begin{proof}
Triangulate $(\S,\M)$ with four arcs as follows, where the two dashed circles have been identified (with opposite orientation).
\begin{center}
\begin{tikzpicture}[scale=.35]
    \draw[fill=black!10] (0,0) circle (4);
    \node (1) at (0,4) [marked] {};
    \node (2) at (3.5,-.5) {};
    \node (3) at (.5,-.5) {};
    \node (4) at (-3.5,-.5) {};
    \node (5) at (-.5,-.5) {};
    \draw (1) to [out=-20,in=90] (2) arc (360:180:1.5) (3) to [out=90,in=280] (1);
    \draw (1) to [out=200,in=90] (4) arc (180:360:1.5) (5) to [out=90,in=260] (1);
    \draw[fill=white,dashed] (2,0) circle (1);
    \draw[fill=white,dashed] (-2,0) circle (1);
    \draw (1) to [out=-40,in=45] (2,1);
    \draw (1) to [out=300,in=90] (.75,0) arc (180:225:1.25) to [out=-45,in=225] (2,-1);
    \draw (1) to [out=240,in=45] (-2,1);
    \draw (1) to [out=220,in=90] (-3.25,0) arc (180:225:1.25) to [out=-45,in=225] (-2,-1);
\end{tikzpicture}
\end{center}
This triangulation gives the quiver $\Q$.
\end{proof}
\begin{rem}
All triangulations of this surface are diffeomorphic; this gives an explanation why every mutation of $\Q$ is equivalent to itself.
\end{rem}

By Theorem \ref{thm: oneboundary}, $\A$ is \textbf{not locally acyclic}.  Much like the Markov cluster algebra, it exhibits several pathological behaviors; we focus on where it differs from the Markov cluster algebra.

Every vertex in every seed has both incoming and outgoing arrows.  This implies that the ideal $P$ generated by the cluster variables is non-trivial.
\begin{prop}$P^2=P$.
\end{prop}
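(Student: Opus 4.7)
The plan is to prove $P \subseteq P^2$ (the reverse inclusion is automatic) by exhibiting one cluster variable explicitly in $P^2$ and then propagating to all cluster variables using the transitive automorphism action noted just before the statement.

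First I would write out the exchange relation at vertex $1$ of the displayed quiver. Reading off the incidences ($\Q_{12}=\Q_{13}=1$, $\Q_{14}=-1$), the mutation formula gives
\[ a_1 a_1' \;=\; a_2 a_3 \;+\; a_4. \]
The critical feature is the ``lone'' summand $a_4$, which appears because vertex $1$ has in-degree exactly one. Solving for $a_4$ expresses it as a difference of two products of cluster variables, so $a_4 \in P^2$.

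To conclude, I would invoke the transitivity: for any cluster variable $b$, there is an automorphism $\phi$ of $\A$ sending $a_4$ to $b$ and permuting cluster variables. Since $P$ is generated as an ideal by the cluster variables, $\phi(P)=P$, and therefore $\phi(P^2)=P^2$; in particular $b=\phi(a_4)\in P^2$. As $b$ was arbitrary and $P$ is generated by the cluster variables, this yields $P\subseteq P^2$.

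The point worth flagging is conceptual rather than technical: not every vertex of $\Q$ produces a useful lone summand. Vertices $3$ and $4$ satisfy $|\pi_i^+|,|\pi_i^-|\geq 2$, so their exchange relations only give $a_i a_i'\in P^2$ without isolating either factor -- this is the same obstruction that prevents a comparably elementary argument for the Markov algebra. Here, however, vertices $1$ and $2$ \emph{are} asymmetric enough to produce a lone term (and symmetrically, the mutation at $2$ also gives $a_3 = a_2 a_2' - a_1 a_4 \in P^2$ on the nose), so the real content of the argument is already visible in a single exchange relation; the transitive automorphism action then fills in the remaining cluster variables at no additional cost.
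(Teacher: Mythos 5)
Your proof is correct and follows essentially the same route as the paper: both isolate the exchange relation $a_1a_1'=a_2a_3+a_4$ at the in-degree-one vertex to get $a_4\in P^2$, and then use the transitive cluster automorphism action to place every cluster variable in $P^2$. Your extra remarks (the alternative relation at vertex $2$ and the failure of vertices $3$, $4$ to give a lone summand) are accurate but not needed.
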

\begin{proof}
Consider a generic seed of $\A$ given below.
\begin{center}
\begin{tikzpicture}[scale=.7,auto,minimum size=2mm]
    \node (1) at (-1,-.58) [mutable] {$a_1$};
    \node (2) at (1,-.58) [mutable] {$a_2$};
    \node (3) at (0,1.15) [mutable] {$a_3$};
    \node (4) at (0,0) [mutable] {$a_4$};

    \draw (1) to [out=330,in=210,-angle 90] (2);
    \draw (2) to [out=90,in=330,-angle 90] (3);
    \draw (1) to [out=90,in=210,-angle 90] (3);
    \draw (4) to [-angle 90] (1);
    \draw (4) to [-angle 90] (2);
    \draw (3) to [out=300,in=60,-angle 90] (4);
    \draw (3) to [out=240,in=120,-angle 90] (4);
\end{tikzpicture}
\end{center}
The mutation relation at $a_1$ is
\[ a_1a_1' = a_2a_3+a_4\]
and so $a_4\in P^2$.  Because there is a cluster automorphism of $\A$ which takes $a_4$ to any other cluster variable, they are all in $P$, and so $P^2=P$.
\end{proof}
\begin{rem}
The prime ideal $P$ in $\X_\A$ has zero Zariski tangent dimension.  However, it is not an isolated point, because $\A$ is a domain and so $\X_\A$ is irreducible.  The interpretation is that $P$ is a singularity in $\X_\A$ so twisted, there are no first-order deformations of $P$.  This is a necessarily infinite-dimensional phenomenon.
\end{rem}
\begin{coro}\label{coro: torusnotfg}
$\A$ is not finitely-generated.
\end{coro}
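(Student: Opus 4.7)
The plan is to combine the identity $P^2 = P$ from the previous proposition with Nakayama's lemma to rule out finite generation of $\A$. The strategy: if $\A$ were finitely generated as a $\Z$-algebra, then by Hilbert's basis theorem $\A$ would be Noetherian, so $P$ would be a finitely-generated ideal, and applying Nakayama to the idempotent ideal $P = P^2$ would force either $P = \A$ or $P = 0$, both of which are easily ruled out.

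First I would verify that $P$ is a proper ideal of $\A$. By Theorem \ref{thm: oneboundary}, $\A(\S,\M)$ has no covering pairs. A glance at $\Q$ shows every vertex has both incoming and outgoing arrows, so no seed has isolated vertices, and $\A$ has no frozen variables. Theorem \ref{thm: coveringhomo} then yields an algebra homomorphism $\psi: \A \to \Z$ sending every cluster variable to $0$. Its kernel is proper and contains $P$, so $1 \notin P$.

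Next I would apply Nakayama. Assuming $\A$ is finitely generated over $\Z$, Hilbert's basis theorem gives that $\A$ is Noetherian, so $P$ is finitely generated as an ideal. The standard form of Nakayama applied to $P$ (viewed as a finitely-generated $\A$-module satisfying $P \cdot P = P$) produces an element $r \in 1 + P$ with $rP = 0$. Since $P$ is proper, $r \neq 0$, and since $\A$ is a domain (as a subalgebra of $\mathbb{Q}(x_1,\ldots,x_4)$), the relation $rP = 0$ forces $P = 0$. But any cluster variable lies in $P$ and is nonzero by the Laurent phenomenon, a contradiction.

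The only real subtlety is establishing that $P$ is proper, and this is precisely where the absence of covering pairs (via Theorem \ref{thm: coveringhomo}) is used essentially; everything else is a routine Nakayama argument once Noetherianity is extracted from the hypothetical finite generation over $\Z$.
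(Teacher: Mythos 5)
Your argument is correct and is essentially the paper's own proof: the paper likewise deduces Noetherianity from hypothetical finite generation, applies Nakayama's lemma to the idempotent ideal $P=P^2$ to get a nonzero annihilator of $P$, and contradicts $\A$ being a domain. Your explicit verification that $P$ is proper (via the absence of covering pairs and Theorem \ref{thm: coveringhomo}) just spells out what the paper records in the surrounding discussion, namely that every vertex in every seed has both incoming and outgoing arrows, so $P$ is non-trivial.
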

\begin{proof}
If $\A$ is finitely generated, then $P$ is also finitely-generated.  Then  $P\cdot P=P$ and Nakayama's lemma imply that there is a non-zero element in $\A$ which kills $P$.  Since $\A$ is a domain, this is a contradiction.
\end{proof}


\subsection{The torus with two boundary marked points}
Let $\Q$ be the quiver in Figure \ref{fig: torus2}, and $\A=\A(\Q)$.
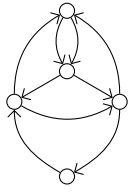
\begin{figure}[h!]
\centering
\begin{tikzpicture}[scale=.7,auto,minimum size=2mm]
    \node (1) at (-1,-.58) [mutable] {};
    \node (2) at (1,-.58) [mutable] {};
    \node (3) at (0,1.15) [mutable] {};
    \node (4) at (0,0) [mutable] {};
    \node (5) at (0,-2) [mutable] {};

    \draw (1) to [out=330,in=210,-angle 90] (2);
    \draw (2) to [out=90,in=330,-angle 90] (3);
    \draw (1) to [out=90,in=210,-angle 90] (3);
    \draw (4) to [-angle 90] (1);
    \draw (4) to [-angle 90] (2);
    \draw (3) to [out=300,in=60,-angle 90] (4);
    \draw (3) to [out=240,in=120,-angle 90] (4);
    \draw (1) to [out=270,in=150,angle 90-] (5);
    \draw (2) to [out=270,in=30,-angle 90] (5);
\end{tikzpicture}
\caption{Torus with two boundary marked points}
\label{fig: torus2}
\end{figure}

\begin{prop}
Let $(\S,\M)$ be the torus with a disc cut out, and two marked points on the boundary.  Then $\A=\A(\S,\M)$.
\end{prop}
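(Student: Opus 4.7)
The strategy follows the template set by the previous two propositions: exhibit an explicit tagged triangulation $\Delta$ of $(\S,\M)$ whose associated quiver $\Q_\Delta$ coincides with $\Q$, and then invoke the definition $\A(\S,\M) := \A(\Q_\Delta)$ together with the mutation-invariance lemma (FST 4.10) cited earlier.

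First I would verify that the numerics are consistent. For a torus with one boundary component, no punctures, and two marked boundary points, the formula $6g+3h+2p+|\M|-6$ gives $6+3+0+2-6=5$, which matches the vertex count of $\Q$. This confirms that a triangulation of $(\S,\M)$ does give a rank-5 cluster algebra, so it is at least plausible that the right triangulation produces $\Q$ on the nose.

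Next I would draw $(\S,\M)$ in the same style as the previous proof: represent the torus with an open disc removed by a large outer disc with two smaller interior discs that are identified along their boundaries in an orientation-reversing way; place the two marked points on the outer boundary. I would then exhibit five explicit arcs between these two marked points: two arcs that lie entirely in the outer disc and split it into triangles incident to the boundary segments, and three further arcs that pass through the identification circles to realize the two ``handle'' loops of the torus. The key book-keeping step is to name each arc and read off, at each of the two marked points, the cyclic (counter-clockwise) order in which the incident arcs appear. By the recipe defining $\Q_\Delta$, each consecutive pair at a marked point contributes one arrow, and I would check that the resulting arrow count between each pair of vertices matches $\Q$ (including the double arrows between $a_3$ and $a_4$, and between $a_1$ and $a_2$).

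The main obstacle is purely bookkeeping: drawing the handle arcs correctly and tracking their cyclic order at the two marked points so that the arrows come out exactly right (in particular, that the extra vertex $a_5$ is attached precisely by two arrows $a_1 \leftarrow a_5 \to a_2$ and nothing else, and that adding it does not accidentally change the arrow multiplicities among $a_1, a_2, a_3, a_4$ that were already present in the one-marked-point case). A convenient sanity check is to note that ``capping off'' the second marked point (equivalently, identifying or merging the two marked points) should recover the triangulation used in the previous proposition, and hence the previous quiver; the present triangulation should differ from that one only by the insertion of $a_5$ between $a_1$ and $a_2$. Once the picture is drawn and the arrow count verified at each marked point, the conclusion $\A = \A(\Q_\Delta) = \A(\S,\M)$ is immediate from the definition.
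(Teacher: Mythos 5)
Your proposal matches the paper's proof in approach and substance: the paper simply exhibits an explicit five-arc tagged triangulation of the once-holed torus with two boundary marked points (drawn as a planar picture with two identified circles, namely the four arcs of the one-marked-point triangulation together with a loop cutting off the second marked point) and observes that its quiver is $\Q$, which is exactly the strategy you describe, sanity checks included. One caution on the parenthetical target of your verification: in $\Q$ the only double arrows are between $a_3$ and $a_4$ (the two bottom-left/bottom-right vertices are joined by a single arrow), and the fifth vertex carries one incoming and one outgoing arrow rather than two outgoing ones; the arrow-by-arrow check you propose, carried out against the actual figure, would of course settle this.
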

\begin{proof}
Triangulate $(\S,\M)$ with five arcs as follows, where the two dashed circles have been identified (with opposite orientation).
\begin{center}
\begin{tikzpicture}[scale=.35]
    \draw[fill=black!10] (0,-.5) circle (4.5);
    \draw (0,0) circle (4);
    \node (1) at (0,4) [marked] {};
    \node (6) at (0,-5) [marked] {};
    \node (2) at (3.5,-.5) {};
    \node (3) at (.5,-.5) {};
    \node (4) at (-3.5,-.5) {};
    \node (5) at (-.5,-.5) {};
    \draw (1) to [out=-20,in=90] (2) arc (360:180:1.5) (3) to [out=90,in=280] (1);
    \draw (1) to [out=200,in=90] (4) arc (180:360:1.5) (5) to [out=90,in=260] (1);
    \draw[fill=white,dashed] (2,0) circle (1);
    \draw[fill=white,dashed] (-2,0) circle (1);
    \draw (1) to [out=-40,in=45] (2,1);
    \draw (1) to [out=300,in=90] (.75,0) arc (180:225:1.25) to [out=-45,in=225] (2,-1);
    \draw (1) to [out=240,in=45] (-2,1);
    \draw (1) to [out=220,in=90] (-3.25,0) arc (180:225:1.25) to [out=-45,in=225] (-2,-1);
\end{tikzpicture}
\end{center}
This triangulation gives the quiver $\Q$.
\end{proof}

By Theorem \ref{thm: atleast2}, $\A$ is \textbf{locally acyclic} and all that this implies. In particular, $\A$ is finitely generated.

Let $i$ be the vertex in $\Q$ with only two incident arrows (the bottom-most one, as drawn).  Let $\Q'$ be the freezing of $\Q$ at $i$, and let $\Q^\dag$ be the deletion of $i$ in $\Q$.
\begin{prop}\label{prop: notclusterlocal}
The natural inclusions
\[ \A(\Q')\subset \A[a_i^{-1}],\;\;\; \A(\Q^\dag)\subset \A/\langle a_i-1\rangle\]
are not equalities.  In particular, $\A(\Q')$ is a freezing which is not a cluster localization.
\end{prop}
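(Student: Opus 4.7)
The plan is to reduce both statements to the non-finite-generation of the torus-with-one-boundary-marked-point cluster algebra from the previous subsection (Corollary \ref{coro: torusnotfg}), using the fact that the locally acyclic $\A$ is itself finitely generated.

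First I would identify the quiver $\Q^\dag$ explicitly. Inspecting Figure \ref{fig: torus2}, the vertex $i$ has exactly two incident arrows (one incoming, one outgoing), and deleting it together with those two arrows produces precisely the quiver of Figure \ref{fig: torus}. Hence $\A(\Q^\dag)$ is the cluster algebra of the torus with one boundary marked point studied just above, and by Corollary \ref{coro: torusnotfg} it is not finitely generated. On the other hand, Theorem \ref{thm: atleast2} tells us that $\A$ is locally acyclic, so Theorem \ref{thm: LAprops} gives that $\A$ is finitely generated over $\Z$, whence the quotient $\A/\langle a_i-1\rangle$ is finitely generated as well. This immediately yields the second strict inclusion
\[ \A(\Q^\dag)\subsetneq \A/\langle a_i-1\rangle. \]

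For the first non-equality I would argue by contradiction, bootstrapping from the second. Suppose that $\A(\Q')=\A[a_i^{-1}]$. Taking the quotient of both sides by the ideal $\langle a_i-1\rangle$ gives
\[ \A(\Q')/\langle a_i-1\rangle \;=\; \A[a_i^{-1}]/\langle a_i-1\rangle \;=\; \A/\langle a_i-1\rangle, \]
the second equality because $a_i$ is already a unit modulo $a_i-1$. But the proposition on deleting (now frozen) vertices identifies the left-hand side with $\A(\Q^\dag)$, giving $\A(\Q^\dag)=\A/\langle a_i-1\rangle$, which contradicts the previous paragraph. Therefore $\A(\Q')\subsetneq \A[a_i^{-1}]$, confirming that $\A(\Q')$ is a freezing which is not a cluster localization.

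The only non-routine step is the combinatorial identification of $\Q^\dag$ with the quiver of Figure \ref{fig: torus}; everything else is a short manipulation trading finite generation (on the locally acyclic side) against the wild behavior of the torus-with-one-boundary-marked-point example (on the deleted side), mediated by the freezing-versus-deletion correspondence already developed in Section 3.
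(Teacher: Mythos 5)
Your proposal is correct and follows essentially the same route as the paper: identify $\Q^\dag$ with the torus-with-one-boundary-marked-point quiver so that Corollary \ref{coro: torusnotfg} rules out $\A(\Q^\dag)=\A/\langle a_i-1\rangle$ (using finite generation of the locally acyclic $\A$), then derive the first non-equality by quotienting the hypothetical equality $\A(\Q')=\A[a_i^{-1}]$ by $\langle a_i-1\rangle$. You merely make explicit two steps the paper leaves implicit, namely $\A[a_i^{-1}]/\langle a_i-1\rangle=\A/\langle a_i-1\rangle$ and the use of the freezing-versus-deletion proposition.
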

\begin{proof}
The quiver $\Q^\dag$ is the quiver in Figure \ref{fig: torus}, so by Corollary \ref{coro: torusnotfg}, $\A(\Q^\dag)$ is not finitely-generated.  Since $\A$ is finitely generated, $\A(\Q^\dag)\neq \A/\langle a_i-1\rangle$.
If $\A(\Q')=\A[a_i^{-1}]$, then quotienting both sides by $a_i-1$ would give $\A(\Q^\dag)=\A/\langle a_i-1\rangle$, so this is impossible.
\end{proof}
\begin{coro}
$\A(\Q')\neq\U(\Q')$
\end{coro}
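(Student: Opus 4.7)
The plan is to use the chain of inclusions from Proposition \ref{prop: inclusions} together with the strict inclusion just established in Proposition \ref{prop: notclusterlocal}. Recall that Proposition \ref{prop: inclusions} gives
\[ \A(\Q')\subseteq \A[a_i^{-1}]\subseteq \U[a_i^{-1}]\subseteq \U(\Q'). \]
I would begin by observing that $\A = \A(\Q)$ is locally acyclic by Theorem \ref{thm: atleast2}, since $(\S,\M)$ has two boundary marked points. Then Theorem \ref{thm: A=U} gives $\A = \U$, so the middle inclusion collapses to an equality $\A[a_i^{-1}] = \U[a_i^{-1}]$.

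Next, I would invoke Proposition \ref{prop: notclusterlocal}, which states precisely that $\A(\Q') \subsetneq \A[a_i^{-1}]$ (this is the first non-equality in that proposition). Combining this strict inclusion with the chain above yields
\[ \A(\Q') \subsetneq \A[a_i^{-1}] = \U[a_i^{-1}] \subseteq \U(\Q'), \]
which forces $\A(\Q') \neq \U(\Q')$, as required.

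There is really no main obstacle here: all the work has already been done in Proposition \ref{prop: notclusterlocal} (where the freezing was shown not to be a cluster localization using the finite-generation contrast between $\A$ and $\A(\Q^\dag)$) and in Theorem \ref{thm: A=U} (which supplies $\A = \U$). The corollary is a one-line consequence of chaining these two facts through Proposition \ref{prop: inclusions}. If anything is worth emphasizing, it is the conceptual point: this shows that the class of cluster algebras satisfying $\A = \U$ is \emph{not} closed under freezing, even when the ambient algebra is locally acyclic.
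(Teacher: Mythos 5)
Your argument is correct and is essentially the paper's own proof: the paper invokes the contrapositive of Lemma \ref{lemma: acycliclocal} (which is itself just the chain of inclusions of Proposition \ref{prop: inclusions}) applied to the fact from Proposition \ref{prop: notclusterlocal} that $\A(\Q')$ is not a cluster localization. Your extra step collapsing $\A[a_i^{-1}]=\U[a_i^{-1}]$ via Theorems \ref{thm: atleast2} and \ref{thm: A=U} is harmless but unnecessary, since the strict inclusion $\A(\Q')\subsetneq\A[a_i^{-1}]\subseteq\U(\Q')$ already forces $\A(\Q')\neq\U(\Q')$.
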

\begin{proof}
Since $\A(\Q')$ is a freezing which is not a cluster localization, Lemma \ref{lemma: acycliclocal} implies that $\A(\Q')\neq \U(\Q')$.
\end{proof}
\begin{rem}\label{rem: Banfffail}
This is an example of a cluster algebra for which the Banff algorithm \emph{can} fail, but does not \emph{always} fail.
\end{rem}

\subsection{Sphere with four punctures}
Let $\Q$ be the quiver in Figure \ref{fig: sphere4}, and $\A=\A(\Q)$.

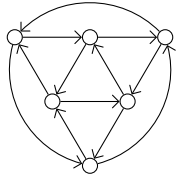
\begin{figure}[h!]
\centering
\begin{tikzpicture}[scale=.5,auto,minimum size=2mm]
    \node (1) at (0,1.14) [mutable] {};
    \node (2) at (-1,-.57) [mutable] {};
    \node (3) at (1,-.57) [mutable] {};
    \node (4) at (-2,1.14) [mutable] {};
    \node (5) at (2,1.14) [mutable] {};
    \node (6) at (0,-2.28) [mutable] {};

    \draw (1) to [-angle 90] (2);
    \draw (2) to [-angle 90] (3);
    \draw (3) to [-angle 90] (1);
    \draw (4) to [-angle 90] (1);
    \draw (1) to [-angle 90] (5);
    \draw (5) to [-angle 90] (3);
    \draw (3) to [-angle 90] (6);
    \draw (6) to [-angle 90] (2);
    \draw (2) to [-angle 90] (4);
    \draw (4) to [angle 90-,out=45,in=135] (5);
    \draw (5) to [angle 90-,out=285,in=15] (6);
    \draw (6) to [angle 90-,out=165,in=255] (4);
\end{tikzpicture}
\caption{Sphere with four punctures}
\label{fig: sphere4}
\end{figure}

\begin{prop}
Let $(\S,\M)$ be the sphere with four punctures.  Then $\A=\A(\S,\M)$.
\end{prop}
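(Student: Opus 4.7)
The plan is to exhibit an explicit tagged triangulation $\Delta$ of $(\S,\M)$ and verify that $\Q_\Delta$ coincides with $\Q$, up to a relabeling of vertices and at worst a global arrow reversal; by the preceding lemma this forces $\A=\A(\Q)=\A(\Q_\Delta)=\A(\S,\M)$.

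First I would check numerics: for the $4$-punctured sphere we have $g=0$, $h=0$, $p=4$, $|\M|=4$, so the arc-count formula $6g+3h+2p+|\M|-6=6$ gives exactly six arcs in any tagged triangulation, matching the six vertices of $\Q$. Next I would take $\Delta$ to be the \emph{tetrahedral} triangulation: identify $(\S,\M)$ with the boundary of a solid tetrahedron having the four punctures $P_1,P_2,P_3,P_4$ as its vertices, and use the six edges $a_{ij}:=P_iP_j$ (for $1\leq i<j\leq 4$) as the arcs. No two arcs share both endpoints and no arc has a puncture in its interior, so declaring every endpoint plain produces a legitimate tagged triangulation with four triangular faces, one per face of the tetrahedron.

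The quiver $\Q_\Delta$ can then be read off directly. At each puncture $P_i$, exactly three arcs $\{a_{ij}:j\neq i\}$ are incident, and by the definition of $\Q_\Delta$ they form a directed $3$-cycle whose orientation is their counter-clockwise cyclic order at $P_i$ with respect to the fixed orientation of $\S$. Hence $\Q_\Delta$ consists of four $3$-cycles glued along vertices, one per puncture, with each arc $a_{ij}$ appearing in exactly the two cycles at its two endpoints. Under the labeling
\[
1\leftrightarrow a_{12},\quad 2\leftrightarrow a_{14},\quad 3\leftrightarrow a_{13},\quad 4\leftrightarrow a_{24},\quad 5\leftrightarrow a_{23},\quad 6\leftrightarrow a_{34},
\]
the four underlying $3$-cycles $\{1,2,3\},\{1,4,5\},\{3,5,6\},\{2,4,6\}$ visible in $\Q$ are precisely the vertex-stars at $P_1,P_2,P_3,P_4$, so the underlying unoriented incidence structure of $\Q_\Delta$ agrees with that of $\Q$.

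The only remaining, and expectedly the only nontrivial, point is to check that the arrow directions in the four $3$-cycles of $\Q$ are jointly realized by a single orientation of $\S$. I would handle this by computing the counter-clockwise cyclic order at each puncture on the outward-oriented tetrahedron in $\mathbb{R}^3$ (a short cross-product check) and comparing with the four cycle orientations forced by $\Q$. Either these match directly, or they are all opposite, in which case we simply use the opposite orientation on $\S$; equivalently, observe that simultaneously reversing every arrow of an ice quiver gives the same cluster algebra, since the mutation rule is invariant under $\Q\mapsto \Q^{op}$ (the two monomial terms in the exchange relation are merely swapped). Either way $\Q_\Delta$ matches $\Q$ as an ice quiver, and hence $\A(\Q)=\A(\Q_\Delta)=\A(\S,\M)$, as claimed.
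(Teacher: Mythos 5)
Your proposal is correct and is essentially the paper's own argument: the paper also exhibits the tetrahedral triangulation of the four-punctured sphere (drawn in the plane with a blown-up point at infinity, so its four faces are the three inner triangles plus the outer one) and reads off that its quiver is $\Q$. Your extra bookkeeping (the arc count $6g+3h+2p+|\M|-6=6$, the vertex-star/$3$-cycle matching, and the observation that $\Q$ and $\Q^{op}$ give the same cluster algebra, covering the orientation ambiguity) only makes explicit what the paper leaves to the picture.
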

\begin{proof}
Triangulate $(\S,\M)$ with six arcs as follows, where the dashed circle denotes a single point at infinity which has been blown up to allow for a planar embedding.
\begin{center}
\begin{tikzpicture}[scale=.75]
    \draw[fill=black!10,dashed] (0,0) circle (1.72);
    \node (1) at (-1,-.58) [marked] {};
    \node (2) at (1,-.58) [marked] {};
    \node (3) at (0,1.14) [marked] {};
    \node (4) at (0,0) [marked] {};
    \draw (1) to [out=-60,in=240] (2);
    \draw (2) to [out=60,in=0] (3);
    \draw (3) to [out=180,in=120] (1);
    \draw (4) to  (1);
    \draw (4) to  (2);
    \draw (3) to (4);
\end{tikzpicture}
\end{center}
This triangulation gives the quiver $\Q$.
\end{proof}
\noindent By Theorem \ref{thm: noboundary}, $\A$ is \textbf{not locally acyclic}.

For a given puncture $p$ in $(\S,\M)$, there is an involution of $\A$ which changes each plain tagging at $p$ to notched, and each notched tagging at $p$ to plain.  The four punctures of $(\S,\M)$ then give an action of the group $\Gamma=(\Z_2)^4$ on $\A$.

Let $\Delta$ be the triangulation of $(\S,\M)$ in the proof of the proposition, and let $\Gamma(\Delta)$ be the collection of 24 tagged arcs in the orbit of $\Delta$ under the group $\Gamma$.
\begin{prop}\label{prop: spheregenerate}
The set of cluster variables $\Gamma(\Delta)$ generates $\A$.
\end{prop}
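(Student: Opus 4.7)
The plan is to show that the subalgebra $\A' \subseteq \A$ generated by the $24$ tagged arcs in $\Gamma(\Delta)$ equals $\A$. A first observation: since $\Gamma$ acts on $\A$ by cluster algebra automorphisms permuting seeds, and $\Gamma(\Delta)$ is $\Gamma$-invariant by definition, $\A'$ is $\Gamma$-stable as a subalgebra. It therefore suffices to prove that every cluster variable of $\A$ lies in $\A'$.

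I would proceed by induction on the length of a shortest mutation sequence connecting a given seed to $\Delta$ in the exchange graph. The base case is trivial: the six arcs of $\Delta$ are among the generators. For the inductive step, suppose $(\Q,\a)$ is a seed with $\a \subseteq \A'$, and consider the new cluster variable $\mu_k(a_k)$ obtained by mutation at any position. The exchange relation
\[ a_k \cdot \mu_k(a_k) = \pi_k^+ + \pi_k^- \]
places $\mu_k(a_k)$ in $\A$, but $\A'$-membership requires the division of $\pi_k^+ + \pi_k^-$ by $a_k$ to take place inside $\A'$, not merely inside the ambient domain $\A$. To promote this divisibility, I would use the $\Gamma$-symmetry: applying each $\gamma \in \Gamma$ produces a parallel exchange relation among $\gamma$-translated cluster variables (all still in $\A'$ by $\Gamma$-stability of $\A'$). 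By combining the sixteen $\Gamma$-translated relations --- for instance, by summing or differencing pairs that differ only by a tag-flip at an endpoint of $a_k$ --- one exhibits $\pi_k^+ + \pi_k^-$ as $a_k$ times an explicit polynomial in $\Gamma(\Delta)$ and previously-produced generators, placing $\mu_k(a_k) \in \A'$.

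The main obstacle is carrying the divisibility argument through the entire induction. A few mutations away from $\Delta$, the arc $a_k$ can be a combinatorially complicated tagged arc (not among the original six), and the monomials $\pi_k^\pm$ will involve cluster variables of varied types, including arcs with notched endpoints and arcs adjacent to self-folded triangles. For each configuration one must identify a specific $\Gamma$-combination of exchange relations that witnesses the required divisibility inside $\A'$; the high symmetry of the $4$-punctured sphere (every puncture equivalent under the ambient mapping-class-group action permuting tagged triangulations) is what keeps the number of essentially distinct cases finite and the argument tractable.
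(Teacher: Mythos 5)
Your reduction to showing that every cluster variable lies in the subalgebra $R$ generated by $\Gamma(\Delta)$ (using $\Gamma$-stability of $R$) matches the paper's opening move, but the engine of your induction has a genuine gap. Inducting on mutation distance from $\Delta$, the exchange relation $a_k\,\mu_k(a_k)=\pi_k^++\pi_k^-$ only exhibits the new variable as a \emph{quotient} by $a_k$; knowing that $a_k$ and $\pi_k^{\pm}$ lie in $R$ does not place $\mu_k(a_k)$ in $R$, since a subalgebra of a domain need not be closed under division even when the quotient happens to lie in $\A$ (this failure of naive divisibility is exactly the phenomenon behind $\A\neq\U$ in nearby examples). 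Your proposed remedy, combining the sixteen $\Gamma$-translates of the exchange relation to witness the divisibility inside $R$, is never made concrete: tag-flipped relations involve genuinely different arcs, and no mechanism is given by which summing or differencing them produces $a_k$ times an element of $R$. The closing claim that symmetry leaves only finitely many essentially distinct configurations is also unsupported, because arcs of arbitrarily large intersection number with $\Delta$ occur and the $\Gamma$- and mapping-class-group actions do not bound that complexity; without a complexity-reducing step the induction does not close.

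The paper avoids division altogether. It first proves a division-free identity: for a once-punctured digon with sides $\alpha,\delta$ and suitable radii $\beta,\gamma$ (one plain, one notched), one has $\alpha=\beta\gamma-\delta$, obtained by reading the exchange relation $\beta\gamma=\alpha+\delta$ in a tagged triangulation containing $\alpha,\beta,\beta',\delta$. It then inducts not on mutation distance but on the geometric intersection number of a plain arc with $\Delta$: the cases of intersection number $0$ and $1$ are handled directly by elements of $\Gamma(\Delta)$, and any plain arc crossing $\Delta$ at least twice wraps around some puncture, where the identity rewrites it as $\beta\gamma-\delta$ with $\beta,\gamma,\delta$ crossing $\Delta$ strictly fewer times; $\Gamma$-invariance then upgrades the statement from plain arcs to all tagged arcs. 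To repair your argument you would need a subtraction-only recursion of this kind (or an actual proof of the divisibility step); as written, the key step is asserted rather than proved.
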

The key lemma in the proof will be the following.
\begin{lemma}
Let $\alpha,\beta,\gamma,\delta$ be tagged arcs in $(\S,\M)$ be as pictured below (the endpoints of $\alpha$ need not be on the boundary or distinct).
\begin{center}
\begin{tikzpicture}
\begin{scope}[scale=.4,auto]
\begin{scope}
	\clip (0,0) circle (4);
	\draw[fill=black!10] (-5,-4) to [in=180,out=30] (0,-3) to [in=150,out=0] (5,-4) to [line to] (5,4) to [in=0,out=210] (0,3) to [in=-30,out=180] (-5,4);
	\node (1) at (0,-3) [marked] {};
	\node (2) at (0,0) [marked] {};
	\node (3) at (0,3) [marked] {};
	\draw (1) to [in=240,out=60,-hooks reversed] node [swap] {$\gamma$} (2);
	\draw (2) to [in=240,out=60] node  {$\beta$} (3);
	\draw (1) to [in=210,out=150] node {$\alpha$} (3);
	\draw (1) to [in=330,out=30] node [swap] {$\delta$} (3);
\end{scope}
\end{scope}
\end{tikzpicture}
\end{center}
Then $\alpha=\beta\gamma-\delta$ in $\A(\S,\M)$.
\end{lemma}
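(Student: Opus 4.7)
My plan is to realize the identity $\alpha = \beta\gamma - \delta$, equivalently $\beta\gamma = \alpha + \delta$, as a single cluster exchange relation. The catch is that $\beta$ and $\gamma$ are themselves incompatible, since they share the puncture $(2)$ with opposite taggings, so no tagged triangulation can contain both. Instead I will use the plain radius $r$ from $(1)$ to $(2)$, which is compatible with $\beta$ at $(2)$, and derive the identity by flipping $\beta$ against this $r$.

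First, I would extend $\{\alpha, \delta, \beta, r\}$ to a tagged triangulation $\Delta$ of $(\S,\M)$ by triangulating outside the digon bounded by $\alpha \cup \delta$. Then I would compute the mutable part of $\Q_\Delta$ restricted to these four vertices using the counter-clockwise recipe of Section 9. The puncture $(2)$ contributes no arrows since it lies on exactly two tagged arcs of $\Delta$ (namely $r$ and $\beta$). The endpoints $(1)$ and $(3)$ each contribute arrows which, after any cancellations from wrap-arounds (in case $(1)$ or $(3)$ is itself a puncture), produce the 4-cycle $\alpha \to r \to \delta \to \beta \to \alpha$, with no other arrows among $\{\alpha, \delta, r, \beta\}$.

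The exchange relation at $\beta$ then reads $\beta \cdot \beta' = \alpha + \delta$, where $\beta'$ is the cluster variable of the flipped tagged arc. It remains to identify $\beta' = \gamma$: the flipped arc must render $\{\alpha, \delta, r, \beta'\}$ a tagged triangulation, hence must be compatible with $\alpha$, $\delta$, and $r$. Among tagged arcs in the digon, the notched counterpart $\beta^{\ast}$ of $\beta$ is ruled out because it shares the puncture $(2)$ with $r$ but has a different underlying arc and the opposite tagging there, while $\gamma$ shares the underlying arc with $r$ and matches its tagging at $(1)$, so the compatibility rules of Section 9 admit $\gamma$ uniquely. Substituting $\beta' = \gamma$ yields the lemma. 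The only subtle point is this uniqueness analysis, which hinges on the special compatibility rule for pairs of tagged arcs with identical underlying arcs; the rest is a routine exchange-relation computation in a 4-cycle quiver.
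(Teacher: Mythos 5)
Your proposal is correct and takes essentially the same route as the paper: complete the configuration to a tagged triangulation containing $\alpha,\delta,\beta$ and one auxiliary arc inside the once-punctured digon, note that the only arrows at $\beta$ come from $\alpha$ and $\delta$ so the exchange relation at $\beta$ reads $\beta\beta'=\alpha+\delta$, and identify the flip $\beta'$ with $\gamma$ via the compatibility rules. The only difference is the auxiliary arc --- you use the plain radius $r$ from $(1)$ while the paper uses the notched copy of $\beta$ --- which changes nothing essential (your choice even sidesteps the conjugate-pair/self-folded subtlety in forming $\Q_\Delta$).
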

\begin{proof}
Let $\beta'$ be $\beta$ with a notch at the puncture.  Then $\alpha,\beta,\beta',\delta$ can be extended to a tagged triangulation $\Delta$ of $(\S,\M)$. In this triangulation, $\gamma$ is the mutation of $\beta$, and the corresponding cluster relation is $\beta\gamma=\alpha+\delta$.
\end{proof}

\begin{proof}[Proof of Proposition \ref{prop: spheregenerate}]
Let $R\subset \A$ be the subalgebra generated by $\Gamma(\Delta)$.  Since the generators of $R$ are closed under $\Gamma$, $R$ is closed under $\Gamma$.  Therefore, it will suffice to show that $R$ contains every plain tagged arc in $\A$.

For any plain tagged arc $\alpha$, let $i$ be its intersection number with $\Delta$. If $i=0$, then $\alpha\in \Delta$, and $\alpha\in R$.  If $i=1$, then there are $\beta,\gamma',\delta\in \Gamma(\Delta)$ as in the picture below.
\begin{center}
\begin{tikzpicture}[scale=.75,auto]
    \draw[fill=black!10,dashed] (0,0) circle (1.72);
    \node (1) at (-1,-.58) [marked] {};
    \node (2) at (1,-.58) [marked] {};
    \node (3) at (0,1.14) [marked] {};
    \node (4) at (0,0) [marked] {};
    \draw (1) to [out=-60,in=240]  (2);
    \draw (4) to (1);
    \draw (4) to [hooks reversed-] (2);
    \draw (1) to [out=120,in=180] (0,.58) to [out=0,in=60] (2);
    \node (a) at (-.9,.58) {$\alpha$};
    \node (b) at (-.4,-.5) {$\beta$};
    \node (c) at (.65,-.2) {$\gamma$};
    \node (d) at (.9,-1.05) {$\delta$};
\end{tikzpicture}
\end{center}
Then $\alpha=\beta\gamma-\delta\in R$.

Now, assume that every tagged arc which intersects $\Delta$ fewer than $i$ times is in $R$, and let $\alpha$ be a plain tagged arc which intersects $\Delta$ $i$ times, with $i\geq2$.  Since $\alpha$ intersects at least two arcs in $\Delta$, there will be a puncture in $(\S,\M)$ around which $\alpha$ looks like the figure on the left (where the dashed lines represent arcs in $\Delta$, and the picture only denotes a small disc around that puncture).
\begin{center}
\begin{tikzpicture}[scale=.5]
\begin{scope}[xshift=-2in]
	\clip (0,0) circle (2.28);
	\draw[fill=black!10,draw=black!10] (0,0) circle (2.28);
	\node (1) at (0,0) [marked] {};
	\draw (-2.28,0.5) to [out=0,in=180] (0,1.14) to [out=0,in=180] (2.28,0.5);
	\draw (1) to [dashed] (0,-2.28);
	\draw (1) to [dashed] (2,1.14);
	\draw (1) to [dashed] (-2,1.14);
	\node (a) at (0,1.5) {$\alpha$};
\end{scope}
\begin{scope}[xshift=2in]
	\clip (0,0) circle (2.28);
	\draw[fill=black!10,draw=black!10] (0,0) circle (2.28);
	\node (1) at (0,0) [marked] {};
	\draw (-2.28,-0.5) to [out=0,in=180] (0,-1.14) to [out=0,in=180] (2.28,-0.5);
	\draw (-2.28,0.5) to [out=0,in=180] (0,1.14) to [out=0,in=180] (2.28,0.5);
	\draw (-2.28,0) to [out=0,in=210] (1);
	\draw (2.28,0) to [out=180,in=-30,-hooks reversed] (1);
	\draw (1) to [dashed] (0,-2.28);
	\draw (1) to [dashed] (2,1.14);
	\draw (1) to [dashed] (-2,1.14);
	\node (a) at (0,1.5) {$\alpha$};
	\node (b) at (-.8,0) {$\beta$};
	\node (c) at (.8,0) {$\gamma$};
	\node (d) at (0.25,-1.5) {$\delta$};
\end{scope}
\end{tikzpicture}
\end{center}
Let $\beta,\gamma,\delta$ be as in the figure on the right.  Each of $\beta,\gamma$ and $\delta$ intersect $\Delta$ fewer than $i$ times, so each are in $R$.  Then $\alpha=\beta\gamma-\delta\in R$.  By induction, every plain tagged arc is in $R$, and by $\Gamma$-invariance, every tagged arc is in $R$, and so $\A=R$.
\end{proof}
Then, $\A$ is an example of a cluster algebra which is finitely generated, but is not locally acyclic.

\subsection{A locally acyclic, non-surface cluster algebra}
Let $\Q$ be the quiver in Figure \ref{fig: X6}, and $\A=\A(\Q)$.
\begin{figure}[h!]
\centering
\begin{tikzpicture}[inner sep=0.5mm,scale=.5,auto,minimum size=2mm]
	\node (1) at (0,0)  [mutable] {};
	\node (2) at (-2,0)  [mutable] {};
	\node (3) at (2,0)  [mutable] {};
	\node (4) at (-1,1.5)  [mutable] {};
	\node (5) at (1,1.5)  [mutable] {};
	\node (6) at (0,-1.3)  [mutable] {};
	\draw (1) to [-angle 90] (2);
	\draw (2) to [out=80, in=220,-angle 90] (4);
	\draw (2) to [out=40, in=260,-angle 90] (4);
	\draw (4) to [-angle 90] (1);
	\draw (1) to [-angle 90] (3);
	\draw (3) to [out=140, in=280,-angle 90] (5);
	\draw (3) to [out=100, in=320,-angle 90] (5);
	\draw (5) to [-angle 90] (1);
	\draw (1) to [-angle 90] (6);
\end{tikzpicture}
\caption{$X_6$}
\label{fig: X6}
\end{figure}

This quiver (called $X_6$) was introduced by Derksen and Owen in \cite{DO08} as an example of a mutation-finite quiver which does not come from a surface.
\begin{prop}
$\A$ is not acyclic, but is locally acyclic.
\end{prop}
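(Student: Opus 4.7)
The plan is to treat the two claims separately. For non-acyclicity, I would use that $X_6$ is mutation-finite by Derksen--Owen \cite{DO08}, so its mutation class is a finite, explicitly enumerable set of quivers, and verifying that none of them is acyclic is then a finite computation. A cleaner route would be to exhibit a mutation-invariant structural feature: for instance, arguing that every quiver mutation-equivalent to $X_6$ contains an oriented 3-cycle with a doubled edge (or some similar ``Markov-like'' subconfiguration), a property incompatible with acyclicity. Either way, the content is that acyclicity is destroyed by the two intertwined double-arrow triangles $1 \to 2 \Rightarrow 4 \to 1$ and $1 \to 3 \Rightarrow 5 \to 1$ sharing the vertex $1$.

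For local acyclicity, I would apply the Banff algorithm (Theorem \ref{thm: Banff}), following the tree recorded in Figure \ref{fig: Banff2}. The starting observation is that the bottom vertex $6$ of $X_6$ is a sink of $\Q_{mut}$, so the arrow from $1$ to $6$ lies in no bi-infinite path, and $(1,6)$ is a covering pair. One branch freezes vertex $1$: the resulting mutable subquiver is the disjoint union of the two double arrows $2 \Rightarrow 4$ and $3 \Rightarrow 5$ together with the isolated vertex $6$, which is already acyclic. The other branch freezes vertex $6$, leaving a 5-vertex quiver that still contains both triangles and admits no covering pair as it stands; but after a single mutation (as drawn in Figure \ref{fig: Banff2}) it acquires a new covering pair of the same shape as $(1,5)$, and the algorithm recurses. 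Figure \ref{fig: Banff2} exhibits the full tree, terminating after finitely many steps in acyclic seeds at every leaf, and Theorem \ref{thm: Banff} then yields an acyclic cover of $\X_\A$.

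The principal obstacle is the non-acyclicity claim. Local acyclicity amounts to verifying Figure \ref{fig: Banff2}, which is a finite combinatorial check of mutations, freezings, and covering pairs. Proving that no mutation of $X_6$ is acyclic, by contrast, is a finite but not instantaneous case analysis unless one can identify a clean mutation-invariant obstruction; finding such an invariant, rather than brute-forcing the mutation class, is where the real work lies.
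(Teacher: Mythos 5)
Your proposal is correct and follows essentially the same route as the paper: non-acyclicity is established by appealing to the Derksen--Owen enumeration of the (finite) mutation class of $X_6$ in \cite{DO08} and checking every quiver in it contains a cycle, and local acyclicity is established by running the Banff algorithm exactly as recorded in Figure \ref{fig: Banff2}, starting from the covering pair at the sink vertex and invoking Theorem \ref{thm: Banff}. The paper makes the first step slightly more explicit only in that it cites \cite[Proposition 4]{DO08}, where the five mutation-equivalent quivers are listed, so no new mutation-invariant obstruction is needed.
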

\begin{proof}
The five quivers mutation-equivalent to $\Q$ can be found in \cite[Proposition 4]{DO08}; each contains a cycle.  Local acyclicity is shown by the Banff algorithm in Figure \ref{fig: Banff2}.
\end{proof}
\begin{rem}
The related $X_7$ quiver of Derksen and Owens (Figure \ref{fig: X7}) has no covering pairs and is not locally acyclic.  This can be verified by checking the two possible quivers obtained by mutation have no sinks or sources.
\begin{figure}[h!]
\centering
\begin{tikzpicture}[inner sep=0.5mm,scale=.5,auto,minimum size=2mm]
	\node (1) at (0,0)  [mutable] {};
	\node (2) at (-2,0)  [mutable] {};
	\node (3) at (2,0)  [mutable] {};
	\node (4) at (-1,1.5)  [mutable] {};
	\node (5) at (1,1.5)  [mutable] {};
	\node (6) at (-1,-1.5)  [mutable] {};
	\node (7) at (1,-1.5)  [mutable] {};
	\draw (1) to [-angle 90] (2);
	\draw (2) to [out=80, in=220,-angle 90] (4);
	\draw (2) to [out=40, in=260,-angle 90] (4);
	\draw (4) to [-angle 90] (1);
	\draw (1) to [angle 90-] (3);
	\draw (3) to [out=140, in=280,angle 90-] (5);
	\draw (3) to [out=100, in=320,angle 90-] (5);
	\draw (5) to [angle 90-] (1);
	\draw (1) to [-angle 90] (7);
	\draw (7) to [out=160, in=20,-angle 90] (6);
	\draw (7) to [out=200, in=-20,-angle 90] (6);
	\draw (6) to [-angle 90] (1);
\end{tikzpicture}
\caption{$X_7$}
\label{fig: X7}
\end{figure}
\end{rem}

\bibliography{MyNewBib}{}
\bibliographystyle{amsalpha}
\end{document}